\newcommand{\abs}[1]{|#1|}
\def\diam{\mathrm{Diam}}
\def\inte{\mathrm{Int}}
\def\fill{\mathrm{Fill}}
\def\dist{\mathrm{dist}}
\def\clos{\mathrm{Clos}}
\def\fix{\mathrm{Fix}}
\def\id{\mathrm{Id}}
\def\homeo{\mathrm{Homeo}}
\newcommand{\bbR}{{\mathbb{R}}}
\newcommand{\bbN}{{\mathbb{N}}}
\newcommand{\bbZ}{{\mathbb{Z}}}
\newcommand{\bbD}{{\mathbb{D}}}
\newcommand{\bbS}{{\mathbb{S}}}
\def\bbH{{\mathbb{H}}}
\def\cA{{\cal A}}  \def\cG{{\cal G}}  \def\cS{{\cal S}}
    \def\cT{{\cal T}}
  \def\cI{{\cal I}}  \def\cU{{\cal U}}
   \def\cP{{\cal P}} 
\def\cF{{\cal F}}
\def\?{$^{***}$\marginpar{?}}
\newtheorem{theo}{Theorem}
\newtheorem*{ques*}{Question}
\newtheorem*{prop*}{Proposition}
\newtheorem*{conj*}{Conjecture}
\newtheorem*{theo*}{Theorem}
\newtheorem{coro}{Corollary}[section]
\newtheorem{theo2}[coro]{Theorem}
\newtheorem{claim}[coro]{Claim}
\newtheorem{fact}[coro]{Fact}
\newtheorem{claim*}{Claim}
\newtheorem{affi*}{Affirmation}
\newtheorem{prop}[coro]{Proposition}
\newtheorem{lemm}[coro]{Lemma}
\newtheorem{sublemm}[coro]{Sub-Lemma}
\newtheorem*{lemm*}{Lemma}
\def\?{\footnote{?}}
\newlength{\espaceavantspecialthm}
\newlength{\espaceapresspecialthm}
\newenvironment{defi}[1][]{\refstepcounter{coro} 
\vskip \espaceavantspecialthm \noindent \textbf{Definition~\thecoro
#1.} }%
{\vskip \espaceapresspecialthm}
\newenvironment{rema}[1][]{\refstepcounter{coro} 
\vskip \espaceavantspecialthm \noindent \textbf{Remark~\thecoro
#1.} }%
{\vskip \espaceapresspecialthm}
\title{Fixed point sets of isotopies on surfaces}
\author{Fran\c cois B\'eguin, Sylvain Crovisier et Fr\'ed\'eric Le Roux}
\begin{document}
\maketitle
\sloppy

\selectlanguage{english}
\begin{abstract}
We consider a self-homeomorphism $h$ of some surface $S$.
A subset $F$ of the fixed point set of $h$ is said to be \emph{unlinked} if there is an isotopy from the identity to $h$ that fixes every point of $F$. With Le Calvez' transverse foliations theory in mind, we prove the existence of unlinked sets that are maximal with respect to inclusion. As a byproduct, we prove the arcwise connectedness of the space of homeomorphisms of the $2$-sphere that preserves the orientation and pointwise fix some given closed connected set $F$.
\end{abstract}

\selectlanguage{french}
\begin{abstract}
Considérons un homéomorphisme $h$ d'une surface $S$. Nous dirons qu'un ensemble $F$ de points fixes de $h$ est non enlacé s'il existe une isotopie de l'identité à $h$ qui fixe tous les points de $F$. Motivés par la théorie des feuilletages transverses de Patrice Le Calvez, nous montrons l'existence d'ensemble non-enlacés qui sont maximaux au sens de l'inclusion. Notre critère de non-enlacement conduit aussi au corollaire suivant : pour toute partie $F$ fermée et connexe de la sphère, l'espace des homéomorphismes de la sphère qui préservent l'orientation et qui fixent tous les points de $F$ est connexe par arcs.
\end{abstract}
\selectlanguage{english}

\tableofcontents


\newpage
\section{Introduction}

\subsection{Main result}

Throughout this text, we consider a connected surface $S$ without boundary, not necessarily compact nor orientable.
Let $\homeo(S)$ be the space of homeomorphisms of $S$, equipped with the topology of uniform convergence on compact subsets of $S$.
An \emph{isotopy} is a continuous path $t \mapsto f_{t}$ from a compact interval $I\subset \mathbb{R}$ to $\homeo(S)$; this isotopy is denoted by $(f_{t})_{t\in I}$ or, more simply, by $(f_t)$. 

Let $F$ be a closed subset of $S$. We denote by $\homeo(S,F)$ the subgroup of $\homeo(S)$ consisting of elements $f$ that fix every point of $F$.
A homeomorphism $f \in \homeo(S,F)$ is  \emph{isotopic to the identity relatively to $F$} if it belongs to the arcwise connected component of the identity in $\homeo(S,F)$, 
in other words if there is an isotopy $(f_{t})_{t\in [0,1]}$ such that $f_{0}=\mathrm{Id}$, $f_{1}=f$, and $f_{t}(x) = x$ for every $x$ in $F$ and $t\in [0,1]$. In this case, the set $F$ is said to be \emph{unlinked} for $f$. Our main technical result is the following criterium of unlinkedness.

\begin{theo}\label{theo.finitely-isotopic}
Let $f \in \homeo(S,F)$. Assume  there exists a dense subset $F_{0}$ of $F$ such that 
every finite subset of $F_{0}$ is unlinked for $f$. Then $F$ is unlinked for~$f$.
\end{theo}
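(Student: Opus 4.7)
Second countability of $S$ makes $F$ separable, so I may replace $F_0$ by a countable dense subset and assume $F_0 = \{x_n\}_{n \geq 1}$. Writing $F_n = \{x_1, \ldots, x_n\}$, the hypothesis yields for each $n$ an isotopy $\Psi^n$ from $\mathrm{Id}$ to $f$ fixing $F_n$ throughout. A single isotopy $\Phi = (\phi_t)_{t \in [0,1]}$ from $\mathrm{Id}$ to $f$ fixing all of $F_0$ suffices: the set $\bigcap_t \fix(\phi_t)$ is closed and contains the dense subset $F_0$, hence contains $F$, proving the theorem.

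\textbf{Strategy.} I would assemble $\Phi$ as the limit of a Cauchy sequence of isotopies $\Phi^n$, each fixing $F_n$ throughout and satisfying $d(\Phi^n, \Phi^{n+1}) < 2^{-n}$ for a complete metric $d$ compatible with uniform convergence on compact subsets of $S$; completeness then produces a limit isotopy with the required properties. The inductive step is the heart of the proof: given $\Phi^n$ and any isotopy $\Theta$ fixing $F_{n+1}$ (from the hypothesis), construct $\Phi^{n+1}$ fixing $F_{n+1}$ and $2^{-n}$-close to $\Phi^n$. The concatenation $\gamma = \Phi^n \ast \Theta^{-1}$ is a loop at $\mathrm{Id}$ in $\homeo(S, F_n)$, and the task is to homotope $\gamma$ rel endpoints, inside $\homeo(S, F_{n+1})$ for half its length, to a $C^0$-small loop. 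This splits into (i) showing that the trajectory $t \mapsto \phi^n_t(x_{n+1})$ -- a loop in $S \setminus F_n$ based at $x_{n+1}$, since $f(x_{n+1}) = x_{n+1}$ -- is null-homotopic, and (ii) deforming $\gamma$ to a small loop inside $\homeo(S, F_{n+1})$ once (i) is arranged.

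\textbf{Main obstacle.} Step (i) is the key technical point: a priori $\Phi^n$ only fixes $F_n$, so its action may wind $x_{n+1}$ around $F_n$. To enforce it, I would strengthen the inductive hypothesis by requiring, in addition to $\Phi^n$ fixing $F_n$, that the trajectory under $\Phi^n$ of every later point $x_m$ (for $m > n$) is null-homotopic in $S \setminus F_n$. This is consistent with the given data: for any finite set of future indices, the existence of an isotopy simultaneously fixing $F_n$ and those $x_m$ -- furnished by applying the hypothesis to their union -- exhibits a representative with trivial trajectories. Step (ii) then follows from a Birman-type long exact sequence relating $\pi_1(\homeo(S, F_{n+1}))$ and $\pi_1(\homeo(S, F_n))$ through $\pi_1(S \setminus F_n, x_{n+1})$, together with the local contractibility of $\homeo(S, F_{n+1})$ at the identity (Hamstrom). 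The delicate part is maintaining coherence of these trajectory-class choices through the induction, since committing at stage $n$ to a null-homotopy for $x_{n+1}$ constrains admissible modifications at stages $> n$; a diagonal choice combined with a compact exhaustion of $S$ handles the non-compact case. This coherent bookkeeping is where the technical heart of the argument resides.
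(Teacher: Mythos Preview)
Your plan has a real gap at the Cauchy step. Knowing that the trajectory $t\mapsto\phi^n_t(x_{n+1})$ is null-homotopic in $S\setminus F_n$ lets you deform $\Phi^n$ rel endpoints (via the fibration $h\mapsto h(x_{n+1})$) to an isotopy fixing $x_{n+1}$ as well, but gives no control on how far that deformation moves $\Phi^n$: a null-homotopic loop can have large diameter, and the correcting isotopy is supported on a set at least as large as a null-homotopy of that loop. The Birman sequence and Hamstrom's result are purely homotopical; they tell you a loop in $\homeo(S,F_{n+1})$ is contractible, not that a contraction can be chosen $C^0$-small. So you have produced \emph{some} $\Phi^{n+1}$ in $\homeo(S,F_{n+1})$, but not one within $2^{-n}$ of $\Phi^n$. (Incidentally, your strengthened inductive hypothesis---trajectories of all later $x_m$ null-homotopic---is automatic once $F_n$ has at least three points, by the uniqueness of homotopies in Corollary~\ref{coro.uniqueness-homotopy}; so the difficulty is not the bookkeeping you flag, but precisely the metric smallness you assume.) A related issue: a metric on $\homeo(S)$ that is both complete and compatible with compact-open convergence must control inverses as well (uniform limits of homeomorphisms need not be injective), which only makes the Cauchy condition harder to secure.

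The paper takes a completely different two-stage route. It first passes through \emph{homotopies}: from the finite hypothesis it builds a strong homotopy relative to $F$ (a homotopy fixing $F$ pointwise and preserving $S\setminus F$) by identifying, for each $z\notin F$, the correct homotopy class of paths from $z$ to $f(z)$ and then continuously selecting a representative (Sections~\ref{sec.selection}--\ref{sec.fini-homo-implique-homo}). Only then does it upgrade to an isotopy, via an ``arc straightening'' procedure (Sections~\ref{s.isotopy-arc}--\ref{sec.homo-implique-iso}): choose a countable family of arcs with endpoints in $F$ cutting $S$ into small pieces, and inductively isotope $f$ to fix each arc. The convergence problem you face is exactly the one confronted in Section~\ref{sec.arc-straightening}, and its resolution---ordering the elementary bigon-pushing isotopies so that each point of $S\setminus F$ is moved by only finitely many of them---is the technical core of the paper.
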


This criterium has applications to the existence of transverse foliations and to the topology of spaces of homeomorphisms. We explain this in the next two subsections.

\subsection{Unlinked continua}

Let $\bbD^2$ be the closed unit disk in the plane, and let $f$ be a homeomorphism of the plane that fixes every point of $\bbD^2$. The celebrated Alexander trick (Proposition~\ref{p.alexander}) provides an isotopy from the identity to $f$ relative to $\bbD^2$ ; more generally, the space $\homeo(\bbR^2,\bbD^2)$ of homeomorphisms that fix every point of $\bbD^2$ is easily seen to be contractible. What happens if we replace $\bbD^2$ by any closed connected  non-empty subset $F$ of the plane? Suppose for simplicity that the complement of $F$ is also connected. When $F$ is locally connected, one may use the Rieman-Caratheodory conformal mapping theorem to transport the Alexander trick, by conjugacy, from the outside of the unit disk to the outside of $F$, and again get an isotopy that fixes every point of $F$. Our first corollary solves the general (non locally connected) case.

\begin{coro}\label{coro.unlinkedness1}
Let $F$ be a closed connected subset of the plane. Then the space 
$$
\homeo^+(\bbR^2, F)
$$
of homeomorphisms that preserve the orientation and fix every point of $F$ is arcwise connected.
In other words, every closed connected subset of the fixed point set of an orientation preserving homeomorphism of the plane is unlinked.
\end{coro}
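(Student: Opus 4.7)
My plan is to deduce the corollary from Theorem~\ref{theo.finitely-isotopic} by choosing the dense subset $F_{0}\subset F$ to be $F$ itself. The entire problem is then reduced to verifying the hypothesis of that theorem: every finite subset $P=\{x_{1},\dots,x_{n}\}$ of $F$ is unlinked for $f$. Granted this, Theorem~\ref{theo.finitely-isotopic} immediately yields that $F$ is unlinked for $f$, which is the content of the corollary.

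For the finite case, I would start from any isotopy $(g_{t})_{t\in[0,1]}$ from $\id$ to $f$ inside $\homeo^{+}(\bbR^{2})$; such an isotopy exists because $\homeo^{+}(\bbR^{2})$ is contractible by the Alexander trick (Proposition~\ref{p.alexander}). In general $(g_{t})$ does not preserve $P$, and the obstruction to modifying it into an isotopy that pointwise fixes $P$ is the pure braid $\beta=(t\mapsto g_{t}(x_{i}))_{i=1,\dots,n}$ in the pure braid group $P_{n}$ of the plane. By standard mapping class group theory, $f$ is isotopic to the identity relatively to $P$ if and only if $\beta$ is trivial in $P_{n}$. The goal is therefore to show that $\beta$ is trivial, exploiting the hypothesis that $\{x_{1},\dots,x_{n}\}$ is contained in the connected continuum $F$.

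The key input from connectedness would be a winding-number/homotopy argument. For a "generic" $y\in F$ (one whose trajectory $t\mapsto g_{t}(y)$ avoids the finite set $P$), the trajectory is a loop in $\bbR^{2}\setminus P$ whose homotopy class depends continuously on $y$. The exceptional set of bad $y$ is the union of continuous images of $[0,1]$, hence nowhere dense in $\bbR^{2}$; connectedness of $F$ (minus this exceptional set) then constrains the homotopy class to be the same for all generic $y\in F$. Taking limits as $y$ approaches each $x_{i}$ along $F$ identifies the common class with the successive strands of $\beta$, and the system of constraints so obtained forces $\beta$ to be trivial.

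The main obstacle I anticipate is twofold. First, the continuum $F$ may be highly non-locally-connected and non-arcwise-connected, so one cannot simply join points of $F$ by arcs inside $F$; the "generic $y$" argument must instead be combined with a small perturbation of $(g_{t})$ that puts the exceptional set in general position with respect to $F$, and with a propagation argument that uses only connectedness of $F$ and not any path-connectedness. Second, the pure braid group $P_{n}$ is non-abelian for $n\geq 3$, so vanishing of pairwise linking numbers is only necessary, not sufficient, for triviality of $\beta$; the argument must therefore be upgraded to a genuinely homotopical statement, most likely by an induction on $n$ in which, after successively modifying $(g_{t})$ so that it already preserves $\{x_{1},\dots,x_{n-1}\}$, the loop $t\mapsto g_{t}(x_{n})$ in $\bbR^{2}\setminus\{x_{1},\dots,x_{n-1}\}$ is shown to be null-homotopic using the same connectedness input applied to $F\setminus\{x_{1},\dots,x_{n-1}\}$.
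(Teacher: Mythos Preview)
Your overall plan coincides with the paper's: reduce to finite subsets via Theorem~\ref{theo.finitely-isotopic}, then prove by induction on $|F_{0}|$ that every finite $F_{0}\subset F$ is unlinked, the inductive step being to show that the trajectory of the new point $z_{0}$ under an isotopy already fixing $F_{0}':=F_{0}\setminus\{z_{0}\}$ is null-homotopic in $\bbR^{2}\setminus F_{0}'$ (Lemma~\ref{l.fibration} then completes the step). So the architecture is right.

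There are two concrete gaps in the execution. First, the ``generic $y$'' detour is unnecessary and technically shaky: nowhere-density in $\bbR^{2}$ says nothing about intersections with $F$, and removing even a meagre set from a continuum can disconnect it. In fact, once the isotopy has been modified to fix $F_{0}'$ pointwise, there is no exceptional set: for $z\neq x_{i}$ one has $f_{t}(z)\neq x_{i}$ because $f_{t}$ is a homeomorphism fixing $x_{i}$, so the trajectory of every $z\in F\setminus F_{0}'$ is already a loop in $\bbR^{2}\setminus F_{0}'$. Drop the genericity machinery.

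Second, and more importantly, the inductive step needs an actual mechanism, and ``connectedness of $F\setminus\{x_{1},\dots,x_{n-1}\}$'' is not it: that set is typically disconnected. The paper's mechanism is this. \emph{Before} the induction, introduce the pairwise linking number $\mathrm{link}(I,x,y)$ (the winding number of $t\mapsto f_{t}(x)-f_{t}(y)$). It is locally constant on the space $\cP(F)$ of unordered pairs of distinct points of $F$, and a short combinatorial lemma shows $\cP(F)$ is connected whenever $F$ is; hence $\mathrm{link}(I,\cdot,\cdot)$ is constant on $\cP(F)$, and composing $I$ with a loop of rotations normalizes it to $0$. This normalization persists through the inductive modifications, since Lemma~\ref{l.fibration} produces an isotopy homotopic to the previous one. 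Now in the inductive step: because $F$ itself is connected, the component $C_{0}$ of $F\setminus F_{0}'$ containing $z_{0}$ has some $z_{1}\in F_{0}'$ in its closure. The free homotopy class of the trajectory is constant along $C_{0}$; for $z\in C_{0}$ near $z_{1}$ the trajectory lies in a small disc $D$ about $z_{1}$ disjoint from the other punctures, and $\mathrm{link}(I,z,z_{1})=0$ forces it to be contractible in $D\setminus\{z_{1}\}\subset\bbR^{2}\setminus F_{0}'$. This ``slide to a boundary puncture and localize'' step is the missing idea in your outline; it is exactly what upgrades the abelian datum (pairwise linking) to the non-abelian conclusion (null-homotopy in a free group), and it uses the connectedness of $F$, not of $F\setminus F_{0}'$.
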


We conjecture that the space $\homeo^+(\bbR^2, F)$ is always contractible, as far as $F$ contains more than one point. 
The generalization of Corollary~\ref{coro.unlinkedness1} to arbitrary surfaces will be discussed in section~\ref{sec.unlinked-continua}. 

\subsection{Maximal unlinked  sets and transverse foliations}

We describe now the application of Theorem~\ref{theo.finitely-isotopic} that was our main motivation.

\begin{coro}\label{coro.maximal-unlinked-sets}
Let $f \in \homeo(S)$.  Assume $F$ is a closed unlinked set for $f$. Then there exists a closed unlinked set $F'$, containing $F$, and maximal for the inclusion among unlinked sets.
\end{coro}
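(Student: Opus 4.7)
The natural strategy is Zorn's lemma, with Theorem~\ref{theo.finitely-isotopic} providing exactly the tool needed to bound chains.

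Consider the family $\cF$ of closed unlinked sets for $f$ that contain $F$, partially ordered by inclusion. This family is non-empty since $F \in \cF$. Let $\cC \subset \cF$ be a totally ordered chain (which we may assume non-empty, otherwise $F$ itself is an upper bound). Define
$$
F_* = \overline{\bigcup_{G \in \cC} G}.
$$
I propose to show $F_* \in \cF$; by Zorn's lemma this immediately yields a maximal element containing $F$.

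The verification is routine except for unlinkedness. The set $F_*$ is closed by construction and contains $F$. Moreover, every point of $\bigcup_{G \in \cC} G$ is a fixed point of $f$, so by continuity of $f$ every point of the closure $F_*$ is fixed as well, i.e.\ $f \in \homeo(S, F_*)$.

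For unlinkedness, the key step is to apply Theorem~\ref{theo.finitely-isotopic} with the dense subset $F_0 = \bigcup_{G \in \cC} G \subset F_*$. Given a finite subset $\{x_1, \dots, x_n\} \subset F_0$, each $x_i$ belongs to some $G_i \in \cC$. Since $\cC$ is totally ordered, the finite sub-chain $G_1, \dots, G_n$ has a maximum, say $G_k$, which contains all the $x_i$. Because $G_k$ is unlinked for $f$, any isotopy from the identity to $f$ fixing every point of $G_k$ also fixes $\{x_1, \dots, x_n\}$, so this finite set is unlinked. Theorem~\ref{theo.finitely-isotopic} then delivers the unlinkedness of $F_*$.

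The main (and essentially only) obstacle is verifying that the chain argument is compatible with the unlinkedness hypothesis; this is precisely what Theorem~\ref{theo.finitely-isotopic} is designed to handle, by reducing the question to finite subsets of a dense set — and the fact that $\cC$ is a chain ensures any finite subset of the union already lies in a single unlinked member, which is the trivial direction.
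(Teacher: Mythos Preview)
Your proof is correct and follows essentially the same approach as the paper's own proof: apply Zorn's lemma, bound a chain by the closure of its union, and invoke Theorem~\ref{theo.finitely-isotopic} using that any finite subset of the union lies in a single member of the chain (hence is unlinked as a subset of an unlinked set). The only additional detail you spell out is that $F_*$ consists of fixed points, which the paper leaves implicit.
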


The maximality amounts to saying that for every isotopy $(f_{t})$ from the identity to $f$, relative to $F'$, and for every fixed point $x_{0}$ of $f$ which is not in  $F'$, the loop $t \mapsto f_{t}(x_{0})$ is not contractible in $S \setminus F'$ (see~Lemma~\ref{l.fibration} below).

\begin{proof}[Proof of Corollary~\ref{coro.maximal-unlinked-sets}]
In order to apply Zorn's lemma, we consider a totally ordered family of closed unlinked sets $(F_{i})_{i \in I}$ containing $F$. Let  $F_{0}$ be the union of the $F_{i}$'s, and $F'$ be the closure of $F_{0}$. A subset of an unlinked set is clearly unlinked, thus every finite subset of $F_{0}$, being included in some $F_{i}$, is unlinked. Theorem~\ref{theo.finitely-isotopic} entails that $F'$ is unlinked. Thus every totally ordered family of closed unlinked sets is included in a closed unlinked set: we may apply Zorn's lemma, and we get some closed maximally unlinked set $F'$ containing $F$, as required by the corollary.
\end{proof}

To explain the interest of the maximal unlinked sets provided by Corollary~\ref{coro.maximal-unlinked-sets}, let us recall Jaulent's and Le Calvez's results (\cite{jaulent,lecalvez05}). Let $F$ be a maximally unlinked set for $f$, and $(f_{t})$ be an isotopy from the identity to $f$ that fixes every point of $F$. In this context, Le Calvez has proved that there exists an oriented foliation $\cF$ on $S \setminus F$ which is \emph{transverse to the isotopy $(f_{t})$}: this means that the trajectory of every point $x$ outside $F$ is homotopic in $S \setminus F$, relative to its endpoints, to a curve which crosses every oriented leaf of the foliation from left to right. Le Calvez's theorem is a extremely powerful tool to study surface dynamics (see for example the introduction of~\cite{lecalvez-tal} and the references within). The general existence of maximally unlinked sets is a crucial auxiliary tool for Le Calvez's theorem: it extends its range of action from some specific cases, when the existence of maximally unlinked sets is obvious (for instance when the fixed point set is assumed to be finite, as in Le Calvez's proof of Arnol'd's conjecture in~\cite{lecalvez05}) or easy (for instance for diffeomorphisms, see the appendix of~\cite{humiliere}), to the case of every homeomorphism which is isotopic to the identity.

We have to point out that  Jaulent has already provided an auxiliary tool which is enough for most applications of Le Calvez's theorem, if less satisfactory (\cite{jaulent}).
Namely, he has proved the existence of ``weakly maximally unlinked sets'' for $f$, which are closed subsets $F$ with the following property:
on $S \setminus F$, there is an isotopy from the identity to the restriction of $f$, and no trajectory for this isotopy is a contractible loop. Corollary~\ref{coro.maximal-unlinked-sets} above is a strengthening of Jaulent's result, since we get an isotopy which extends continuously to the identity on $F$. It answers Question 0.2 of Jaulent's paper.
This stronger version is needed for example in the work of Yan on torsion-low isotopie (\cite{yan}).
Another motivation for the present work was to clarify the links between various natural notions of ``unlinkedness''; we will discuss this in the next subsection.

\bigskip

We end the discussion of maximal unlinked sets with a version of Corollary~\ref{coro.maximal-unlinked-sets} that takes into account a given isotopy. Let $f \in \homeo(S)$.  We consider couples $(F,(f_{t}))$ where $F$ is a closed set of fixed points of $f$, and $(f_{t})$ is an isotopy from the identity to $f$ relatively to $F$. The set $\cS$ of such couples is endowed with the order relation defined by $(F,(f_{t})) \leq (F', (f'_{t}))$ when $F \subset F'$ and for every point $z \in S \setminus F$, the trajectories of $z$ under both isotopies $(f_{t})$ and $(f'_{t})$ are homotopic in $S \setminus F$. Note that it suffices to check the compatibility between $(f_{t})$ and $(f'_{t})$ on one point in each connected component of $S\setminus F$ (for example by requiring that the trajectory under $(f_{t})$ of some point of $F' \setminus F$ is a contractible loop in $S \setminus F$). Actually, in most cases, the compatibility between isotopies is automatic: for instance,  for every couples $(F,(f_{t}))$ and $(F',(f'_{t}))$ of $\cS$ such that $F$ has at least three points, if $F$ is included in $F'$ then $(F,(f_{t})) \leq (F', (f'_{t}))$. This ``uniqueness of homotopies'' will be a key ingredient in the construction of isotopies. It also allows the following improvement on Corollary~\ref{coro.maximal-unlinked-sets}  (see the end of section~\ref{ss.uniqueness} for the proof).

\begin{coro}\label{coro.maximal-isotopies}
For every  $(F,(f_{t})) \in \cS$ there exists a maximal element $(F', (f'_{t})) \in \cS$ such that 
$(F,(f_{t})) \leq (F', (f'_{t}))$.
\end{coro}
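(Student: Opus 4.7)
The plan is to apply Zorn's lemma to $(\cS, \leq)$, following the schema of the proof of Corollary~\ref{coro.maximal-unlinked-sets} but tracking the isotopies. Given a totally ordered chain $\cC = ((F_i, (f^i_t)))_{i \in I}$ in $\cS$ containing $(F, (f_t))$, the task is to exhibit an upper bound in $\cS$; the construction splits according to the sizes of the $F_i$'s.

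Suppose first that some $F_{i_0}$ appearing in $\cC$ has at least three points. Set $F_\infty = \overline{\bigcup_{i \in I} F_i}$. By total ordering, every finite subset of $\bigcup_i F_i$ sits inside some $F_i$ and is therefore unlinked for $f$, so Theorem~\ref{theo.finitely-isotopic} produces an isotopy $(f^\infty_t)$ from $\id$ to $f$ relative to $F_\infty$, that is, an element $(F_\infty, (f^\infty_t)) \in \cS$. For each $i \geq i_0$, the set $F_i$ has at least three points and is contained in $F_\infty$, so the uniqueness-of-homotopies principle recalled in the excerpt (and proved in Section~\ref{ss.uniqueness}) forces $(F_i, (f^i_t)) \leq (F_\infty, (f^\infty_t))$ automatically. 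For $i < i_0$, the inequality follows by transitivity of $\leq$ through $(F_{i_0}, (f^{i_0}_t))$. Thus $(F_\infty, (f^\infty_t))$ is an upper bound of $\cC$.

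In the remaining case, every $F_i$ in $\cC$ has at most two points. Since the $F_i$'s are totally ordered by inclusion and each has cardinality $\leq 2$, there are at most three distinct values of $F_i$, and the largest, $F_{i^*} = \bigcup_i F_i$, is realized by some element of $\cC$. I claim $(F_{i^*}, (f^{i^*}_t))$ is an upper bound. Indeed, for any $(F_i, (f^i_t)) \in \cC$ the chain order compares it with $(F_{i^*}, (f^{i^*}_t))$; the only way $(F_{i^*}, (f^{i^*}_t)) \leq (F_i, (f^i_t))$ could hold is if $F_i = F_{i^*}$, in which case compatibility of trajectories relative to the common set $F_{i^*}$ is symmetric and $(F_i, (f^i_t)) \leq (F_{i^*}, (f^{i^*}_t))$ holds as well. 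Applying Zorn's lemma then yields the desired maximal element $(F', (f'_t)) \geq (F, (f_t))$.

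The main obstacle is precisely the uniqueness-of-homotopies step in the first case: one must ensure that the isotopy $(f^\infty_t)$ coming from Theorem~\ref{theo.finitely-isotopic} is compatible, along the trajectory of every point, with each given chain isotopy $(f^i_t)$ relative to its own fixed set $F_i$. This is automatic only when $F_i$ has at least three points, which is why chains of very small fixed sets have to be treated separately, using the structural finiteness of such chains instead of the abstract existence provided by Theorem~\ref{theo.finitely-isotopic}.
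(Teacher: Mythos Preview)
Your argument is correct, but it is organized differently from the paper's own proof. The paper does \emph{not} re-run Zorn's lemma on $\cS$; instead it invokes Corollary~\ref{coro.maximal-unlinked-sets} once to obtain a maximal unlinked closed set $F'\supset F$, picks any isotopy $I'$ relative to $F'$ (so $(F',I')$ is automatically maximal in $\cS$), and then only has to check the single inequality $(F,(f_t))\leq(F',I')$. That last check is done via Corollary~\ref{coro.uniqueness-homotopy}, phrased in terms of $S\setminus F$ avoiding the five exceptional surfaces rather than in terms of $|F|\geq 3$; the exceptional cases are then disposed of by first enlarging $F$ to some $F_1$ with $(F,(f_t))<(F_1,I_1)$ and observing that $S\setminus F_1$ is no longer exceptional. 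Your route---Zorn directly on $\cS$, Theorem~\ref{theo.finitely-isotopic} to manufacture the upper-bound isotopy, and the ``three points'' uniqueness remark to force compatibility along the whole chain---works, but trades one application of an already-proved corollary for a from-scratch chain argument. The paper's version is shorter and makes the dependence on Corollary~\ref{coro.maximal-unlinked-sets} explicit; yours has the virtue of being self-contained once Theorem~\ref{theo.finitely-isotopic} is in hand. Note that the ``$|F|\geq 3$ implies automatic compatibility'' statement you invoke is asserted in the introduction but the paper's own proof of the corollary prefers the surface-type hypothesis of Corollary~\ref{coro.uniqueness-homotopy}; this is harmless for your purposes since you are citing the paper's claim, but it is worth being aware that the two formulations are not literally identical.
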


\subsection{Fifty ways to be unlinked}

Consider as before a homeomorphism $f$ on a connected surface $S$, and some closed set $F$ of fixed points of $f$. If $F$ is unlinked for $f$, then $h = f_{\mid S \setminus F}$ is isotopic to the identity as a homeomorphism of $S \setminus F$. The converse is not true, as shown by the example of a Dehn twist in the closed annulus with $F$ equal to the boundary. We will say that $F$ is \emph{weakly unlinked} if $h$ is isotopic to the identity on $S \setminus F$.  Both weak and strong unlinkedness turn out to have many equivalent formulations. Our main task in this text will be to prove the most difficult implications between these formulations. We begin by defining the various formulations of weak unlinkedness in paragraph (a), and of strong unlinkedness in paragraph (b) below. In paragraph (a) we will think of $M$ as $S \setminus F$.

\paragraph{(a) Homotopies and isotopies on the complement of $F$}
\label{subsection.weakly-unlinked}
Let  $M$ be a connected surface without boundary, and $h$ be a homeomorphism of $M$. Remember that $h$ is said to be \emph{homotopic to the identity} if there exists a \emph{homotopy}, \emph{i. e.} a continuous map $H : M \times [0,1] \to M$, such that $H(.,0) = \mathrm{Id}$ and $H(.,1)=h$. If the map $H$ can be chosen to be proper (inverse images of compact subsets are compact), then $h$ is said to be \emph{properly homotopic to the identity}. 
A loop is a continuous map from the circle $\bbS^1$ to $M$; two loops $\alpha,\beta$ are said to be \emph{freely homotopic} if there exists a map $H : \bbS^1 \times [0,1] \to M$ such that $H(.,0)=\alpha$ and $H(.,1) = \beta$.
We consider the following properties on $h$.

\begin{enumerate}
\item[(W1)] \label{weakly-unlinked.isotopic} \emph{The map $h$ is isotopic to the identity.}
\item[(W2)] \label{proper.homotopic}
\emph{The map $h$ is properly homotopic to the identity.}
\item[(W3)] \label{weakly-unlinked.homotopic} \emph{The map $h$ is homotopic to the identity.} 
\item[(W4)] \label{weakly-unlinked.pi1} 
\emph{The map $h$ lifts to a homeomorphism $\tilde h$ of the universal cover of $M$  that commutes with the automorphisms of the universal cover. Equivalently, $h$ acts trivially on $\pi_{1}(M)$.\footnote{More formally, the outer automorphism of $\pi_{1}(M,*)$ induced by $h$ is trivial, see subsection~\ref{ss.w5w4}.}}
\item[(W5)] \label{weakly-unlinked.curves} \emph{Every loop is freely homotopic to its image under $h$}.
\end{enumerate}

\begin{theo}[mostly due to D. Epstein]
\label{theo.equivalences-weak}~
 Let $M$ be a connected surface without boundary, and $h$ be a homeomorphism of $M$.
 Then properties (W1), (W2), (W3), (W4), (W5) are equivalent, with the following exceptions  (see Remark~\ref{r.exceptions}):
\begin{itemize}
\item  (W3) does not implies (W2) when $M$ is the plane or the open annulus.
\item  (W4) does not implies (W3) when $M$ is the sphere.
\end{itemize}
\end{theo}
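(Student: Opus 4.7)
The plan is to close the cycle
$$(W1)\Rightarrow(W2)\Rightarrow(W3)\Rightarrow(W4)\Leftrightarrow(W5)\Rightarrow(W3)\Rightarrow(W1),$$
where the deep step is Epstein's homotopy-implies-isotopy theorem for surface homeomorphisms and the remaining arrows are standard.

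I would first dispatch the elementary arrows. For $(W1)\Rightarrow(W2)$, an isotopy $(f_t)$ defines a continuous map $H(x,t)=f_t(x)$; properness holds because $t\mapsto f_t^{-1}$ is also continuous for the compact-open topology (using local compactness of $M$), so for $K\subset M$ compact the preimage $H^{-1}(K)$ is the continuous image of the compact set $[0,1]\times K$ under $(t,y)\mapsto(f_t^{-1}(y),t)$. The step $(W2)\Rightarrow(W3)$ is trivial. For $(W3)\Rightarrow(W4)$, a homotopy $H$ together with the path $\gamma(t)=H(x_0,t)$ from $x_0$ to $h(x_0)$ gives the standard identification of $h_*$ with conjugation by $\gamma$ on $\pi_1$, so $h$ acts trivially as an outer automorphism. $(W4)\Rightarrow(W5)$ is immediate since free-homotopy classes of loops in $M$ correspond to conjugacy classes in $\pi_1(M)$; the converse $(W5)\Rightarrow(W4)$ uses the fact that an automorphism of a surface group preserving every conjugacy class must be inner.

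The non-trivial closing arrow within this cycle is $(W4)\Rightarrow(W3)$. For every surface $M$ other than the sphere, the universal cover $\widetilde M$ is $\bbR^2$ or the hyperbolic plane, hence contractible; therefore $M$ is a $K(\pi_1 M,1)$ and, by obstruction theory, two continuous self-maps of $M$ are homotopic iff they induce the same outer automorphism on $\pi_1(M)$. This closes $(W3)\Leftrightarrow(W4)\Leftrightarrow(W5)$ off the sphere. When $M=\bbS^2$, condition $(W4)$ is vacuous while an orientation-reversing homeomorphism is not homotopic to the identity (degree being a homotopy invariant), yielding the first listed exception.

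The deep step $(W3)\Rightarrow(W1)$ is Epstein's theorem: two homotopic self-homeomorphisms of a surface are isotopic. Combined with the easy chain $(W1)\Rightarrow(W2)\Rightarrow(W3)$ it produces $(W1)\Leftrightarrow(W2)\Leftrightarrow(W3)$ in general. The second exception concerns $(W3)\Rightarrow(W2)$ only: a reflection of $\bbR^2$, or the end-swap $(x,r)\mapsto(x,-r)$ of the open annulus $\bbS^1\times\bbR$, is homotopic to the identity through a non-proper homotopy, but any \emph{proper} homotopy from such $h$ to $\mathrm{Id}$ would have to preserve the degree at infinity or the induced permutation of ends --- which it does not --- so $(W3)$ holds while $(W2)$ (and hence $(W1)$) fails. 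The main obstacle of the theorem is Epstein's result; the rest is classical algebraic topology.
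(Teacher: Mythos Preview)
Your argument has a genuine gap at the step you call ``Epstein's theorem''. Epstein's result is that \emph{properly} homotopic surface homeomorphisms are isotopic, i.e.\ $(W2)\Rightarrow(W1)$, not $(W3)\Rightarrow(W1)$. Your version cannot be right as stated: the reflection of $\bbR^2$ and the end-swap of the open annulus are homotopic to the identity (so $(W3)$ holds) but are not isotopic to it (so $(W1)$ fails). You exhibit precisely these examples two sentences later, which makes the argument internally inconsistent: you assert ``$(W1)\Leftrightarrow(W2)\Leftrightarrow(W3)$ in general'' via $(W3)\Rightarrow(W1)$, and then immediately produce counterexamples to $(W3)\Rightarrow(W1)$. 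Relatedly, your scheme never establishes $(W2)\Rightarrow(W1)$ on the plane and the annulus, which the theorem does claim.

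What is actually missing is an independent proof of $(W3)\Rightarrow(W2)$ off the plane and the open annulus: one must upgrade an arbitrary homotopy to a \emph{proper} one. This is not automatic and is exactly where the listed exceptions arise. The paper carries out this step separately (Proposition~\ref{p.homotopic-implies-proper}), first showing that a homotopy from the identity to $h$ forces $h$ to fix each end and to locally preserve orientation near every isolated planar end, and then building a proper homotopy via a continuous-selection argument (or, alternatively, hyperbolic geometry). Only after that does Epstein's $(W2)\Rightarrow(W1)$ close the cycle. A smaller gap: your $K(\pi,1)$ argument for $(W4)\Rightarrow(W3)$ does not cover the projective plane, whose universal cover is $\bbS^2$; that case needs a separate (easy) treatment.
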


The implication (W5) $\Longrightarrow$ (W4) is purely algebraic, and due to Grossman and Allenby, Kim, and  Tang (\cite{grossman,allenby}). The implication  (W4) $\Longrightarrow$ (W3) is standard in homotopy theory,  see for example \cite[Proposition 1.B.9]{hatcher}. The most difficult part of the Theorem, namely that property (W2) implies property (W1), is proved by Epstein in his classical work~\cite{epstein}. We will provide a complete proof (spread in Propositions~\ref{p.W5-W4}, \ref{p.w4Tow3}, \ref{p.homotopic-implies-proper}, \ref{p.projective-plane} and Section~\ref{ss.empty-F}).

\medskip

\begin{rema}\label{r.exceptions}
The exceptional cases may be further analyzed by considering the ends of $M$ and the orientation (see sections~\ref{ss.ends} and~\ref{ss.orientation}).
Propositions~\ref{p.w4Tow3} and \ref{p.sphere} will imply:
\begin{itemize}
\item \emph{When $M$ is the plane or the sphere, any orientation-preserving homeomorphism of $M$ is properly homotopic to the identity (hence, (W4), (W3) and (W2) are equivalent  when one restricts to orientation-preserving homeomorphisms).}
\item \emph{When $M$ is the open annulus, (W3) implies (W2) if one restricts to homeomorphisms fixing each of the two ends of $M$.}
\end{itemize}
An orientation-reversing  homeomorphism of the plane is homotopic to the identity but is not properly homotopic to the identity. Similarly, a homeomorphism of the open annulus $\mathbb{A}=\mathbb{S}^1\times\mathbb{R}$ which interchanges the two ends $\mathbb{A}$ is homotopic, but not properly homotopic, to the identity. Finally, an orientation-reversing homeomorphism of the sphere is not homotopic to the identity, although it acts trivially on the (trivial!) fundamental group. 
\end{rema}

\paragraph{(b) Homotopies and isotopies relative to $F$}
\label{subsection.strongly-unlinked}

Let $S$ be a surface with a homeomorphism $f$, and  $F$ be a closed set of fixed points of $f$.
We will say that $f$ is \emph{strongly homotopic to the identity relatively to $F$} 
if there exists a \emph{stron homotopy relative to $F$}, \emph{i. e.} a map
$H : S \times [0,1] \to S$ such that 
\begin{itemize}
\item $H(.,0) = \mathrm{Id}$, $H(.,1) = f$,
\item  $H(x,t) \not \in F$ for every $x \not \in F$ and every $t$,
\item and $H(x,t) = x$ for every $x \in F$ and every $t$.
\end{itemize}
In other words, 
 the restriction $f_{\mid S \setminus F} : S \setminus F \to S \setminus F$ is homotopic to the identity, and the homotopy extends continuously to $F$ as a map that fixes every point of $F$. 
We consider the following properties on $f$ and $F$.

\begin{enumerate}
\item[(S1)] \label{strongly-unlinked.isotopic} \emph{$f$ is isotopic to the identity relatively to $F$.}
\item[(S2)] \label{strongly-unlinked.homotopic} \emph{$f$ is strongly homotopic to the identity relatively to $F$.}
\item[(S3)] \textbf{(the ``finitely homotopic'' criterion)} \label{strongly-unlinked.finitely-homotopic} \emph{There exists a dense subset $F_0$ of $F$ such that for every finite subset $F'$ of $F_0$, $f$ is  strongly  homotopic to the identity relatively to $F'$.}
\end{enumerate}

 Note that, clearly, Property (S1) implies Property (S2), and Property (S2) implies Property (S3). The main task of the paper is to prove the converse implications. The following statement clearly implies Theorem~\ref{theo.finitely-isotopic}.

\bigskip

\begin{theo}\label{theo.equivalences-strong}
Let $S$ be a connected surface without boundary, $F$ a closed non-empty subset of $S$, and $f \in \homeo(S,F)$. Then Properties (S1), (S2), (S3) are equivalent.
\end{theo}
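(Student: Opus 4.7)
The implications $(S1) \Rightarrow (S2) \Rightarrow (S3)$ are immediate from the definitions: for $(S2) \Rightarrow (S3)$, take $F_{0} = F$ and restrict the strong homotopy to any finite subset. Thus the theorem reduces to $(S3) \Rightarrow (S1)$, which is exactly Theorem~\ref{theo.finitely-isotopic}. The plan is to prove this by an inductive construction combined with a limiting argument in the space of isotopies.

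The starting point is to enumerate a countable dense sequence $\{x_{n}\}_{n\ge 1}$ in $F_{0}$ and set $F_{n} = \{x_{1},\ldots,x_{n}\}$. For each $n$, property $(S3)$ applied to $F_{n}$ yields a strong homotopy from the identity to $f$ relative to $F_{n}$. Since $F_{n}$ is finite, I would upgrade this strong homotopy to an actual isotopy $(f_{t}^{(n)})_{t\in[0,1]}$ relative to $F_{n}$: restricting to $S\setminus F_{n}$ makes $f_{|S\setminus F_{n}}$ homotopic to the identity of the punctured surface, hence isotopic by Theorem~\ref{theo.equivalences-weak} (the exceptional cases --- plane, sphere, open annulus --- cause no problem because $f$ preserves orientation and the presence of fixed points in $F_{n}$ rules out the end-swapping case on the annulus); then one extends the isotopy of $S\setminus F_{n}$ continuously to $F_{n}$ by the identity, which is a local matter near each $x_{i}$ and can be handled by the Alexander trick in a small disk.

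The heart of the argument is to modify the $(f_{t}^{(n)})$'s inductively so that the sequence becomes Cauchy in the compact-open topology on continuous paths in $\homeo(S)$. Using the \emph{uniqueness of homotopies} alluded to in the paragraph preceding Corollary~\ref{coro.maximal-isotopies} --- any two isotopies rel $F_{n}$ from $\mathrm{Id}$ to $f$ are homotopic as such as soon as $F_{n}$ has at least three points --- one selects, at each inductive step, an isotopy $(f_{t}^{(n+1)})$ rel $F_{n+1}$ that agrees with $(f_{t}^{(n)})$ outside a small open neighborhood $U_{n}$ of $x_{n+1}$, chosen so that $\overline{U_{n}}\cap F_{n}=\emptyset$ and $\mathrm{diam}(U_{n})\to 0$. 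The required local adjustment is again an Alexander-trick construction in a disk, applied to the comparison of the two isotopies on $U_{n}$ using the fact that they have homotopic trajectories at $x_{n+1}$.

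The sequence then converges to a continuous path $(f_{t}^{(\infty)})$ of self-maps of $S$ with $f_{0}^{(\infty)}=\mathrm{Id}$ and $f_{1}^{(\infty)}=f$; the main obstacle, which I expect to be the most delicate part of the proof, is to verify that $(f_{t}^{(\infty)})$ is actually a path in $\homeo(S,F)$ --- that is, that each $f_{t}^{(\infty)}$ is a homeomorphism and fixes $F$ pointwise. For $x\in F_{0}$ the equality $f_{t}^{(\infty)}(x)=x$ is immediate since $x\in F_{n}$ for some $n$ and all later supports $U_{m}$ are disjoint from $x$ by construction; but for $x\in F\setminus F_{0}$ one must control how the successive adjustments accumulate near such a point. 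This forces a careful choice of the neighborhoods $U_{n}$ (beyond merely shrinking their diameters): one must ensure a kind of uniform smallness of $U_{n}$ in a neighborhood of each point of $F$, and it is here that the density of $F_{0}$ in the possibly intricate closed set $F$ is genuinely used to govern the limiting behavior.
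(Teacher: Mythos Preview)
Your inductive step contains a genuine gap. You claim that one can pass from an isotopy $(f_t^{(n)})$ rel~$F_n$ to an isotopy $(f_t^{(n+1)})$ rel~$F_{n+1}$ which agrees with the previous one outside a small neighbourhood $U_n$ of the single point $x_{n+1}$. But the trajectory $t\mapsto f_t^{(n)}(x_{n+1})$ is a loop in $S\setminus F_n$ which, although contractible there, has no reason to be small: it can wander through a large portion of the surface. Any modification that forces this trajectory to become constant (via Lemma~\ref{l.fibration} or an Alexander trick) must be supported on a neighbourhood of the \emph{entire trajectory}, not on a small disk around $x_{n+1}$. So you cannot choose $\mathrm{diam}(U_n)\to 0$, the sequence $(f_t^{(n)})$ is not Cauchy in any useful sense, and the convergence argument collapses. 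The difficulty is not just technical: if this could be done, one would also prove the false statement that for $F$ totally disconnected, weak unlinkedness of $S\setminus F$ automatically upgrades to an isotopy rel~$F$ by a simple limiting procedure, bypassing all the work of Sections~\ref{sec.arc-straightening}--\ref{sec.homo-implique-iso}.

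The paper's strategy is deliberately different and avoids any limit of isotopies over growing finite sets. It splits the implication into $(S3)\Rightarrow(S2)$ and $(S2)\Rightarrow(S1)$. The first step (Proposition~\ref{prop.finitely-homotopic}) builds only a \emph{strong homotopy} rel~$F$, using Michael-type selection to choose trajectories continuously; here one never needs the successive modifications to be localized, because one is selecting paths, not composing homeomorphisms. The second step (Proposition~\ref{p.strong-homotopy-implies-strong-isotopy}) converts this strong homotopy into an isotopy by the Arc Straightening Lemma: one brings back a sequence of arcs $\alpha_n$ one at a time, with supports controlled by the filling of $\alpha_n\cup f(\alpha_n)$ in $S\setminus F$, so that convergence to a homeomorphism is guaranteed because the pieces of $S\setminus\bigcup\alpha_n$ shrink. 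The control on supports comes from the geometry of the arcs, not from any smallness of trajectories of points of $F$, and this is exactly what your approach is missing.
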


\paragraph{(c) The totally discontinuous case}
In the case when $F$ is totally discontinuous, every isotopy on the complement of $F$ extends to an isotopy of $S$ fixing every point of $F$. Thus in this case weak and strong unlinkedness are equivalent, as expressed by the following proposition.
\begin{prop}
Let $S$ be a connected surface, $F$ a closed non-empty subset of $S$, and $f \in \homeo(S,F)$. Let  $M= S \setminus F$, and $h$ be the restriction of $f$ to $M$. 

Assume that  $F$ is totally discontinuous. Then property (S1) for  $f$ is equivalent to property (W1) for $h$.
\end{prop}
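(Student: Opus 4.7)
The direction (S1)$\Rightarrow$(W1) is immediate: restrict any isotopy of $S$ fixing $F$ to the open surface $M=S\setminus F$.

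For the converse, I apply Theorem~\ref{theo.finitely-isotopic} with $F_{0}=F$, which is tautologically dense in $F$; it then suffices to show that every finite subset $F'=\{x_{1},\ldots,x_{n}\}$ of $F$ is unlinked for $f$. My strategy is a two-step reduction: first I promote (W1) from $M$ to the larger surface $S\setminus F'$, then I invoke the finite case.

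\emph{Step 1 --- from $M$ to $S\setminus F'$.} By Theorem~\ref{theo.equivalences-weak}, (W1) on $M$ gives property (W5): every loop in $M$ is freely homotopic in $M$ to its image under $h$. Given a loop $\beta$ in $S\setminus F'$, the set $F\setminus F'$ is closed and totally discontinuous, hence zero-dimensional in the surface $S\setminus F'$; a standard general-position argument (pushing a one-dimensional object off a zero-dimensional closed set in a surface) provides a free homotopy, within $S\setminus F'$, from $\beta$ to a loop $\beta'\subset M$. Property (W5) for $h$ then gives $\beta'\sim h(\beta')$ in $M$; pushing the homotopy $\beta\sim\beta'$ forward by $f$ gives $f(\beta)\sim f(\beta')=h(\beta')$ in $S\setminus F'$. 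Concatenating, $\beta$ is freely homotopic to $f(\beta)$ in $S\setminus F'$, which is property (W5) for $f|_{S\setminus F'}$. Theorem~\ref{theo.equivalences-weak} returns (W1) for $f|_{S\setminus F'}$; its exceptional cases concern only (W3)$\not\Rightarrow$(W2) and (W4)$\not\Rightarrow$(W3), neither of which breaks the chain (W5)$\Rightarrow$(W1).

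\emph{Step 2 --- finite case.} It remains to produce an isotopy of $S$ from $\mathrm{Id}$ to $f$ fixing $F'$, given an isotopy $(\tilde h_{t})$ of $S\setminus F'$ from $\mathrm{Id}$ to $f|_{S\setminus F'}$. I pick pairwise disjoint closed disks $D_{i}$ around each $x_{i}$, small enough that $f(D_{i})$ is close to $D_{i}$ by continuity of $f$ at $x_{i}$. A preliminary isotopy, compactly supported in a neighborhood of $\bigcup_{i} D_{i}\setminus F'$, adjusts $(\tilde h_{t})$ so that each $D_{i}$ is invariant at every time. The restriction map $\homeo(D_{i},x_{i})\to\homeo(\partial D_{i})$ is a Serre fibration whose fiber $\homeo(D_{i},\partial D_{i}\cup\{x_{i}\})$ is contractible by the Alexander trick, so the path $t\mapsto\tilde h_{t}|_{\partial D_{i}}$ lifts to a path $t\mapsto\hat h_{t}$ in $\homeo(D_{i},x_{i})$ starting at $\mathrm{Id}$. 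Gluing $(\hat h_{t})$ inside each $D_{i}$ with $(\tilde h_{t})$ on the complement yields the required isotopy of $S$ fixing $F'$.

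The main obstacle lies in the preliminary adjustment of Step~2: making each $D_{i}$ invariant throughout the isotopy (not merely at $t=1$) amounts to deforming the path $t\mapsto\tilde h_{t}(\partial D_{i})$ of Jordan curves in $S\setminus F'$ to a constant path. This is routine but technical once the disks $D_{i}$ are chosen small enough. Step~1 is a clean application of zero-dimensionality, and the Alexander-trick lifting at the end of Step~2 is classical.
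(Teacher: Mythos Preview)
Your Step~1 contains a genuine error. You assert that the exceptional cases of Theorem~\ref{theo.equivalences-weak} ``concern only (W3)$\not\Rightarrow$(W2) and (W4)$\not\Rightarrow$(W3), neither of which breaks the chain (W5)$\Rightarrow$(W1)'' --- but the chain (W5)$\Rightarrow$(W1) is precisely (W5)$\Rightarrow$(W4)$\Rightarrow$(W3)$\Rightarrow$(W2)$\Rightarrow$(W1), so both exceptions sit squarely inside it. Concretely, if $S$ is the sphere and $|F'|\le 2$, or $S$ is the plane and $|F'|\le 1$, then $S\setminus F'$ is a sphere, plane, or open annulus and your deduction of (W1) from (W5) is invalid. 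The repair is easy (restrict to $|F'|\ge 3$, using that subsets of unlinked sets are unlinked; when $|F|\le 2$ take $F'=F$ so that Step~1 becomes vacuous), but as written the claim is false. Your Step~2 is also loose: the ``preliminary adjustment'' necessarily changes the time-$1$ map (since $f(D_i)\neq D_i$ in general), and you never reconnect the resulting isotopy to $f$; a further Alexander trick is required at the end.

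More to the point, the whole detour through Theorem~\ref{theo.finitely-isotopic} is unnecessary. The paper's argument is one line: an isotopy $(h_t)$ on $M=S\setminus F$ is automatically a \emph{proper} homotopy (the map $(x,t)\mapsto(h_t(x),t)$ is a homeomorphism of $M\times[0,1]$, and projection to $M$ is proper since $[0,1]$ is compact), and when $F$ is totally disconnected every point of $F$ is an end of $M$, so by the characterization in Remark~\ref{r.characterization-proper-homotopies} the isotopy extends continuously to $S$ fixing $F$ pointwise. Your Step~2 is essentially this extension argument done by hand for finite $F'$; once one sees why it works there, the same reasoning applies directly to the original totally disconnected $F$, rendering Step~1 and the appeal to Theorem~\ref{theo.finitely-isotopic} superfluous.
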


It is not true in general that property (W1) for $h$ implies property (S1) for $f$. The most elementary counterexample is given by a twist map on an annulus, with $F$ equal to the boundary of the annulus: the restriction of $f$ to the interior of the annulus is isotopic to the identity, but $f$ is not isotopic to the identity relative to its boundary. It is plausible that every counter-example is a variation on this one.

\paragraph{(d) The braid viewpoint}

Property (S3), which appears in Theorem~\ref{theo.finitely-isotopic}, reduces the problem of unlinkedness to the case of finite sets. Unlinkedness of finite sets can be characterized using braids, as follows (this will not be used anywhere else in the text). Let $F$ be a closed subset of a surface $S$. A \emph{geometric pure braid based on $F$} is a continuous map $b : F \times [0,1] \to S$ with $b(x,0)=b(x,1)=x$ for every $x$ in $F$, and such that $x \mapsto b(x,t)$ is injective for every $t$. A geometric pure braid $b$ \emph{represents the trivial braid} if there exists a continuous map $B : F \times [0,1] \times [0,1] \to S$ such that $B(.,.,0)= b$, $B(.,.,1)$ is the constant braid $(x,t) \mapsto x$, and $B(.,.,s)$ is a geometric pure braid for every $s$. Now consider a homeomorphism $f$ of $S$ that fixes every point of $F$. Assume $f$ is isotopic to the identity, and choose any isotopy $(f_{t})$ from the identity to $f$. This isotopy generates the geometric pure braid $b_{F,(f_{t})} : (x,t) \mapsto f_{t}(x)$. The following criterium is an easy consequence of the fact that for any given point $x_{0}$, the map that takes a homeomorphism $h$ to the image of $x_{0}$ under $h$ is a fiber bundle (see Lemma~\ref{l.fibration}).

\begin{prop}
Assume that $F$ is a finite set. If the geometric pure braid $b_{F,(f_{t})}$ represents the trivial braid, then $f$ is isotopic to the identity relatively to $F$.
\end{prop}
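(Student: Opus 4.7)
The plan is to rephrase the statement as a routine exercise in homotopy lifting along the evaluation fibration. Enumerate $F = \{x_{1},\dots,x_{n}\}$ and let $\cC_{n}(S)$ denote the configuration space of $n$-tuples of pairwise distinct points of $S$, with basepoint $\xi_{0} := (x_{1},\dots,x_{n})$. Let $\homeo_{0}(S)$ be the arcwise connected component of the identity in $\homeo(S)$, which contains $f$ by hypothesis. Consider the evaluation map
$$\Phi : \homeo_{0}(S) \longrightarrow \cC_{n}(S), \qquad h \longmapsto (h(x_{1}),\dots,h(x_{n})).$$
By Lemma~\ref{l.fibration} (applied inductively on the $n$ coordinates), $\Phi$ is a locally trivial fiber bundle onto its image, and its fiber over $\xi_{0}$ is $\homeo(S,F)\cap \homeo_{0}(S)$. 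An isotopy from $\id$ to $f$ relative to $F$ is exactly a path from $\id$ to $f$ inside this fiber.

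Next I translate the given data. The isotopy $(f_{t})$ is a path in $\homeo_{0}(S)$ from $\id$ to $f$, and its projection under $\Phi$ is precisely the braid $b_{F,(f_{t})}$ viewed as a loop $\gamma$ in $\cC_{n}(S)$ based at $\xi_{0}$. The triviality of this braid provides a continuous map $H : [0,1]^{2} \to \cC_{n}(S)$ satisfying $H(t,0) = \gamma(t)$, $H(t,1) \equiv \xi_{0}$, and $H(0,s) = H(1,s) \equiv \xi_{0}$; in other words, a null-homotopy of $\gamma$ rel basepoint.

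To conclude, I apply the relative homotopy lifting property of the fibration $\Phi$ for the pair $([0,1],\{0,1\})$. The initial lift $t\mapsto f_{t}$ of $H(\cdot,0)$, together with the constant lifts $s\mapsto \id$ of $H(0,\cdot)$ and $s\mapsto f$ of $H(1,\cdot)$, form a compatible family (they agree at the corners because $\Phi(\id) = \Phi(f) = \xi_{0}$), so they extend to a continuous lift $\tilde H : [0,1]^{2} \to \homeo_{0}(S)$ of $H$. The path $t\mapsto \tilde H(t,1)$ then runs from $\id$ to $f$ and takes values in the fiber over $\xi_{0}$; it is the desired isotopy relative to $F$, and Property (S1) holds.

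The only genuine subtlety is the bundle statement itself beyond the single-point case given by Lemma~\ref{l.fibration}, which is a standard consequence of parametric isotopy extension; once this is in hand, the relative homotopy lifting property is a standard fact about Hurewicz fibrations and the argument is entirely formal.
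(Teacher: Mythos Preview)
Your proof is correct and follows exactly the approach the paper indicates: the paper does not give a detailed argument but simply states that the result is ``an easy consequence'' of the evaluation map $h\mapsto h(x_0)$ being a fiber bundle (Lemma~\ref{l.fibration}), which is precisely the homotopy-lifting argument you spell out in full.
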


\subsection{Strategy for Theorem~\ref{theo.equivalences-strong}}\label{ss.strategy}

Our main task is to prove the non-trivial implications of Theorem~\ref{theo.equivalences-strong}, namely that ``finitely homotopic'' implies strongly homotopic, \emph{i. e.}  (S3) implies (S2), and that trongly homotopic implies isotopic, \emph{i. e.}  (S2) implies (S1).

To prove (S3) implies (S2), we need to build homotopies. In order to build a homotopy between $f$ and the identity, one chooses continuously for each point $z$ in the surface a path $\gamma_z$ between $z$ and $f(z)$. This is done in two steps: first identify the homotopy class $C_z$ of paths in $S \setminus F$ which will contain $\gamma_z$ and then select continuously the path $\gamma_z$ in the class $C_z$. The hypothesis (S3) provides an increasing family $(F_{n})_{n \geq 0}$ of finite subsets of $F$, whose union is dense in $F$, and for each $n$ a homotopy $I_{n}$ relative to $F_{n}$. For the first step, fix a point $z$ in the complement of $F$. Then we prove that up to homotopy, there exists a unique paths from $z$ to $f(z)$ in the complement of $F$ which for each $n$ is homotopic in the complement of $F_{n}$ to the trajectory of $z$ under the homotopy $I_{n}$ (Lemma~\ref{l.uniqueness-Cz}); for this the essential ingredient is the ``uniqueness of homotopies'' which is discussed in section~\ref{ss.uniqueness}.  For the second step, we apply a selection technique due to E. Michael, which provides a continuous selection for a multivalued map under some very general assumptions. 

\bigskip

To prove (S2) implies (S1), we need to build isotopies. More precisely, we have a non-empty closed set $F$ of fixed points of $f$ and we want to build an isotopy relative to $F$ between the identity and $f$, under the hypothesis that there is a strong homotopy. To explain the strategy, let us assume for simplicity that $S$ is the closed unit square, and $F$ is any closed subset of $S$. We take the equivalent viewpoint of constructing an isotopy, relative to $F$,  from the identity to the inverse of $f$. Then the strategy will go as follows (see Figure~\ref{f.strategy}). Take the middle vertical segment $\alpha_{1}$, and we find a first isotopy $I_{1}$, that does not move the points of $F$, from the identity to a homeomorphism $f_{1}$ that brings $\alpha_{1}$ back into place, namely such that  $f_{1} f $ fixes $\alpha_{1}$. Now take the middle horizontal segment $\alpha_{2}$ and find a second isotopy $I_{2}$ from $f_{1}$ to a homeomorphism $f_{2}$, that does not move the points of $ F_1:=F \cup \alpha_{1}$, and brings $\alpha_{2}$ back into place. Go on this way, bringing successively back into place  a sequence of segments $\alpha_{n}$ that cut the square into pieces with smaller and smaller diameters. The infinite concatenation of all these isotopies is easily seen to converge to the inverse of $f$, as wanted. 

\begin{figure}[ht]
\def\svgwidth{\textwidth}
\begin{center}
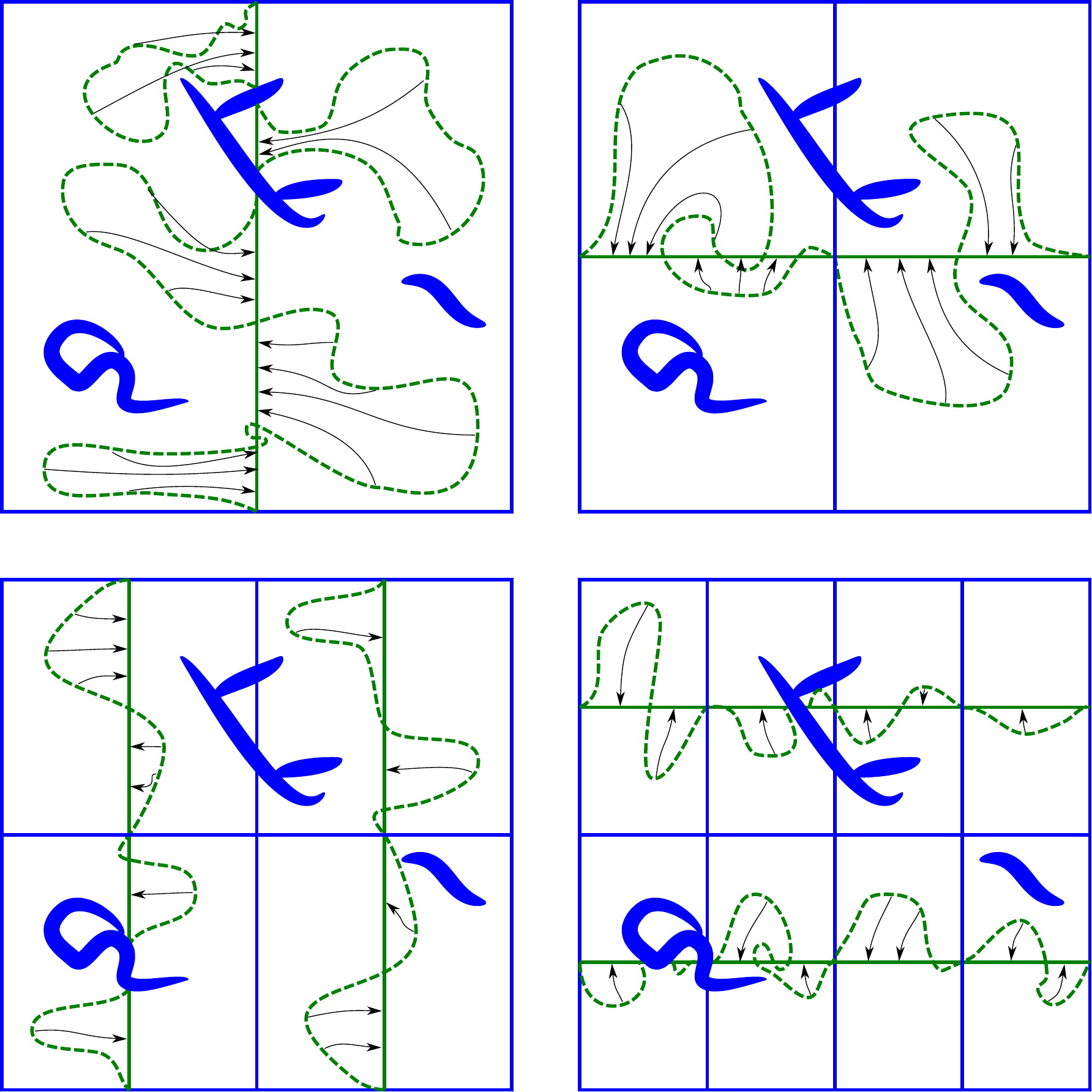
\caption{\label{f.strategy} Proof of (S2)$\Rightarrow$(S1): the first steps in the construction of the isotopy.}
\end{center}
\end{figure}

\bigskip

With this strategy in mind, it is clear that the key step in the above explanation is the existence of each isotopy $I_{n}$. More precisely, we need the two following properties:

\begin{itemize}
\item  Given that $f$ is strongly homotopic to the identity, and given any arc $\alpha$, there is an isotopy $(f_{t})$ relative to $F$  such that $f_{1} f(\alpha) = \alpha$. We call this the ``Arc Staightening Lemma'' (Proposition~\ref{l.multicoupure}). The proof of this lemma is probably the novelty of this paper in terms of techniques.
\item With the same notations, the map $f_{1} f$ is strongly homotopic to the identity relatively to $F \cup \alpha_{1}$. This will be the content of Lemma~\ref{l.still-homotopic}.
\end{itemize}

Finally, let us give a hint about the ``Arc Straightening Lemma''. Again the required isotopy will be obtained as an infinite concatenation of more elementary isotopies. As a first rough approximation, let us pretend that each elementary isotopy consists in bringing back a subarc of $\alpha$ which is (the closure of) a connected component $\beta$ of $\alpha \setminus F$. Before doing this there is  a preliminary technical step to get the additional property that $\beta$ coincides with $f(\beta)$ in a neighborhood of its endpoints, and is transverse everywhere else. With this additional property, the isotopy that sends $f(\beta)$ back to $\beta$ is constructed by  ``pushing bigons'' to successively remove all the transverse intersection points between $f(\beta)$ and $\beta$. This technique goes back at least to Epstein's paper. We have some constraints on  the support of the isotopy, in order to ensure the convergence of the infinite concatenation of our elementary isotopies (the precise statement is Proposition~\ref{p.straightening-arc-easy}). One key issue is to ensure the convergence of the infinite concatenation of all these elementary isotopies, each bringing back into place one connected component of $\alpha \setminus F$. If no precaution is taken, then the trajectory of a point outside $F$ could be pushed closer and closer to $F$ by each elementary isotopy, thus converging to a point in $F$, and then the concatenation would converge to a non-injective map. In the proof the elementary isotopies are concatenated in a carefully chosen order (using the above additional property), so that each point travels only under a finite number of elementary isotopies, thus circumventing this pitfall.

\subsection{Organization of the paper}

The diagram on Figure~\ref{fig.implications} shows where the various implications are proven.

\begin{figure}[p]
\def\longueur{5cm}
\def\intercol{3cm}
\def\interligne{2.5cm}
\def\hauteurTitre{0cm}
\vspace{-1.5cm}
\hspace{-4cm}
\begin{center}
\begin{tikzpicture}[>=latex]
\node[text width=\longueur,text badly centered] at(0,\hauteurTitre){\bf \Large Strong unlinkedness \normalsize \\ $f:S \to S$,  \ \  $F\subset \mathrm{Fix}(f)$};
\node[text width=\longueur,text badly centered] at(\longueur+\intercol,\hauteurTitre){\bf \Large Weak unlinkedness \normalsize \\ $M = S \setminus F, \ \ h = f_{\mid M}$};
\node[draw,text width=\longueur,text badly centered] (S1)  at(0,-\interligne){(S1)  $f$ is isotopic to the identity relatively to $F$};
\node[draw,text width=\longueur,text badly centered] (W1) at(\longueur+\intercol,-\interligne){(W1) $h$ is isotopic to the identity};
\node[draw,text width=\longueur,text badly centered] (S2) at(0,-3*\interligne){(S2) $f$ is strongly homotopic to the identity relatively to $F$ };
\node[draw,text width=\longueur,text badly centered] (W2)  at(\longueur+\intercol,-2*\interligne){(W2) $h$ is properly homotopic to the identity};
\node[draw,text width=\longueur,text badly centered] (S3)  at(0,-5*\interligne){(S3)  $f$ is strongly homotopic to the identity relatively to every finite subset of a dense subset};
\node[draw,text width=\longueur,text badly centered] (W3)  at(\longueur+\intercol,-3*\interligne){(W3) $h$ is homotopic to the identity};
\node[draw,text width=\longueur,text badly centered] (W4)  at(\longueur+\intercol,-4*\interligne){(W4) $h$ acts trivially on $\pi_{1}(M)$};
\node[draw,text width=\longueur,text badly centered] (W5)  at(\longueur+\intercol,-5*\interligne){(W5) every loop is freely homotopic to its image};

\tikzstyle{implique}=[thick,-implies,double equal sign distance,shorten >=0.05cm,shorten <=0.05cm]
\tikzstyle{impliqueTrivial}=[-implies,thin,double equal sign distance,shorten >=0.05cm,shorten <=0.05cm]
\draw[impliqueTrivial]  (S1) -- (W1) ; 
\draw[impliqueTrivial]  (S1.340) -- (S2.20) ; 
\draw[impliqueTrivial]  (S2.340) -- (S3.33) ; 
\draw[impliqueTrivial]  (W1.210) -- (W2.150) ;
\draw[impliqueTrivial]  (W2.210) -- (W3.150) ;
\draw[impliqueTrivial]  (W3.210) -- (W4.150) ;
\draw[impliqueTrivial]  (W4.210) -- (W5.150) ;
\draw[implique]  (S3) -- (S2) node[midway,left,text width=2cm] {\footnotesize selection of trajectories (sections 3,4)} ;
\draw[implique]  (S2) -- (S1) node[midway,left,text width=3cm] {\footnotesize arcs straightening (sections 5,6,7,8)} ;
\draw[implique]  (W5) -- (W4) node[midway,right,text width=4cm] {\footnotesize algebra of surface groups (section 2)} ;
\draw[implique]  (W4) -- (W3) node[midway,right,text width=4cm] {\footnotesize selection of trajectories (section 3)} ;
\draw[implique]  (W3) -- (W2) node[midway,right,text width=4cm] {\footnotesize selection of trajectories (section 3)} ;
\draw[implique]  (W2) -- (W1) node[midway,right,text width=4cm] {\footnotesize Epstein, arc straightening (section 8)} ;

\draw[thick,-implies,double equal sign distance,dashed,shorten >=0.2cm,shorten <=0.2cm] 
(W1) to[bend right=20] 
node[midway,above,text width=3cm] {\footnotesize when $F$ is  totally disconnected} (S1) ;
\end{tikzpicture}
\end{center}
\caption{Organization of the paper. The uncommented arrows denote the trivial implications}
\label{fig.implications}
\end{figure}
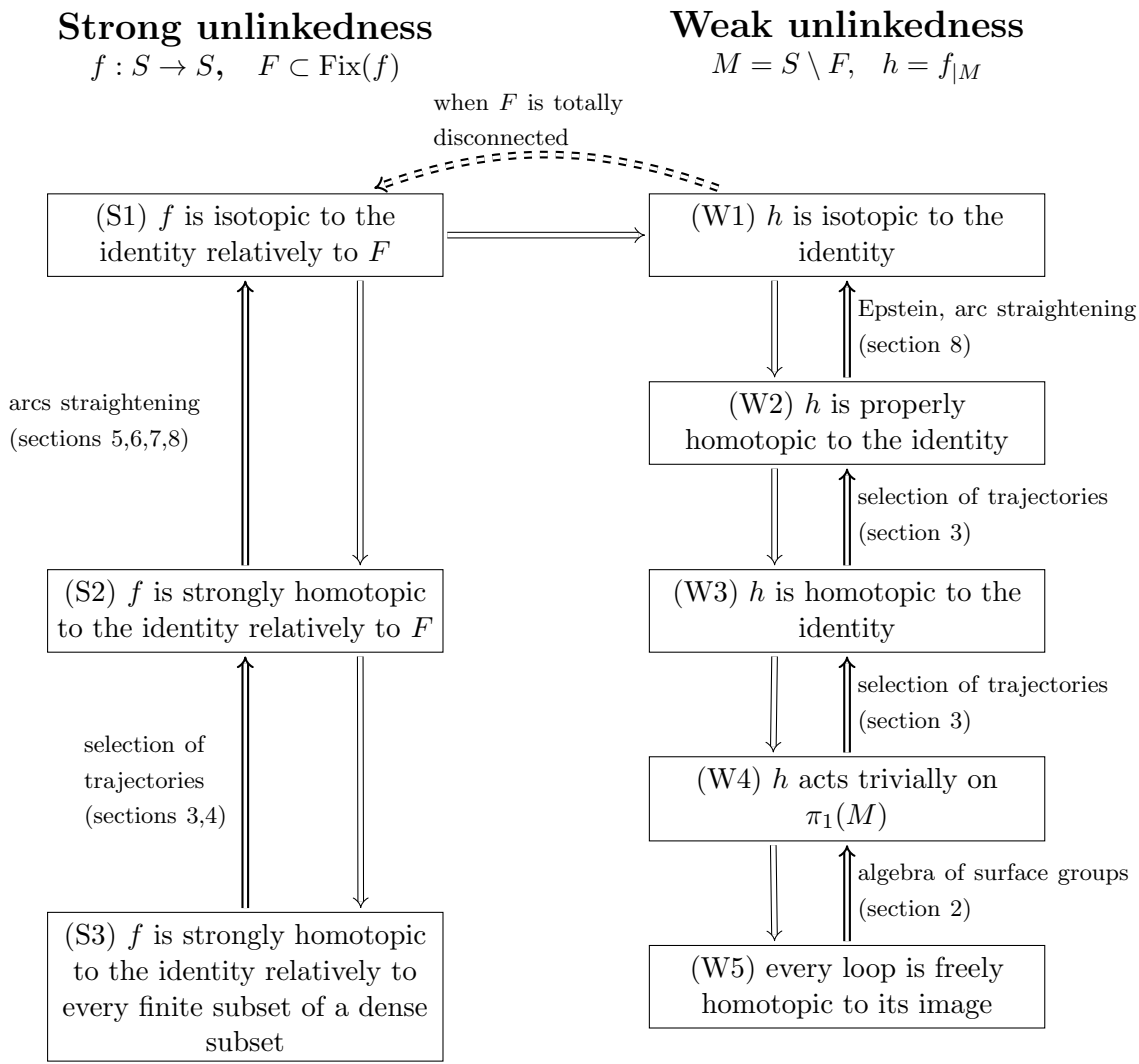

In section~\ref{s.algebra},  we collect the needed algebraic properties of fundamental groups of surfaces. In particular in~\ref{ss.w5w4} we recall the main arguments for the proof that (W5) implies (W4) (Proposition~\ref{p.W5-W4}). These algebraic properties are also used in~\ref{ss.uniqueness} to prove the key observation that homotopies are essentially unique.

\bigskip

Then we start building homotopies. 

\bigskip

In section~\ref{sec.selection}, we prove a general criterion to select a curve continuously in a given homotopy class. We first use this in~\ref{ss.constructions-homotopies} to construct a homotopy, proving that (W4) implies (W3) (Proposition~\ref{p.w4Tow3}). Then again in~\ref{ss.proper-homotopies} to transform a homotopy into a proper homotopy, proving that (W3) implies (W2)  (Proposition~\ref{p.homotopic-implies-proper}). Some alternative proofs using hyperbolic geometry are provided in~\ref{sub.hyperbolic-geometry}.

In section~\ref{sec.fini-homo-implique-homo}, assuming property (S3), we first build a homotopy on the complement of $F$. Then we modify this homotopy so that it extends to $F$, thus getting a strong homotopy and proving that (S3) implies (S2) (Proposition~\ref{prop.finitely-homotopic}). Here the selection criterion of the previous section is essential.

\bigskip

From then on, we undertake to build isotopies. 

\bigskip

In section~\ref{s.isotopy-arc}, we show how to ``push bigons'' in order to construct an isotopy relative to $F$, sending an arc $\beta$ to another arc $\beta'$ which is assumed to be homotopic to $\beta$, in the easy case where $\beta$ is essentially disjoint from $F$. In section~\ref{sec.redresser-les-courbes} we do the same in the case where $\beta$ is a simple closed curve.

In section~\ref{sec.arc-straightening} we prove the Arc Straightening Lemma (see subsection~\ref{ss.strategy} for a brief description of the content of this Lemma).

In section~\ref{sec.homo-implique-iso} we use the Arc Straightening Lemma to  build an isotopy from a homeomorphism to the identity, thus proving that  (S2) implies (S1) (Proposition~\ref{p.strong-homotopy-implies-strong-isotopy}). We also use it to give a proof that (W2) implies (W1) in~\ref{ss.empty-F}.

In section~\ref{sec.unlinked-continua} we prove Corollary~\ref{coro.unlinkedness1} concerning unlinked continua and its generalization to every surface.

\newpage
\part{Homotopies}

\section{Algebra of surface groups and applications}\label{s.algebra}
Throughout the paper, we consider curves as maps from an interval or from the circle to the surface. A (free) homotopy from a curve $\alpha: I \to S$ to a curve $\beta: I \to S$ is a continuous maps $H : I \times [0,1] \to S$ such that $H(.,0)=\alpha$ and $H(.,1)=\beta$.

\subsection{Surface groups}\label{ss.surface-groups}
We collect some useful facts about fundamental groups of (compact and non compact, orientable and non orientable) surfaces. Fundamental groups of all surfaces are computed for instance in ~\cite[section 4.2]{stillwell2012classical}, including  a proof that the fundamental group of a non compact surface is always a free group.

%

\begin{prop}\label{prop.algebra-fundamental-group}
Let $G$ be the fundamental group of some connected surface $S$ without boundary. Then
\begin{enumerate}
\item if $S$ is not the torus nor the Klein bottle, then every abelian subgroup of $G$ is cyclic.
\item if $S$ is not the projective space, the torus, the Klein bottle, the annulus or the M\"obius band, then the center of $G$ is trivial.
\end{enumerate}
\end{prop}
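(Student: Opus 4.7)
The plan is to proceed by the classification of connected surfaces without boundary, splitting into the noncompact and compact cases, and reducing each to standard facts about free groups or hyperbolic surface groups.

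First I would handle the noncompact case. By the fact recalled just before the statement, $\pi_{1}(S)$ is then a free group. By Nielsen--Schreier every subgroup of a free group is free, and the only abelian free groups are the trivial group and $\bbZ$; this gives (1) at once. For (2), a free group of rank $\geq 2$ has trivial center: any two free generators generate a free subgroup of rank two which itself has trivial center, so no nontrivial element commutes with both. Hence if the center of $\pi_{1}(S)$ is nontrivial, $\pi_{1}(S)$ must be trivial or isomorphic to $\bbZ$. In the latter case $S$ is homotopy equivalent to the circle, and the classification of noncompact surfaces without boundary forces $S$ to be the open annulus or the open Möbius band, both of which are excluded in (2).

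For the compact case, the classification leaves five possibilities: $\bbS^{2}$, $\bbR P^{2}$, the torus, the Klein bottle, and closed surfaces of negative Euler characteristic. The sphere has trivial fundamental group, and $\pi_{1}(\bbR P^{2})\cong\bbZ/2\bbZ$ is cyclic, so (1) is immediate for these; the torus and Klein bottle are excluded from both statements (and one checks by hand, using the orientable double cover, that the Klein bottle group does contain a copy of $\bbZ^{2}$, confirming the necessity of the exclusion). For a closed surface $S$ of negative Euler characteristic I would invoke the existence of a hyperbolic metric: $\pi_{1}(S)$ acts freely, properly discontinuously and cocompactly on $\bbH^{2}$ by isometries. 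Every nontrivial element then admits a unique invariant geodesic (its translation axis), and two such isometries commute if and only if they share this axis. The subgroup of $\pi_{1}(S)$ preserving a given axis is a discrete torsion-free subgroup of the isometry group of that axis, hence infinite cyclic; this yields (1). For (2), a nontrivial central element $g$ would have its axis preserved by every element of $\pi_{1}(S)$, which is incompatible with the cocompact action on $\bbH^{2}$.

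The proof is essentially bookkeeping across the classification; the only mildly delicate point is the nonorientable hyperbolic case, where one must allow the stabilizer of an axis to contain glide reflections. The argument still goes through because a torsion-free discrete subgroup of $\bbR\rtimes\bbZ/2\bbZ$ is automatically infinite cyclic, so neither conclusion is affected.
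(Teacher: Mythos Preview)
Your proof is correct, but it follows a genuinely different route from the paper's. You proceed by exhaustive case analysis across the classification of surfaces: in the noncompact case you exploit that $\pi_1(S)$ is free and use Nielsen--Schreier together with the standard facts that abelian free groups are cyclic and that free groups of rank $\geq 2$ have trivial center; in the closed hyperbolic case you pass through Uniformization and the geometry of translation axes in $\bbH^2$, correctly noting that the nonorientable case introduces glide reflections but that torsion-free discrete stabilizers of an axis remain infinite cyclic. The paper instead gives a single covering-space argument that treats both parts uniformly: given a non-cyclic abelian subgroup $H$ (for part~1) or a nontrivial center $Z$ (for part~2), it passes to the cover $S'$ of $S$ with fundamental group $H$ (resp.\ $Z$), identifies $S'$ among the few surfaces with abelian $\pi_1$, and then observes that $S$, being a quotient of $S'$, must lie in the short list of exceptions (using Euler characteristic for part~1). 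The paper's argument is shorter and avoids any appeal to hyperbolic geometry or to the fine structure of free groups; your approach is more hands-on and makes the geometric reason for centerlessness in the hyperbolic case transparent, at the cost of invoking Uniformization and a small amount of extra casework.
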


\begin{proof}
Assume $G$ contains some abelian subgroup $H$ of rank greater than 1. By the covering spaces classification theorem, there is a surface $S'$ which is a covering of $S$ and has fundamental group isomorphic to $H$. By the classification of fundamental groups of surfaces, $S'$ is a torus. In particular $S$ is a compact surface. Furthermore, since a covering maps multiplies the Euler characteristic by some integer, the Euler characteristic of $S$ is zero, hence $S$ is a torus or a Klein bottle.

Now assume the center of $G$, say $Z$, is non trivial. Let $S'$ be a covering of $S$ with fundamental group isomorphic to $Z$. Then $S'$ is a surface with non trivial abelian fundamental group, thus a torus, a projective space, an annulus or a M\"obius band. The surface $S$ is a quotient of one of these, thus it belongs to the list of the five exceptional cases of the second point of the proposition.
\end{proof}

An element $\alpha$ in a group $G$ is called \emph{primitive} if it has no root, \emph{i. e. } there is no element $\beta$ and integer $n\geq 2$ such that $\beta^n = \alpha$.

\begin{prop}\label{prop.algebra-fundamental-group2}
 If $\alpha$ is a primitive element in the fundamental group $G$ of a non compact surface, and if $\beta$ commutes with $\alpha$, then $\beta$ is a power of $\alpha$.
\end{prop}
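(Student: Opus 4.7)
The plan is to use the hypothesis on $S$ to pass to a purely group-theoretic statement about free groups, and then invoke the standard result on centralizers in free groups.

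First I would recall the fact cited just above the proposition (from Stillwell's book): the fundamental group $G$ of any non-compact connected surface without boundary is a free group. So the proposition reduces to the following purely algebraic statement: if $\alpha$ is a primitive element of a free group $F$ and $\beta$ commutes with $\alpha$, then $\beta \in \langle \alpha \rangle$.

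Next I would invoke the classical theorem that in a free group, two elements commute if and only if they are both powers of a common element (equivalently, the centralizer of any non-trivial element is infinite cyclic). One standard way to see this is via the Nielsen--Schreier theorem: the subgroup $\langle \alpha, \beta\rangle$ of $F$ is itself free, and being abelian (since $\beta$ commutes with $\alpha$) it must have rank at most $1$, hence is cyclic, say $\langle \alpha, \beta\rangle = \langle \gamma \rangle$. From this we get $\alpha = \gamma^m$ and $\beta = \gamma^n$ for some integers $m, n$.

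Finally, since $\alpha$ is primitive, the equality $\alpha = \gamma^m$ forces $|m|=1$, so $\gamma = \alpha^{\pm 1}$. Therefore $\beta = \gamma^n = \alpha^{\pm n}$ is a power of $\alpha$, as required.

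There is no real obstacle here; the only point that requires care is the justification that $\langle \alpha,\beta\rangle$ is abelian and free of rank $\leq 1$, which is immediate once one has recalled Nielsen--Schreier and noted that a free group of rank $\geq 2$ is non-abelian. The argument is completely standard and self-contained in a few lines.
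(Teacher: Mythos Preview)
Your proof is correct and follows essentially the same line as the paper's: show that the abelian subgroup $\langle \alpha,\beta\rangle$ is cyclic, write $\alpha=\gamma^m$, $\beta=\gamma^n$, and use primitivity of $\alpha$ to conclude. The only difference is in how you justify that $\langle\alpha,\beta\rangle$ is cyclic: the paper appeals to its preceding Proposition (every abelian subgroup of a surface group is cyclic unless the surface is a torus or Klein bottle, proved via covering-space theory), while you go directly through freeness of $G$ and Nielsen--Schreier. Your route is slightly more self-contained for the non-compact case; the paper's route reuses a result it already has in hand.
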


\begin{proof}
The subgroup generated by $\alpha$ and $\beta$ is abelian, hence it is a cyclic group according to the first point of the previous proposition. Thus $\alpha$ and $\beta$ are powers of a commun element, but since $\alpha$ is primitive, $\beta$ is a power of $\alpha$.
\end{proof}

The following proposition says that an essential simple closed curve on an orientable surface is always primitive in the fundamental group, and describes the only exceptions in the non-orientable case.

\begin{prop}\label{prop.primitive}
Let $\alpha$ be an essential simple closed curve on a surface $S$. Assume that there exists a closed curve  $\beta$ and an integer $n \geq 2$ such that $\beta^n$ is freely  homotopic to $\alpha$. Then $\alpha$ bounds a compact M\"obius strip in $S$, $n=2$, and $\beta$ is freely homotopic to the core of the M\"obius strip.
\end{prop}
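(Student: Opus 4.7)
My plan is to pass to the cyclic cover of $S$ corresponding to $\langle [\beta] \rangle \subset \pi_1(S)$ and invoke the classification of simple closed curves in surfaces with cyclic fundamental group.

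Choose basepoints so that the free homotopy yields $[\alpha]$ conjugate to $[\beta]^n$ in $\pi_1(S)$. Since $\alpha$ is essential, $[\beta]$ has infinite order in $\pi_1(S)$ --- the only surfaces with torsion in $\pi_1$ are the sphere and the projective plane, which can be inspected directly. Let $\pi : \tilde S \to S$ be the covering with $\pi_* \pi_1(\tilde S) = \langle [\beta] \rangle$. Then $\pi_1(\tilde S) \cong \mathbb{Z}$, and by the classification of connected surfaces without boundary having infinite cyclic fundamental group, $\tilde S$ is either an open annulus or an open Möbius band.

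Next, lift the cylinder realizing the free homotopy $\beta^n \simeq \alpha$ to $\tilde S$. Its boundary circles are a closed lift $\tilde \beta$ of $\beta$, generating $\pi_1(\tilde S)$, and a closed lift $\tilde \alpha$ of $\alpha$, with $[\tilde \alpha] = \pm n [\tilde \beta]$ in $\mathbb{Z}$. The lift $\tilde \alpha$ is simple because $\alpha$ is and $\pi$ is a local homeomorphism. I would then apply the classification of essential simple closed curves: class $\pm 1$ in the open annulus, and class $\pm 1$ (core) or $\pm 2$ (boundary-parallel, a two-to-one cover of the core) in the open Möbius band. Combined with $n \geq 2$, the only possibility is that $\tilde S$ is the open Möbius band, $n = 2$, and $\tilde \alpha$ is boundary-parallel, bounding in $\tilde S$ a compact sub-Möbius-band $\tilde M$ whose core is $\tilde \beta$.

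The last step, which I expect to be the main obstacle, is to show that $\pi$ restricts to an embedding on $\tilde M$, so that $\pi(\tilde M)$ provides the required Möbius strip in $S$. Any two distinct closed lifts of $\alpha$ are disjoint simple closed curves in $\tilde S$ (by simplicity of $\alpha$); an innermost-curve argument in $\tilde M$ shows that $\tilde \alpha$ is the only such lift meeting $\tilde M$. To rule out self-identifications in the interior of $\tilde M$, one analyzes the covering action on the universal cover: any $g \in \pi_1(S)$ identifying two points of $\tilde M$ must, up to a power of $[\beta]$, preserve the pair $\{\tilde \alpha, \tilde \beta\}$, and in particular commute with $[\beta]$. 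Using Proposition~\ref{prop.algebra-fundamental-group2} (with a direct inspection for the short list of compact exceptions of Proposition~\ref{prop.algebra-fundamental-group}) one concludes $g \in \langle [\beta] \rangle$, hence $g$ acts trivially on $\tilde S$. Therefore $\pi(\tilde M)$ is an embedded compact Möbius strip in $S$ bounded by $\alpha$ with core $\pi(\tilde \beta)$ freely homotopic to $\beta$, which is the desired conclusion.
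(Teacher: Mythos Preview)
Your reduction to the cyclic cover $\tilde S$ corresponding to $\langle[\beta]\rangle$ is the same starting point as the paper, and your use of the classification of essential simple closed curves in the open annulus and open M\"obius band to force $n=2$ is cleaner than the paper's argument (which invokes a result of Bonino, or alternatively uniformization, to exclude $n>2$).

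The divergence is in the last step, producing the embedded M\"obius strip. The paper does not attempt to show that $\pi|_{\tilde M}$ is injective. Instead, once $n=2$ and $\alpha$ is seen to be two-sided, it passes to the orientation double cover $Y\to S$: there $\alpha$ has two disjoint lifts, $\beta$ has one, and lifting the free homotopy shows the two lifts of $\alpha$ are homotopic; by Lemma~\ref{l.bound-annulus} they bound an annulus in $Y$, whose image in $S$ is the desired M\"obius strip. This sidesteps any analysis of the (generally non-regular) cover $\tilde S\to S$.

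Your direct approach can be made to work, but the sketch has two imprecisions. First, an innermost argument does \emph{not} show that $\tilde\alpha$ is the only closed lift meeting $\tilde M$; rather, it produces an innermost closed lift $\tilde\alpha''$ and a sub-band $\tilde M''$ containing no other lift of $\alpha$ (closed or not, since a non-closed lift is a proper line and cannot sit inside the compact $\tilde M''$). You must then argue with $\tilde M''$, not $\tilde M$. Second, the assertion that a deck transformation $g$ identifying two interior points of $\tilde M''$ ``preserves the pair $\{\tilde\alpha,\tilde\beta\}$ and hence commutes with $[\beta]$'' is not justified as written. The correct argument is: in the universal cover, the two boundary lines of $g(\widehat M'')$ are lifts of $\alpha$ meeting $\widehat M''$, and by the innermost choice these must lie in $\partial\widehat M''$; this forces $g(\widehat M'')=\widehat M''$, so $g$ normalises $\langle[\beta]\rangle$, hence $g[\beta]g^{-1}=[\beta]^{\pm 1}$, and then (after reducing to $[\beta]$ primitive, which you should state) Proposition~\ref{prop.algebra-fundamental-group2} and torsion-freeness give $g\in\langle[\beta]\rangle$. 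With these corrections your route goes through; the paper's orientation-cover trick is shorter because it recycles Lemma~\ref{l.bound-annulus}.
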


\begin{proof}(See also~\cite{farb-margalit} for a proof in the orientable hyperbolic case.)
The existence of $\alpha$ implies that $S$ is not the sphere, and the existence of a root $\beta$ of $\alpha$ implies that $S$ is not the projective plane. Thus the universal cover $X$ of $S$ is homeomorphic to the plane. Let $T_{\beta}$ be an automorphism corresponding to $\beta$: there is a lift of $\beta$ which is invariant under $T_{\beta}$. This automorphism acts freely and properly discontinuously on the plane, thus it is conjugate either (1) to a translation or (2) to the composition of a translation and a symmetry. We may lift the free homotopy between $\beta^n$ and $\alpha$, thus getting a lift $\tilde \alpha$ of $\alpha$ joining a point and its image by $T_{\beta}^n$. If $T_{\beta}$ is a translation, then the quotient $X/T_{\beta}$ is an open annulus, and the projection of $\tilde \alpha$ is a simple closed curve which makes $n$ turns around this annulus. This is impossible. Thus $T_{\beta}$ is the composition of a translation and a symmetry, and the quotient $X/T_{\beta}$ is an open M\"obius band.

We claim that $n=2$. Assume by contradiction that $n > 2$. Consider any point on $\tilde \alpha$, its image under $T_{\beta}^n$ is another point on $\tilde \alpha$; both points bound a closed subarc $A$ of $\tilde \alpha$. Since $\alpha$ is a simple closed curve homotopic to $\beta^n$, this arc $A$ is disjoint from all its iterates $T_{\beta}^p A$ except for $p=-n,0,n$. In particular, $A$  $T_{\beta}A$ and $T_{\beta}^2A$ are pairwise disjoint, but $T_{\beta}^n A$ meets $A$. This contradicts Bonino's Corollary 3.11 in~\cite{bonino}\footnote{One can also get a contradiction by using the uniformization theorem, the classification of the isometries of the eulidean (resp. hyperbolic) plane.}. Hence $n=2$

Now since $T_{\beta}^2$ is an automorphism corresponding to $\alpha$, we deduce that $\alpha$ is two-sided (\emph{i. e.} it has a tubular neighborhood homeomorphic to an annulus, not a M\"obius band).

Consider the orientation covering $Y$ of $S$, which is 2 to 1.
Since $\alpha$ is two-sided, its inverse image in $Y$ has two components.
Since $\beta$ is one-sided,  its inverse image has a single component. We may lift the free homotopy between $\alpha$ and $\beta^2$, starting with any of the two lifts of $\alpha$, and this provides a homotopy between the given lift of $\alpha$ and the lift of $\beta$. Thus both lifts of $\alpha$ in $Y$ are essential simple closed curves which are homotopic and disjoint. Thus they bound an annulus (Lemma~\ref{l.bound-annulus}). The projection of this annulus in $X$ provides a M\"obius band bounded by $\alpha$, whose core is freely homotopic to $\beta$.
\end{proof}

\begin{lemm}\label{lemm.groupe-fonda-sous-surface}
Let $S$ be a surface, $S'$ be a subset of $S$ which is a surface with boundary, bounded by curves none of which is contractible in $S$.
Then the natural map $\pi_{1}(S') \to \pi_{1}(S)$ induced by the inclusion $S' \hookrightarrow S$ is injective.
\end{lemm}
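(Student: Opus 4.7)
My plan is to use covering space theory. Let $p : \tilde S \to S$ denote the universal cover, and let $\tilde V$ be any connected component of the preimage $p^{-1}(S')$. A standard fact is that $\pi_1(S') \to \pi_1(S)$ is injective if and only if every such $\tilde V$ is simply connected: a loop $\gamma$ in $S'$ is trivial in $\pi_1(S)$ iff it lifts to a closed loop in $\tilde S$, and such a closed lift bounds a disk inside the component of $\tilde V$ containing it iff $\gamma$ is trivial in $\pi_1(S')$. So I would reduce the lemma to proving this simple-connectedness.

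The cases where $\tilde S$ is compact, i.e.\ $S=\bbS^2$ or $S=\mathbb{RP}^2$, are degenerate: every simple closed curve in $\bbS^2$ is contractible, and every non-contractible simple closed curve in $\mathbb{RP}^2$ is one-sided and therefore cannot bound an embedded subsurface, so in both cases the hypothesis either forces $\partial S'=\emptyset$ or makes the statement vacuous, and injectivity is checked directly. Suppose henceforth $\tilde S\cong\bbR^2$. Then $\pi_1(S)$ acts freely on a contractible space and is consequently torsion-free (by the Lefschetz fixed point theorem). For every boundary circle $c$ of $S'$, the class $[c]\in\pi_1(S)$ is non-trivial and hence of infinite order, so each connected component of $p^{-1}(c)$ is a cover of $c$ corresponding to the trivial subgroup of $\pi_1(c)$; this forces it to be a properly embedded line in $\bbR^2$. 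Consequently $\partial\tilde V$ is a locally finite disjoint union of proper lines in the plane.

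The core of the argument, and what I expect to be the main obstacle, is the following planar claim: a connected closed subsurface $\tilde V\subset\bbR^2$ whose boundary is a locally finite disjoint union of proper lines is simply connected. I would establish this in two steps. First, I show that every connected component $C$ of $\bbR^2\setminus\tilde V$ is unbounded. If $C$ were bounded, then $\partial C$ would be compact and contained in $\partial\tilde V$, so by local finiteness it would meet only finitely many of the lines, and in each line lie inside a compact arc; write $K$ for this finite disjoint union of arcs. A finite disjoint union of arcs does not separate the plane (by Alexander duality, or a direct argument), so $\bbR^2\setminus K$ is connected. But $C$ is disjoint from $K$ while $\partial C\subset K$, so $C$ is both open and closed in $\bbR^2\setminus K$, which forces $C=\bbR^2\setminus K$; this is unbounded, a contradiction. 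Second, given a loop $\gamma$ in $\tilde V$, it bounds a disk $D$ in $\bbR^2$; any point of $D\setminus\tilde V$ would lie in a (necessarily unbounded) component of $\bbR^2\setminus\tilde V$, which cannot reach infinity without crossing $\partial D=\gamma\subset\tilde V$. Therefore $D\subset\tilde V$, so $\tilde V$ is simply connected, and the lemma follows.
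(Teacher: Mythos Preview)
Your proof is correct and follows essentially the same route as the paper: pass to the universal cover, observe that each boundary curve of $S'$ lifts to a proper line in the plane, and deduce that any connected component $\tilde V$ of $p^{-1}(S')$ is simply connected. The paper compresses your two-step planar argument into the single phrase ``$\tilde S'$, being an intersection of topological half-planes, is simply connected''; your version is more explicit (and also handles the $\bbS^2$ and $\mathbb{RP}^2$ cases, which the paper leaves implicit), with the only imprecision being that an arbitrary loop $\gamma$ need not literally bound a disk --- but replacing $D$ by the union of $\gamma$ with the bounded components of $\bbR^2\setminus\gamma$ fixes this immediately.
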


\begin{proof}
Let $X$ be the universal cover of $S$, and $\tilde S' \subset X$ a connected component of the inverse image of $S'$ in $X$. The space $\tilde S'$ is a surface bounded by lifts of the boundary curves of $S'$. Since each boundary curve is essential, its lifts are topological proper lines in $X$. Thus $\tilde S'$, being an intersection of topological half-planes, is simply connected. This implies that $\tilde S'$ is a universal cover of $S'$. The lemma follows.
\end{proof}

\subsection{Free homotopies of loops and outer automorphisms}\label{ss.w5w4}

In this section, $h$ denotes a homeomorphism of a connected surface $M$ without boundary. We will prove that Property (W5) implies Property (W4). Property (W5) says that every loop  is freely homotopic to its image. 

Let us first explain more precisely the meaning of the phrase ``$h$ acts trivially on $\pi_1(M)$" in (W4).
We choose a point $x_{0}$ in $M$. The homeomorphism $h$ induces an isomorphism $h_{*}$ between $\pi_{1}(M,x_{0})$ and $\pi_{1}(M,h(x_{0}))$. Similarly, any curve $\alpha$ joining $x_{0}$ to $h(x_{0})$ induces isomorphism $\alpha_{*}: \gamma \mapsto \alpha \star \gamma \star \alpha^{-1}$ between $\pi_{1}(M,x_{0})$ and $\pi_{1}(M,h(x_{0}))$. We say that \emph{$h$ acts trivially in $\pi_{1}(M)$} if we can choose a curve $\alpha$ such that $h_{*} = \alpha_{*}$. Note that given any curve $\alpha$ from $x_{0}$ to $h(x_{0})$, we get the automorphism $\alpha_{*}^{-1} h_{*}$ of the group $\pi_{1}(M,x_{0})$. Changing $\alpha$ amounts to composing this automorphism by an \emph{inner automorphism} (a conjugacy). Thus the action of $h$ on $\pi_{1}(M)$ is only defined up to a composition with a conjugacy. Formally, $h$ induces an \emph{outer automorphism} of $\pi_{1}(M)$, an element of the quotient of the group of automorphisms by inner automorphisms. 

Now, let us explain the equivalence between the two formulations of (W4). If $\tilde h$ is a lift of $h$ to the universal cover $\tilde M$ that commutes with the covering automorphisms, then we get a curve $\alpha$ so that $\alpha_*=h_*$ by projecting any curve joining a point $\tilde x_{0}$ to its image $\tilde h (\tilde x_{0})$. Conversely, if $\alpha$ is a curve joining $x_{0}$ to $h(x_{0})$ so that $h_{*} = \alpha_{*}$, then we consider a  $\tilde\alpha$ is a lift of $\alpha$. It joins a lift $\tilde x_0$ of $x_0$ to a lift $\tilde y_0$ of $y_0:=h(x_0)$. Then the lift $\tilde h$ of $h$ mapping $\tilde x_0$ to $\tilde y_0$ commutes with the covering automorphisms. 

\begin{prop}
\label{p.W5-W4}
Let $h$ be a homeomorphism of a connected surface $M$.
If every loop in $M$ is freely homotopic to its image under $h$
then  $h$ acts trivially on $\pi_{1}(M)$.
\end{prop}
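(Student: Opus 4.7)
The strategy is to translate the topological hypothesis (W5) into an algebraic statement about the outer automorphism induced by $h$ on $\pi_1(M)$, and then invoke a theorem from combinatorial group theory on surface groups. I would fix a base point $x_0 \in M$ and a path $\alpha$ from $x_0$ to $h(x_0)$, then set $\Phi := \alpha_*^{-1} \circ h_*$, an automorphism of $\pi_1(M,x_0)$. The goal reduces to showing that $\Phi$ is inner: for then, composing $\alpha$ with a loop at $x_0$ representing the conjugator produces a new path $\alpha'$ with $h_* = \alpha'_*$, which is exactly (W4) (lifting the endpoint image of $\alpha'$ to the universal cover and using it to define $\tilde h$ as in the preceding discussion).

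The first step is to show that $\Phi$ is \emph{class-preserving}, i.e.\ $\Phi(\gamma)$ is conjugate to $\gamma$ for every $\gamma \in \pi_1(M,x_0)$. This is where (W5) enters. Given a loop $\gamma$ based at $x_0$, the free homotopy between $\gamma$ and $h \circ \gamma$ traces out a path $\beta$ from $x_0$ to $h(x_0)$, and the standard interpretation of a free homotopy yields $h_*(\gamma) = \beta_*(\gamma)$ in $\pi_1(M,h(x_0))$. Hence $\Phi(\gamma) = \alpha_*^{-1}\beta_*(\gamma)$ is the image of $\gamma$ under the inner automorphism associated with the loop $\bar\alpha \star \beta \in \pi_1(M,x_0)$.

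The second step is purely algebraic: every class-preserving automorphism of the fundamental group of a surface is inner. For non-compact $M$, $\pi_1(M)$ is free (by \S\ref{ss.surface-groups}) and the statement is Grossman's theorem~\cite{grossman}; for closed $M$, it is the theorem of Allenby, Kim and Tang~\cite{allenby}. In the degenerate cases where $\pi_1(M)$ is abelian (sphere, projective plane, plane, open annulus, M\"obius band, torus) the conclusion is immediate since being class-preserving then forces $\Phi = \mathrm{Id}$. Writing $\Phi$ as conjugation by some $[\delta] \in \pi_1(M,x_0)$ and replacing $\alpha$ by the concatenation of $\alpha$ with $\delta^{\pm 1}$ (sign dictated by the chosen convention for $\alpha_*$) finishes the proof.

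The main obstacle is the second step, which is not elementary; one cannot avoid invoking (or reproving) the theorems of Grossman and of Allenby-Kim-Tang on pointwise inner automorphisms of surface groups. By contrast, the topological-to-algebraic reduction in the first step and the final cosmetic correction of $\alpha$ are straightforward once the conventions on base points and path multiplication are fixed.
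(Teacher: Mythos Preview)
Your proposal is correct and follows essentially the same route as the paper: reduce to showing that $\Phi=\alpha_*^{-1}h_*$ is class-preserving, then invoke Grossman (free groups, hence non-compact surfaces) and Allenby--Kim--Tang (closed surfaces). The only difference is that the paper goes on to give a short self-contained proof of the free-group case via two explicit lemmas about reduced words, whereas you are content to cite the result; the overall architecture is the same.
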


\begin{proof}
Assume every loop is freely homotopic to its image. Choose some curve $\alpha$ from a point to its image. Let $\alpha_{*}, h_{*} : \pi_{1}(M,x_{0}) \to \pi_{1}(M,h(x_{0}))$ be the isomorphism induced by $\alpha$ and $h$ respectively. Consider the automorphism $\Phi = \alpha_{*}^{-1} h_{*}$ of $G=\pi_{1}(M,x_{0})$.  The hypothesis entails that every element $g$ of $G$ is conjugate to its image $\Phi(g)$: there is an element $c_g$ of $G$ such that $\Phi (g) = c_g g c_g^{-1}$. We want to prove that $\Phi$ is an inner automorphism, \emph{i. e. } that there exists $c \in G$ such that $\Phi (g) = c g c^{-1}$ for every $g\in G$ (the same $c$ for all $g$'s). 
 
 This purely algebraic statement may be found in the following papers. If $M$ is a non compact surface, then $\pi_{1}(M)$ is a free group, and the result is Lemma 1 of~\cite{grossman}. If $M$ is a compact orientable surface, then the result is again proved in~\cite{grossman}. (In this paper a group is said to have property A if every automorphims that sends each element to a conjugate is an inner automorphism; the surface groups are denoted $H_{k}$, and the proof that $H_{k}$ has property A is exactly the proof of Theorem 3 of the paper.)  If $M$ is a compact non orientable surface, then the result is Theorem 3.2 of~\cite{allenby}.

\bigskip

Since non compact surfaces are the interesting case for the present paper, and the argument is short, let us recall it. 
In this case $G$ is isomorphic to a free group. If $G$ is generated by a single element then the result is obvious. Thus in what follows we assume that $G$ is a free group of rank at least $2$.

\begin{lemm} 
In the free group $G=<a,b>$, every element which is conjugate to $ab$ may be written   $wabw^{-1}$ or $wbaw^{-1}$ as a reduced word in the letters $a,b$, for some word $w$.
\end{lemm}

\begin{proof}
Let $c \in G$, and write $c$ as the reduced word in $a,b$. If $cabc^{-1}$ is not a reduced word then one of $ca$ and $bc^{-1}$ is not a reduced word either. We deal with the first case, the other being similar. If $ca$ is not a reduced word then $c$ ends by the letter $a^{-1}$. Write  $c = c_{1} c_{2}$,  where $c_{2}$  a maximal ending subword of $c$ with the property that the letters of $c_{2}$ are alternatively $b^{-1}$ and $a^{-1}$. 
Then $cabc^{-1}$  equals either $c_{1} ab c_{1}^{-1}$ or $c_{1} ba c_{1}^{-1}$, according to the parity of the length of $c_{2}$.
\end{proof}

\begin{lemm}\label{lemma.free-group-conjugate}
Let $c$ be an element in the free group $G=<a,b>$, and assume that
\[
acbc^{-1} \mbox{ is conjugated to } ab.
\]
Then there exists integers $k,\ell$ such that $c=a^{k}b^{\ell}$.
\end{lemm}

\begin{proof}
We write $c$ as the reduced word in $a,b$. Up to eliminating the $b$'s at the end of $c$, which does not change the value of $cbc^{-1}$, we may assume that $cbc^{-1}$ is reduced. The problem now boils down to proving that $c$ is a power of $a$. We first examine the case when $acbc^{-1}$ is reduced. Using the hypothesis and the first lemma, we may write $acbc^{-1} = wabw^{-1}$ or $acbc^{-1} = wbaw^{-1}$. Note that in these equalities all the words are reduced. The word $acbc^{-1}$ has a '$b$' in the position just after the middle, thus the second equality is impossible.  Looking separately at the beginning and the end of the equality $acbc^{-1} = wabw^{-1}$, we get $ac=wa$ and $c=w$. Thus $c$ commutes with $a$; since the group is free, $c$ is a power of $a$, as wanted. In the case when the word $acbc^{-1}$ is not reduced, 
$c=a^{-1}c_{1}$, then $acbc^{-1}$ is equal to the reduced word $c_{1}bc_{1}^{-1}a$, and we conclude using the same method as in the first case.
\end{proof}

Now consider an automorphism $\Phi$ of the free group $G=<a_{1}, ..., a_{n} >$ such that for every $g\in G$, $\Phi(g) = c_{g} g c_{g}^{-1}$, and write $c_{i}$ for $c_{a_{i}}$. Up to composing $\Phi$ with the inner automorphism $g \mapsto c_{1}^{-1} g c_{1}$, we may assume that $c_{1} = 1$. Fix some $i>1$ and let  
$a=a_{1}$, $b=a_{i}$, $c=c_{i}$. Since $\Phi(ab) = \Phi(a) \phi(b) = acbc^{-1}$ is conjugate to $ab$, 
we are in the situation of Lemma~\ref{lemma.free-group-conjugate}.
We get $c_{i} = a_{1}^{k_{i}} a_{i}^{\ell_{i}}$. Up to composing each $c_{i}$ with $a_{i}^{-\ell_{i}}$ we may assume that $c_{i} = a_{1}^{k_{i}}$. It remains to check that $k_{i}$ does not depend on $i$. To see this we apply again lemma~\ref{lemma.free-group-conjugate} with $a=a_{i}$,  $b=a_{j}$ with $1<i<j$, and $c = a_{1}^{k_{j}-k_{i}}$. We get that $a_{1}^{k_{j}-k_{i}}$ equals some power of $a_{i}$, and thus $k_{j}=k_{i}$ since $G$ is free.
\end{proof}

\subsection{Uniqueness of homotopies}\label{ss.uniqueness}

Algebraic properties of fundamental groups have some important consequences on uniqueness of homotopies that we explain now.
We begin by the following easy but fundamental fact (which is already in \cite{gramain} and \cite{jaulent}).

\begin{fact}
\label{fact.commute}
In a  topological space $X$, let $\alpha$ be a loop based at $z$, and $H: \bbS^1 \times [0,1] \to X$ be a homotopy of loops such that $H(.,0) = H(.,1) = \alpha$. Let $\beta$ be the loop given by the trajectory of $z = \alpha(0)$ under $H$, namely $\beta(t) = H(0,t)$. Then in the fondamental group $\pi_{1}(X,z)$, $[\alpha]$ commutes with $[\beta]$.
\end{fact}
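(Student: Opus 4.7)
The plan is to interpret the homotopy $H$ as a map from the torus to $X$ and then invoke the commutativity of the fundamental group of the torus.

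First I would note that the hypothesis $H(\cdot,0)=H(\cdot,1)=\alpha$ says precisely that $H$ is constant on the two boundary circles of the cylinder $\mathbb{S}^1\times[0,1]$ in a compatible way. Identifying $(\theta,0)$ with $(\theta,1)$ turns the cylinder into the torus $\mathbb{T}^2=\mathbb{S}^1\times\mathbb{S}^1$, and $H$ descends to a continuous map $\bar H:\mathbb{T}^2\to X$ sending the basepoint $(0,0)$ to $z$.

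Next I would identify the standard generators of $\pi_1(\mathbb{T}^2,(0,0))\simeq \mathbb{Z}^2$: the ``horizontal'' loop $m:t\mapsto(t,0)$ and the ``vertical'' loop $\ell:t\mapsto(0,t)$. By construction, $\bar H\circ m=\alpha$ and $\bar H\circ \ell=\beta$, so on fundamental groups
\[
\bar H_*[m]=[\alpha],\qquad \bar H_*[\ell]=[\beta].
\]

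Since $\pi_1(\mathbb{T}^2,(0,0))$ is abelian, $[m]$ and $[\ell]$ commute, and applying the homomorphism $\bar H_*$ yields $[\alpha][\beta]=[\beta][\alpha]$ in $\pi_1(X,z)$, as desired. There is no real obstacle here; the only point requiring a moment of care is checking that the quotient map from the cylinder to the torus is well-defined and continuous, which is immediate from the hypothesis that $H$ agrees on the two ends.
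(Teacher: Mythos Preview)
Your proof is correct and follows essentially the same route as the paper: pass to the quotient torus using $H(\cdot,0)=H(\cdot,1)$, identify $\alpha$ and $\beta$ as the images of the two standard generators of $\pi_1(\mathbb{T}^2)$, and use that this group is abelian. One small wording quibble: $H$ is not ``constant'' on the boundary circles but rather \emph{agrees} on them; you clearly have the right picture, though, since you state the identification $(\theta,0)\sim(\theta,1)$ correctly.
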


\begin{proof}
Since $H(.,0) = H(.,1)$, $H$ induces a continuous map from the quotient torus 
$$
T = \bbS^1 \times [0,1]\ \ / \ \  (s,0) \sim (s,1).
$$
Then the result follows from the fact that the fundamental group of the torus is abelian: indeed in $T$ the curves $s \mapsto (s,0)$ and $t \mapsto (0,t)$ induces two commuting elements of $\pi_{1}(T,(0,0)$; and their images under $H$ are nothing but the curves $\alpha$ and $\beta$.
\end{proof}

\begin{coro}\label{coro.uniqueness-homotopy}
Let $M$ be a surface that is not the projective plane, the torus, the Klein bottle, the annulus, the M\"obius band. Let $f$ be a homeomorphism of $M$ which is homotopic to the identity.
\begin{enumerate}
\item If $H,H'$ are two homotopies from the identity to $f$, then for every point $z$, the trajectories of $z$ under $H$ and $H'$ are homotopic.
\item There is  a unique lift of $f$ to the universal cover of $M$ which is obtained by lifting a homotopy from the identity to $f$. 
\item If $M$ is not the sphere, then the space of homotopies from the identity to $f$ is contractible. 
\end{enumerate}
\end{coro}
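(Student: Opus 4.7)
For part (1), I fix $z\in M$ and consider the loop $\alpha_{z}:=\gamma_{z}\star(\gamma'_{z})^{-1}$ based at $z$, where $\gamma_{z}(t):=H(z,t)$ and $\gamma'_{z}(t):=H'(z,t)$. Given an arbitrary loop $\ell$ based at $z$, the formulas $\Gamma(s,t):=H(\ell(s),t)$ and $\Gamma'(s,t):=H'(\ell(s),t)$ produce two homotopies of loops from $\ell$ to $f\circ\ell$, and their concatenation $\Gamma\star(\Gamma')^{-1}$ is a homotopy of loops from $\ell$ back to $\ell$ whose trajectory at the basepoint is precisely $\alpha_{z}$. Applying Fact~\ref{fact.commute} gives that $[\alpha_{z}]$ commutes with $[\ell]$ in $\pi_{1}(M,z)$; since $\ell$ is arbitrary, $[\alpha_{z}]$ lies in the center of $\pi_{1}(M,z)$, which under our hypotheses on $M$ is trivial by Proposition~\ref{prop.algebra-fundamental-group}(2). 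Hence $\gamma_{z}$ and $\gamma'_{z}$ are homotopic rel.\ endpoints.

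Part (2) will be a short consequence of part (1). A lift $\tilde f$ of $f$ to $\tilde M$ is determined by the image of a single lifted point $\tilde z$, and the lift associated to a homotopy $H$ is characterized by the rule ``lift the path $\gamma_{z}$ starting at $\tilde z$ and take the endpoint''. By part (1), two homotopies from $\id$ to $f$ produce trajectories that are homotopic rel.\ endpoints, so their lifts starting at $\tilde z$ terminate at the same point, making $\tilde f$ independent of $H$.

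For part (3), I identify the space $\cH$ of homotopies from $\id$ to $f$, equipped with the compact--open topology, with the space of continuous sections of the fibration $p\colon E\to M$ whose fiber $E_{z}$ is the path space of paths from $z$ to $f(z)$ in $M$; the identification is the exponential law $H\leftrightarrow (z\mapsto H(z,\cdot))$. Since $M$ is neither the sphere nor (by hypothesis) the projective plane, its universal cover is homeomorphic to $\bbR^{2}$ and hence contractible, so each path-component of each fiber $E_{z}$ is contractible. By part (1), any section in $\cH$ takes values, at each $z$, in the same path-component of $E_{z}$ as any fixed reference homotopy, so these chosen components assemble into a subfibration $E'\subset E$ with contractible fibers whose section space coincides with $\cH$. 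A standard obstruction-theoretic argument then yields that the section space of such a fibration over the CW complex $M$ is contractible. The delicate point I anticipate is precisely the verification that the selected components genuinely form a subfibration of $E$ (rather than merely a continuously varying choice of component), which will require local triviality of $E$ together with the canonicity of the choice provided by part (1).
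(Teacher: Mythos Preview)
Your arguments for parts (1) and (2) coincide with the paper's: the same concatenation trick with Fact~\ref{fact.commute} to force $[\alpha_z]$ into the (trivial) center of $\pi_1(M,z)$, and the same endpoint-of-lift argument for uniqueness of $\tilde f$.

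For part (3) you take a genuinely different route. The paper invokes the Uniformization Theorem to put a hyperbolic metric on $M$, identifies $\tilde M$ with $\bbH^2$, and writes down an explicit contraction of the space of homotopies: the canonical homotopy $H_0$ is given by geodesic segments $t\tilde f(x)+(1-t)x$ in $\bbH^2$, and any other homotopy $H$ is deformed to $H_0$ by the geodesic convex combination $s\tilde H_0+(1-s)\tilde H$; equivariance under deck transformations (which are isometries) makes everything descend to $M$. Your approach instead packages the problem as contractibility of the section space of a fibration with contractible fibers and appeals to obstruction theory. This buys you freedom from Uniformization, at the cost of the two technical points you flag: checking that the fiberwise components selected by part~(1) really assemble into a Hurewicz fibration $E'\to M$, and checking that the ``contractible fibers $\Rightarrow$ contractible section space'' principle applies in the relevant topological category (one needs, e.g., that the section space has CW homotopy type, not merely trivial homotopy groups). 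Both are standard but not entirely free. The paper does remark in passing that an alternative proof can be cooked up using the selection techniques of its Section~\ref{sec.selection}, which is closer in spirit to what you do.
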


\begin{proof}
Let $H,H'$ be homotopies from the identity to $f$. Consider the homotopy  from the identity to the identity obtained by concatenating $H$ and the inverse of $H'$, and let $\beta$ be the loop given by the trajectory of $z$ under this homotopy. Proving Property 1 amounts to showing that $\beta$ is homotopically trivial. Fact~\ref{fact.commute} implies that $\beta$ commutes with every other loop $\alpha$. In other words $\beta$ is in the center of $\pi_{1}(M,z)$. But Proposition~\ref{prop.algebra-fundamental-group} states that the center of $\pi_{1}(M,z)$ is trivial. Hence, $\beta$ must be  homotopically trivial, and Property 1 is proved.  

To prove Property 2, we consider again two homotopies $H,H'$ from the identity to $f$. The homotopy $H$ (resp. $H'$) can be lifted to the universal cover $\tilde M$ of $M$ as a homotopy from the identity to some homeomorphism $\tilde f$ (resp. $\tilde f'$) of $ \tilde M$ which is a lift of $f$. We have to prove that the lifts $\tilde f$ and $\tilde f'$ coincide. Given some point $\tilde z$ in $\tilde M$, the point $\tilde f (\tilde z)$ is the endpoint of the lift of the trajectory under $H$ of the projection $z$ of $\tilde z$ in $M$. Likewise, $\tilde f '(\tilde z)$ is the endpoint of the lift of the trajectory of $z$ under $H'$. According to Property~1, these two trajectories are homotopic, and thus $\tilde f (\tilde z) = \tilde f' (\tilde z)$. Since $\tilde z$ is an arbitrary point, this shows that the lifts $\tilde f$ and $\tilde f'$ coincide, and Property 2 is proved.

Finally we prove Property~3. According to the Uniformization Theorem (see \emph{e.g.} \cite{reyssat}), and given the restrictions on its topology, the surface $M$ admits a hyperbolic structure, the universal cover $\tilde M$ identifies with the hyperbolic plane $\bbH^2$, and the automorphisms are isometries of $\bbH^2$. For every $x,y \in \bbH^2$, denote by $s \mapsto (1-s) x +  s y$ the arclength parametrization of the unique geodesic arc from $x$ to $y$. Let $\tilde f$ be the lift of $f$ which is given by Property 2. Then the formula $\tilde H_{0}(t,x) = t \tilde f(x) + (1-t) x$
provides a canonical homotopy from the identity to $\tilde f$ on $\bbH^2$, which induces a homotopy $H_{0}$ from the identity to $f$ on $M$. Given a homotopy $H$ from the identity to $f$, with lift $\tilde H$, the formula
$$
s \tilde H_{0} + (1-s) \tilde H
$$
provides a continuous deformation from $\tilde H$ to $\tilde H_{0}$ which induces a deformation from $H$ to $H_{0}$. This deformation depens continuously on $H$. This shows that the space of homotopies from the identity to $f$ is contractible. An alternative proof can be cooked up using the selections techniques presented in Section~\ref{sec.selection}.
\end{proof}

\bigskip

To end this section, we apply the previous results to show how Corollary~\ref{coro.maximal-isotopies} on maximal isotopies follows from Corollary~\ref{coro.maximal-unlinked-sets}.

\begin{proof}[Proof of Corollary~\ref{coro.maximal-isotopies}]
We consider a closed set $F$ of a surface $S$, a homeomorphism $f$ of $S$ that fixes every point of $F$, and an isotopy $I$ from the identity to $f$ relative to $F$. Let $M = S \setminus F$. First assume that $M$ is not 
one of the five exceptional cases of the first point of Corollary~\ref{coro.uniqueness-homotopy}.
Corollary~\ref{coro.maximal-unlinked-sets} provides a maximal unlinked closed set $F'$ containing $F$. In particular, there exists an isotopy $I'$ from the identity to $f$ relative to $F'$, and the couple  $(F', I')$ is a maximal element of $\cal S$. Corollary~\ref{coro.uniqueness-homotopy} says that the trajectories of a point $z \not \in F$ under both isotopies are homotopic in $M$, which means that $(F, I) \leq (F', I')$, as wanted.

Now assume $M$ is one of the five remaining cases.
If  $(F, I)$ is a maximal element of $\cal S$, then we are done. If not, we find a couple 
$(F_{1}, I_{1})$ with $F$ strictly included in $F_{1}$.
Now $M_{1} = S \setminus F_{1}$ does not belong to the excluded list anymore, and we may apply the first case to get a maximal couple $(F', I')$ such that $(F, I) \leq (F_1, I_{1}) \leq (F', I')$, as wanted.
\end{proof}


\section{Continuous selection of paths}\label{sec.selection}

In order to build a homotopy between a surface homeomorphism $f\in \homeo(S)$ and the identity,
we must choose continuously for each point $z$ in the surface a path $\gamma_z$ between $z$ and $f(z)$.
This is done in two steps: we first identify the homotopy class $C_z$ of paths in $S$ which will contain $\gamma_z$
and then select continuously the path $\gamma_z$ in the class $C_z$.
This section explains the second step. As an application we will prove the implications (W4) $\Rightarrow$ (W3) and (W3) $\Rightarrow$ (W2). These techniques will also be crucial in the next section.

One may obtain the family $(\gamma_z)_{z\in S}$ by using the general Michael's selection theorems~\cite{michael}. For a two-dimensional topological space $S$, the main assumptions for the existence of a continuous selection are:
the map $z\mapsto C_z$ is lower semi-continuous; for each $z$, the class $C_z$ is path-connected
and simply-connected; the map $z\mapsto C_z$ is
equi-locally path-connected and equi-locally simply connected.
We will not use Michael's theorem, we rather prove a selection result which is adapted to our simpler setting (Proposition~\ref{prop.selection}). The greatest simplification comes from a strong version of lower semi-continuity.

\subsection{An easy selection theorem}

In this section we consider two metric spaces $(S,d)$ and $(\cP,\delta)$. In the applications $S$ will be a surface (or more generally will contain a dense open set 
homeomorphic to a surface) and $\cP$ will be a space of paths on $S$. We use the notation $B_{d}(x,r)$ and $B_{\delta}(\gamma,\alpha)$ for the balls in $S$ and $\cP$.

We consider a family $(C_{z})_{z \in S}$ of subsets of $\cP$ indexed by points of $S$. We will state some topological condition that guarantees the existence of a \emph{continuous selection}, that is, a continuous map $z \mapsto \gamma_{z}$ from $S$ to $\cP$ such that for every $z$, $\gamma_{z} \in C_{z}$. In this section we will often denote applications as families, \emph{e. g.} the map $z \mapsto \gamma_{z}$ is denoted by $(\gamma_{z})_{z \in S}$.

\paragraph{Local triviality.}
  We will say that the family $z \mapsto C_{z}$  is \emph{locally trivial}
if for every $w \in S$, there exists a neighborhood $U_w$ of $w$
and a continuous map
$$\Theta \colon U_w\times C_{w}\to \cP$$
such that $\Theta(w, .) = \operatorname{Id}$ and
$\Theta(z,.)$ is a homeomorphism from $C_{w}$ to $C_z$
for each $z\in U_w$  (see Figure~\ref{f.sc-local-triviality-B}).  

\begin{figure}[ht]
\begin{center}
\def\svgwidth{0.45\textwidth}
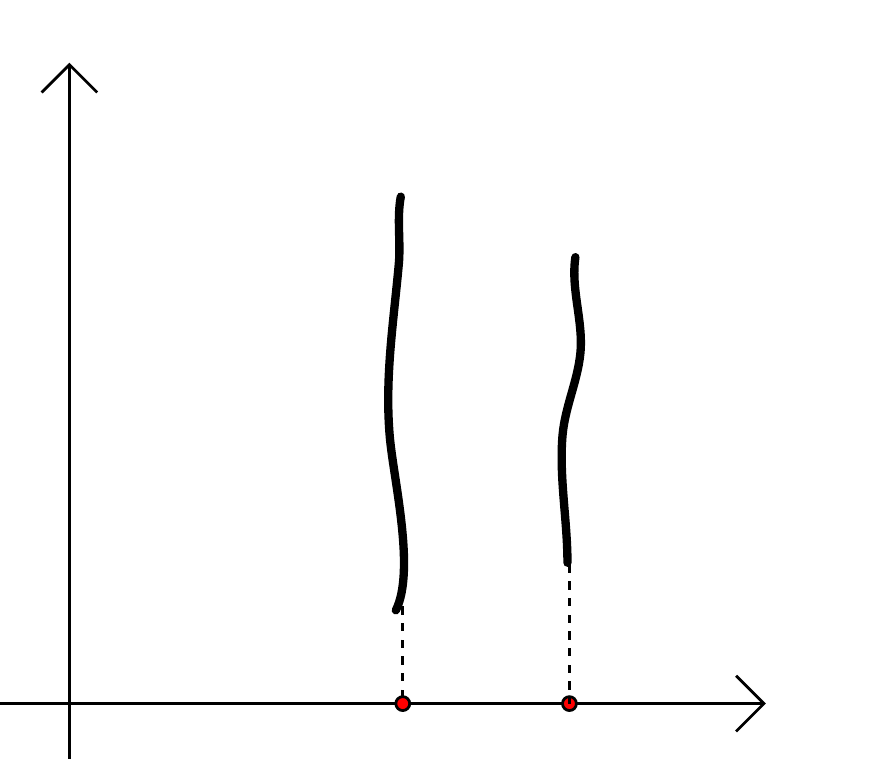
\def\svgwidth{0.45\textwidth}
\hspace{1cm}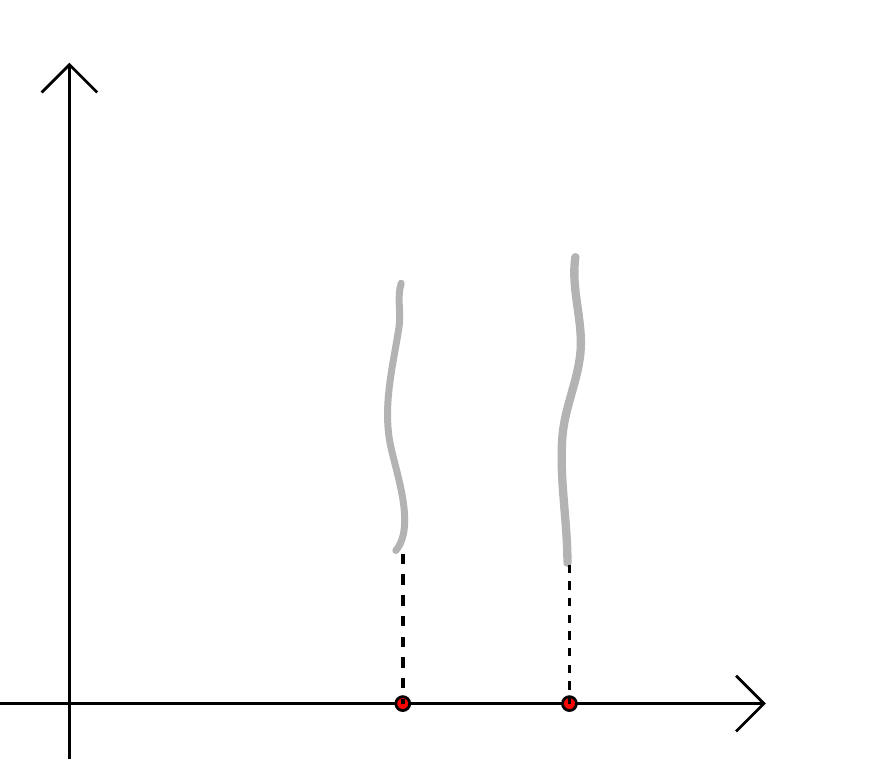
\caption{\label{f.sc-local-triviality-B}	Local triviality and extension of continuous families}
\end{center}
\end{figure}


Assuming this property and these notations, let $(z_{t})_{t\in \Delta_{1}}$ be a continuous family of points of $U_{w}$ indexed by the unit interval $\Delta_{1} = [0,1]$. Let $\gamma_{0} \in C_{z_{0}}$, $\gamma_{1} \in C_{z_{1}}$  and denote by $\gamma'_{0}, \gamma'_{1}$ the two elements of $C_{w}$ such that $\gamma_{i} = \Theta(z_{i},\gamma'_{i})$ for $i=0,1$. Let us assume furthermore that $C_{w}$ is arcwise connected. In this case there is a continuous family $(\gamma'_{t})_{t \in \Delta_{1}}$, included in $C_{w}$, that extends the family $\gamma'_{0}, \gamma'_{1}$.
Then the formula
$$
\gamma_{t} = \Theta(z_{t}, \gamma'_{t}), \ \ \ t \in \Delta_{1}
$$
produces a continuous family that extends $\gamma_{0}, \gamma_{1}$ and such that $\gamma_{t} \in C_{z_{t}}$ for each $t\in \Delta_{1}$. Likewise, let us assume that $C_{w}$ is simply connected. Let $(z_{t})_{t \in \Delta_{2}}$ be a continuous family of points of $U_{w}$ indexed by the standard $2$-simplex $\Delta_{2}$ (a triangle), and $(\gamma_{t})_{t \in \partial \Delta_{2}}$ a continuous family of paths, indexed by the boundary of $\Delta_{2}$, such that $\gamma_{t} \in C_{z_{t}}$ for every $t \in \partial \Delta_{2}$. Then as above this family extends to a continuous family $(\gamma_{t})_{t \in \Delta_{2}}$ such that $\gamma_{t} \in C_{z_{t}}$ for every $t \in \Delta_{2}$.

\paragraph{Local properties at singletons.}
Let $y_{0}$ be some point for which $C_{y_{0}}=\{\gamma_{y_{0}}\}$ is a singleton.
 We will say that the map $z \mapsto C_{z}$ is:
 \begin{itemize}
 \item[--] \emph{lower semi-continuous at $y_0$}
if for every neighborhood $\cU$ of $\gamma_{y_{0}}$ there is a neighborhood $V$ of $y_{0}$ such that, for every $z\in V$, the set $C_{z}$ meets $\cU$,
\item[--] \emph{equi-locally arcwise connected at $y_{0}$} 
if for every neighborhood $\cU$ of $\gamma_{y_{0}}$ there is a neighborhood $V$ of $y_{0}$ 
and a neighborhood $\cU'$ of $\gamma_{y_{0}}$
such that, for every $z \in V$ and every $\gamma,\gamma' \in C_{z} \cap \cU'$, there exists a path from $\gamma$ to $\gamma'$ in  $C_{z} \cap \cU$,
\item[--] \emph{equi-locally simply connected at $y_{0}$} 
if, for every neighborhood $\cU$ of $\gamma_{y_{0}}$, there is a neighborhood $V$ of $y_{0}$ 
and a neighborhood $\cU'$ of $\gamma_{y_{0}}$
such that, for every $z \in V$, every continuous family $(\gamma_{t})_{t \in \partial \Delta_{2}}$ included in $C_{z} \cap \cU'$, extends to a continuous family $(\gamma_{t})_{t \in \Delta_{2}}$ included in $C_{z} \cap \cU$.
\end{itemize}

\paragraph{Selection.}
Here is the selection result that we shall need.

\begin{prop}\label{prop.selection}
Let  $F$ be  a closed subset of $S$, and assume $M:=S \setminus F$ is a surface.\footnote{Not that here $M$ is not supposed to be connected. In section~\ref{ss.proper-homotopies}, $S$ will be the end-compactification of a connected surface $M$, and $F$ will be the set of ends of $M$; but in section~\ref{ss.strong-homotopies} $M$ will be the complement of a general closed set $F$ in a surface $S$.} Assume the following hypotheses on the family $(C_{z})_{z \in S}$:
\begin{enumerate}
\item Each $C_{z}$ is arcwise connected and simply connected.
\item On $S\setminus F$, the map $z\mapsto C_z$ is locally trivial.
\item\label{i.singleton} For every $y\in F$, the set $C_y$ is a singleton. 
\item The map $z \mapsto C_{z}$ is lower semi-continuous, equi-locally arcwise connected and equi-locally simply connected at every point of $F$.
\end{enumerate}
Then there exists a continuous selection $z \mapsto \gamma_{z}$.
\end{prop}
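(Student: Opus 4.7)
The plan is to construct the selection by extending over the skeleta of a triangulation of the surface $M = S \setminus F$, with the mesh adapted to $F$ so that continuity at points of $F$ holds automatically. This is the classical obstruction-theoretic approach to selection on a two-dimensional complex: hypotheses~1 and~2 provide the local data on $M$ (arcwise-connected, simply connected fibers, trivialized over charts), while hypotheses~3 and~4 impose the boundary data along $F$.

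The values $\gamma_y$ for $y \in F$ are already prescribed by hypothesis~3. For each $y \in F$ I would fix a decreasing basis of neighborhoods $\cU_n(y)$ of $\gamma_y$ in $\cP$ and use hypothesis~4 to pick, for each $n$, a neighborhood $V_n(y) \subset S$ of $y$ and a smaller neighborhood $\cU'_n(y) \subset \cU_n(y)$ of $\gamma_y$ which simultaneously witnesses the three equi-local properties (lower semi-continuity, arcwise connectedness, simple connectedness) going from $\cU'_n(y)$ into $\cU_n(y)$. Then I would triangulate $M$ so that the mesh shrinks near $F$: every simplex $\sigma$ close to some $y \in F$ is contained in an appropriate $V_n(y)$ with $n$ growing as $\sigma$ approaches $F$, and every simplex is also contained in some local-triviality chart $U_w$ from hypothesis~2.

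Next, I would extend the selection over the skeleta of this triangulation. On the $0$-skeleton, pick $\gamma_v \in C_v$ for each vertex $v$, taking $\gamma_v \in C_v \cap \cU'_n(y)$ when $v \in V_n(y)$ by (enhanced) lower semi-continuity. On the $1$-skeleton, for each edge $e$ contained in a chart $U_w$, pull the two endpoint values back to $C_w$ via the trivialization $\Theta$, join them by a path in $C_w$ (arcwise connected by hypothesis~1), and push the path forward by $\Theta$; if $e \subset V_n(y)$, equi-local arcwise connectedness forces the connecting path to stay inside $\cU_n(y)$. On the $2$-skeleton, each face has an already-selected continuous boundary loop of paths which, via the trivialization and simple connectedness of $C_w$, extends to a continuous family over the full triangle; the equi-local simple-connectedness clause keeps the extension inside $\cU_n(y)$ whenever the face lies in $V_n(y)$.

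Finally, continuity of the resulting selection $z \mapsto \gamma_z$ at a point $y \in F$ is automatic from the adapted mesh: given a target neighborhood $\cU$ of $\gamma_y$, choose $n$ with $\cU_n(y) \subset \cU$; then any $z \in M$ close enough to $y$ lies in a simplex contained in $V_n(y)$, whose selected values have been constrained to $\cU_n(y) \subset \cU$. The main obstacle, and really the heart of the argument, is the careful combinatorial design of the triangulation: it must simultaneously refine the cover of $M$ by local-triviality charts and the nested neighborhoods $V_n(y)$ of points of $F$, which forces the two choices to be interleaved and the mesh to shrink at a controlled rate toward $F$. Once this adapted triangulation is in place, the skeletal extension is routine two-dimensional obstruction theory.
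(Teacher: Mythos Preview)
Your outline is a valid obstruction-theoretic strategy, but it differs from the paper's and leaves a delicate step under-justified. The paper does \emph{not} adapt the mesh to $F$: it fixes a single triangulation refined only so that each closed simplex lies in a local-triviality chart $U_w$, and then at each vertex, edge, and face makes a \emph{near-optimal} choice---$\gamma_z$ is taken within a factor~$2$ of the infimum distance to $\gamma_{y_z}$ for some nearby $y_z \in F$ with $d(z,y_z)\le 2\,d(z,F)$, and analogously for the edge and face extensions (conditions $(\star),(\star\star),(\star\star\star)$ in the paper). Continuity at a given $y_0 \in F$ is then proved a posteriori by a three-stage cascade of $\varepsilon/10$ estimates, chaining equi-local simple connectedness, equi-local arcwise connectedness, and lower semi-continuity. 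This completely bypasses the ``combinatorial design of the triangulation'' you identify as the heart of your approach; in particular it never assigns a reference pair $(y,n)$ to a simplex, and so never faces the problem of reconciling such assignments across the uncountably many $y \in F$. (Your final continuity argument tacitly assumes the simplex's assigned $y$ coincides with the target $y_0$; when it does not, you still owe an estimate relating $\cU_{n_\sigma}(y_\sigma)$ to a neighborhood of $\gamma_{y_0}$.)

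The more concrete gap is the claim that ``equi-local arcwise connectedness forces the connecting path to stay inside $\cU_n(y)$.'' The equi-local hypotheses are \emph{pointwise}: for each fixed $z \in V_n(y)$ they connect two elements of $C_z \cap \cU'_n(y)$ by a path in $C_z \cap \cU_n(y)$. Your edge extension is a continuous family indexed by the points $z$ of the edge, built by pushing a single path in $C_w$ through the trivialization $\Theta$; nothing in the hypotheses says that push-forward lands in $\cU_n(y)$. Converting the pointwise hypothesis into the family statement you need requires subdividing the edge, using uniform continuity of $\Theta$ on each piece, invoking lower semi-continuity to place nearby values at the subdivision nodes, and applying the pointwise equi-local connectedness at each node to splice---none of which you supply, and which is precisely \emph{not} ``routine obstruction theory.'' To be fair, the paper's proof also passes over this quickly (its sentence ``there exists $(\gamma'_z)_{z \in [z_0 z_1]}\dots$ at distance less than $\alpha_1/10$'' asserts the same family existence), but its near-optimality framework at least isolates the issue cleanly: the construction uses only local triviality, and the equi-local hypotheses enter solely to exhibit a competitor family bounding the infimum in $(\star\star)$.
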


\begin{proof}
Since $S\setminus F$ is a surface, it admits a triangulation.
Any compact set of $S\setminus F$ meets only finitely many triangles.
By subdividing the triangulation if necessary,
we may assume that each closed triangle is included in one of the open set $U_{w}$ given by the local triviality (hypothesis 2).

\medskip

When $F=\emptyset$, the proof is straightforward: first select for any vertex $z$ of the triangulation any element $\gamma_{z}$ in $C_{z}$; then extend this selection continuously to the 1-skeleton using the local triviality; then extend the selection to the whole of $S$ using again the local triviality.

\medskip

 From now on we assume that $F$ is non empty. 
For every point $y$ in $F$, according to hypothesis~\ref{i.singleton}, $C_y$ is a singleton; we denote by $\gamma_y$ the unique element $C_{y}$.

\medskip

For each vertex $z$ of the triangulation, we choose a point $y_{z}$ in $F$ such that $d(z,y_{z}) \leq 2 d(z,F)$, and
an element $\gamma_{z}$ of $C_{z}$ not too far from $\gamma_{y_{z}}$ (remember that $y_{z_{0}}$ is a point in $F$), namely in such a way that
$$
(\star) \ \ 	\delta(\gamma_{z}, \gamma_{y_{z}}) \leq 2 \inf_{\gamma^*} \delta(\gamma^*,\gamma_{y_{z}})
$$
where the infimum is taken among all $\gamma^*$ in $C_{z}$.
\medskip

For each edge $\Delta_{1} = [z_{0} z_{1}]$ of the triangulation, whose endpoints are vertices $z_{0},z_{1}$,
we choose an open set $U_{w}$ given by the local triviality and containing $\Delta_1$.
Since $C_{w}$ is arcwise connected, the family $\{\gamma_{z_{0}}, \gamma_{z_{1}}\}$ extends to  a family $(\gamma_{z})_{z \in \Delta_{1}}$, such that $\gamma_{z}$ is in $C_{z}$ for each $z \in [z_{0}z_{1}]$ (as explained above after the definition of the local triviality). Again we choose this family not too far from $\gamma_{y_{z_{0}}}$, namely in such a way that 
$$
(\star \star) \ \ \sup_{z \in \Delta_{1}} \delta(\gamma_{z}, \gamma_{y_{z_{0}}}) \leq 2 \inf_{(\gamma^*_{z})} \sup_{z \in \Delta_{1}} \delta(\gamma^*_{z},\gamma_{y_{z_{0}}}) 
$$
where the infimum is taken among all continuous families  $(\gamma^*_{z})_{z\in \Delta_{1}}$ with $\gamma^*_{z} \in C_{z}$ for each $z$ in $\Delta_{1}$ and
$\gamma^*_{z_0}=\gamma_{{z_{0}}}$, $\gamma^*_{z_1}=\gamma_{{z_{1}}}$.

For each triangle $\Delta_{2}$ of the triangulation, whose vertices are $z_0,z_1,z_2$,
we choose an open set $U_w$ containing $\Delta_{2}$.
We can extend the family $(\gamma_{z})_{z \in \partial \Delta_{2}}$ to a family defined for every $z \in \Delta_2$. Again we choose the extension so that 
$$
(\star \star \star) \ \ \sup_{z \in \Delta_{2}} \delta(\gamma_{z}, \gamma_{y_{z_{0}}}) \leq 2 \inf_{(\gamma^*_{z})} \sup_{z \in \Delta_{2}} \delta(\gamma^*_{z},\gamma_{y_{z_{0}}}) 
$$
where the infimum is taken among all continuous families  $(\gamma^*_{z})_{ z \in \Delta_2}$ with $\gamma^*_{z} \in C_{z}$ for each $z \in \Delta_2$ and $\gamma^*_z=\gamma_z$
when $z\in \partial \Delta_2$.
To complete the proof it remains to show the following claim.

\begin{claim}
The map $z \mapsto \gamma_{z}$ satisfying properties $(\star), (\star \star), (\star \star \star)$ is continuous.
\end{claim}

 To prove the claim, we first note that by construction the map is continuous at every point of $S \setminus F$. Given  $y_{0} \in F$, let us prove the continuity at $y_{0}$. From the lower semi-continuity at $y_{0}$, we immediately get that the restriction of the map $z \mapsto \gamma_{z}$ to the set $F$ is continuous.

\medskip

Let $\varepsilon>0$. We first apply the equi-local simple connectedness to
the ball $B_\delta(\gamma_{y_{0}},\frac{\varepsilon}{10})$, and get some balls $B_d(y_{0},a_{1})$ and $B_\delta(\gamma_{y_{0}},\alpha_{1})$.
We then apply the equi-local arcwise connectedness to the ball $B_\delta(\gamma_{y_{0}},\frac{\alpha_{1}}{10})$
and get some balls $B_d(y_{0},a_{0})$ and $B_\delta(\gamma_{y_{0}},\alpha_{0})$.
The lower semi-continuity applied to the ball  $B_\delta(\gamma_{y_{0}},\frac{\alpha_{0}}{10})$
gives a ball $B_d(y_{0},b)$.
Finally, we apply the continuity of $y \mapsto \gamma_{y}$ on $F$ to get $c>0$
such that for every $y \in F \cap B_d(y_{0},c)$,  the distance $\delta(\gamma_{y},\gamma_{y_{0}})$ is less than the minimum $\beta$ of $\varepsilon/10, \alpha_{0}/10, \alpha_{1}/10$.

\medskip

Let $d$ be a positive real number, smaller than $a_{1}, a_{0}, b, c$. Let $\Delta_{2}$ be a $2$-simplex of the triangulation $\cT$ included in $B(y_{0},\frac{d}{3})$. To get the continuity we prove that for $z \in \Delta_{2}$, the distance $\delta(\gamma_{z}, \gamma_{y_{0}})$ is less than $\varepsilon$. Denote by $z_{0}, z_{1}, z_{2}$ the vertices and by $[z_{i}, z_{j}]$, the edges of the $2$-simplex $\Delta_{2}$.

\medskip

By lower semi-continuity,
there exists $\gamma'_{z_{i}} \in C_{z_{i}}$ at distance less than $\alpha_{0}/10$ from $\gamma_{y_{0}}$.
The point $y_{z_{0}}$ used in the definition of $\gamma_{z_{0}}$ satisfies
$d(z_{0}, y_{z_{0}}) < 2d(z_{0}, y_{0})$
and thus  $d(y_{z_{0}}, y_{0}) < 3d(z_{0}, y_{0}) < d$. In particular the continuity of $y\mapsto \gamma_y$
on $F$ gives $\delta(\gamma_{y_{z_{0}}}, \gamma_{y_{0}}) < \beta \leq \alpha_{0}/10$. The definition of $\gamma_{z_{0}}$ now implies
\begin{eqnarray*}
\delta(\gamma_{z_{0}}, \gamma_{y_{0}}) &\leq & \delta(\gamma_{y_{0}}, \gamma_{y_{z_{0}}}) +  \delta(\gamma_{z_{0}}, \gamma_{y_{z_{0}}}) \\
& < & \delta(\gamma_{y_{0}}, \gamma_{y_{z_{0}}})   + 2 \delta(\gamma'_{z_{0}}, \gamma_{y_{z_{0}}}) \quad\quad\quad\quad\quad\quad\quad \mbox{by $(\star)$}\\
& \leq & \delta(\gamma_{y_{0}}, \gamma_{y_{z_{0}}})   + 2 \left( \delta(\gamma'_{z_{0}}, \gamma_{y_0})+ \delta(\gamma_{y_0}, \gamma_{y_{z_{0}}})\right)\\
&< & 2 \delta(\gamma'_{z_{0}}, \gamma_{y_{0}}) + 3 \delta(\gamma_{y_{0}}, \gamma_{y_{z_{0}}})\\
& <&  5 \frac{\alpha_{0}}{10} = \frac{\alpha_{0}}{2}.
\end{eqnarray*}
Of course we get the same estimate for $z_{1}$ and $z_{2}$.

\medskip

Now we can use equi-local arcwise connectedness:
there exists $(\gamma'_{z})_{z \in [z_{0}z_{1}]}$, with $\gamma'_{z} \in C_{z}$ for every $z$ in $[z_{0} z_{1}]$, at distance less than $\alpha_{1}/10$ from $\gamma_{y_{0}}$.
The same type of inequalities as above (using $(\star\star)$ instead of $(\star)$) gives, for every $z \in [z_{0}, z_{1}]$,
$$
\delta(\gamma_{z}, \gamma_{y_{0}}) \leq \alpha_{1}/2.
$$
Of course we get the same estimate for every $z$  in $[z_1,z_2]$ and for $z$ in $[z_2,z_0]$.

\medskip

Finally use the equi-local simple connectedness:
there exists $(\gamma'_{z})_{z \in \Delta_{2}}$, with $\gamma_{z} \in C'_{z}$ for every $z$, at distance less than $\varepsilon/10$ from $\gamma_{y_{0}}$.
Again, the same type of inequalities as above (using this time $(\star\star\star)$ instead of $(\star)$) gives, for every $z \in \Delta_{2}$,
$$
\delta(\gamma_{z}, \gamma_{y_{0}}) \leq \varepsilon/2.
$$
This completes the proof of Proposition~\ref{prop.selection}.
\end{proof}

\subsection{Construction of homotopies}
\label{ss.constructions-homotopies}

With the selection tool we show that Property (W4) implies Property (W3)
(except in the case of the sphere, where this does not hold, and in the case of the projective plane, which will be a consequence of Proposition~\ref{p.projective-plane}).

\begin{prop}\label{p.w4Tow3}
\label{p.W4-W3}
Let $f$ be a homeomorphism of a connected surface $M$ without boundary,
which is not the sphere nor the projective plane.
If $f$ acts trivially on $\pi_{1}(M)$ then $f$ is homotopic to the identity.
\end{prop}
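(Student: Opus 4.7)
The plan is to apply the selection theorem, Proposition~\ref{prop.selection}, with $S = M$ and $F = \emptyset$, to produce a continuous family $(\gamma_z)_{z\in M}$ of paths with $\gamma_z(0)=z$ and $\gamma_z(1)=f(z)$; the required homotopy is then $H(z,t) = \gamma_z(t)$.

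First I would set up the candidate homotopy classes. Using hypothesis (W4), fix a lift $\tilde f \colon \tilde M \to \tilde M$ of $f$ that commutes with every deck transformation. Since $M$ is neither the sphere nor the projective plane, its universal cover $\tilde M$ is homeomorphic to the plane, hence contractible. For each $z\in M$, choose a lift $\tilde z \in \tilde M$ and set
$$C_z \;=\; \{\gamma \in C([0,1],M) : \gamma(0)=z,\ \gamma(1)=f(z),\ \tilde\gamma(1)=\tilde f(\tilde z)\},$$
where $\tilde\gamma$ denotes the lift of $\gamma$ starting at $\tilde z$. Commutation of $\tilde f$ with deck transformations makes $C_z$ independent of the choice of $\tilde z$: any other lift is $T(\tilde z)$ for some deck transformation $T$, the lift of $\gamma$ from $T(\tilde z)$ is $T\circ\tilde \gamma$, and it ends at $T(\tilde f(\tilde z)) = \tilde f(T(\tilde z))$.

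Next, I would verify the hypotheses of Proposition~\ref{prop.selection}. The lifting map $\gamma \mapsto \tilde\gamma$ is a homeomorphism from $C_z$ (with the compact-open topology) onto the space of paths in $\tilde M$ from $\tilde z$ to $\tilde f(\tilde z)$; since $\tilde M$ is contractible, this path space is contractible, so $C_z$ is in particular arcwise connected and simply connected. For local triviality, fix $w\in M$ and an evenly covered, convex-like neighborhood $U_w$ of $w$ (e.g.\ a coordinate chart), and pick a continuous family of paths $\sigma_z : [0,1] \to U_w$ from $w$ to $z$ for $z\in U_w$ (straight lines in a chart). Define
$$\Theta\colon U_w \times C_w \longrightarrow C([0,1],M), \qquad \Theta(z,\gamma) \;=\; \overline{\sigma_z} * \gamma * (f\circ \sigma_z),$$
suitably reparametrized so that $\Theta(w,\cdot)=\mathrm{Id}_{C_w}$. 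Lifting $\Theta(z,\gamma)$ from $\tilde z$ produces the concatenation $\overline{\tilde\sigma_z} * \tilde\gamma * (\tilde f \circ \tilde\sigma_z)$, whose endpoint is $\tilde f(\tilde z)$; hence $\Theta(z,\cdot)$ is a homeomorphism from $C_w$ onto $C_z$. Since $F=\emptyset$, hypotheses (3) and (4) of the selection proposition are vacuous.

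Proposition~\ref{prop.selection} then yields a continuous selection $z \mapsto \gamma_z \in C_z$, and $H(z,t):=\gamma_z(t)$ is the desired homotopy from the identity to $f$.

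The main obstacle is the verification that the fibers $C_z$ are simply connected: this is precisely the point where one needs the assumption that $M$ is not the sphere or the projective plane, since then $\tilde M \cong \mathbb{R}^2$ is aspherical and the path space is contractible; for $S^2$ (or $\mathbb{RP}^2$, whose universal cover is $S^2$), the corresponding path space has $\pi_1 \cong \pi_2(S^2)=\mathbb{Z}$ and the selection strategy would fail. A secondary technical point is to arrange the concatenations in $\Theta$ with a parametrization depending continuously on $z$ and reducing to the identity at $z=w$, which is routine but requires some care with reparametrization.
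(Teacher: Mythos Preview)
Your proof is correct and follows the same approach as the paper: define $C_z$ via the commuting lift $\tilde f$, verify the hypotheses of Proposition~\ref{prop.selection} (with $F=\emptyset$), and read off the homotopy from the continuous selection. The one substantive difference is in the local triviality: the paper constructs $\Theta$ using a continuous family of compactly supported ambient homeomorphisms $\varphi_{z,t},\psi_{z,t}$ (via Lemma~\ref{l.fibration} and Alexander's trick), which gives $\Theta(w,\cdot)=\mathrm{Id}$ for free, whereas your path-concatenation construction is more elementary but needs the reparametrization you flag to achieve $\Theta(w,\cdot)=\mathrm{Id}$ exactly.
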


We begin by characterizing the triviality of the action on the fundamental group in terms of families of paths.

\begin{claim}\label{claim.pi-1-Cz}
Let $f$ be a homeomorphism of a connected surface $M$. Then $f$ acts trivially on $\pi_{1}(M)$ if and only if there exists a family $(C_{z})_{z \in M}$ of sets of paths with the following properties:
\begin{enumerate}
\item\label{i.homotopy-class} Each $C_{z}$ is a homotopy class of paths from $z$ to $f(z)$,
\item\label{i.continuity} (Continuity.) For every $w$, for every $z$ close enough to $w$, the family $C_{z}$ contains a path obtained by
concatenating a small path from $z$ to $w$, a path in $C_{w}$, and a small path from $f(w)$ to $f(z)$.
\end{enumerate}
\end{claim}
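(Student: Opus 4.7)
My plan is to prove the equivalence by passing to the universal cover, using the equivalent formulation of ``$f$ acts trivially on $\pi_1(M)$'' recalled in subsection~\ref{ss.w5w4}: $f$ admits such an action iff it lifts to a homeomorphism $\tilde f \in \homeo(\tilde M)$ commuting with every deck transformation of $\pi \colon \tilde M \to M$. The claim will be established by a natural bijective correspondence between such lifts $\tilde f$ and families $(C_z)$ satisfying (1)--(2).

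For the direction $(\Rightarrow)$, given a deck-equivariant lift $\tilde f$, I set $C_z$ to be the homotopy class rel endpoints of the projection of any path in $\tilde M$ joining a lift $\tilde z$ of $z$ to $\tilde f(\tilde z)$. Simple connectedness of $\tilde M$ makes the class independent of the path, and deck equivariance makes it independent of the lift $\tilde z$ (a different lift $T(\tilde z)$ is joined to $T(\tilde f(\tilde z))$ by the translated path, which projects to the same curve). Property (1) is then built in. To verify (2) at $w$, I pick evenly covered neighborhoods $U \ni w$ and $V \ni f(w)$; for $z$ close to $w$, the lift $\tilde z$ lying in the sheet of $U$ above $\tilde w$ satisfies $\tilde f(\tilde z) \in \tilde V$ (by continuity of $\tilde f$). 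Any small concatenation $\beta_1 \star c_w \star \beta_2$, with $c_w \in C_w$, $\beta_1 \subset U$, $\beta_2 \subset V$, then lifts starting at $\tilde z$ to a path whose endpoint is forced to be $\tilde f(\tilde z)$, so the concatenation belongs to $C_z$.

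For the direction $(\Leftarrow)$, given $(C_z)$, I define $\tilde f$ on $\tilde M$ by: for $\tilde z$ projecting to $z$, pick any $c \in C_z$, lift $c$ to $\tilde c$ starting at $\tilde z$, and set $\tilde f(\tilde z) = \tilde c(1)$. Well-definedness is exactly property (1) (two elements of $C_z$ are homotopic rel endpoints, so lift to paths with common endpoint). Deck equivariance and the lifting property $\pi \circ \tilde f = f \circ \pi$ are immediate from the construction. Continuity of $\tilde f$ at a point $\tilde w$ is where property (2) is essential: for $\tilde z$ close to $\tilde w$, I produce an element of $C_z$ of the form $\beta_1 \star c_w \star \beta_2$ with small $\beta_i$, and lifting this path at $\tilde z$ through evenly covered neighborhoods forces $\tilde f(\tilde z)$ to lie close to $\tilde f(\tilde w)$. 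Finally $\tilde f$ is a homeomorphism: it is a local homeomorphism because it is a continuous lift of a homeomorphism through a covering map; injectivity follows from injectivity of $f$ together with the free action of deck transformations (if $\tilde f(\tilde z_1) = \tilde f(\tilde z_2)$ then $z_1 = z_2$, so $\tilde z_2 = T\tilde z_1$, and equivariance forces $T = \mathrm{id}$); surjectivity follows similarly from surjectivity of $f$.

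I expect the main delicate point is the continuity step in the $(\Leftarrow)$ direction, since property (2) is stated only as the existence of one suitable representative in $C_z$, rather than as continuous dependence of a selected representative; passing to the universal cover is what lets us upgrade this local existence into pointwise continuity of $\tilde f$ via the rigidity of path lifting through evenly covered neighborhoods.
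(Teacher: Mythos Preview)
Your proof is correct and follows essentially the same approach as the paper: both directions pass through the characterization of ``acting trivially on $\pi_1(M)$'' via a deck-equivariant lift $\tilde f$, defining $C_z$ as the projection of paths from $\tilde z$ to $\tilde f(\tilde z)$ in one direction, and recovering $\tilde f$ by lifting representatives of $C_z$ in the other. Your version is in fact more thorough than the paper's, which asserts without detail that the resulting $\tilde f$ is a lift commuting with the deck group; your explicit verification of continuity from property~(2) and of the homeomorphism property fills in exactly what the paper leaves implicit.
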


\begin{proof}
Assume that $f$ acts trivially on $\pi_1(M)$. Let $\tilde f:X\to X$ be a lift that commutes with the deck transformations. For every $z\in M$, we pick a lift $\tilde z\in X$ of $z$ and define $C_z$ to be the set of the projections in $M$ of the paths in $X$ going from $\tilde z$ to $\tilde f(\tilde z)$. This set $C_z$ is obviously a homotopy class of paths from $z$ to $f(z)$. Property~\ref{i.continuity} follows from the continuity of $\tilde f$.

Conversely, assume the existence of a family $(C_{z})_{z \in M}$ satisfying Properties~\ref{i.homotopy-class} and~\ref{i.continuity}. For every point $z\in M$, we pick a lift $\tilde z\in X$ of $z$, we consider the lift $F(z)$ of the point $f(z)$ so that every path in $C_z$ lifts to a path from $\tilde z$ to $F(z)$. Then the map $\tilde f:\tilde z\mapsto F(z)$ is a lift of $f$ which commutes with the covering automorphisms. Hence $f$ acts trivially on $\pi_1(M)$.
%
\end{proof}

\begin{proof}[Proof of Proposition~\ref{p.w4Tow3}]
To construct a homotopy, we want to apply Proposition~\ref{prop.selection} to the family $z \mapsto C_{z}$ provided by Claim~\ref{claim.pi-1-Cz}, with $S=M$ and $F=\emptyset$.
This will yield a continuous family of paths $z\mapsto \gamma_z$ connecting
$z$ to $f(z)$. Then the map $(t,z)\mapsto \gamma_z(t)$
will be a homotopy between the identity and $f$. Thus the proof will be complete with the following claim (note that items 3 and 4 of Proposition~\ref{prop.selection} are automatically satisfied when $F=\emptyset$).

\begin{claim}\label{c.fibration}
The map $z\mapsto C_z$ as in Claim~\ref{claim.pi-1-Cz} satisfies items 1 and 2 of Proposition~\ref{prop.selection}. Namely, every $C_{z}$ is arcwise connected and simply connected, and the map $z\mapsto C_z$ is locally trivial.
\end{claim}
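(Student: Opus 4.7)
The plan is to verify the two items of Proposition~\ref{prop.selection} for the family $(C_z)_{z\in M}$. Since $M$ is neither the sphere nor the projective plane, its universal cover $X$ is a simply-connected non-compact surface, hence homeomorphic to $\mathbb{R}^2$. Fix the lift $\tilde f$ of $f$ commuting with the deck transformations, and for each $z$ a lift $\tilde z \in X$; by definition, a path in $M$ from $z$ to $f(z)$ belongs to $C_z$ if and only if it lifts, starting at $\tilde z$, to a path ending at $\tilde f(\tilde z)$. Lifting thus yields a homeomorphism (in the compact-open topology) between $C_z$ and the space of all paths in $X$ from $\tilde z$ to $\tilde f(\tilde z)$. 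Since $X \cong \mathbb{R}^2$, this latter space is convex -- the affine combination of two paths with common endpoints is again such a path -- and therefore contractible; in particular each $C_z$ is arcwise and simply connected. This disposes of item~1.

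For item~2, given $w \in M$, I would choose a simply-connected open neighborhood $U_w$ of $w$ small enough that $f(U_w)$ is contained in a simply-connected open neighborhood $V$ of $f(w)$. Using a chart, pick continuously for each $z \in U_w$ a path $\sigma_z$ from $z$ to $w$ inside $U_w$ (with $\sigma_w$ constant) and a path $\tau_z$ from $f(w)$ to $f(z)$ inside $V$ (with $\tau_w$ constant). Set
\[
\Theta(z,\gamma) := \sigma_z \ast \gamma \ast \tau_z,
\]
with an appropriate reparametrization. Then $\Theta$ is continuous, $\Theta(w,\cdot) = \mathrm{Id}$, and for each fixed $z$ the map is a homeomorphism onto its image with inverse $\gamma' \mapsto \sigma_z^{-1} \ast \gamma' \ast \tau_z^{-1}$.

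The one nontrivial point, which I expect to be the main obstacle, is to check that $\Theta(z,\gamma)$ really lies in $C_z$ and not merely in some homotopy class of paths from $z$ to $f(z)$. For this I would lift the concatenation starting at $\tilde z$. Because $U_w$ is simply connected, each component of its preimage in $X$ is mapped homeomorphically to $U_w$, so the lift of $\sigma_z$ stays in the component containing $\tilde z$ and ends at the unique lift $\tilde w$ of $w$ in that component. The homotopy class $C_w$ is insensitive to the choice of lift of $w$ -- this is precisely where one uses that $\tilde f$ commutes with the deck transformations -- so the lift of $\gamma \in C_w$ starting at $\tilde w$ ends at $\tilde f(\tilde w)$. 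Finally, after possibly shrinking $U_w$ so that $\tilde f$ maps the component of $\tilde z$ into a single component of the preimage of $V$, the lift of $\tau_z$ starting at $\tilde f(\tilde w)$ remains in that component and ends at the unique lift of $f(z)$ therein, which is exactly $\tilde f(\tilde z)$. Hence the full concatenation lifts to a path from $\tilde z$ to $\tilde f(\tilde z)$, so $\Theta(z,\gamma) \in C_z$, completing the verification of item~2.
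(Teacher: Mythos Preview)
Your treatment of item~1 is correct and coincides with the paper's: lift to the universal cover $X\cong\bbR^2$ and use that the space of paths with prescribed endpoints in a contractible space is itself contractible.

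For item~2 there is a genuine gap. Your concatenation map $\Theta(z,\gamma)=\sigma_z\ast\gamma\ast\tau_z$ is continuous, and your lifting argument correctly shows that its image lies in $C_z$; but it is not a homeomorphism \emph{onto} $C_z$. A path $\gamma'\in C_z$ belongs to the image only if it literally begins by traversing $\sigma_z$ and ends by traversing $\tau_z$, which a generic element of $C_z$ does not do. Your proposed inverse $\gamma'\mapsto\sigma_z^{-1}\ast\gamma'\ast\tau_z^{-1}$ is only an inverse up to homotopy rel endpoints: the composite $\sigma_z^{-1}\ast(\sigma_z\ast\gamma\ast\tau_z)\ast\tau_z^{-1}$ is a different parametrized path from $\gamma$, no matter how you reparametrize. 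The definition of local triviality in Proposition~\ref{prop.selection}, and the way it is used in the proof there (one must pull \emph{arbitrary} elements $\gamma_0\in C_{z_0}$, $\gamma_1\in C_{z_1}$ back to $C_w$), genuinely requires $\Theta(z,\cdot)$ to be a bijection onto $C_z$.

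The paper avoids this by acting with ambient homeomorphisms of $M$ rather than concatenating paths. It builds continuous families of compactly supported homeomorphisms $\varphi_{z,t}$ (for $t\in[0,\tfrac12]$) and $\psi_{z,t}$ (for $t\in[\tfrac12,1]$) of $M$ satisfying $\varphi_{z,0}(w)=z$, $\psi_{z,1}(f(w))=f(z)$, $\varphi_{z,1/2}=\psi_{z,1/2}=\id$, $\varphi_{w,t}=\psi_{w,t}=\id$, and sets $\Theta(z,\gamma)(t)$ equal to $\varphi_{z,t}(\gamma(t))$ or $\psi_{z,t}(\gamma(t))$ according as $t\le\tfrac12$ or $t\ge\tfrac12$. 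Since each $\varphi_{z,t}$ and $\psi_{z,t}$ is a homeomorphism of $M$, this formula is invertible pointwise in $t$, yielding an honest homeomorphism $C_w\to C_z$.
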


Let us prove the claim. Since $M$ is not the sphere, nor the projective plane,
its universal cover $X$ is a plane. Let $\tilde z$ be a lift of some point $z$ of $M$, and $\tilde z'$ be the endpoint of the lift of any element of $C_{z}$ starting at $\tilde z$. The set $\tilde C_{z}$ of paths in $X$ joining $\tilde z$ to $\tilde z'$ is contractible. Furthermore, the covering map induces a homeomorphism between $\tilde C_{z}$ and $C_{z}$. Thus $C_{z}$ is arcwise connected and simply connected.

 To prove the local triviality, let us fix a point $w\in M$ (see Figure~\ref{f.sc-local-triviality-C}). One can find a continuous family of 
 homeomorphisms $\varphi_{z,0}$ of $M$, supported on a small disk $U_{w}$ around $w$,
defined for all $z$ in $U_{w}$, such that $\varphi_{w,0} = \operatorname{Id}$
 and such that for every such $z$, $\varphi_{z,0}(w)=z$ (see the proof of Lemma~\ref{l.fibration}).
 Since the space of compactly supported homeomorphisms of the disk is contractible (Alexander's trick), we may extend this family to a continuous family $\varphi_{z,t}$ supported in $U_{w}$, defined for $z$ in $U_{w}$ and $t$ in $[0,\frac{1}{2}]$, such that 
$\varphi_{z,\frac{1}{2}}=\operatorname{Id}$ for every such $z$, and $\varphi_{w,t}= \operatorname{Id}$ for every $t$.


Likewise, there is 
a continuous family of 
 homeomorphisms $\psi_{z,t}$ of $M$, supported on a small disk around $f(w)$,
 defined for all $t\in [\frac{1}{2},1]$ and all $z$ in $U_{w}$, 
 and such that for every such $z$,
$$
\psi_{z,\frac{1}{2}}=\operatorname{Id}, \ \ 
\psi_{z,1}(f(w))=f(z)
\text{ and } 
\psi_{w,t}=\id \text{ for every } t.
$$
 
We then consider the map $\Theta$ on $U_w\times C_{w}$
which associates to $(z,\gamma)$ the path $\gamma'$ between $z$ and $f(z)$ defined by
$$
\gamma'(t)= \left\{
\begin{array}{ll}
\varphi_{z,t}\circ\gamma(t) & \text{ for } t \in [0,\frac{1}{2}] \\
\psi_{z,t}\circ \gamma(t) & \text{ for } t \in [\frac{1}{2},1] 
\end{array}
\right.
$$
\begin{figure}[ht]
\begin{center}
\def\svgwidth{0.3\textwidth}
\def\ZZ{$\varphi_{z,\frac{1}{2}}=\psi_{z,\frac{1}{2}}=\operatorname{Id}$}
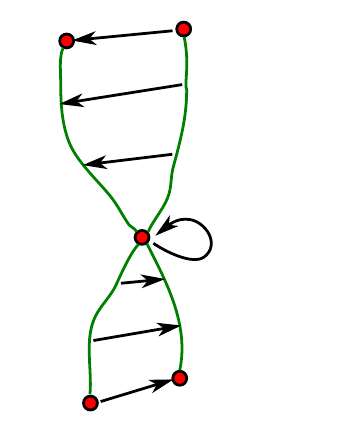
\caption{\label{f.sc-local-triviality-C}Proof of local triviality}
\end{center}
\end{figure}
This path joins $z$ to $f(z)$. By continuity of the family $z \mapsto C_{z}$, it belongs to $C_{z}$.
Since $\varphi_{w,t}=\psi_{w,t}=\id$ for every $t$, we have $\Theta(w,.)=\id$.
Furthermore the formulas defining $\Theta(z,.)$ can be reversed, thus $\Theta(z,.)$ is a homeomorphism between $C_{w}$ and $C_z$, as required by the local triviality.
This proves the claim, and completes the proof of Proposition~\ref{p.w4Tow3}.
\end{proof}

\subsection{Construction of proper homotopies}\label{ss.proper-homotopies}
We will now use the selection result to construct proper homotopies, proving that Property (W3) implies Property (W2).

\begin{prop}\label{p.homotopic-implies-proper}
Let $M$ be a connected surface, which is neither the plane nor the open annulus, and  $f$ be a homeomorphism of $M$ homotopic to the identity. Then there is a proper homotopy from the identity to $f$.

The same result holds:
\begin{itemize}
\item[--] if $M$ is the open annulus, and $f$ fixes each of the two ends of $M$,
\item[--] if $M$ is the plane, and $f$ preserves the orientation.
\end{itemize}
\end{prop}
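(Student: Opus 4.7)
The plan is to apply the selection result Proposition~\ref{prop.selection} to the end compactification of $M$. Let $S = M \cup E(M)$, where $E(M)$ is the space of ends, and set $F = E(M)$ so that $M = S \setminus F$. A first step is to show that $f$ extends to a self-homeomorphism $\bar f$ of $S$ fixing every end: this is precisely where the exceptional hypotheses matter. If $M$ is neither the plane nor the open annulus, then any homeomorphism acting trivially on $\pi_1(M)$ must fix each end (since ends correspond, in most cases, to specific conjugacy classes or specific elements of $\pi_1$). For the plane, orientation preservation ensures that the extension to $S^2$ fixes the point at infinity, and for the annulus it is exactly the extra hypothesis being imposed.

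Next, for each $z \in M$, I choose a homotopy class $C_z$ of paths from $z$ to $f(z)$ in $M$. Since $f$ is homotopic to the identity, Claim~\ref{claim.pi-1-Cz} provides a distinguished lift $\tilde f$ of $f$ to the universal cover commuting with the deck group, and one takes $C_z$ to be the class corresponding, via this lift, to paths from a chosen lift $\tilde z$ to $\tilde f(\tilde z)$. For $z$ close to an end $e$, I require $C_z$ to be the class representable by paths lying in arbitrarily small neighborhoods of $e$ in $S$; for ends whose neighborhoods in $M$ are cyclic (half-open annuli) this picks the unique class with zero winding around $e$. Finally, set $C_e$ to be the singleton consisting of the constant path at $e$. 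Verifying that these two descriptions of $C_z$ are compatible uses the fact that $\bar f(e) = e$ and that a neighborhood of $e$ in $S$ has simply connected closure.

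I then check the four hypotheses of Proposition~\ref{prop.selection}. Each $C_z$ is arcwise and simply connected because, via the chosen lift, it identifies with the (contractible) space of paths in the universal cover between two fixed points. Local triviality on $M$ is obtained exactly as in the proof of Claim~\ref{c.fibration}: one transports paths by isotopies supported on small disks around $z$ and $f(z)$. Property~\ref{i.singleton} holds by construction.

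The main obstacle is the verification of lower semi-continuity and of equi-local arcwise and simple connectedness at each end $e$. For this I use a neighborhood basis $(U_n)$ of $e$ in $S$ consisting of topological disks such that $U_n \cap M$ is of standard type (annular or simply connected), and such that $\bar f(U_{n+1}) \subset U_n$. For $z \in U_{n+1}$, both endpoints $z$ and $f(z)$ lie in $U_n$, and the preferred class $C_z$ is represented by a path in $U_n \cap M$, which gives lower semi-continuity; the space of such representatives is, in turn, contractible, yielding the equi-local arcwise and simple connectedness. Once Proposition~\ref{prop.selection} is applied we obtain a continuous family $(\gamma_z)_{z \in S}$, and the formula $H'(z,t) := \gamma_z(t)$ defines a continuous map $S \times [0,1] \to S$ that is the identity on $F$ at every time; its restriction to $M$ is the desired proper homotopy from the identity to $f$.
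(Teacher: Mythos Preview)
Your overall strategy is the paper's: pass to the end compactification $S$, extend the family $(C_z)$ by constant paths at the ends, and apply Proposition~\ref{prop.selection}. The genuine gap is in the step you call ``compatibility of the two descriptions of $C_z$'', which is exactly the lower semi-continuity at an end $e$. You justify it by saying that a neighborhood of $e$ in $S$ ``has simply connected closure'', but the paths must lie in $U_n \cap M$, not in $U_n$, and $U_n \cap M$ is essentially never simply connected: for an isolated planar end it is an open annulus, and for an end accumulated by other ends or a non-planar end it is neither annular nor simply connected (so your claimed dichotomy ``annular or simply connected'' for $U_n \cap M$ is false). Knowing only that $z, f(z) \in U_n$ does \emph{not} force the globally defined class $C_z$ (coming from the lift $\tilde f$) to contain any representative in $U_n \cap M$; the trajectory under the given homotopy may leave every fixed neighborhood of $e$, and one must prove it is nonetheless homotopic to a path that stays near $e$.

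This is precisely the content of Lemmas~\ref{lemm.end-2} and~\ref{lemm.semi-continuity-proper} in the paper. For an isolated planar end one first shows that $\hat f$ preserves the local orientation; then, comparing a local path $\beta$ from $z$ to $f(z)$ with an arbitrary $\gamma \in C_z$, one proves $[\beta^{-1}\gamma]$ commutes with the boundary loop $[\alpha]$ and hence (by Proposition~\ref{prop.algebra-fundamental-group2}) is a power of $[\alpha]$, so $C_z$ contains $\beta \alpha^n \subset U_n$. For accumulated or non-planar ends the argument is different, via invariance of lifted components. A secondary issue: your reason for $\hat f$ fixing each end (``ends correspond to specific conjugacy classes of $\pi_1$'') does not work as stated, since distinct ends of a planar surface can bound freely homotopic curves; the paper's Lemma~\ref{lemm.end-1} uses intersection-number and separation arguments instead.
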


\begin{rema}
\label{r.characterization-proper-homotopies}
Let $M$ be a surface and $f$ be a homeomorphism of $M$. Let $S$ be the end-compactification of $M$, and let $F = S \setminus M$ denote the set of ends (see section~\ref{ss.ends}). The homeomorphism $f$ extends to a homeomorphism of $S$ that we denote by $\hat f$. Moreover, the end compactification provides the following nice characterization of proper homotopies:  a homotopy $H$ from the identity to $f$ is proper if and only if it extends to a homotopy $\hat H$ of $S$ such that $\hat H(x,t) = x$ for every $x \in F$ and every $t \in [0,1]$. The proof is easy and left to the reader. In particular, a necessary condition for the existence of a proper homotopy from the identity to $f$ is that $f$ fixes each end of $M$ (\emph{i.e.} $\hat f(x)=x$ for every $x\in F$).  
\end{rema}

\begin{rema}
The homeomorphism $f$ of the open annulus $\mathbb{S}^1\times (-1,1)$ defined by $f(\theta,s)=(\theta,-s)$ is homotopic to the identity. To see this, one may consider for example the homotopy $(f_t)_{t\in [0,1]}$ defined by $f_t(\theta,s)=(\theta,(1-2t)s)$. Yet, there is no proper homotopic joining the identity to $f$, since $f$ exchanges the two ends of $\mathbb{S}^1\times (-1,1)$. Similarly, the homeomorphism $g$ of the plane $\mathbb{R}^2$ defined by $g(x,y)=(x,-y)$ is homotopic to the identity, but there is no proper homotopy joining the identity to $g$ since $g$ reverses the orientation. These two examples show that Proposition~\ref{p.homotopic-implies-proper} is optimal.
\end{rema}

\begin{proof}[Proof of Proposition~\ref{p.homotopic-implies-proper}]
Let $M$ be a connected surface and $f$ be a homeomorphism of $M$ which is homotopic to the identity. Assume that $M$ is not compact (otherwise the Proposition is trivial). In particular, $\pi_1(M)$ is a free group. We denote by $S$ the end-compactification of $M$, by $F = S \setminus M$ the set of ends of $M$, and by $\hat f$ the extension of $f$ to $S$. 

We exclude the two following cases: (1) $M$ is an annulus and $\hat f$ exchanges both ends of $M$, and (2) $M$ is a plane (the case when $M$ is a plane and $f$ preserves the orientation wil be treated separately at the end of the proof).
As explained in Remark~\ref{r.characterization-proper-homotopies}, we want to construct a homotopy of the end-compactification $S$, from the identity to $\hat f$, which fixes every end of $M$.

By hypothesis $f$ is homotopic to the identity, thus there is a family $(C_{z})_{z \in M}$ of paths in $M$ as in Claim~\ref{claim.pi-1-Cz}. Claim~\ref{c.fibration} applies: every $C_{z}$ is arcwise connected and simply connected, and the map $z \mapsto C_{z}$ is locally trivial.

 We extend the map $z \mapsto C_{z}$ to $S$ by defining $C_{z}$ to be the singleton containing the constant path $\{t \mapsto z\}$ when $z$ is an end of $M$.
We want to apply Proposition~\ref{prop.selection}. Points 1,2,3 are satisfied, it remains to check that the map $z \mapsto C_{z}$ is lower semi-continuous, equi-locally arcwise connected and equi-locally simply connected at every point $z$ of $F$.\footnote{Actually, in the case of a M\"obius band, the semi-continuity may fail; but in this case we will recover it by making another choice for the $C_{z}$'s, see below.}
 The following lemma is crucial (this is where we have to exclude the annulus case).

\begin{lemm}\label{lemm.end-1}
The homeomorphism $\hat f$ fixes each end of $M$. 
\end{lemm}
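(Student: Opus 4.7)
The plan is to show that each end of $M$ is characterised by a peripheral conjugacy class in $\pi_1(M)$, which must be preserved by $f$ since $f$ acts trivially on $\pi_1$.

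First I would dispose of the trivial cases via the classification of non-compact surfaces. If $M$ has a single end (for instance if $M$ is the open M\"obius band, or any one-ended surface of infinite genus) there is nothing to prove. If $M$ is the open annulus, then $\hat f$ acts on a two-element set of ends and the exchanging permutation is excluded by hypothesis, leaving only the identity. The plane case is also excluded at this stage. So from now on I assume that $M$ is neither a plane, an annulus, nor a M\"obius band; in particular $\pi_1(M)$ is a non-trivial free group.

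Fix an end $e$. Choose an exhaustion $K_1\subset K_2\subset\cdots$ of $M$ by compact connected submanifolds with finite boundary (from a locally finite triangulation), and let $U_n$ denote the component of $M\setminus K_n$ whose closure in $S$ contains $e$. Then $(\overline{U_n}^S)_n$ forms a neighborhood basis of $e$ in $S$. I would argue that, for $n$ large, one of the boundary curves $C_n$ of $U_n$ can be chosen to be an essential simple closed curve in $M$: otherwise $C_n$ would bound a disk on the $U_n$-side for arbitrarily large $n$, which would force $e$ to admit a basis of planar simply-connected neighborhoods, making $M$ itself a plane, against our standing assumption. Since $f$ is homotopic to the identity it acts trivially on $\pi_1(M)$, so $f(C_n)$ is freely homotopic to $C_n$ in $M$ (via the identification of free homotopy classes of loops with conjugacy classes in $\pi_1(M)$).

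By the classical surface topology result that two disjoint freely homotopic essential simple closed curves cobound an embedded annulus (the M\"obius band alternative being ruled out by Proposition~\ref{prop.primitive} since the two curves are disjoint), and after making $C_n$ and $f(C_n)$ disjoint via the standard bigon reduction technique (to be developed in Section~\ref{s.isotopy-arc}), one concludes that $C_n$ and an isotope of $f(C_n)$ cobound a compact annulus $A_n\subset M$. For $n$ large, $C_n$ lies deep in the end-neighborhood $\overline{U_n}^S$, and I would argue that the compact annulus $A_n$---whose boundary includes $C_n$---is itself forced to lie in $\overline{U_n}^S$: otherwise $A_n$ would meet the compact set $K_n$, which, once $K_n$ is chosen to absorb every fixed compact subset of $M$, produces a contradiction. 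Hence $f(C_n)\subset \overline{U_n}^S$ for all $n$, and by continuity of $\hat f$ on the end compactification, $\hat f(e)=\lim \hat f(C_n)=e$. The main obstacle will be controlling the diameter of the cobounding annulus $A_n$; this requires a careful interplay between the choice of the exhaustion and the observation that annuli cobounded by disjoint homotopic essential curves are in a sense canonical, so that $A_n$ cannot escape to a different end of $M$.
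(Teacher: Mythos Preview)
Your approach differs from the paper's, and the gap you yourself flag---controlling the annulus $A_n$---is genuine and is not resolved by what you propose.

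The paper argues by contradiction with a short case analysis on the nature of the end $x$ with $\hat f(x)\neq x$. If $x$ is non-planar, a small neighbourhood $U$ with $\hat f(U)\cap U=\emptyset$ contains two simple loops $\alpha,\gamma$ with intersection number $1$; since $f(\alpha)$ is freely homotopic to $\alpha$ it must meet $\gamma\subset U$, contradicting $f(\alpha)\subset\hat f(U)$. If $x$ is planar but not isolated, a small loop $\alpha\subset U$ separating $x$ from a nearby end $x'$ must have $f(\alpha)$ also separating $x$ from $x'$, again impossible. Only then, with $x$ planar and isolated, does the paper use that a small essential loop around $x$ and its image cobound an annulus---forcing $M$ to be the open annulus, the excluded case.

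Your route tries to run the annulus argument uniformly, without first reducing to the planar isolated case, and this is where it breaks. The sentence ``once $K_n$ is chosen to absorb every fixed compact subset of $M$'' is not meaningful: $A_n$ varies with $n$, so there is no fixed compact set to absorb. What you would actually need is: if $\hat f(e)\neq e$, then for large $n$ the disjoint curves $C_n$ and $f(C_n)$ lie near distinct ends, and none of the complementary regions of $C_n\cup f(C_n)$ is an annulus---contradicting Lemma~\ref{l.bound-annulus}. Proving that last clause for non-planar or non-isolated ends amounts to re-deriving the paper's intersection/separation arguments inside a more elaborate wrapper; the ``canonicity'' of cobounding annuli does not give it to you for free.

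Two smaller issues. Your argument that some $C_n$ is essential is backwards: a contractible boundary curve of $U_n$ bounds a compact disk in $M$, and since $\overline{U_n}^S$ contains the end $e$, that disk lies on the side \emph{away} from $U_n$, not the $U_n$-side. (Easily repaired by filling such disks into $K_n$.) And the forward reference should be to Lemma~\ref{l.bound-annulus} or Section~\ref{sec.redresser-les-courbes}, not Section~\ref{s.isotopy-arc}, which treats lines rather than closed curves.
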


\begin{proof}
Let $x$ be an end of $M$. We assume by contradiction that $\hat f(x) \neq x$, and we will prove that $M$ is an open annulus (the first excluded case).
We first prove that $x$ is planar and isolated in $F$. Let $U$ be a small neighborhood of $x$, so that $\hat f(U)$ is disjoint from $U$.

Assume that $x$ is not planar (see Figure~\ref{f.properhomotopy}, left). So $U\cap M$ contains a surface $V$ with boundary, which is either a handle (homeomorphic to the torus minus an open disc) or a closed M\"obius band (homeomorphic to the projective plane minus an open disc). We can choose a simple loop $\gamma$ in $V$, and a simple loop $\alpha$ in $U$, so that the intersection number of $\gamma$ and $\alpha$ equals $1$.
The intersection number is invariant by homotopy, hence every curve freely homotopic to $\alpha$ must meet $\gamma$.  In particular, every curve freely homotopic to $\alpha$ must meet $U$.  But the curve $f(\alpha)$  is homotopic to $\alpha$ since $f$ is homotopic to the identity, and it is included in $\hat f(U)$ which was assumed to be disjoint from $U$. This is a contradiction.

\begin{figure}[ht]
\begin{center}
\def\svgwidth{0.45\textwidth}
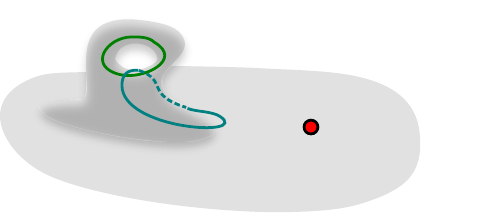
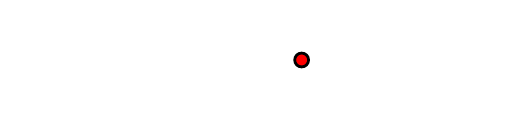
\caption{The ends are planar and isolated \label{f.properhomotopy}}
\end{center}
\end{figure}

Assume now that the end $x$ is planar but not isolated  (see Figure~\ref{f.properhomotopy}, right). Since $x$ is planar, we can find a neighbourhood $V$ of $x$, homeomorphic to a disc, included in $U$. Since $x$ is not isolated, we can find another end $x'$ of $M$ in $V$. We consider a simple closed curve $\alpha$ in $V$ which separates $x$ from $x'$: its intersection number with an arc from $x$ to $x'$ is $\pm 1$. The curve $f(\alpha)$ is freely homotopic to $\alpha$, hence it must also separate $x$ from $x'$. But $f(\alpha)$ is included in $\hat f(U)$ which is disjoint from $V$. This is again a contradiction.

Thus $x$ is isolated in $F$ and planar. So we can find a simple closed curve $\alpha$, surrounding a disk $D$ in $S$ which does not contain any end but $x$. If $\alpha$ is contractible in $M$, it bounds a disc (Lemma~\ref{lemm.bounding-disk}) and $M$ has only one end, contrary to the assumption that $x \neq \hat f(x)$. Assume $\alpha$ is sufficiently near $x$. Then $\alpha$ and  $f(\alpha)$ are two disjoint essential simple closed curves which are homotopic. Thus they bound a closed annulus (Lemma~\ref{l.bound-annulus}), and we see that $M$ is an open annulus with two ends $x,\hat  f(x)$.
\end{proof}

The next lemma is where we have to exclude the plane case. See section~\ref{ss.orientation} for the definitions.
\begin{lemm}\label{lemm.end-2}
The homeomorphism $\hat f$ locally preserves the orientation near every isolated planar end of $M$.
\end{lemm}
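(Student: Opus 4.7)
\textbf{Plan of proof for Lemma~\ref{lemm.end-2}.} My plan is to argue by contradiction, in the spirit of the proof of Lemma~\ref{lemm.end-1}, but tracking orientations rather than positions. Suppose that $\hat f$ reverses orientation near an isolated planar end $x$ of $M$. Using that $\hat f(x)=x$ (Lemma~\ref{lemm.end-1}) and continuity of $\hat f$, I first choose a closed disk neighborhood $V$ of $x$ in $S$, together with a larger closed disk neighborhood $V_0\supset V\cup \hat f(V)$, and let $\alpha\subset V_0\setminus\{x\}$ be the boundary of $V$ oriented as the positive boundary of the sub-disk of $V_0$ containing $x$. Then $\hat f\circ\alpha = \partial \hat f(V)$ is also a simple closed curve going once around $x$ in $V_0\setminus\{x\}$, and since $\pi_1(V_0\setminus\{x\})\cong\bbZ$ is generated by the class of any positively oriented simple loop around $x$, the assumption that $\hat f$ reverses orientation near $x$ translates (after a basepoint change along a path inside $V_0\setminus\{x\}$) into the identity $[\hat f\circ\alpha]=[\alpha]^{-1}$ in $\pi_1(V_0\setminus\{x\})$.

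Next, since $f$ is homotopic to the identity on $M$, the loops $\alpha$ and $\hat f\circ\alpha$ are freely homotopic in $M$. Letting $i\colon V_0\setminus\{x\}\hookrightarrow M$ denote the inclusion, this means that $i_\ast[\alpha]$ and its inverse are conjugate in $\pi_1(M)$.

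Since $M$ is non-compact, $\pi_1(M)$ is a free group, and no nontrivial element of a free group is conjugate to its own inverse: from $cwc^{-1}=w^{-1}$ one derives $(wc)^2=c^2$, and in a free group the equality $x^2=y^2$ forces $x=y$ (the subgroup $\langle x,y\rangle$ is free and such a relation makes it cyclic), whence $w=1$. Therefore $i_\ast[\alpha]=1$, and by Lemma~\ref{lemm.bounding-disk} the curve $\alpha$ bounds a disk $D\subset M$. Gluing $D$ along $\alpha$ to the sub-disk of $V$ containing $x$ realizes $S$ as a $2$-sphere with $x$ as its only end, so $M=\bbR^2$. This contradicts the exclusion of the plane from the main argument of Proposition~\ref{p.homotopic-implies-proper}, and hence $\hat f$ preserves orientation near $x$.

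The main technical point to pin down is the translation of ``$\hat f$ reverses orientation near $x$'' into the equation $[\hat f\circ\alpha]=[\alpha]^{-1}$: this requires carefully relating the orientations that $V$ and $\hat f(V)$ inherit as sub-disks of $V_0$ (equivalently, a local orientation of $S$ near the fixed point $x$) and keeping track of the basepoint identifications in the abelian group $\pi_1(V_0\setminus\{x\})$. Once that step is set up correctly, the algebraic fact about free groups and the end-compactification argument are both short.
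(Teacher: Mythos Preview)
Your proof is correct and follows essentially the same route as the paper's: argue by contradiction, observe that a small boundary curve $\alpha$ around the end would then be freely homotopic in $M$ to its inverse, and use that $\pi_1(M)$ is free to force $[\alpha]=1$, whence $M$ is a plane (the excluded case). The only notable difference is in the algebraic step: you give a direct self-contained argument that no nontrivial element of a free group is conjugate to its inverse (via $(wc)^2=c^2\Rightarrow wc=c$), whereas the paper takes a small detour through Proposition~\ref{prop.primitive} to note that $[\alpha]$ is primitive (or the square of a primitive in the M\"obius-band case) before asserting the same conclusion. Your version is slightly cleaner on this point, since the primitivity is not actually needed.
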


\begin{proof}
Let $x$ be an isolated planar end of $M$. According to the previous lemma, $x$ is fixed by $\hat f$. Assume, by contradiction, that $\hat f$ does not preserves the orientation near $x$. If  $\alpha$ is a simple closed curve surrounding $x$, then  $f(\alpha)$ surrounds $x$ with the opposite orientation, and is freely homotopic to $\alpha$ since $f$ is homotopic to the identity. If $\alpha$ is contractible in $M$ then it bounds a disc, $M$ is a plane and $f$ reverses the orientation: this is our second excluded case. If $\alpha$ is not contractible in $M$ then the corresponding element $[\alpha]$ in the fundamental group of $M$ is conjugate to its inverse. But $[\alpha]$ is a primitive element in a free group, or the square of a primitive element in case of a M\"obius band (see Proposition~\ref{prop.primitive}), and such an element is not conjugate to its inverse. This is the desired contradiction.
\end{proof}

\begin{lemm}\label{lemm.semi-continuity-proper}
The map $z \mapsto C_{z}$ is lower semi-continuous at every point of $F$ (up to making another choice for the $C_{z}$'s in the case of a M\"obius band).
\end{lemm}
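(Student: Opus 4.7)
The plan is to reduce lower semi-continuity at an end $y_0\in F$ to a statement about the lift $\tilde f$ of $f$ to the universal cover $\pi:X\to M$ used to define the family $(C_z)$. Specifically, the required representative of $C_z$ near $y_0$ will exist if and only if, for a lift $\tilde z$ of $z$, the point $\tilde f(\tilde z)$ lies in the same connected component of $\pi^{-1}(W\cap M)$ as $\tilde z$ (for some prescribed small neighborhood $W$ of $y_0$).

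First I would set up the neighborhoods. By Lemma~\ref{lemm.end-1}, $\hat f(y_0)=y_0$, so for any prescribed neighborhood $W$ of $y_0$ in $S$ there is a smaller neighborhood $V\subset W$ with $\hat f(V)\subset W$; for $z\in V\cap M$, both $z$ and $f(z)$ then lie in $W\cap M$. Shrinking $W$ if necessary, one may take $W\cap M$ connected, with finitely many boundary circles in $M$, each essential in $M$; by Lemma~\ref{lemm.groupe-fonda-sous-surface}, the inclusion $W\cap M\hookrightarrow M$ is $\pi_1$-injective. Therefore each connected component of $\pi^{-1}(W\cap M)$ is a universal cover of $W\cap M$, and if $\widetilde W_{\tilde z}$ denotes the component containing $\tilde z$, then a path from $z$ to $f(z)$ in $W\cap M$ represents $C_z$ iff its lift starting at $\tilde z$ terminates at $\tilde f(\tilde z)$; since this lift stays in $\widetilde W_{\tilde z}$, the existence of such a representative is equivalent to $\tilde f(\tilde z)\in \widetilde W_{\tilde z}$.

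The heart of the proof will therefore be to show that $\tilde f$ fixes each component of $\pi^{-1}(W\cap M)$ that accumulates at a lift of $y_0$ in $X$, equivalently that $\tilde f$ fixes each end of $X$ lying above $y_0$. The local orientation preservation at planar ends provided by Lemma~\ref{lemm.end-2}, combined with the fact that $\tilde f$ commutes with the covering automorphisms, rules out the only serious obstruction: that $\tilde f$ could swap two lifts of $y_0$ related by a deck transformation, which would correspond precisely to a local reversal of orientation. For non-planar ends an analogous argument applies, in the spirit of the proof of Lemma~\ref{lemm.end-1}: swapping two lifts would force $\tilde f$ to act non-trivially on loops threading through a handle or crossing a non-orientable piece near $y_0$, contradicting the triviality of the action on $\pi_1(M)$.

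The M\"obius band exception occurs because $\pi_1(M)\cong\mathbb{Z}$ has non-trivial center, so there are two distinct lifts of $f$ commuting with deck transformations, differing by the generator of $\pi_1(M)$ (the glide reflection). The preimage in $X$ of a small neighborhood of the unique end of $M$ has two connected components swapped by this generator; one of the two lifts fixes both components while the other swaps them. Selecting the correct lift yields the family $(C_z)$ for which semi-continuity holds, which is the meaning of the ``other choice'' in the statement. I expect the main obstacle to be precisely the verification that the lift $\tilde f$ fixes each end of $X$ above every $y_0\in F$: this amounts to a careful local-to-global analysis combining Lemmas~\ref{lemm.end-1} and~\ref{lemm.end-2} with the algebraic structure of $\pi_1(M)$, and it is where the M\"obius band has to be singled out.
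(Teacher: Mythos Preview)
Your framework---reduce to showing that the lift $\tilde f$ sends $\tilde z$ into the ``same'' component of $\pi^{-1}(W\cap M)$---is sound, and it is exactly what the paper does in the accumulated/non-planar case. Your identification of the M\"obius band exception (two candidate lifts differing by the core curve) is also correct. But the heart of the argument is missing, and some of your formulations do not make sense as written.

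First, the phrase ``ends of $X$ lying above $y_0$'' and ``lifts of $y_0$'' is undefined: $y_0\notin M$, so it has no lift in $X$, and $X$ is a plane with a single end. What you actually need is a statement about components of $\pi^{-1}(W\cap M)$, but $\tilde f$ does not permute these components (since $f(W)\not\subset W$ in general), so ``$\tilde f$ fixes each component'' is not literally meaningful either. The paper sidesteps this by working with two nested neighbourhoods $V\subset U$ and only claiming that $\tilde f(\tilde V')$ \emph{meets} $\tilde V'$, which suffices to produce a path in $V'\cup f(V')\subset U$.

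Second, your justification for the isolated planar case is not an argument. You write that orientation preservation ``rules out swapping two lifts of $y_0$'', but the actual mechanism is: orientation preservation gives $[\alpha]=[\beta\star f(\alpha)\star\beta^{-1}]$ for any path $\beta$ in $U$, while the homotopy gives $[\alpha]=[\gamma\star f(\alpha)\star\gamma^{-1}]$ for $\gamma\in C_z$; hence $[\beta^{-1}\gamma]$ commutes with $[\alpha]$, and then \emph{primitivity} of $[\alpha]$ in the free group $\pi_1(M)$ (Proposition~\ref{prop.algebra-fundamental-group2}) forces $[\beta^{-1}\gamma]=[\alpha]^n$, so $\beta\alpha^n\in C_z$ lies in $U$. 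You never invoke primitivity, which is the crux; without it the step fails, and indeed it \emph{is} exactly the failure of primitivity that produces the M\"obius band exception (Proposition~\ref{prop.primitive}).

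Third, for the accumulated/non-planar case your sketch (``swapping lifts would force $\tilde f$ to act non-trivially on $\pi_1$'') is circular: $\tilde f$ commuting with deck transformations \emph{is} the statement that $f$ acts trivially on $\pi_1$, so you cannot derive a contradiction from it. The paper's argument is different and genuinely topological: one finds a loop $\alpha\subset V$ not homotopic to $\partial V$; if $\tilde f(\tilde V')$ were disjoint from $\tilde V'$, a boundary line $L$ of $\tilde V'$ would separate them, and $T_\alpha$-invariance of both sets forces $T_\alpha L=L$, making $\alpha$ homotopic to $\partial V$---contradiction. This separating-line argument is the missing idea in your non-planar case.
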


\begin{proof}
Let $x \in F$. Let us prove the lower semi-continuity at $x$. We first examine the case when $x$ is isolated in $F$ and planar.
Let $U$ be a (disk) neighborhood of $x$ which contains no other point of $F$. According to Lemma~\ref{lemm.end-1}, $\hat f$ fixes $x$, so we can choose a smaller disk neighborhood $V$ of $x$ such that $\hat f(V)$ is included in $U$.  Let $z$ be a point of $V \setminus \{x\}$. We are looking for a curve belonging to $C_{z}$ and included in $U$.
Let $\alpha$ be a simple closed curve around $x$ in $V \setminus \{x\}$ based at $z$, $\beta$ be a curve from $z$ to $f(z)$ in $U$, and $\gamma$ be any curve in $C_{z}$. According to Lemma~\ref{lemm.end-2}, $\hat f$ preserves the orientation near $x$. Consequently, the curve $\alpha$ is freely homotopic to $f(\alpha)$ in $U \setminus \{x\}$. 
In the fundamental group $\pi_{1}(S \setminus F, z)$, we get
 $$
 [\alpha] = [\beta \star f(\alpha) \star \beta^{-1}].
 $$  
On the other hand, recall that there is homotopy from the identity to $f$, and that the set $C_z$ is the homotopy class of the trajectory of $z$ for this homotopy. As a consequence, we get 
$$
 [\alpha] = [\gamma \star f(\alpha) \star \gamma^{-1}].
$$
Thus we see that $[\beta^{-1}\gamma]$ commutes with $\alpha$. Now we examine two sub-cases. If $[\alpha]$ is a primitive element in the free group $\pi_{1}(M, x)$, then we apply Proposition~\ref{prop.algebra-fundamental-group2} and we conclude that there exists some integer $n$ such that 
$$
[\beta^{-1}\gamma] = [\alpha]^n.
$$
Thus $C_{z}$ contains the curve $\beta \alpha^{n}$ which is included in $U$. This proves the semi-continuity in this sub-case. If $[\alpha]$ is not a primitive element, then Proposition~\ref{prop.primitive} says that $\alpha$ bounds a compact M\"obius band. Then $M$ is an open M\"obius band, whose only end is $x$, and
whose fundamental group is cyclic and generated by the core curve $\alpha'$ with $[\alpha] = [\alpha']^2$. In this sub-case, either $[\beta^{-1}\gamma]$ is a power of $[\alpha]$ and we conclude as before, or $[\beta^{-1}\gamma \alpha']$ is a power of $\alpha$. There is an isotopy of the M\"obius band which ``makes a full turn'', that is, which goes from the identity to the identity and such that every trajectory is freely homotopic to the core $\alpha'$ (in the model where $M$ is the quotient of the plane by the map $(x,y) \mapsto (x+1, -y)$, take the homotopy from the identity to this map and project down to $M$). Composing the homotopy $H$ with this full turn isotopy, we get a new homotopy $H'$ from the identity to $f$, under which the trajectory of $z$ is homotopic to  $\gamma' = \gamma\alpha'$. For each $z$, define $C'_{z}$ to be the homotopy class of the trajectory of $z$ under $H'$.
Now $\beta^{-1}\gamma'$ is a power of $\alpha$, and we conclude as before that $C'_{z}$ contains a curve included in $U$. Thus the new family $(C'_{z})$ is lower semi-continuous.

Let us prove the lower semi-continuity in the case when $x$ is accumulated in $F$ or non planar.
Let $U$ be a neighborhood of $x$, and again consider a smaller neighborhood $V$, bounded by an essential simple closed curve disjoint from $F$, such that $V \cup \hat f(V) \subset U$.   Let $z$ be a point of $V$ distinct from $x$. As in the proof of Lemma~\ref{lemm.end-1}, there is a simple closed curve $\alpha$, included in $V$, which is not freely homotopic to any curve disjoint from $V$, and in particular it is essential and not homotopic to the boundary of $V$.
Let $V'=V \setminus F$.
We consider the universal cover $X$ of $M=S \setminus F$, which is a topological plane, a lift $\tilde V'$ of $V'$ in $X$, a lift $\tilde z$ of $z$ included in $\tilde V'$. Consider a lift $\tilde \alpha$ of $\alpha$ included in $\tilde V'$, and let $T$ be the corresponding (non trivial) automorphism of $X$.
Since $\alpha$ is included in $V'$ this automorphism leaves $\tilde V'$ invariant.
Let $\tilde f$ be the lift of $f$ obtained by lifting the homotopy from the identity to $f$.
Let us show that $\tilde f \tilde V'$ meets $\tilde V'$.
Assume by contradiction that $\tilde f \tilde V'$ is disjoint from $\tilde V'$. Since $\tilde f$ commutes with $T$, the set $\tilde f \tilde V'$ is also invariant under $T$. The set $\tilde V'$ is a submanifold, its boundary is a union of topological lines, since it is disjoint from $\tilde f \tilde V'$ one of these lines, say $L$, separates $\tilde V'$ from $\tilde f \tilde V'$. Since both $\tilde V'$ and $\tilde f \tilde V'$ are invariant under $T$  we get $TL = L$. This indicates that $\alpha$ is homotopic to the projection of $L$, which is the boundary of $V$ in $S \setminus F$, a contradiction.
Thus we have proved that $\tilde f \tilde V'$ meets $\tilde V'$. Since both sets are connected we may find a curve $\beta$ in their union which joins $\tilde z$ to $\tilde f (\tilde z)$. The projection of $\beta$ is a curve included in $V'\cup f(V')$ and homotopic to the trajectory of $z$ under the homotopy. In other words, it belongs to $C_{z}$. This proves the semi-continuity of $z \mapsto C_{z}$ at $x$.
\end{proof}

\begin{lemm}\label{lemm.ELC-ELSC}
 The map $z \mapsto C_{z}$ is equi-locally connected and equi-locally  simply connected at every point of $F$.
 \end{lemm}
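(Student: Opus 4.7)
The plan is to reduce both equi-local properties to the contractibility of $C_z$ intersected with a small neighborhood of the constant path $\gamma_x$, which will be forced by the injectivity of the map $\pi_1(U\cap M)\hookrightarrow\pi_1(M)$ for a suitable neighborhood $U$ of $x$ in $S$.

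Fix $x\in F$ and a basic neighborhood of $\gamma_x$ in $\cP$, which we can take of the form $\cU=\{\gamma\in\cP\,:\,\gamma([0,1])\subset W\}$ for some open neighborhood $W$ of $x$ in $S$. Using the same construction as in the proof of Lemma~\ref{lemm.semi-continuity-proper}, we select a smaller open neighborhood $U$ of $x$ with $\bar U\subset W$, such that (possibly after restricting to the relevant connected component near $x$) $U\cap M$ is connected and its boundary in $M$ consists of simple closed curves that are not contractible in $M$: in the isolated planar case a small disk around $x$ works since its boundary cannot bound a disk in $M$; in the non-isolated or non-planar case one uses the essential bounding curve already constructed in the semi-continuity proof. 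Lemma~\ref{lemm.groupe-fonda-sous-surface} then yields the injection $\pi_1(U\cap M)\hookrightarrow\pi_1(M)$.

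Let $p:\tilde M\to M$ be the universal cover, and let $\tilde f$ denote the lift of $f$ commuting with deck transformations that is associated with the given homotopy from $\mathrm{Id}$ to $f$. By injectivity of the induced map on $\pi_1$, every connected component of $p^{-1}(U\cap M)$ is a universal cover of $U\cap M$, hence homeomorphic to $\bbR^2$ since $U\cap M$ is a non-compact surface. For $z\in U\cap M$ fix a lift $\tilde z\in\tilde M$, and let $\tilde U_{\tilde z}$ be the component of $p^{-1}(U\cap M)$ containing $\tilde z$. Under the identification of $C_z$ with the space of paths in $\tilde M$ from $\tilde z$ to $\tilde f(\tilde z)$, an element of $C_z$ has image in $U\cap M$ if and only if its lift stays in $\tilde U_{\tilde z}$, which in particular forces $\tilde f(\tilde z)\in\tilde U_{\tilde z}$. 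In that case $C_z\cap\cU_U$, with $\cU_U:=\{\gamma\,:\,\gamma([0,1])\subset U\}$, is homeomorphic to the space of paths in the contractible space $\tilde U_{\tilde z}$ joining $\tilde z$ to $\tilde f(\tilde z)$; being a space of paths between two points in a contractible surface, it is itself contractible (for instance, by straight-line deformation to a fixed path after identifying $\tilde U_{\tilde z}\cong\bbR^2$).

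To conclude, pick an open neighborhood $V\subset U$ of $x$ in $S$ with $\hat f(V)\subset U$, and apply the lower semi-continuity from Lemma~\ref{lemm.semi-continuity-proper} (after performing, in the M\"obius band case, the same modification of the family $(C_z)$ as in that lemma) to the neighborhood $\cU_V:=\{\gamma\,:\,\gamma([0,1])\subset V\}$ of $\gamma_x$: one obtains a neighborhood $V_0\subset V$ of $x$ such that $C_z\cap\cU_V\neq\emptyset$ for every $z\in V_0\cap M$. Set $\cU':=\cU_V$ and keep $V_0$ as the neighborhood of $x$ in $S$. For every $z\in V_0\cap M$, the set $C_z\cap\cU_U$ contains $C_z\cap\cU'$, so it is non-empty, and hence contractible by the previous step. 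Consequently any two elements of $C_z\cap\cU'$ are joined by a path in $C_z\cap\cU_U\subset C_z\cap\cU$, and any continuous family in $C_z\cap\cU'$ indexed by $\partial\Delta_2$ extends continuously over $\Delta_2$ in $C_z\cap\cU$, which gives simultaneously the equi-local arcwise connectedness and the equi-local simple connectedness. The most delicate point is the choice of $U$ with essential bounding curves, which relies on the same case distinction as in Lemma~\ref{lemm.semi-continuity-proper}.
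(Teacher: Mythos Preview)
Your proof is correct and follows essentially the same strategy as the paper's: choose a neighborhood $U$ of $x$ whose boundary in $M$ consists of essential curves, use Lemma~\ref{lemm.groupe-fonda-sous-surface} to get $\pi_1(U\cap M)\hookrightarrow\pi_1(M)$, deduce that each component of $p^{-1}(U\cap M)$ is a plane, and conclude that paths in $C_z$ lying in $U\cap M$ form a contractible space. The paper phrases this slightly more directly (given two paths in $C_z$ contained in $V'=V\setminus F$, lift and connect them in the simply connected $\tilde V'$), while you package it as contractibility of $C_z\cap\cU_U$; the content is the same. One minor remark: your appeal to lower semi-continuity to ensure $C_z\cap\cU'\neq\emptyset$ is harmless but unnecessary, since when $C_z\cap\cU'=\emptyset$ the required conditions are vacuous, and when it is non-empty your contractibility argument already applies.
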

 
 \begin{proof}
Let $x$ be a point of $F$, and $U$ be a neighborhood of $x$ in $S$. Since $F$ is totally disconnected and since we have assumed that $M=S\setminus F$ is not a plane, one can find a closed neighbourhood $V$ of $x$ in $S$, included in $U$, bounded by a simple closed curve disjoint from $F$, and such that $M\setminus V$ is not a disc. Note that $V':=V\setminus F$ is connected since $V$ is connected and $F$ is totally disconnected. Let $z \in V\setminus F$, and $\gamma,\gamma'$ be two elements of $C_{z}$ that are included in $V'$. Consider the universal cover $X$ of $M$, a lift $\tilde V'$ of $V'$ in $X$. Since $M\setminus V$ is not a disk, the map induced by the inclusion $V \setminus F \hookrightarrow M$ between the corresponding fundamental groups is injective (Lemma~\ref{lemm.groupe-fonda-sous-surface}). Thus $\tilde V'$ is a plane. 
Since $C_{z}$ is a homotopy class of paths from $z$ to $f(z)$, there exist some lifts $\tilde \gamma, \tilde \gamma'$ of $\gamma,\gamma'$ having the same endpoints. Since $\tilde V'$ is simply connected, those two lifts are homotopic in $\tilde V'$ relatively to the endpoints. This homotopy induces a homotopy from $\gamma$ to $\gamma'$, in $V \setminus F$, relatively to the endpoints. This proves the equi-local connectedness.

\bigskip

It remains to check that $z\mapsto C_{z}$ is equi-locally simply connected at $x$. We pursue with the notations of the preceding paragraph. Given a continuous family $(\gamma_{t})_{t \in \partial \Delta_{2}}$ included in $C_{z}$, we lift it to a continuous family of arcs $(\tilde \gamma_{t})_{t \in \partial \Delta_{2}}$ included in $\tilde V'$ and sharing  the same end-points. Since the homotopy group $\pi_2(\tilde V')$ is trivial, we may extend this family to a continuous family $(\tilde \gamma_{t})_{t \in \Delta_{2}}$ included in $\tilde V'$, which in turns provides a continuous family $(\gamma_{t})_{t \in \Delta_{2}}$ included in $V$ (hence in $U$). This proves the equi-local connectedness, and completes the proof of the lemma.
\end{proof}

The family $(C_z)_{z\in S}$ satisfies all the hypotheses of Proposition~\ref{prop.selection}. Therefore, we can apply this proposition. It provides a continuous selection $z\mapsto \gamma_z$ so that $\gamma_z\in C_z$ for every $z\in S$. This continuous selection defines a homotopy $\hat H$ from the identity to the homeomorphism $\hat f$. Moreover, for every $z\in F$, this homotopy $\hat H$ satisfies $\hat H(t,z)=z$, since $C_z$ is the singleton consisting of the constant path $\{t\mapsto z\}$. According to Remark~\ref{r.characterization-proper-homotopies}, the restriction $H$ of $\hat H$ to the surface $M$ is a proper homotopy. The proof of Proposition~\ref{p.homotopic-implies-proper} is complete, but for the case $M$ is a plane and $f$ preserves the orientation.

Consider finally a homeomorphism $f$ of the plane that preserves the orientation. Choose any point $x_{0}$, and let $g$ be a compactly supported homeomorphism of $M$ such that $g(x_{0}) = f(x_{0})$. The map $f' = g^{-1}f$ induces a homeomorphism of the annulus $M' = M \setminus \{x_{0}\}$ that preserves both ends. Let $\alpha$ be a loop around infinity outside the support of $g$.
Since $f$ preserves the orientation,  the loop $f'(\alpha) = f(\alpha)$ is homotopic to $\alpha$. Thus $f'$ acts trivially on the fundamental group of $M'$, and thus is homotopic to the identity in $M'$(according to Proposition~\ref{p.W4-W3}). According to the annulus case of Proposition~\ref{p.homotopic-implies-proper}, $f'$ is also properly homotopic to the identity. Thus $f$ is properly homotopic to $g$ in the plane, and by Alexander's trick $g$ is isotopic to the identity, so $f$ is properly homotopic to the identity.
\end{proof}

\subsection{Alternative proofs using hyperbolic geometry}
\label{sub.hyperbolic-geometry}

In this section we give alternative proofs for Propositions~\ref{p.w4Tow3} and~\ref{p.homotopic-implies-proper}, using surface geometrization. For sake of simplicity, we will only consider  the hyperbolic case, \emph{i.e.} the case where the surface $M$ has negative Euler characteristic\footnote{Similar techniques, using euclidean geometry instead of hypebolic geometry, can be used to cook up some proofs for Propositions~\ref{p.w4Tow3} and~\ref{p.homotopic-implies-proper} in the case where the surface $M$ has zero Euler characteristic.}. Note that the techniques presented below, which are based on hyperbolic geometry, cannot be used to construct strong homotopies. In other words, selection tools cannot be superseded in the proof of Proposition~\ref{prop.finitely-homotopic} in the next section. 

\begin{proof}[Alternative proof of Proposition~\ref{p.w4Tow3} and~\ref{p.homotopic-implies-proper} in the hyperbolic case]
We assume that $M$ has negative Euler characteristic. By the Uniformization Theorem, $M$ admits a riemannian metric of constant curvature $-1$ and finite volume. As a further consequence, $M$ is homeomorphic to the quotient of the hyperbolic plane $\mathbb{H}^2$ by a discrete group $\Gamma$ of hyperbolic isometries. Since $f$ acts trivially on $\pi_1(M)$, it admits a lift $\tilde f:\mathbb{H}^2\to\mathbb{H}^2$ which commutes with the elements of $\Gamma$. We define a homotopy $(F_t)_{t\in [0,1]}$ joining the identity to $\tilde f$ as follows. For every $w\in\mathbb{H}^2$, we consider the unique geodesic arc $\gamma_{w}:[0,1]\to \mathbb{H}^2$ joining $\gamma_w(0)=w$ to $\tilde \gamma_w(1)=\tilde f(w)$, and we set $F_t(w):=\gamma_{w}(t)$. This homotopy commutes with the elements of $\Gamma$ (because $\tilde f$ commutes with the elements of $\Gamma$, and because the elements of $\Gamma$ map geodesics on geodesics). Hence, it induces a homotopy $(f_t)_{t\in [0,1]}$ joining the identity to $f$ on the surface $M$. This completes the proof of Proposition~\ref{p.w4Tow3}.

Now we want to show that the homotopy $(f_t)_{t\in [0,1]}$ is proper. We consider the compactification of $\mathbb{H}^2$ by the circle at infinity. On the one hand, since the hyperbolic metric on $M$ has finite volume, the set of all the fixed points at infinity of the elements of $\Gamma$ is dense in the whole circle at infinity. On the other, since  $\tilde f$ commutes with the elements of $\Gamma$, it must preserves the fixed points at infinity of every element of $\Gamma$. Consequently, $\tilde f$ extends by the identity on the circle at infinity. Let us consider an end $x$ of $M$. It admits a basis of neighborhoods whose lifts in $\mathbb{H}^2$ have geodesically convex components (a set $E$ is geodesically convex if it contains all the geodesics whose endpoints are in $E$). More precisely,
two cases may be considered:
\begin{itemize}
\item[--] either $x$ is a cusp and is a limit of simple closed horocycles,
\item[--] or $x$ is a limit of simple closed geodesics.
\end{itemize}
Let $V\subset U$ be two such neighborhoods of $x$, such that $V$ is much smaller than $U$, and $\tilde V\subset \tilde U$ be some lifts of the neighbourhoods in $\mathbb{H}^2$. Since $V$ is much smaller than $U$, its image $\tilde f(\tilde V)$ must be contained in a lift $\tilde U'$ of $U$. Since $\tilde f$ acts as the identity at infinity, $\tilde U'$ must coincide with $\tilde U$. Hence $\tilde f(\tilde V)$ is included in $\tilde U$. Since $\tilde U$ is geodesically convex, it follows that the set $F_t(\tilde V)$ is included in $\tilde U$  for every $t\in [0,1]$ 
As a further consequence, the inclusion $f_t(V)$ is included in $U$ for every $t$. This shows that the homotopy $(f_t)_{t\in [0,1]}$ is proper.
\end{proof}


\section{Homotopies relative to finite sets}\label{sec.fini-homo-implique-homo}



In this section we prove that Property (S3) implies Property (S2).
As in the previous section we consider a boundaryless connected surface $S$ (not necessarily compact  nor orientable), and some closed subset $F$. 

\begin{prop}[The finite criterion]
\label{prop.finitely-homotopic}
Let $f \in \homeo(S,F)$. Assume there exists a dense subset $F_{0}$ of $F$ such that  $f$ is strongly homotopic to the identity relatively to any finite subset of $F_{0}$.
Then $f$ is strongly homotopic to the identity relatively to $F$.
\end{prop}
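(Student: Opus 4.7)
The plan is to build the required strong homotopy as $H(z,t) = \gamma_z(t)$, where $z \mapsto \gamma_z$ is a continuous selection produced by Proposition~\ref{prop.selection} applied to a well-chosen family $(C_z)_{z\in S}$ of path-classes. First I would choose an increasing sequence of finite subsets $F_0\subset F_1\subset F_2\subset\cdots$ of the dense set $F_{0}$, such that $\bigcup_n F_n$ is dense in $F$, and for each $n$ fix a strong homotopy $I_n=(f_t^n)_{t\in[0,1]}$ from the identity to $f$ relative to $F_n$, as provided by (S3). For $z\in S\setminus F$ I then define $C_z$ to be the homotopy class (in the space of paths from $z$ to $f(z)$ in $S\setminus F$) of any path which, for every $n$, is homotopic in $S\setminus F_n$ rel endpoints to the trajectory $t\mapsto f_t^n(z)$. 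Existence of such a class is automatic since the trajectories $t\mapsto f_t^n(z)$ already lie in $S\setminus F_n$; uniqueness up to homotopy in $S\setminus F$ is the essential content of the ``uniqueness of homotopies'' discussed in \S\ref{ss.uniqueness} (applied to the complement of $\bigcup_n F_n$, whose fundamental group has trivial center by Proposition~\ref{prop.algebra-fundamental-group}, since density of $\bigcup_n F_n$ in $F$ rules out the exceptional topological types). For $y\in F$ I set $C_y=\{t\mapsto y\}$, the constant path.

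The core of the proof is then to verify the four hypotheses of Proposition~\ref{prop.selection}. Arcwise-connectedness and simple-connectedness of each $C_z$ with $z\in S\setminus F$ follow exactly as in Claim~\ref{c.fibration}, by lifting to the universal cover of $S\setminus F$. Local triviality on $S\setminus F$ is obtained by the same explicit construction as in Proposition~\ref{p.w4Tow3}: use continuous families of compactly supported homeomorphisms around $w$ and $f(w)$ to transport a path from $C_w$ to a path with endpoints $z,f(z)$, the resulting path automatically belonging to $C_z$ by continuity of the deformation with respect to each $I_n$. By construction $C_y$ is a singleton for $y\in F$.

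The main obstacle is to establish lower semi-continuity, equi-local arcwise connectedness, and equi-local simple connectedness at every $y\in F$; this is where the density hypothesis on $F_0$ is truly used. Given a disc neighbourhood $U$ of $y$ in $S$, I would exploit the density of $\bigcup_n F_n$ to pick $n$ large enough and a point $y_n\in F_n\cap U$ so close to $y$ that the isotopy $I_n$, which fixes $y_n$ throughout, sends a small disc neighbourhood $V\subset U$ of $y$ into $U$ for every $t\in[0,1]$; then for any $z\in V\setminus F$ the trajectory $t\mapsto f_t^n(z)$ is a representative of $C_z$ entirely contained in $U\setminus F$, giving lower semi-continuity. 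The same neighbourhoods $V\subset U$, combined with the fact that a lift of the path class $C_z$ lives inside a simply-connected lift of $U\setminus F$ (by Lemma~\ref{lemm.groupe-fonda-sous-surface}, provided $U$ is small enough that $S\setminus U$ is not a disc), yield equi-local arcwise connectedness and equi-local simple connectedness, following the pattern of Lemmas~\ref{lemm.semi-continuity-proper} and~\ref{lemm.ELC-ELSC}.

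Once Proposition~\ref{prop.selection} produces a continuous selection $z\mapsto\gamma_z$, I set $H(z,t):=\gamma_z(t)$. This is continuous in $(z,t)$, satisfies $H(z,0)=z$ and $H(z,1)=f(z)$, fixes every point of $F$ (since $C_y$ is the constant path at $y$), and avoids $F$ for $z\notin F$ because $\gamma_z\in C_z$ is by definition a path in $S\setminus F$. Hence $H$ is a strong homotopy from the identity to $f$ relative to $F$, proving (S3)$\Rightarrow$(S2).
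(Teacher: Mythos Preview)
Your overall architecture is the same as the paper's, but there is a genuine gap that recurs in two places and is precisely the main difficulty of the whole argument: the trajectory $t\mapsto f_t^n(z)$ under $I_n$ lies only in $S\setminus F_n$, \emph{not} in $S\setminus F$. The homotopy $I_n$ is strong relative to $F_n$, so it may well drag $z$ across points of $F\setminus F_n$.

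This breaks your existence argument for $C_z$: you need a path from $z$ to $f(z)$ lying in $S\setminus F$ and homotopic in each $S\setminus F_n$ to the $I_n$-trajectory, and you have exhibited no such path. The paper spends Lemmas~\ref{l.1}--\ref{l.uniqueness-Cz} on exactly this point: one lifts to the universal cover of $S\setminus F_n$, uses Brown--Kister to show the lift of $f$ preserves the connected components of the preimage of $S\setminus F$, and then pushes the lifted $I_p$-homotopy off the preimage of $F$ by a deformation retract onto a compact approximation of $S\setminus F$. Only after this work does one know $C_z$ is nonempty; uniqueness then follows from Lemma~\ref{l.3} (a loop contractible in every $S\setminus F_n$ is contractible in $S\setminus F$), which is again a compact-approximation argument rather than a direct appeal to \S\ref{ss.uniqueness}.

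The same error reappears in your lower semi-continuity step: you conclude that the $I_n$-trajectory of $z\in V$ lies in $U\setminus F$, but you can only say it lies in $U\setminus F_n$, so it is not a representative of $C_z$ at all. The paper's Lemma~\ref{l.cz-continuite} handles this by a case analysis (isolated point, interior point, accumulated boundary point); in the hardest case one finds an auxiliary point $x'\in F$ near $y$, uses that its $I_p$-trajectory is contractible (being a limit of trajectories of points in $\bigcup_n F_n$), and builds a short path in $U\setminus F$ by concatenation and the local Brown--Kister argument. Your proposed shortcut does not address this.
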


The proposition is obtained in three steps:
\begin{itemize}
\item[--] We first build a homotopy $(I_t)$ on $S \setminus F$ between the restriction $f_{|S\setminus F}$ and the identity.
\item[--] Then we prove that the homotopy classes of the trajectories $t\mapsto I_t(z)$
in $S\setminus F$ are lower semi-continuous at points of $F$.
\item[--] Finally, we use the selection tools of Section~\ref{sec.selection} to show that $(I_t)$ can be chosen to extend continuously
by the identity on the set $F$.
\end{itemize}

Proposition~\ref{prop.finitely-homotopic} is obvious when $F$ is finite. Thus in the remaining of this section we will assume that $F$ is infinite.
Under the hypotheses of Proposition~\ref{prop.finitely-homotopic}, there exists an increasing sequence $(F_{n})$ of finite subsets of $S$ whose union is dense in $F$, and for each 
$n$ there is a strong homotopy $I_{n}$ from the identity to $f$ relatively to $F_{n}$.
The trajectory of a point $z$ under this homotopy is denoted by $I_{n}.z$, and likewise the induced homotopy on a closed path $\gamma$ is denoted by $I_{n}.\gamma$.
We may, and we will, assume that each $F_n$ contains at least $3$ points for every $n$.
A first important remark is that by uniqueness of the homotopies in $S\setminus F_n$
(Corollary~\ref{coro.uniqueness-homotopy}),
the trajectories $I_{n}.z$  and $I_{p}.z$ are homotopic in $S\setminus F_n$ when $n < p$.
Here is another important remark for the case of non-orientable surfaces (see the appendix, section~\ref{ss.orientation} for the definition).

\begin{claim}\label{c.preserves-orientation}
Let $f$ be as in Proposition~\ref{prop.finitely-homotopic}. Then $f$ locally preserves the orientation near each point $x$ of $F$.
\end{claim}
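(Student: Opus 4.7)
The plan is to argue by contradiction: suppose there exists $x \in F$ near which $f$ reverses orientation. I choose an orientable open disk $U \subset S$ containing $x$ together with a smaller disk $V$ with $x \in V$, $f(V) \subset U$, and $f|_V : V \to U$ reversing the chart orientation. Since $F$ is infinite and $F_0$ is dense in $F$, $F_0$ is infinite too, so I first pick two distinct points $x_2, x_3 \in F_0 \setminus \{x\}$, then shrink $U$ and $V$ around $x$ (retaining the properties above) so that $U \cap \{x_2, x_3\} = \emptyset$, and finally pick $x_1 \in F_0 \cap V$ (either $x$ itself, if $x \in F_0$, or a nearby point of $F_0$). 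Then $x_1, x_2, x_3$ are pairwise distinct elements of $F_0$, and $f$ reverses orientation at the fixed point $x_1 \in V$. Hypothesis (S3) supplies a strong homotopy $H : S \times [0,1] \to S$ from the identity to $f$ relative to $\{x_1, x_2, x_3\}$; by the very definition of strong homotopy, $H$ sends $\big(S \setminus \{x_1, x_2, x_3\}\big) \times [0,1]$ into $S \setminus \{x_1, x_2, x_3\}$.

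Next I choose $\alpha$ a small simple loop around $x_1$ contained in $V$, taken small enough that $f(\alpha)$ still lies in $U$. By construction neither $\alpha$ nor $f(\alpha)$ meets $\{x_2, x_3\}$. Restricting $H$ to $\alpha \times [0,1]$ produces a free homotopy in $S' := S \setminus \{x_1, x_2, x_3\}$ between $\alpha$ and $f(\alpha)$, so these two loops represent the same conjugacy class in $\pi_1(S')$. On the other hand, the orientation-reversing character of $f$ near the fixed point $x_1$ forces $f(\alpha)$ to have winding number $-1$ around $x_1$ in the chart $U$; hence $[f(\alpha)] = [\alpha]^{-1}$ already inside $\pi_1(U \setminus \{x_1\}) \cong \bbZ$. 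Pushing this equality forward via the inclusion $U \setminus \{x_1\} \hookrightarrow S'$ and passing to conjugacy classes, the class of $\alpha$ in $\pi_1(S')$ is conjugate to its own inverse.

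The contradiction then comes from the structure of $\pi_1(S')$: since $S'$ is obtained from $S$ by removing a nonempty finite set, it is a non-compact surface, and by the classification recalled in Section~\ref{ss.surface-groups} its fundamental group is a free group. In a free group, no nontrivial element is conjugate to its inverse --- one clean argument uses the action on the Cayley tree, where a nontrivial element acts as a hyperbolic translation with a unique oriented axis and strictly positive translation length, while its inverse translates with opposite sign along the same axis, so the two cannot coincide as group elements. Since a small loop around a puncture is always a nontrivial peripheral element of the fundamental group of a punctured surface, $[\alpha]$ is nontrivial in $\pi_1(S')$, which yields the desired contradiction. The single delicate point, which I expect to be the main obstacle to getting the argument exactly right, is the transfer of the local identity $[f(\alpha)] = [\alpha]^{-1}$ from $\pi_1(U \setminus \{x_1\})$ into $\pi_1(S')$; this is legitimate precisely because the peripheral generator associated to $x_1$ remains nontrivial after inclusion, so the relation survives the push-forward.
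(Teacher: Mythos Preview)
Your proof is correct, but it takes a considerably longer route than the paper's. Both arguments begin the same way: since ``locally preserves orientation'' is stable under limits of fixed points, one may replace $x$ by a nearby point $x_1\in F_0=\bigcup_n F_n$. At this point the paper observes something you do not use: the strong homotopy $I_n$ (for any $n$ with $x_1\in F_n$) \emph{fixes} $x_1$, so by continuity the homotopy $t\mapsto I_n(\alpha(\,\cdot\,),t)$ of a sufficiently small loop $\alpha$ around $x_1$ stays inside the disk $U$, and it avoids $x_1$ because $I_n$ is strong relative to $F_n\ni x_1$. Hence $\alpha$ and $f(\alpha)$ are already freely homotopic in $U\setminus\{x_1\}$, where the winding number detects orientation directly. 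No extra punctures, no global fundamental group, no free-group algebra are needed.

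Your approach instead works globally in $S'=S\setminus\{x_1,x_2,x_3\}$: you correctly deduce that $[\alpha]$ is conjugate to $[\alpha]^{-1}$ in $\pi_1(S')$, and then invoke the (true, standard) fact that no nontrivial element of a free group is conjugate to its inverse, together with nontriviality of the peripheral loop --- which is exactly why you needed the auxiliary points $x_2,x_3$. This is perfectly valid, and the ``transfer'' you flag as delicate is fine since $U\setminus\{x_1\}\hookrightarrow S'$ preserves free homotopy classes. What your argument buys is a proof that never appeals to the continuity/locality of the homotopy near $x_1$; what the paper's argument buys is a two-line proof with no algebraic input beyond $\pi_1(\text{punctured disk})\cong\bbZ$.
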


\begin{proof} By density it suffices to check the case when $x$ belongs to $F_{n}$ for some $n$. Let $U$ be a disk neighborhood of $x$.
Consider an oriented curve $\alpha$ bounding a very small disk neighborhood of $x$. Using the strong homotopy relative to $F_{n}$, we see that the oriented curve $f(\alpha)$ is homotopic to $\alpha$ in $U \setminus x$. Thus $f$ preserves the orientation at $x$.
\end{proof}

\subsection{Homotopy in $S \setminus F$}\label{sec-finitely-homotopic}

\begin{prop}\label{p.class}
Under the assumptions of Proposition~\ref{prop.finitely-homotopic},
there exists a homotopy $I$ in $S \setminus F$ between the restriction $f_{|S\setminus F}$ and the identity.
\end{prop}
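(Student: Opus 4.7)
The plan is to apply the continuous selection theorem Proposition~\ref{prop.selection} on the open surface $S\setminus F$ with empty ``singular'' set (working componentwise). Concretely, I will identify for each $z\in S\setminus F$ a distinguished homotopy class $C_z$ of paths from $z$ to $f(z)$ in $S\setminus F$, verify that $z\mapsto C_z$ satisfies the hypotheses of Proposition~\ref{prop.selection}, and then let $I(z,t)=\gamma_z(t)$ where $z\mapsto\gamma_z$ is the resulting continuous selection. Before doing this I would record two preliminary facts. First, $f$ preserves each connected component of $S\setminus F$: this follows from the strong homotopies $I_n$, since the density of $\bigcup F_n$ in $F$ forces any prospective image of a component of $S\setminus F$ to agree with the original one. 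Second, thanks to the hypothesis that each $F_n$ contains at least three points, each surface $S\setminus F_n$ falls outside the exceptional list of Corollary~\ref{coro.uniqueness-homotopy}; in particular the trajectories $I_n.z$ and $I_p.z$ (for $n\leq p$) are homotopic in $S\setminus F_n$ relative to endpoints, so the homotopy classes $[I_n.z]$ form a compatible family as $n$ varies.

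I would then define $C_z$ to be the set of relative homotopy classes of paths from $z$ to $f(z)$ in $S\setminus F$ that map, under the inclusion $S\setminus F\hookrightarrow S\setminus F_n$, to the class $[I_n.z]$ for every $n$. The heart of the proof is the sub-lemma asserting that $C_z$ is non-empty and consists of a single class (this is the analogue of Lemma~\ref{l.uniqueness-Cz} advertised in Section~\ref{ss.strategy}). Once this is granted, each $C_z$ is arcwise and simply connected (in the universal cover of $S\setminus F$ it identifies with the contractible space of paths between two fixed points), and the local triviality of $z\mapsto C_z$ on $S\setminus F$ follows essentially verbatim from the construction in the proof of Claim~\ref{c.fibration}: small disks around $z$ and $f(z)$ together with Alexander's trick. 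Proposition~\ref{prop.selection} then yields a continuous selection $z\mapsto\gamma_z$, and the map $(z,t)\mapsto\gamma_z(t)$ is the required homotopy.

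The uniqueness half of the sub-lemma is the easier direction: two paths $\gamma,\gamma'\in C_z$ concatenate to a loop $\gamma\cdot(\gamma')^{-1}$ in $S\setminus F$ that is null-homotopic in every $S\setminus F_n$; one then argues, using the density of $\bigcup F_n$ in $F$, that such a loop is already null-homotopic in $S\setminus F$. The idea is that any null-homotopy disk in $S\setminus F_n$ meeting $F\setminus F_n$ in a large compact piece would be forced, by approximating that piece by points of $F_{n'}$ for some $n'$ much larger, to be incompatible with the parallel null-homotopy in $S\setminus F_{n'}$, unless the disk can be chosen to avoid $F$ altogether.

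The main obstacle is the existence of at least one element of $C_z$, i.e., the construction of a single path in $S\setminus F$ from $z$ to $f(z)$ having the prescribed class modulo each $F_n$. My plan here is a successive-approximation procedure: starting from $\alpha_0=I_0.z$, I would modify the path inductively, on shrinking neighborhoods of its crossings with the new points of $F_{n+1}\setminus F_n$, using the free homotopy between $I_n.z$ and $I_{n+1}.z$ in $S\setminus F_n$ (provided by the uniqueness of homotopies) to make the modifications coherently. With appropriate diameter control, the sequence $(\alpha_n)$ converges uniformly to a path $\alpha_\infty$; by monitoring distances to $F$ at each step one ensures $\alpha_\infty([0,1])\cap F=\emptyset$ and $[\alpha_\infty]\in C_z$. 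Combining this with the continuous selection described above produces the desired homotopy $I$ on $S\setminus F$ and completes the proof of Proposition~\ref{p.class}.
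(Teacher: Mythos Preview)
Your overall framework matches the paper's: define $C_z$ as the set of path classes in $S\setminus F$ compatible with every $[I_n.z]$, check it is a single class, then feed $z\mapsto C_z$ to the selection machinery. Your uniqueness argument is the paper's Lemma~\ref{l.3}. The gap is in existence. Your successive-approximation scheme asks to pass from $\alpha_n$ (in the class $[I_n.z]$ in $S\setminus F_n$) to a nearby $\alpha_{n+1}$ in the class $[I_{n+1}.z]$ in $S\setminus F_{n+1}$. But the lifts of $[\alpha_n]$ to $S\setminus F_{n+1}$ form a torsor over the normal closure of small loops around the points of $F_{n+1}\setminus F_n$, and there is no reason the specific lift $[I_{n+1}.z]$ should be reachable by a \emph{small} perturbation of $\alpha_n$: if some $p\in F_{n+1}\setminus F_n$ lies far from $\alpha_n$, then $\alpha_n$ already has a well-defined class in $S\setminus(F_n\cup\{p\})$, and that class may simply be the wrong one. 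So neither the convergence of $(\alpha_n)$ nor the disjointness of the limit from $F$ is under control.

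The paper bypasses this with a different device. Instead of approximating the path, it takes an essential \emph{loop} $\gamma\subset S\setminus F$ through $z$ and proves (Lemma~\ref{l.1}) that the free homotopy $I_p.\gamma$ from $\gamma$ to $f(\gamma)$ can be pushed entirely into $S\setminus F$, using the Brown--Kister theorem (the canonical lift $\tilde f$ fixes $\tilde F$ pointwise and preserves the components of its complement) together with a deformation retract onto a compact approximation of $S\setminus F$. The base-point trajectory of this pushed homotopy is then a genuine path in $S\setminus F$ from $z$ to $f(z)$; an algebraic argument (Lemma~\ref{l.2}, via Fact~\ref{fact.commute} and freeness of $\pi_1(S\setminus F_p)$) shows these paths lie in a single class independent of $n$, which is $C_z$. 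This loop trick is the idea your proposal is missing.
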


We will use several times the following construction. Let us call \emph{compact approximation of  $S \setminus F$} a subset $M$ of $S \setminus F$ which is a compact and connected surface with boundary such that every connected component $U$ of $S \setminus M$, satisfies at least one of the two following properties:
\begin{enumerate}
\item $U$ is not relatively compact (that is, the closure of $U$ is not compact),
\item or $U$ meets $F$.
\end{enumerate}

\begin{lemm}
Let $K$ be a compact connected subset of $S$ which is disjoint from $F$. Then there exists a compact approximation of $S \setminus F$ whose interior contains $K$.
\end{lemm}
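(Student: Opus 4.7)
The plan is in two stages: first produce an auxiliary compact sub-surface $M_0$ of $S\setminus F$ that contains $K$ in its interior, then enlarge $M_0$ by absorbing its ``bad'' complementary components, i.e.\ those that are relatively compact in $S$ and disjoint from $F$.

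For the first stage, I would argue as follows. The surface $S\setminus F$ is an open $2$-manifold without boundary, so it admits a locally finite triangulation $\cT$. Since $K$ is compact and contained in $S\setminus F$, only finitely many closed simplices of $\cT$ meet $K$. Let $N$ be the closed star (or, if one prefers a manifold right away, the second-derived neighborhood) of $K$ in $\cT$; this is a compact subset of $S\setminus F$ whose interior contains $K$. Passing to a regular neighborhood turns $N$ into a compact subsurface with boundary; discarding the components that do not meet $K$ and keeping the one containing $K$ (which is possible because $K$ is connected) yields a compact connected surface with boundary $M_0\subset S\setminus F$ with $K\subset\mathrm{Int}(M_0)$.

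For the second stage, note that $\partial M_0$ consists of finitely many disjoint circles, and each connected component of $S\setminus M_0$ has a non-empty frontier contained in $\partial M_0$; this forces $S\setminus M_0$ to have only \emph{finitely many} components $U_1,\dots,U_k$. Call $U_i$ ``bad'' if $\overline{U_i}$ is compact in $S$ and $\overline{U_i}\cap F=\emptyset$, and set
\[
M \;:=\; M_0 \;\cup\; \bigcup_{U_i\text{ bad}} \overline{U_i}.
\]
Using a tubular/Möbius collar of $\partial M_0$ in $S$, each $\overline{U_i}$ is a compact surface with boundary whose frontier is a union of circles of $\partial M_0$, so $M$ is again a compact connected surface with boundary. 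By construction $M\subset S\setminus F$, $K\subset \mathrm{Int}(M_0)\subset\mathrm{Int}(M)$, and the components of $S\setminus M$ are exactly the ``good'' $U_i$'s, each of which either has non-compact closure or meets $F$. Hence $M$ is a compact approximation of $S\setminus F$ whose interior contains $K$.

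The main (mild) obstacle is checking that the pieces $\overline{U_i}$ glue onto $M_0$ to give an honest compact surface with boundary, rather than something with pinched or wild frontier; this is handled by invoking the collar neighborhood theorem along $\partial M_0$. Everything else (finiteness of the list $U_1,\dots,U_k$, connectedness of $M$, and the defining property of a compact approximation) then follows from the definitions.
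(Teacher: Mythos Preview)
Your proof is correct and follows essentially the same approach as the paper's: construct a compact connected surface $M_0\subset S\setminus F$ containing $K$ in its interior, then adjoin the finitely many relatively compact complementary components that miss $F$. The paper's version is terser (it simply posits $M'$ without the triangulation details and does not spell out the surface-with-boundary check), but the idea is identical.
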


\begin{proof}
We consider a compact connected surface $M'$ included in $S \setminus F$ containing $K$ in its interior. It has a finite number of complementary components. Then $M$ is obtained as the union of $M'$ with those of the relatively compact connected components of $S \setminus M'$ which are disjoint from $F$.
\end{proof}

Note that by density of $\cup F_{n}$ in $F$, if $M$ is a compact approximation of $S \setminus F$, for every large enough $n$, every relatively compact connected component of $S \setminus M$ meets $F_{n}$. A first application of this construction is given by the following lemma, which does not involve the map $f$.

\begin{lemm}\label{l.3}
Any loop $\gamma\subset S\setminus F$ which is contractible in $S\setminus F_n$
for each $n$ is also contractible in $S\setminus F$.
\end{lemm}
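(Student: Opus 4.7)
The strategy is to exploit the compact approximation tool introduced just before the lemma, together with Lemma~\ref{lemm.groupe-fonda-sous-surface} on $\pi_1$-injectivity of sub-surfaces. The plan is to place $\gamma$ inside a compact surface with boundary $M\subset S\setminus F$, and show that for $n$ large enough the inclusion $M\hookrightarrow S\setminus F_n$ is $\pi_1$-injective; the hypothesis that $\gamma$ is contractible in $S\setminus F_n$ will then force $[\gamma]$ to be trivial already in $\pi_1(M)$, hence $\gamma$ will be contractible in $S\setminus F$.

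First I would pick, using the preceding construction, a compact approximation $M$ of $S\setminus F$ whose interior contains the compact loop $\gamma$. Since $\bigcup_n F_n$ is dense in $F$ and $(F_n)$ is increasing, every relatively compact connected component of $S\setminus M$ meets $F_n$ as soon as $n$ is large enough; fix such an $n$.

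The main step will be to prove that every boundary curve $c$ of $M$ is essential in $S\setminus F_n$, except in one trivial case. Suppose $c$ is contractible in $S\setminus F_n$, so that $c$ bounds a disk $D\subset S\setminus F_n$ (by the bounding disk lemma); then $c$ is separating in $S$ and $S\setminus c$ has exactly two components, namely $D^{\circ}$ and $S\setminus D$. Let $U$ be the component of $S\setminus M$ adjacent to $c$ on the side opposite to $\mathrm{int}(M)$. Either $D^{\circ}$ lies on the $U$-side, in which case a connectedness argument gives $U\subset D^{\circ}$, so $U$ is relatively compact and meets $F_n$, contradicting $D\cap F_n=\emptyset$; or $D^{\circ}$ lies on the $\mathrm{int}(M)$-side, in which case $M\subset D$. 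In the latter situation, if $M=D$ then $M$ is a disk and $\gamma$ is trivially contractible in $M\subset S\setminus F$, finishing the proof; otherwise $D\setminus M$ is a non-empty open subset of $D^{\circ}$ whose connected components are precisely the relatively compact components of $S\setminus M$ contained in $D$, so by our choice of $n$ at least one of them meets $F_n$, again contradicting $D\subset S\setminus F_n$.

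Therefore, unless $\gamma$ is already contractible in $M$, no boundary curve of $M$ is contractible in $S\setminus F_n$; Lemma~\ref{lemm.groupe-fonda-sous-surface} applied to the pair $M\subset S\setminus F_n$ then gives the $\pi_1$-injectivity of the inclusion, and the hypothesis that $\gamma$ is contractible in $S\setminus F_n$ forces $\gamma$ to be contractible in $M$, hence in $S\setminus F$. The main obstacle is the case analysis on which side of $c$ the disk $D$ sits: the key point is that the defining property of a compact approximation rules out both cases for $n$ large, but one must also verify the somewhat subtle claim that the components of $D\setminus M$ are actual connected components of $S\setminus M$ rather than merely open subsets thereof (which follows from the fact that $c\subset M$ blocks any enlargement of these components across $\partial D$).
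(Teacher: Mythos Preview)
Your proof is correct and follows essentially the same approach as the paper: choose a compact approximation $M$ containing $\gamma$, pick $n$ large so that every relatively compact complementary component of $M$ meets $F_n$, and reduce to showing $\gamma$ is null in $\pi_1(M)$. The only difference is that for the final step you invoke Lemma~\ref{lemm.groupe-fonda-sous-surface} on $\pi_1$-injectivity, whereas the paper reproves that injectivity on the spot via the universal cover of $S\setminus F_n$; your case analysis for the situation where some boundary curve of $M$ bounds a disk in $S\setminus F_n$ is also more explicit than the paper's one-line assertion that ``the only possibility is $M=\Delta$''.
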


\begin{figure}[ht]
\def\svgwidth{0.7\textwidth}
\begin{center}
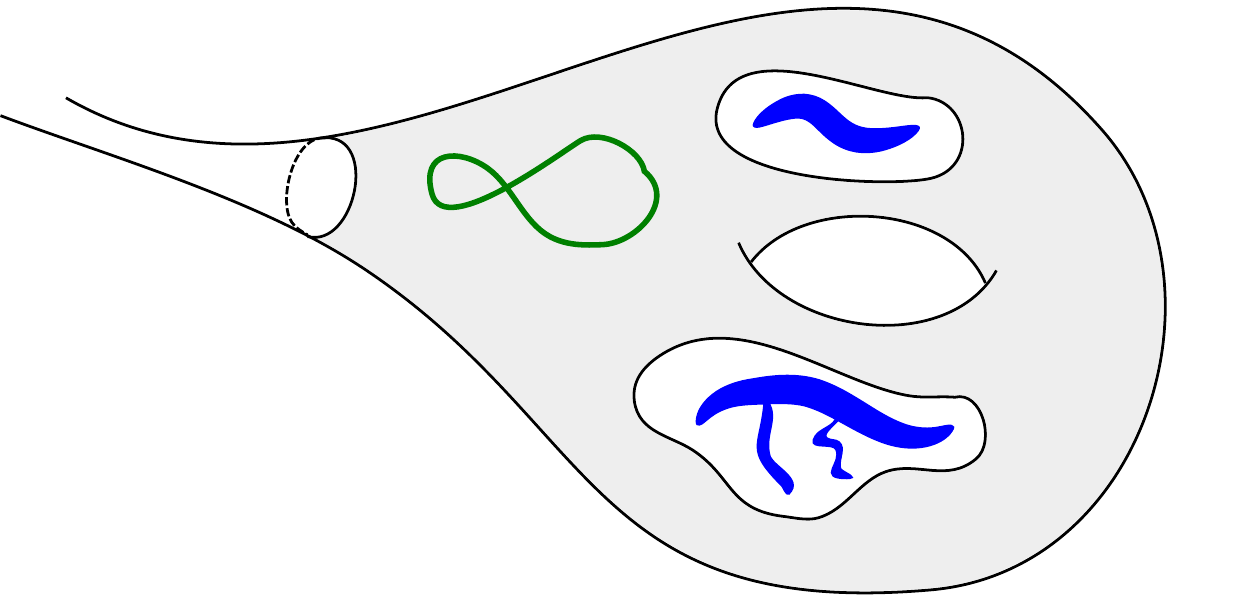
\caption{\label{f.compact-approx} Contractible loops in $S \setminus F$}
\end{center}
\end{figure}

\begin{proof}
Let $\gamma$ be as in the lemma, and $M$ be a compact approximation of $S \setminus F$ that contains $\gamma$. Let $n$ be such that every relatively compact connected component of $S \setminus M$ meets $F_{n}$ (see Figure~\ref{f.compact-approx}).

If there exists a connected component $\delta$ of the boundary of $M$
which is contractible in $S \setminus F_{n}$,
then $\delta$ bounds a topological disk $\Delta\subset S\setminus F_{n}$
(by Lemma~\ref{lemm.bounding-disk}).
The only possibility in this case is that $M = \Delta$. Since $\gamma$ is included in a disk, it is contractible.

Now we assume that no boundary component is contractible in $S \setminus F_{n}$.  We consider the universal cover $X$ of $S \setminus F_{n}$, which is homeomorphic to the plane. Let $\tilde M$ be a lift of $M$, \emph{i. e. } a connected component of the inverse image of $M$ in $X$. Since $\gamma$ is contractible in $S \setminus F_{n}$, it lifts to a closed path $\tilde \gamma$ in $\tilde M$. There is a topological disk $\Delta$ in $X$ that contains $\tilde \gamma$, and whose boundary is arbitrarily close to $\tilde \gamma$
(see Lemma~\ref{lemma.approximation-disc}); in particular we may assume that the boundary of $\Delta$ is disjoint from the boundary of $\tilde M$. On the other hand every boundary component of $M$, being non contractible in $S \setminus F_{n}$, lifts to a proper embedding of $\bbR$ into $X$; thus it is disjoint from $\Delta$. This implies that $\Delta$ is included in $\tilde M$. Since $\tilde \gamma$ is contractible in $\Delta$, the loop $\gamma$ is contractible in $M$.
\end{proof}

The next lemma says that, given a loop $\gamma$ outside $F$, the path followed by $\gamma$ under a homotopy $I_{n}$ can be continuously pushed off $F$ (see Figure~\ref{f.l.1}).

\begin{figure}[ht]
\def\svgwidth{\textwidth}
\begin{center}
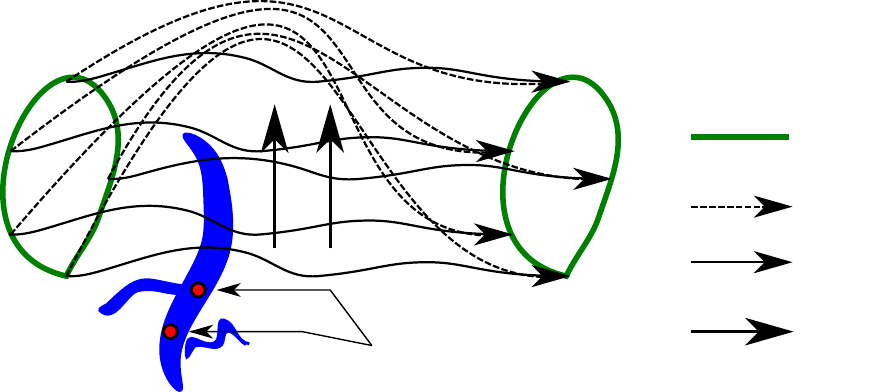
\caption{\label{f.l.1} Pushing the homotopy off $F$.}
\end{center}
\end{figure}

\begin{lemm}\label{l.1}
For every loop $\gamma\subset S\setminus F$ and every $n$,
there exists a homotopy $H_{\gamma,n}\colon \bbS^1 \times [0,1]\to S\setminus F$ between $\gamma$ and $f(\gamma)$
which is homotopic in $S\setminus F_n$ to the homotopy $I_n.\gamma$.
\end{lemm}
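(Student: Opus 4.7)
The strategy is to work inside a compact approximation of $S\setminus F$, in the spirit of Lemma~\ref{l.3}, and then push the homotopy into that approximation by lifting to the universal cover of $S\setminus F_n$. To begin, fix a compact approximation $M\subset S\setminus F$ whose interior contains the compact set $\gamma \cup f(\gamma)$. By density of $\bigcup_{k} F_k$ in $F$, there exists $n' \geq n$ such that every relatively compact component of $S \setminus M$ meets $F_{n'}$. By Corollary~\ref{coro.uniqueness-homotopy}, the homotopies $I_n.\gamma$ and $I_{n'}.\gamma$ are homotopic in $S \setminus F_n$ rel $\gamma$ and $f(\gamma)$, so it suffices to produce $H_{\gamma,n}$ whose image lies in $M \subset S\setminus F$ and which is homotopic to $I_{n'}.\gamma$ in $S\setminus F_{n'}$.

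As in the proof of Lemma~\ref{l.3}, the choice of $n'$ forces every boundary component of $M$ to be non-contractible in $S\setminus F_{n'}$, and Lemma~\ref{lemm.groupe-fonda-sous-surface} then shows that the inclusion $M\hookrightarrow S\setminus F_{n'}$ is injective on fundamental groups. Let $X\simeq\bbR^2$ be the universal cover of $S\setminus F_{n'}$. Every connected component of the preimage of $M$ in $X$ is then a universal cover of $M$: a simply connected planar region bounded by properly embedded lines, each covering a boundary component of $M$. I would then lift $I_{n'}.\gamma$ to a $T$-equivariant map $\tilde I\colon \bbR\times[0,1]\to X$ through the universal cover $\bbR\times[0,1]$ of the cylinder $\bbS^1\times[0,1]$, where $T$ is the deck transformation of $X\to S\setminus F_{n'}$ corresponding to $[\gamma]$. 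Its top and bottom edges $\tilde I(\cdot,0)$ and $\tilde I(\cdot,1)$ are lifts of $\gamma$ and $f(\gamma)$, so each lies in some component of the preimage of $M$, say $\tilde M_0$ and $\tilde M_1$.

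The main obstacle will be to show that $\tilde M_0 = \tilde M_1$; equivalently, that the trajectory $t\mapsto I_{n'}(t,z_0)$ of a basepoint $z_0\in\gamma$ is homotopic in $S\setminus F_{n'}$, rel endpoints, to a path in $M$. I expect this to follow from the $\pi_1$-injectivity of $M\hookrightarrow S\setminus F_{n'}$ combined with the $T$-equivariance of $\tilde I$ and the fact that every boundary component of $M$ is essential in $S\setminus F_{n'}$: if a complementary region of $\tilde M_0$ in $X$ separated $\tilde\gamma$ from $\widetilde{f\gamma}$, its $T$-invariance would force the bounding line of $X\setminus\tilde M_0$ to project to a boundary component of $M$ that is freely homotopic to $\gamma$ in $S\setminus F_{n'}$, contradicting the essentiality of the boundary. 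Once $\tilde M_0=\tilde M_1=:\tilde M$, the top and bottom edges of $\tilde I$ lie in a single simply connected region $\tilde M\subset X\simeq\bbR^2$ whose complement is a disjoint union of regions, each bounded by a single properly embedded line. One can then homotope $\tilde I$ rel its top and bottom edges to a map into $\tilde M$ by pushing every excursion into a complementary region back onto its bounding line. This deformation is $T$-equivariant, so it descends to the desired homotopy $H_{\gamma,n}\colon\bbS^1\times[0,1]\to M\subset S\setminus F$, which is homotopic to $I_{n'}.\gamma$ (and hence to $I_n.\gamma$) in $S\setminus F_{n'}\supset S\setminus F_n$.
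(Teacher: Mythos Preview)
Your overall strategy matches the paper's: lift to the universal cover $X$ of $S\setminus F_{n'}$ for some $n'\geq n$, locate a single simply connected lift $\tilde M$ of $M$ containing both $\tilde\gamma$ and $\widetilde{f\gamma}$, and retract the lifted homotopy into $\tilde M$. The genuine gap is exactly where you flagged the ``main obstacle'': showing $\tilde M_0=\tilde M_1$. Your proposed argument does not go through. If $T$ is non-trivial and a $T$-invariant complementary region of $\tilde M_0$ separates $\tilde\gamma$ from $\widetilde{f\gamma}$, then indeed its bounding line projects to a boundary component of $M$ freely homotopic (up to a power) to $\gamma$ in $S\setminus F_{n'}$. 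But this is \emph{not} a contradiction: ``essential'' only means non-contractible, and nothing about a compact approximation prevents a boundary component from being freely homotopic to $\gamma$ (picture $\gamma$ encircling a piece of $F$ and $M$ a thin annular neighbourhood of $\gamma\cup f(\gamma)$). And when $\gamma$ is contractible in $S\setminus F_{n'}$, $T$ is the identity and the equivariance argument yields nothing at all.

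The paper resolves this by establishing the connection \emph{before} choosing $M$. The canonical lift $\tilde f$ of $f$ to $X$ (obtained by lifting any $I_p$, $p\geq n$) fixes every point of $\tilde F=\Pi^{-1}(F\setminus F_n)$: since each $I_p$ is a strong homotopy relative to $F_p$, the lift $\tilde f$ fixes the dense subset $\bigcup_{p>n}\tilde F_p$, hence all of $\tilde F$. As $\tilde f$ preserves orientation (Claim~\ref{c.preserves-orientation}), the Brown--Kister theorem shows that $\tilde f$ preserves every connected component of $X\setminus\tilde F$. Thus $\tilde z$ and $\tilde f(\tilde z)$ lie in the same component, and a path joining them there projects to a path $\alpha\subset S\setminus F$ from $z_0$ to $f(z_0)$ in the correct homotopy class. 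One then chooses $M$ to contain the \emph{connected} set $\gamma\cup\alpha\cup f(\gamma)$, which makes $\tilde M_0=\tilde M_1$ automatic. This Brown--Kister step is the key idea you are missing. The rest of your outline is essentially right, though for the retraction be careful with bounded complementary regions of $\tilde M$: a disk does not retract onto its boundary circle, so the paper removes one point of $\tilde F_p$ from each such region and uses that the lifted homotopy $\tilde I_p$ avoids $\tilde F_p$.
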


\begin{proof}
Let $n$ be an integer. We  denote by $\Pi: X \to S\setminus F_n$  the universal cover. 
Each homotopy $I_p$, $p\geq n$, lifts to $X$, hence defines a lift of $f$.
By compatibility of the homotopies on $S\setminus F_n$
(Corollary~\ref{coro.uniqueness-homotopy}), all these lifts coincide, defining a homeomorphism $\tilde f$
on $X$. 

Let $\tilde F = \Pi^{-1}(F \setminus F_{n})$.
Since, for $p>n$, $I_p$ is a strong homotopy relative to $F_p$,
the lift $\tilde F_p =  \Pi^{-1} (F_p\setminus F_n)$ to $X$ is fixed by $\tilde f$.
The union of the $\tilde F_{p}$'s for $p >n$ is dense in $\tilde F$, and thus
 $\tilde f$ fixes every point of $\tilde F$. Since $f$ preserves the orientation at every point of $F$ (Claim~\ref{c.preserves-orientation}), so does $\tilde f$ at every point of $\tilde F$, and thus $f$ preserves the orientation of the topological plane $X$.
Now Brown-Kister Theorem~\ref{t.brown-kister} applies and shows that
every connected component of $X\setminus \tilde F$ is invariant under $\tilde f$.

Let $\gamma$ be as in Lemma~\ref{l.1}. We will use the variable $s\in \bbS^1$ for the parametrization of $\gamma$, the variable $t\in [0,1]$ for the time of the homotopies $I_{n}$ between loops,  and $u$ for the deformation from $H_{\gamma,n}$ to $I_{n}.\gamma$ that we are about to construct (see Figure~\ref{f.l.1}).
Let us consider a lift $\tilde z$ of the point $z= \gamma(0)$ and the lift $\tilde \gamma$ of $\gamma$ starting at $\tilde z$. 
Since the connected component of $X \setminus \tilde F$ containing $\tilde z$ is invariant under $\tilde f$,
 there exists a curve joining $\tilde z$ to $\tilde f( \tilde z)$ in $X\setminus\tilde F$. Let $\alpha$ be the projection of this curve under $\Pi$. We consider a compact approximation $M$ of $S\setminus F$ containing the connected set $\gamma \cup \alpha \cup f(\gamma)$. We also consider integers $p>n$ large enough so that $F_{p}$ meets every relatively compact connected component of $S \setminus M$. The connected component $\tilde M$  of $\Pi^{-1}(M)$ that contains $\tilde z$ is a connected surface  containing $\tilde \gamma$ and $\tilde f(\tilde \gamma)$.

\begin{figure}[ht]
\def\svgwidth{\textwidth}
\begin{center}
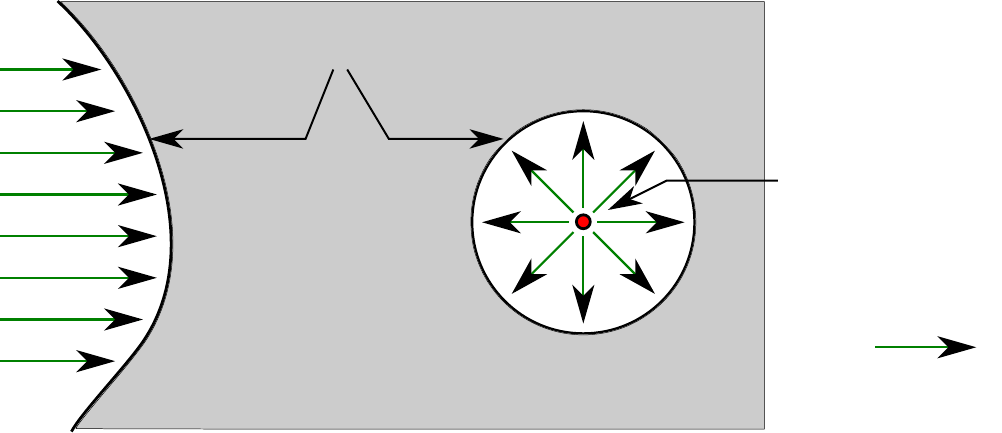
\caption{\label{f.pushing-lift} Pushing the homotopy off $F$...}
\end{center}
\end{figure}

Let us first consider the case where the loop $\gamma$ is contractible in $S \setminus F$. The lift $\tilde \gamma$ is a closed path in $X$.
Since $M$ is a compact surface, every connected component $\Delta$ of $X \setminus \inte(\tilde M)$ is of the following type: either $\Delta$ is unbounded, and then it is homeomorphic to the closed half-plane $\bbR \times [0,+\infty)$; or $\Delta$ is compact, and then it is homeomorphic to the closed unit disk (see Figure~\ref{f.pushing-lift}). In the second case $\Delta$ meets the lift $\tilde F_{p}$ of $F_{p}$; we select one point in $\tilde F_{p} \cap \Delta$ for every such $\Delta$ and denote the set of these points by $\tilde F'_{p}$. Note that for each (bounded or unbounded) such $\Delta$, the set $\Delta \setminus \tilde F'_{p}$ can be continuously deformed onto its boundary. Since the family of such $\Delta$'s is locally finite in $X$, we may put all these deformations together we get a \emph{deformation retract} $(R_{u})_{u \in [0,1]}$ from $X \setminus \tilde F'_{p}$ to $\tilde M$, \emph{i. e.} a continuous map $(z,u) \mapsto R_{u}(z)$ from $(X \setminus \tilde F'_{p}) \times [0,1]$ to $X \setminus \tilde  F'_{p}$  such that $R_{0}$ is the identity and $R_{1}$ is a \emph{retraction} from $X \setminus \tilde F'_{p}$ to $\tilde M$ (this means that $R_{1}$ maps $X \setminus \tilde F'_{p}$ to $\tilde M$ and its restriction to $\tilde M$ is the identity). We lift the strong homotopy $I_{p}$ to a homotopy $\tilde I_{p}$ in $X$. A crucial observation is that $\tilde I_{p}$ induces a homotopy in $X  \setminus \tilde F'_{p}$; in particular, the homotopy $\tilde I_{p}. \tilde \gamma$ does not meet $\tilde F'_{p}$.
Now we get the wanted homotopy by pushing  $\tilde I_{p}. \tilde \gamma$ by the deformation retract $(R_{u})$, and then projecting down to $S$. In other words the wanted homotopy is given by the formula
$$
H : (s,t,u) \mapsto \Pi \circ R_{u} \circ \tilde I_{p}(\tilde \gamma(s),t).
$$
Indeed for $u=0$ the formula boils down to $I_{p}.\gamma$. For $u=1$ we get a homotopy $H_{\gamma,n}$ between $\gamma$ and $f(\gamma)$ outside $F$. Moving $u$ from $0$ to $1$ we get a homotopy between  $I_{p}.\gamma$ and $H_{\gamma,n}$ outside $F_{n}$. Since the homotopies $I_{p}$ and $I_{n}$ are homotopic on $S \setminus F_{n}$ (Corollary~\ref{coro.uniqueness-homotopy}), the lemma is proved in this case.

Now let us consider the case where the loop $\gamma$ is not contractible in $S \setminus F$. Then $\gamma$ lifts to a non closed path $\tilde \gamma$ joining $\tilde z$ to the point $\tau(\tilde z)$ for some automorphism $\tau$ of the universal cover. We consider the intermediate covering
$$X \overset{\Pi_0}\longrightarrow X/\tau\overset{\Pi_1}\longrightarrow S\setminus F_n.$$
Since the action of any deck transformation is conjugate to a translation in the plane or to a translation composed with a symetry (in case $\gamma$ is a one-sided curve), $X/\tau$ is homeomorphic to the annulus $\bbS^{1} \times \bbR$ or to the open M\"obius band.
Let $\Delta$ be a connected component of  the complement of $\inte(\Pi_{0}(\tilde M))$ in $X/\tau$. Now $\Delta$ is a closed disk, a closed half-plane  or a closed half-annulus $\bbS^{1} \times [0,+\infty)$. Such an annulus retracts by deformation on its boundary. Thus, as before, we may construct a deformation retract $R_{u}$ from $(X/\tau) \setminus F''_{p}$ to $\tilde M$, where $F''_{p}$ is a subset of $\Pi_{0}(\tilde F_{p})$. Then we set
$$
H (s,t,u) = \Pi_{1} \circ R_{u} \circ \Pi_{0} \circ \tilde I_{p}(\tilde \gamma(s),t).
$$
Note that  $\tilde \gamma(1) = \tau \tilde \gamma(0)$,  $\tilde I_{p}(.,t)$ commutes with $\tau$ for every $t$, and $\Pi_{0} \tau = \Pi_{0}$. Thus $H(1,t,u) = H(0,t,u)$ for every $t,u$, and this formula defines a map from $\bbS^{1} \times [0,1]^{2}$ to $F \setminus F_{n}$. We conclude as in the first case.
\end{proof}

In the context of the previous lemma, the path $t \mapsto H_{\gamma,n}(0,t)$ joins the point $z = \gamma(0)$ to $f(z)$ in $S \setminus F$. We will denote it by $H_{\gamma,n}.z$  and call it the trajectory of $z$ under the homotopy $H_{\gamma,n}$.
\begin{lemm}\label{l.2}
For any loop $\gamma\subset S\setminus F$ based at $z$ and any $n\leq p$,
such that $\gamma$ is not contractible in $S\setminus F_n$,
the trajectories  $H_{\gamma,p}.z$ and $H_{\gamma,n}.z$ are homotopic in $S\setminus F_p$, and thus also homotopic to the trajectory $I_{p}.z$.
\end{lemm}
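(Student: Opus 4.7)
The plan is to define a loop $\mu$ based at $z$ in $S\setminus F_p$ whose nullhomotopy in $S\setminus F_p$ will yield the conclusion, and then to derive this nullhomotopy by purely algebraic means from two pieces of information. By Lemma~\ref{l.1} applied to $p$, the trajectory $H_{\gamma,p}.z$ is already homotopic to $I_p.z$ in $S\setminus F_p$, so it will suffice to show the same for $H_{\gamma,n}.z$, i.e.\ that the loop
$$\mu := H_{\gamma,n}.z \star \overline{I_p.z},$$
which lies in $S\setminus F_p$ since $H_{\gamma,n}.z \subset S\setminus F$ and $I_p.z \subset S\setminus F_p$, is contractible there.

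I will extract two properties of $\mu$. First, I concatenate $H_{\gamma,n}$ with the reverse of $I_p.\gamma$ to get a map $K \colon \bbS^1 \times [0,1] \to S\setminus F_p$ satisfying $K(\cdot,0) = K(\cdot,1) = \gamma$ and whose trajectory of $z$ is precisely $\mu$; Fact~\ref{fact.commute} then forces $[\mu]$ and $[\gamma]$ to commute in $\pi_1(S\setminus F_p, z)$. Second, I invoke Lemma~\ref{l.1} applied to $n$ to deform $H_{\gamma,n}$ to $I_n.\gamma$ inside $S\setminus F_n$, and then Corollary~\ref{coro.uniqueness-homotopy} to further deform $I_n.\gamma$ to $I_p.\gamma$ in $S\setminus F_n$ (reading $I_p$ as a homotopy from $\id$ to $f$ in $S\setminus F_n$; the hypothesis $|F_n|\geq 3$ keeps $S\setminus F_n$ out of the excluded surfaces). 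Tracking the trajectories of $z$ through these deformations shows that $[\mu] = 1$ in $\pi_1(S\setminus F_n, z)$.

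The last step is algebraic. Since $F_p$ is finite and non-empty, $S\setminus F_p$ is non-compact, so Proposition~\ref{prop.algebra-fundamental-group}(1) applies and the abelian subgroup of $\pi_1(S\setminus F_p, z)$ generated by $[\mu]$ and $[\gamma]$ is cyclic; writing $[\gamma] = [\delta]^a$ and $[\mu] = [\delta]^b$ for a generator $[\delta]$, the hypothesis that $\gamma$ is not contractible in $S\setminus F_n$ gives $[\delta]^a \neq 1$ in $\pi_1(S\setminus F_n, z)$, which is free hence torsion-free; combined with $[\delta]^b = 1$ there, this forces $b = 0$, and therefore $[\mu] = 1$ in $\pi_1(S\setminus F_p, z)$. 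The most delicate point is the second property: one must carefully articulate the passage from the two successive $2$-parameter ``homotopies of homotopies'' to a single $1$-parameter homotopy of paths, rel endpoints.
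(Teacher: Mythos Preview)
Your proof is correct and follows essentially the same route as the paper's. The only cosmetic difference is that the paper compares $H_{\gamma,n}.z$ with $H_{\gamma,p}.z$ (setting $\alpha = H_{\gamma,n}.z \star \overline{H_{\gamma,p}.z}$) whereas you compare it directly with $I_p.z$; since $H_{\gamma,p}.z \sim I_p.z$ in $S\setminus F_p$ by Lemma~\ref{l.1}, the two loops are homotopic and the arguments are interchangeable. Your algebraic step (via a common root $[\delta]$) and the paper's (writing $[\alpha]^k=[\gamma]^\ell$) are just two phrasings of the same cyclic-subgroup argument in the free group $\pi_1(S\setminus F_p,z)$.
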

\begin{proof}
Let $\alpha$ denote the trajectory  $H_{\gamma,n}.z$ concatenated with the backward trajectory  $H_{\gamma,p}^{-1}.z$.
Since both $H_{\gamma,n}$ and $H_{\gamma,p}$ are homotopies from $\gamma$ to $f(\gamma)$ disjoint from $F$,  and thus from $F_{p}$, we may apply Fact~\ref{fact.commute} in the space $X =S \setminus F_{p}$, and we get that $[\alpha]$ and $[\gamma]$ commute in the group
 $\pi_1(S\setminus F_p, z)$. Since this group is free and $\gamma$ is non-trivial, this proves that $[\alpha]^k=[\gamma]^\ell$ for some $\ell\in \bbZ
$ and $k\in \bbZ \setminus \{0\}$ (Proposition~\ref{prop.algebra-fundamental-group}). The inclusion $S \setminus F_{p} \subset S \setminus F_{n}$ induces a morphism between fundamental groups, thus the same equality holds in $\pi_1(S\setminus F_n, z)$.
But in $S\setminus F_n$, the trajectory $H_{\gamma,n}.z$ is homotopic to $I_{n}.z$,
the trajectory $H_{\gamma,p}.z$ is homotopic to $I_{p}.z$, and  $I_{n}.z$ is homotopic to $I_{p}.z$.
Thus  in $\pi_1(S\setminus F_n, z)$ we have $[\alpha]=0$, hence also $[\gamma]^\ell=0$.
Since $\gamma$ is non-contractible in $S\setminus F_n$, this yields $\ell=0$. Going back to $\pi_1(S\setminus F_p, z)$ we get  $[\alpha]^k=0$.
Since $k\neq 0$ and there is no torsion element in $\pi_1(S\setminus F_p, z)$, it follows that $[\alpha]=0$  as wanted.
\end{proof}

\begin{lemm}\label{l.uniqueness-Cz}
For every $z \in S \setminus F$, there exists a unique homotopy class $C_{z}$ of paths from $z$ to $f(z)$ in $S\setminus F$ with the following property: for every $n$ every path in $C_{z}$ is homotopic in $S\setminus F_{n}$ to the trajectory $I_{n}.z$.
\end{lemm}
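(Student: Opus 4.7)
The plan is that uniqueness follows immediately from Lemma~\ref{l.3}, while existence is obtained by exhibiting a path $\eta:=H_{\gamma,n_0}.z$ produced by Lemma~\ref{l.1} for a judiciously chosen loop $\gamma\subset S\setminus F$ based at $z$ and integer $n_0\geq 1$.

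For uniqueness, if $\eta,\eta'$ are two paths from $z$ to $f(z)$ in $S\setminus F$ each homotopic in $S\setminus F_n$ to $I_n.z$ for every $n$, then the loop $\eta\cdot\eta'^{-1}\subset S\setminus F$ is contractible in $S\setminus F_n$ for every $n$. By Lemma~\ref{l.3} it is already contractible in $S\setminus F$, so $\eta$ and $\eta'$ lie in the same homotopy class of paths in $S\setminus F$.

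For existence, I first note that applying Lemma~\ref{l.1} with $\gamma$ the constant loop at $z$ produces a path $H_{\gamma,1}.z$ from $z$ to $f(z)$ contained in $S\setminus F$, so $f(z)$ lies in the same component of $S\setminus F$ as $z$. I split into two cases according to the fundamental group of this component. In the main case I choose a loop $\gamma$ at $z$ in $S\setminus F$ which is not contractible in $S\setminus F$. Lemma~\ref{l.3} then provides some $n_0\geq 1$ such that $\gamma$ is not contractible in $S\setminus F_{n_0}$, and since $S\setminus F_p\subset S\setminus F_{n_0}$ for $p\geq n_0$, this non-contractibility persists for all $p\geq n_0$. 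I set $\eta := H_{\gamma,n_0}.z$. For $p\geq n_0$, Lemma~\ref{l.2} yields $\eta\sim H_{\gamma,p}.z$ in $S\setminus F_p$, and Lemma~\ref{l.1} yields $H_{\gamma,p}.z\sim I_p.z$ in $S\setminus F_p$, whence $\eta\sim I_p.z$ in $S\setminus F_p$. For $p<n_0$, the homotopy $\eta\sim I_{n_0}.z$ in $S\setminus F_{n_0}$ from Lemma~\ref{l.1} also takes place in the larger space $S\setminus F_p$, and the compatibility remark preceding Lemma~\ref{l.1} (an application of Corollary~\ref{coro.uniqueness-homotopy}) gives $I_{n_0}.z\sim I_p.z$ in $S\setminus F_p$. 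In the degenerate case no such $\gamma$ exists, so the component of $z$ in $S\setminus F$ is simply connected; I take $\eta := H_{\gamma,1}.z$ for $\gamma$ constant at $z$, and for every $p$, the path $H_{\gamma,p}.z$ also lies in $S\setminus F$, so simple connectedness of the component forces $\eta\sim H_{\gamma,p}.z$ in $S\setminus F$, hence in $S\setminus F_p$; combining with Lemma~\ref{l.1} yields $\eta\sim I_p.z$ in $S\setminus F_p$.

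The main obstacle is the degenerate case where Lemma~\ref{l.2}'s non-contractibility hypothesis is unavailable; there, simple connectedness of the component of $z$ in $S\setminus F$ is what makes the paths $H_{\gamma,p}.z$ comparable to each other in $S\setminus F$ and thus provides the replacement argument.
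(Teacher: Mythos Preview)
Your proof is correct and follows essentially the same route as the paper's: uniqueness via Lemma~\ref{l.3}, existence in the main case via Lemmas~\ref{l.1} and~\ref{l.2} applied to a loop that is non-contractible in $S\setminus F$ (hence in some $S\setminus F_{n_0}$ by Lemma~\ref{l.3}), and the degenerate simply-connected case handled by noting that all the paths $H_{\gamma,p}.z$ lie in the same simply-connected component. The only cosmetic differences are that the paper invokes Brown--Kister to place $f(z)$ in the component of $z$ (you obtain this directly from Lemma~\ref{l.1} with the constant loop), and the paper cites Lemma~\ref{l.2} directly for $\eta\sim I_p.z$ rather than routing through $H_{\gamma,p}.z$.
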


\begin{proof}
By Claim~\ref{c.preserves-orientation}, $f$ locally preserves the orientation near every point of $F$.
If the component of $S\setminus F$ which contains $z$ is a disc, then this disk is invariant under $f$ according to the local version of the Brown-Kister Theorem (Proposition~\ref{p.Brown-Kister}). Then we (obviously)
define $C_z$ as the unique homotopy class of paths from $z$ to $f(z)$ in $S\setminus F$.
From Lemma~\ref{l.1} applied to the constant loop, these paths are homotopic to $I_n.z$ in $S\setminus F_n$ for each $n$:
indeed $C_z$ contains the path $H_{\gamma,n}(z)$ which is homotopic to $I_{n}.z$.

If the component of $S\setminus F$ which contains $z$ is not a disc, there exists a loop $\gamma$ based at $z$ which is not
contractible in $S\setminus F$, hence not contractible in $S\setminus F_n$ for some $n$ (Lemma~\ref{l.3}).
Let us apply Lemma~\ref{l.1} to this loop, and define $C_z$ as the homotopy class of the trajectory $H_{\gamma,n}.z$.
By Lemma~\ref{l.2}, the paths in $C_{z}$ are homotopic in $S\setminus F_p$ to the trajectory $I_p.z$  for every $p\geq n$.
It is also the case for $p\leq n$ by compatibility of the homotopies.

Finally we prove the uniqueness. Let us consider two homotopy classes $C_z$ and $C'_z$ satisfying the conclusion of the lemma, and two paths $\alpha,\alpha'$
in each of them. By assumption, the loop $\alpha^{-1}\times \alpha'$ is contractible
in each $S\setminus F_n$, hence in $S\setminus F$ by Lemma~\ref{l.3}.
This shows that $C_z=C'_z$.
\end{proof}


\bigskip

\begin{proof}[End of the proof of Proposition~\ref{p.class}]
We want to construct a homotopy from the identity to $f$. The construction is performed independently on each connected component $M_{0}$ of $S \setminus F$. Let us check that the map $z \mapsto C_{z}$ provided by Lemma~\ref{l.uniqueness-Cz}, and restricted to $M_{0}$, satisfies the continuity property required by Claim~\ref{claim.pi-1-Cz}. Let $w$ be a point of $S \setminus F$, and $z$ be another point of $S \setminus F$ close to $w$. Let $\gamma$ be the concatenation of a small path from $z$ to $w$, a path in $C_{w}$ and a small path from $f(w)$ to $f(z)$. We have to check that $\gamma$ satisfies the characterization of $C_{z}$ given by Lemma~\ref{l.uniqueness-Cz}. Let us fix an integer $n$. Every path in $C_{w}$ is homotopic in $S \setminus F_{n}$ to the trajectory $I_{n}.w$; thus $\gamma$ is homotopic in  $S \setminus F_{n}$ to the concatenation of a small path from $z$ to $w$,  the trajectory $I_{n}.w$  and a small path from $f(w)$ to $f(z)$. By continuity of $I_{n}$ this path is homotopic to the trajectory $I_{n}.z$. This proves the wanted continuity property.

Now according to Claim~\ref{claim.pi-1-Cz}, the restriction of $f$ to $M_{0}$ acts trivially on the fundamental group of $S \setminus F$. Then Proposition~\ref{p.W4-W3} applies and we get that the restriction is homotopic to the identity on $M_{0}$, as wanted.
\end{proof}

\subsection{Lower semi-continuity at $F$}

Let us consider the homotopy $I$ obtained in Proposition~\ref{p.class}. As before, we associate to each point $z\in S\setminus F$ the homotopy class $C_z$ of paths between $z$ and $f(z)$ in $S\setminus F$ which contains the trajectory $I.z$. We extend this definition by setting $C_z:=\{z\}$ for $z\in F$ (more formally, we define $C_z$ to be the singleton whose unique element is the constant path which remains at $z$).

\begin{lemm}\label{l.cz-continuite}
The map $z \mapsto C_{z}$ is lower semi-continuous at every point $x$ of $F$.
\end{lemm}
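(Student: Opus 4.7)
Fix $x \in F$ and an open neighborhood $U$ of $x$ in $S$; we seek an open neighborhood $V$ of $x$ such that $C_z$ contains a path with image in $U$ for every $z \in V$. When $z \in V \cap F$ (assuming $V \subset U$) this is immediate since $C_z=\{\gamma_z\}$ is the constant path at $z$. For the substantive case $z \in V \cap M$, by Lemma~\ref{l.uniqueness-Cz} it suffices to build a path $\beta$ from $z$ to $f(z)$ in $U \cap M$ which is homotopic to $I_n.z$ in $S \setminus F_n$ for some $n$ (Lemma~\ref{l.2} then extends the homotopy condition to every $p$). My strategy is to first use the strong homotopy $I_n$ to produce a path contained in $U$ but possibly crossing $F\setminus F_n$, and then push it off $F$ via the retraction technique of Lemma~\ref{l.1}.

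By density of $\bigcup_m F_m$ in $F$, choose $y \in F_n \cap U$ with $d(y,x)$ very small and shrink $U$ so that $U \cap F_n = \{y\}$. Since $I_n(y, t)=y$ for every $t$ and $I_n$ is uniformly continuous on a compact neighborhood of $x$, there is $r>0$ such that $I_n(B(y,r)\times[0,1]) \subset U$; by choosing $y$ close enough to $x$ (namely with $d(y,x)<r/2$, which is possible by density), the ball $V:=B(x,r/2)$ is a neighborhood of $x$ contained in $B(y,r)$, so for $z \in V$ the trajectory $I_n.z$ is a path in $U \setminus \{y\}$ joining $z$ to $f(z)$ (it avoids $F_n$ by the strong homotopy property). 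Next, pass to the universal cover $\Pi:X\to S\setminus F_n$, whose canonical lift $\tilde f$ (induced by $I_n$) commutes with the deck transformations by Corollary~\ref{coro.uniqueness-homotopy}. Exactly as in the proof of Lemma~\ref{l.1}, $\tilde f$ fixes every point of $\tilde F := \Pi^{-1}(F\setminus F_n)$, using density of $\bigcup_{p>n}F_p$ in $F\setminus F_n$ together with Claim~\ref{c.preserves-orientation}; by the Brown--Kister Theorem~\ref{t.brown-kister} every component of $X\setminus\tilde F$ is $\tilde f$-invariant.

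The lift of $I_n.z$ starting at a chosen lift $\tilde z$ stays in a single component $\tilde U$ of $\Pi^{-1}(U\setminus\{y\})$ and ends at $\tilde f(\tilde z)$. This $\tilde U$ is simply connected, because a small loop around $y\in F_n$ is primitive in the free group $\pi_1(S\setminus F_n)$, so by Lemma~\ref{lemm.groupe-fonda-sous-surface} the inclusion $U\setminus\{y\}\hookrightarrow S\setminus F_n$ is injective on fundamental groups. A deformation retract inside $\tilde U$, analogous to the one constructed in Lemma~\ref{l.1} using a compact approximation of $\tilde U$ avoiding suitably chosen lifts of points of $F_p\setminus F_n$ for $p$ large, then turns the lifted trajectory into a path $\tilde\beta$ from $\tilde z$ to $\tilde f(\tilde z)$ in $\tilde U\setminus \tilde F$. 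Its projection $\beta$ is a path in $U\cap M$ from $z$ to $f(z)$, path-homotopic in $\tilde U$ -- hence in $S\setminus F_n$ -- to $I_n.z$, so $\beta\in C_z$ as required. The main obstacle is the tight coordination between three scales: the neighborhood $U$ (where the final path must lie), the continuity basin $B(y,r)$ of $I_n$ at $y$ (which depends on $I_n$), and the distance $d(y,x)$ (controlled by density); $V$ must fit inside the continuity basin at $y$, which forces $y$ to be quantitatively close to $x$.
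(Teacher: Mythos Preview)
Your argument has two genuine gaps. The first is the circularity you yourself flag at the end: you choose $y\in F_n$ near $x$, then extract $r>0$ from the uniform continuity of $I_n$ at $y$, and finally demand $d(y,x)<r/2$. But $r$ depends on the pair $(y,n)$ already chosen, so ``choose $y$ close enough'' is not well-defined; density of $\bigcup_m F_m$ does not guarantee the existence of a successful choice when $x\notin\bigcup_m F_m$, because the radii $r(y,n)$ may shrink faster than $d(y,x)$ along any approximating sequence. The second gap is more serious: the retraction of Lemma~\ref{l.1} cannot be transplanted into $\tilde U$ as you claim. That retraction collapses $X\setminus\tilde F_p'$ onto a compact approximation, and it applies to the lift of $I_p.\gamma$ precisely because $I_p$ avoids $F_p$. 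Inside $\tilde U$ you have only two candidates: the lift of $I_n.z$, which stays in $\tilde U$ but may cross $\tilde F$ (and even $\tilde F_p'$ for $p>n$), so the retraction is undefined along it; or the lift of $I_p.z$, which avoids $\tilde F_p'$ but has no reason to remain in $\tilde U$ (the homotopy $I_p$ is not controlled near $x$). Either way you cannot conclude that $\tilde z$ and $\tilde f(\tilde z)$ lie in the same component of $\tilde U\setminus\tilde F$, and when $\tilde F\cap\tilde U$ reaches $\partial\tilde U$ this can genuinely fail for the naive argument.

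The paper circumvents both issues by abandoning the universal cover of $S\setminus F_n$ in favor of a case analysis: when $x$ is isolated it invokes the argument of Lemma~\ref{lemm.semi-continuity-proper}; when $x$ is accumulated and the local component $V'\subset V\setminus F$ is non-contractible it works in the universal cover of the relevant component of $S\setminus F$ itself (not $S\setminus F_n$) and argues that $\tilde f(\tilde V')$ must meet $\tilde V'$; and when $V'$ is contractible it slides $z$ along an arc in $V'$ toward a point $x'\in F\setminus F_p$ and uses the local Brown--Kister Proposition~\ref{p.Brown-Kister} at $x'$ to produce a short path in $Z\setminus F$ there. This last step is exactly the missing ingredient that replaces your unjustified retraction.
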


\begin{proof}[Proof of Lemma~\ref{l.cz-continuite}]
We consider a point $x\in F$ and a neighbourhood $U$ of $x$ in $S$. We have to prove that, for $z\in S\setminus F$ close enough to $x$, the homotopy class $C_z$ contains a path which is contained in $U$. 

If $x$ lies in the interior of $F$, this follows immediately from the definition of $C_z$ for $z\in F$. If $x$ is isolated in $F$, then we can use the argument of the first part of the proof of Lemma~\ref{lemm.semi-continuity-proper}.  

Now we assume that $x$ is not isolated in $F$ and not in the interior of $F$. Let $V$ be a closed disc containing $x$ such that $V \cup f(V) \subset U$. Since $x$ is not isolated in $F$, up to reducing $V$, we may assume that the boundary of $V$ meets $F$.  Let  $W$ be an open disc containing $x$ such that $W \cup f(W) \subset V$. Let $z\in W\setminus F$ (such a point $z$ does exist, since $z$ is not in the interior of $F$). Let $V'$ be  the connected component  of $V\setminus F$ containing  $z$. We distinguish two cases.

The first case is when $V'$ is not contractible: there exists some simple closed curve $\alpha$ in $V'$ which bounds a disc in $V$ which meets $F$.  We argue as in the case of an accumulated or non planar end in the proof of Lemma~\ref{lemm.semi-continuity-proper}. Let $M'$ be the connected component of $S\setminus F$ containing $V'$. Note that $f$ preserves $M'$, since $f_{S\setminus F}$ is homotopic to the identity. Let $X'$ be the universal cover of $M'$ which is a topological plane. We consider a lift $\tilde V'$ of $V'$, a lift $\tilde z$ of $z$ in $\tilde V'$, a lift $\tilde\alpha$ of $\alpha$ included in $\tilde V'$, and the lift $\tilde f$ of $f$ obtained by lifting the homotopy $I$. We denote by $T$ the covering automorphism associated to $\tilde\alpha$. Assume by contradiction that $\tilde f(\tilde V')$ is disjoint from $\tilde V'$. The boundaries of $\tilde V'$  is a disjoint union of proper topological lines. One of these topological lines, say $L$, separates $\tilde V'$ from $\tilde f(\tilde V')$. By construction, $T$ preserves $\tilde V'$. Since $\tilde f$ commutes with the covering automorphisms, $T$ also preserves $\tilde f(\tilde V')$. Hence $T$ must preserve $L$. As a further consequence, the projection of $L$ in $S$ must be a closed curve, contained in the boundary of $V'$ and disjoint from $F$. This contradicts the fact that the boundary of $V$ is a simple closed curve which meets $F$. Hence $\tilde f(\tilde V')$ must meet $\tilde V'$, and we may find a path $\beta$ in $\tilde f(\tilde V')\cup\tilde V'$ joining $\tilde z$ to $\tilde f(\tilde z)$. The projection of the path is a curve joining $z$ to $f(z)$, homotopic to the trajectory of $z$ for the homotopy $I$, hence an element of $C_z$. Moreover this curve is included in $V'\cup f(V')$, hence in $U$. So we have proved that $C_z$ contains a path which is contained in $U$.

The second case is when $V'$ is contractible (see Figure~\ref{f.preuve-semi-continuite}).
Let us fix some integer $p$. Since $x$ is not isolated in $F$,
there exists a path $\alpha\subset W$ which connects $z$ to a
point $x'\in F \setminus F_{p}$ and is disjoint from $F\setminus \{x'\}$.
By Corollary~\ref{coro.uniqueness-homotopy},
for any $n\geq p$, and any $y\in F_n$, the loop $I_p.y$
is homotopic in $S \setminus F_{p}$ to $I_n.y$, which is constant.
The points in $\bigcup_n F_n$ are thus contractible fixed points for the homotopy $I_p$.
Since the set of contractible fixed points is closed, 
and $\bigcup_n F_n$ is supposed to be dense in $F$,
the trajectory $I_p.x'$ also is contractible in $S\setminus F_p$.

\begin{figure}[ht]
\begin{center}
\def\svgwidth{0.8\textwidth}
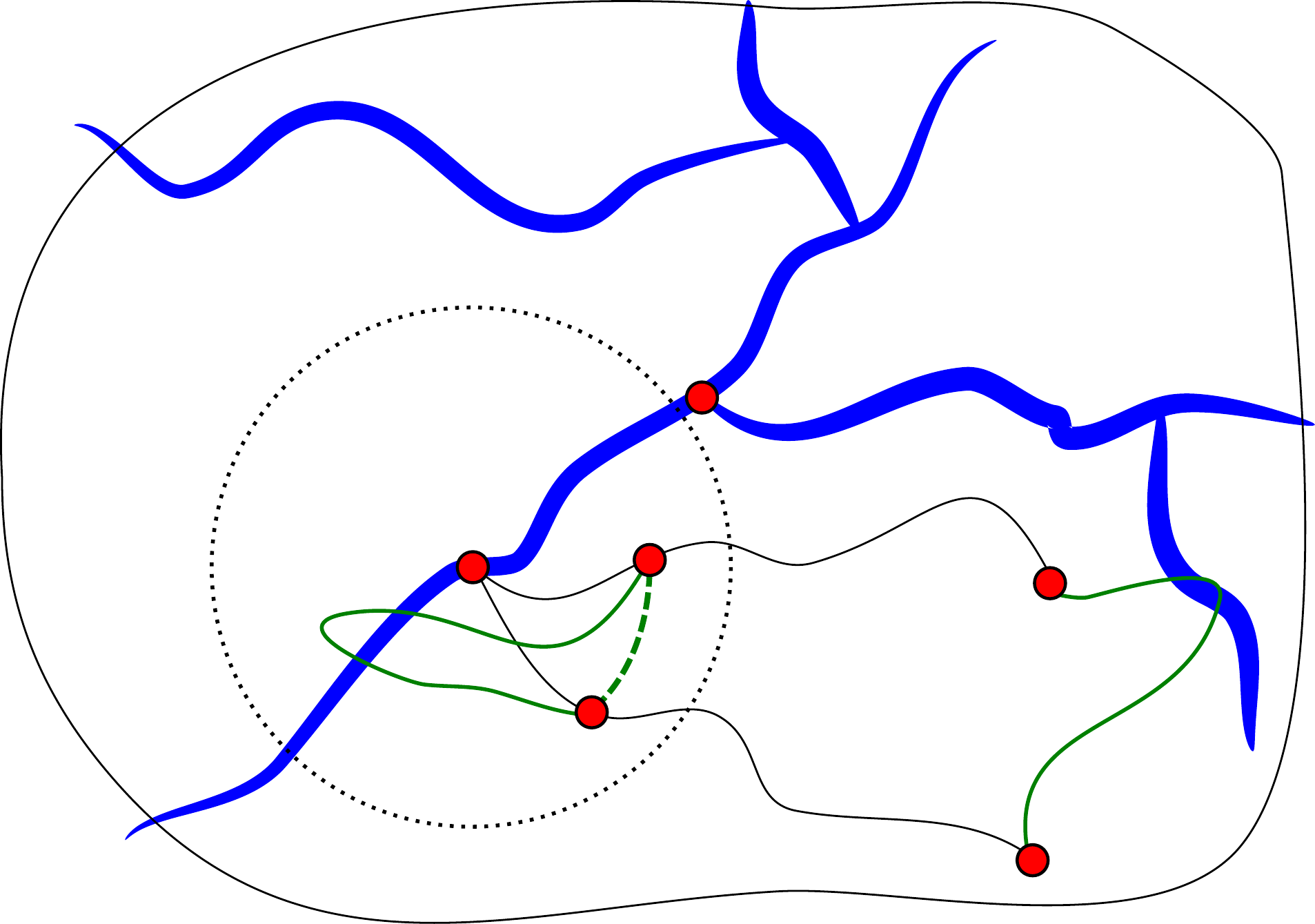
\end{center}
\caption{\label{f.preuve-semi-continuite}Proof of lower semi-continuity}
\end{figure}

Let $Z$ be a small neighborhood of $x'$ homeomorphic to a disc and disjoint from $F_p$.
By continuity, for any point $z'\in \alpha$ close enough to $x'$, the trajectory $I_p.z'$
is homotopic in $S\setminus F_p$ to a path contained in $Z$.
Remember that $f$ locally preserves the orientation at every point of $F$ (Claim~\ref{c.preserves-orientation}).
Consequently by the local version of the Brown-Kister theorem (Proposition~\ref{p.Brown-Kister} below),
 there exists a path $\gamma_{z'}\subset Z\setminus F$ which connects $z'$ to $f(z')$.
Since $Z$ is a disc and disjoint from $F_{p}$, the trajectory $I_p.z'$ is homotopic to $\gamma_{z'}$
in $S\setminus F_p$.

The homotopy $I_p$ in $S\setminus F_p$ allows to build a homotopy
in $S\setminus F_p$ between the trajectory $I_p.z$
and the concatenation $\alpha_{\mid [zz']}\ast I_{p}.z'\ast f(\alpha^{-1}_{\mid [zz']})$
where $\alpha_{\mid [zz']}$ is a path contained in $\alpha$ which connects $z$ to $z'$.
One deduces that $I_p.z$ is homotopic in $S\setminus F_p$ to the path
$\beta_p:=\alpha_{\mid [zz']}\ast\gamma_{z'} \ast f(\alpha^{-1}_{\mid [zz']})$
which is contained in $(W \cup f(W) ) \setminus F$, and thus in $V'$ (in particular we see that $f(z)$ belongs to $V'$).
Since $V'$ is contractible, the homotopy class of $\beta_{p}$ in $V'$ does not depend on $p$. Let $\beta$ be any curve in this homotopy class.
For every $p$, the trajectory $I_{p}.z$ is homotopic in $S \setminus F_{p}$ to our path $\beta$.
According to lemma~\ref{l.uniqueness-Cz}, this property characterizes
the fact that $\beta$ belongs to the homotopy class $C_{z}$. So we found an element of $C_z$ which is contained in $U$.
\end{proof}

\subsection{Construction of strong homotopies}
\label{ss.strong-homotopies}

\begin{proof}[Proof of Proposition~\ref{prop.finitely-homotopic}]
Assume the hypotheses of Proposition~\ref{prop.finitely-homotopic}. Apply Proposition~\ref{p.class} to get a homotopy $I$ between the restriction $f_{|S\setminus F}$ and the identity.  For every $z \in S \setminus F$, denote as before by $C_{z}$ the homotopy class containing the trajectory $I.z$. Extend this by letting $C_{z}=\{z\}$ whenever $z$ belongs to $F$. Claim~\ref{c.fibration} applies: every $C_{z}$ is arcwise connected and simply connected, and the map $z \mapsto C_{z}$ is locally trivial. According to Lemma~\ref{l.cz-continuite}, the map $z\mapsto C_z$ is also lower semi-continuous. Moreover, the argument in the proof of Lemma~\ref{lemm.ELC-ELSC} apply in our slightly different setting\footnote{With the following modifications. In our new setting, $F$ is not necessarily totally disconnected and $M=S\setminus F$ might be a plane. Yet, since $F$ is not a single point (we have assumed that $F$ is infinite), we can still find a closed neighbourhood $V$ of $x$ in $S$, included $U$, bounded by a simple closed curve, so that $F$ meets $S\setminus \mathrm{int}(V)$. This is enough to ensure that the map induced by the inclusion of $V\setminus F\hookrightarrow S\setminus F$ between the corresponding fundamental groups is injective. Also, in our new setting, $V\setminus F$ is not necessarily connected. To overcome this difficulty, it is enough to replace $V\setminus F$ by the connected component of $V\setminus F$ containing~$z$. }, and we get that the map $z \mapsto C_{z}$ is equi-locally connected and equi-locally simply connected at every point $x$ of $F$. Hence all the hypotheses of Proposition~\ref{prop.selection} are satisfied. We apply this proposition and get a continuous selection $z \mapsto \gamma_{z}$. This provides a strong homotopy from the identity to $f$ relative to $F$, as wanted.
\end{proof}

\newpage


\part{Isotopies}

\section{Straightening a topological line}
\label{s.isotopy-arc}

In the previous sections, we have only dealt with \emph{homotopies}. We will now start to construct \emph{isotopies}. In the present section, we will consider the simplest situation: we consider two properly embedded topological lines $\alpha,\alpha'$ which coincide outside a compact set, and we aim to build an isotopy ``bringing back" $\alpha'$ on $\alpha$. 

A \emph{properly embedded line} in a surface $M$ is a proper one-to-one map $\alpha:\bbR \to M$. Observe that the existence of a properly embedded line forces $M$ to be non compact. Two properly embedded topological lines $\alpha,\alpha'$ in a surface $M$ are said to be \emph{compactly homotopic} if there is a homotopy $H: \bbR \times [0,1] \to M$, joining $\alpha$ to $\alpha'$, with $H(s,t) = H(s,0)$ for every $s$ outside a compact interval of $\bbR$ and every $t\in [0,1]$; note that this implies that $\alpha$ and $\alpha'$ coincide outside a compact set. The main goal of the section is to prove the following result:

\begin{prop}
\label{p.straightening-lines-non-precise}
Let $\alpha,\alpha'$ be two compactly homotopic properly embedded lines in a surface $M$. Then there exists an isotopy $I$ of homeomorphisms of $M$, from the identity to a homeomorphism $f$ such that $f(\alpha') = \alpha$. 
\end{prop}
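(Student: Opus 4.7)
The plan is to reduce to a finite bigon-pushing argument, as in Epstein's classical paper. Since $\alpha$ and $\alpha'$ coincide outside a compact set, I would first choose a compact subsurface $N\subset M$ whose interior contains the compact set where $\alpha$ and $\alpha'$ differ, and whose boundary meets $\alpha=\alpha'$ transversely in two points $p,q$; the problem then becomes to construct a compactly supported isotopy in $N$ sending the arc $\alpha'\cap N$ to $\alpha\cap N$. These two arcs share endpoints $p,q$ and, by hypothesis, are homotopic relative to $\{p,q\}$ inside $N$ (since the compact homotopy between $\alpha$ and $\alpha'$ can be pushed into $N$).

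The key preliminary step is to put $\alpha$ and $\alpha'$ in ``transverse position'' after a first isotopy. More precisely, I would show that there is an isotopy of $M$, supported in a small neighborhood of $N$, after which $\alpha'\cap N$ meets $\alpha\cap N$ in finitely many points, each possessing a chart of $M$ in which the two arcs look like the two coordinate axes of $\bbR^2$; moreover, near $p$ and $q$ the arcs already coincide. In the topological category this requires some care, but can be done by covering the compact arc by finitely many small discs, straightening $\alpha$ and $\alpha'$ inside each disc to simple topological arcs crossing at isolated points, and removing arcs of tangential contact by a small local push. This step is the main technical obstacle.

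Having reached transverse position with $n$ intersection points in the interior, I would argue by induction on $n$. If $n=0$, the arcs $\alpha\cap N$ and $\alpha'\cap N$ are disjoint topological arcs in $N$ sharing the endpoints $p,q$ and homotopic rel $\{p,q\}$; their union therefore bounds a topological disk $D$ in $N$ (by the disk-bounding lemma that will be used elsewhere in the paper, applied in $N$ or in the universal cover), and the Schoenflies/Alexander trick provides a compactly supported isotopy of $N$ from the identity to a homeomorphism sending $\alpha'\cap N$ to $\alpha\cap N$ through $D$. If $n\geq 1$, the crucial observation is the existence of an \emph{innermost bigon}: among all sub-arcs $\alpha_0\subset\alpha\cap N$ and $\alpha'_0\subset\alpha'\cap N$ that share their two endpoints and are disjoint in their interiors, one chooses an innermost one. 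That $\alpha_0\cup\alpha'_0$ bounds a disk $D$ in $N$ follows from the same disk-bounding principle, using that $\alpha_0$ and $\alpha'_0$ are homotopic rel endpoints (which is forced by the transversality and the compact homotopy between $\alpha$ and $\alpha'$, via a standard cancellation argument in the universal cover).

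Once an innermost bigon $D$ is produced, a small compactly supported isotopy $I$ in an arbitrarily small neighborhood of $\overline D$ sends $\alpha'_0$ across $D$ just past $\alpha_0$, reducing the number of transverse intersection points by two without creating new ones. Applying this finitely many times and concatenating the resulting isotopies (together with the initial transversalization isotopy) yields an isotopy $I$ from the identity to a homeomorphism $f$ with $f(\alpha')=\alpha$, as required. The main obstacle, as mentioned, lies in the topological transversalization step and in verifying that an innermost bigon bounds an embedded disk; both are standard but require careful setup in the purely topological (non-smooth) category.
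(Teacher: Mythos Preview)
Your proposal is correct and follows essentially the same route as the paper: put the lines in transverse position (the paper's Proposition~\ref{p.transverse}), find a minimal bigon (Proposition~\ref{p.exists-bigon}), push it across (Lemmas~\ref{l.remove-bigon-1} and~\ref{l.remove-bigon-2}), and induct on the number of intersection components. The only organizational difference is that the paper works directly with the properly embedded lines in $M$ via a notion of \emph{quasi-transversality} (coincidence on two half-lines plus finitely many transverse crossings) rather than restricting to a compact subsurface $N$; and the paper carries out the bigon-finding in the universal cover by a three-step descent (Lemma~\ref{l.minimal-bigon} applied three times) rather than invoking an ``innermost'' argument directly in $N$. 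Your acknowledged obstacles---topological transversalization and the embedded-disk property for the bigon---are exactly what the paper spends Propositions~\ref{p.transverse} and~\ref{p.exists-bigon} on.
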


This result is essentially due to Epstein (\cite{epstein}). We will prove it for sake of completeness, and also because we will need a more precise statement that cannot be found in the literature (see Proposition~\ref{p.straightening-arc-easy} below).

In subsection~\ref{ss.filling}, we introduce the set $\fill(\alpha\cup\alpha')$. Roughly speaking, $\fill(\alpha\cup\alpha')$ is the part of $M$ ``lying between $\alpha$ and $\alpha'$".  All the isotopies that we will construct will be supported in a small neighbourhood of $\fill(\alpha\cup\alpha')$. In subsection~\ref{ss.transverse}, we prove a technical lemma which allows to put the lines $\alpha,\alpha'$ in \emph{quasi-transverse position}. In subsection~\ref{ss.bigons}, we will introduce the notion of \emph{bigon}, and prove the existence of minimal bigons bounded by subarcs of $\alpha$ and $\alpha'$. Finally, in subsection~\ref{ss.construction-isotopy-arc}, we state and prove a precise version of proposition~\ref{p.straightening-lines-non-precise}. The isotopy $I$ will obtained as a concatenation of elementary isotopies, each of which consists in ``removing a minimal bigon". 

Two compactly homotopic properly embedded lines certainly belong to the same connected component of $M$. Thus, without loss of generality, we will assume throughout this section that $M$ is connected.

\subsection{Filling of two properly embedded topological lines}
\label{ss.filling}


\begin{defi}
\label{d.filling}
Let $E$ be a subset of a connected surface $M$. The \emph{filling} of $E$ in $M$, denoted by $\fill(E)$ is the union of $E$ and all the connected components of $M\setminus E$ that are relatively compact, \emph{i. e.} with compact closure. The set $E$ is said to be \emph{filled} if it coincides with its filling, \emph{i.e.} if no connected component of $M\setminus E$ is relatively compact. 
\end{defi}

Observe that the notion of filling is monotonic: if $E\subset E'$ then $\fill(E)\subset\fill(E')$. Also note that the filling of a closed set is a closed set, and the filling of a compact set is a compact set. The three lemmas below state some other properties of fillings:

\begin{lemm}
\label{l.filled-neighbourhood}
If $E$ is a compact filled subset of a surface $M$, then every neighbourhood of $E$ contains a filled neighbourhood of $E$. 
\end{lemm}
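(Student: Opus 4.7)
The plan is to produce $V$ as the filling of a carefully chosen compact subsurface $N$ with $E \subset \mathrm{int}(N) \subset N \subset U$. Assuming $U$ is open (otherwise shrink it), I first choose an open set $U'$ with $E \subset U' \subset \overline{U'} \subset U$ and $\overline{U'}$ compact. Next I pick a compact subsurface with boundary $N$ satisfying $E \subset \mathrm{int}(N) \subset N \subset U'$ and such that \emph{every connected component of $N$ meets $E$} (for instance, take $N$ to be a finite union of small closed disks centered at points of a finite cover of $E$ by disks compactly contained in $U'$, smoothed into a subsurface with boundary). Then set $V := \fill(N)$.

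By construction $V$ is compact, is a neighbourhood of $E$ (since $N$ already is), and is filled: indeed $\partial N$ is a finite disjoint union of simple closed curves, $M \setminus N$ has finitely many connected components, and the complementary components of $V$ in $M$ are precisely the \emph{non}-relatively-compact components of $M \setminus N$. The only thing left to verify is that $V \subset U$, i.e.\ that every relatively compact component $W$ of $M \setminus N$ is contained in $U$.

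Here is where I would use that $E$ is filled. Such a $W$ is a connected open subset of $M \setminus E$, hence contained in a unique connected component $C$ of $M \setminus E$; moreover $\partial W \subset \partial N$ is disjoint from $E$, so $\overline{W} \subset C$. Since $E$ is filled, $C$ is not relatively compact, while $\overline{W}$ is compact, so $W \subsetneq C$ and the ``escape route'' from $W$ to the ends of $M$ through $C$ must pass through $C \cap N$. The condition that every component of $N$ meets $E$ forbids $N$ from containing a stray loop far from $E$ encircling a region outside $U$; combined with $N \subset U'$, a connectedness argument on the compact connected set $\overline{W} \subset C$ (which would otherwise have to cross $\partial U'$ to reach $\partial W \subset U'$ from a point outside $U$) then forces $\overline{W} \subset \overline{U'} \subset U$.

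The main obstacle is making the last step precise: ruling out that a relatively compact complementary component $W$ of $M \setminus N$ straddles $\partial U'$. The subtle point is that naively the filling operation can create unwanted compact holes far from $E$, as one sees by taking for $N$ the thickening of $E$ together with a disjoint annulus encircling a point of $M \setminus U$. The hypothesis on components of $N$ meeting $E$ is precisely what is needed to exclude this pathology, and turning this intuition into a clean proof is the crux of the argument---one may, if necessary, iterate: if for the initial $N$ some relatively compact $W$ exits $U$, enlarge $N$ by a thin tube along a proper ray in $C$ aimed at an end of $M$, staying in $U$, so that $W$ becomes non-relatively-compact. Finiteness of the number of boundary components of $N$ ensures termination.
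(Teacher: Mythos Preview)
Your approach has a genuine gap: taking $V = \fill(N)$ for a compact subsurface $N$ with $E \subset \mathrm{int}(N) \subset N \subset U'$ does \emph{not} in general give $V \subset U$, even under your extra hypothesis that every component of $N$ meets $E$. Consider $M=\bbR^2$, let $E$ be an arc on the unit circle from angle $\delta$ to $2\pi-\delta$ with $\delta=0.01$, and take $U=\{x:d(x,E)<1/2\}$ and $U'=\{x:d(x,E)<0.4\}$. Then $E$ is filled (its complement is connected), $0\notin U$, and a union of closed disks of radius $0.3$ centered at points of $E$ (which is a legitimate choice under your description) yields a connected $N\subset U'$ that closes the gap; the bounded component of $\bbR^2\setminus N$ is roughly the disk $\{|x|\lesssim 0.7\}$, which contains $0\notin U$. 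Your connectedness sketch (``$\overline W$ would have to cross $\partial U'$'') produces no contradiction here. One can rescue the filling strategy by taking the disks of radius much smaller than the gap, but then one has to \emph{prove} that a sufficiently small radius works, uniformly over all the bounded complementary components; you have not done this, and your proposal gives no mechanism for choosing the scale.

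Your iterative fix is also stated backwards. Enlarging $N$ by a tube makes $M\setminus N$ smaller, so the offending component $W$ shrinks and remains relatively compact; it cannot ``become non-relatively-compact''. Moreover a tube along a \emph{proper} ray cannot stay in $U$ in general.

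The paper avoids all of this by going in the opposite direction: rather than filling, it \emph{cuts}. Start from any compact subsurface $W'$ with $E\subset\mathrm{int}(W')\subset W'\subset W$. For each relatively compact component $A_i$ of $M\setminus W'$, the filled hypothesis on $E$ provides a properly embedded half-line $\beta_i\subset M\setminus E$ from a point of $A_i$ to infinity. Set $W''=W'\setminus(\beta_1\cup\dots\cup\beta_n)$. Then $W''\subset W'\subset W$ automatically, $W''$ is still a neighbourhood of $E$ since the $\beta_i$ avoid $E$, and each former bounded hole $A_i$ is now joined to infinity through $\beta_i$, so $W''$ is filled. The proper rays are exactly the ``escape routes'' you were reaching for, but they should be \emph{removed} from the neighbourhood, not added to it.
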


\begin{proof}
Let $W$ be a neighbourhood of $E$. We want to find a filled neighbourhood of $E$ contained in $W$. First, since $E$ is compact, we can find a compact surface with boundary $W'$ so that $E\subset\mathrm{int}(W')\subset W$. We denote by $A_1,\dots,A_n$ the relatively compact connected components of $M\setminus W'$. For every $i$, since $E$ is filled, we can find a properly embedded topological half-line $\beta_i$ in $M$, connecting a point of $A_i$ to infinity, so that $\beta_i\cap E=\emptyset$. We consider the set $W'':=W'\setminus (\beta_1\cup\dots\cup\beta_n)$. This is a filled neighbourhood of $E$ contained in $W$.
\end{proof}

\begin{lemm}
\label{lemm.filling-disjoint}
Let $E,E'$ be disjoint subsets of $M$. Assume that  no connected component of $E'$ is relatively compact. Then $E'$ is disjoint from $\fill(E)$.
\end{lemm}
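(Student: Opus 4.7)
The plan is to argue by contradiction: suppose some point $x$ lies in $E' \cap \fill(E)$, and derive that the connected component of $E'$ through $x$ must be relatively compact, contradicting the hypothesis.

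First I would unpack the definition of $\fill(E)$. By hypothesis $E'$ is disjoint from $E$, so any point $x\in E'\cap\fill(E)$ must actually lie in some relatively compact connected component $U$ of $M\setminus E$. Let $C$ denote the connected component of $E'$ through $x$. Since $E\cap E'=\emptyset$ we have $C\subset M\setminus E$, and since $C$ is connected it must be contained in a single connected component of $M\setminus E$; as $x\in U$, necessarily $C\subset U$.

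Now the closure $\overline{C}$ is contained in $\overline{U}$, which is compact by assumption on $U$, so $\overline{C}$ is compact as a closed subset of a compact set. Hence $C$ is a relatively compact connected component of $E'$, contradicting the hypothesis that no connected component of $E'$ is relatively compact. Therefore $E'\cap\fill(E)=\emptyset$, as required.

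I expect no real obstacle here: the only mild subtlety is to notice that one should argue at the level of connected components of $E'$ (not of $E'$ itself) in order to invoke the hypothesis, and to use the elementary topological fact that a connected subset of $M\setminus E$ is contained in a single connected component of $M\setminus E$. The entire argument fits in a few lines and requires no additional machinery.
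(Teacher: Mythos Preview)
Your proof is correct and follows essentially the same approach as the paper: argue by contradiction, observe that a point of $E'\cap\fill(E)$ must lie in some relatively compact connected component of $M\setminus E$, and trap the relevant piece of $E'$ inside that component. If anything, your version is slightly more precise than the paper's, which writes ``$E'$ must be included in $C$'' where it really means the connected component of $E'$ through the given point.
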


\begin{proof}
By contradiction, assume that $E'$ intersects $\fill(E)$. Since $E'$ is disjoint from $E$, it must intersect a relatively compact connected component $C$ of $M\setminus E$. Since the boundary of $C$ is included in $E$, it follows that $E'$ must be included in $C$. This is impossible since no connected component of $E'$ is relatively compact.
\end{proof}

\begin{lemm}
\label{lemm.fillings-disjoint}
Let $E,E'$ be disjoint subsets of $M$. Assume that no connected component of $E$ or $E'$ is relatively compact. Then the sets $\fill(E)$ and $\fill(E')$ are disjoint.
\end{lemm}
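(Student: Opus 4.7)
The plan is to bootstrap from Lemma~\ref{lemm.filling-disjoint} applied symmetrically, and then rule out a hypothetical common point of the two fillings by forcing $M$ to be compact in a way that contradicts the hypothesis. First I would apply Lemma~\ref{lemm.filling-disjoint} with both orderings of $(E,E')$: since each of $E$ and $E'$ has no relatively compact connected component, this yields both $E\cap\fill(E')=\emptyset$ and $E'\cap\fill(E)=\emptyset$. Thus any potential common point of $\fill(E)$ and $\fill(E')$ must lie outside $E\cup E'$.

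Next I would argue by contradiction: suppose $x\in\fill(E)\cap\fill(E')$. By the first step, $x\notin E\cup E'$, so the definition of $\fill$ places $x$ in a relatively compact connected component $C$ of $M\setminus E$ and in a relatively compact connected component $C'$ of $M\setminus E'$. The key intermediate step is to identify $C$ with $C'$: by the first step, $C\subset\fill(E)$ is disjoint from $E'$, so $C$ is a connected subset of $M\setminus E'$ containing $x$, whence $C\subset C'$; the reverse inclusion is symmetric.

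Once $C=C'$, this common set is a connected component of both $M\setminus E$ and of $M\setminus E'$, so its topological frontier in $M$ is contained in $E\cap E'=\emptyset$. Hence $C$ is clopen in $M$ and, by connectedness of $M$, equals the whole of $M$. But $C$ is relatively compact, so $M$ itself is compact; then, provided $E\neq\emptyset$ (otherwise $\fill(E)=\emptyset$ in the non-compact surfaces that are the only setting of interest in the sequel), every connected component of $E$ is relatively compact, contradicting the hypothesis.

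I expect no serious obstacle: the argument is short and purely point-set topological, relying only on the monotonicity of $\fill$, the connectedness of $M$, and Lemma~\ref{lemm.filling-disjoint}. The delicate point is the identification $C=C'$, which is precisely where the ``no relatively compact component'' hypothesis on both sets is essential, via the previous lemma.
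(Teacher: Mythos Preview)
Your argument is correct. The overall shape matches the paper's: both proofs argue by contradiction, locate a point of $\fill(E)\cap\fill(E')$ in relatively compact components $C$ of $M\setminus E$ and $C'$ of $M\setminus E'$, and derive a contradiction with the hypothesis.

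The endgame differs slightly. The paper observes that $\partial C\subset E$ and $\partial C'\subset E'$ are disjoint and concludes (up to swapping) that $\overline{C}\subset C'$; then $\partial C\subset E$ lies inside $C'$, so an entire connected component of $E$ is trapped in the relatively compact $C'$, contradicting the hypothesis directly. You instead use Lemma~\ref{lemm.filling-disjoint} twice to get $C\cap E'=\emptyset$ and $C'\cap E=\emptyset$, from which the mutual inclusion $C=C'$ follows cleanly; this forces $\partial C\subset E\cap E'=\emptyset$, hence $C=M$ is compact. Your route has the virtue of making the initial reduction (ruling out $x\in E\cup E'$) explicit via the previous lemma, and of avoiding the ``up to exchanging $C$ and $C'$'' step, which the paper leaves to the reader. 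The paper's route has the virtue of reaching the contradiction without passing through ``$M$ compact'', so it does not need the hedge on $E=\emptyset$. In the context of Section~\ref{s.isotopy-arc}, where $M$ is non-compact (forced by the existence of a properly embedded line), your contradiction at ``$M$ compact'' is already final, and the hedge is unnecessary.
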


\begin{proof}
By contradiction, assume that $\fill(E)$ intersects $\fill(E')$. Since $E$ and $E'$ are disjoint, there must exist a relatively compact connected component $C$ of $M\setminus E$ and a relatively compact connected component $C'$ of $M\setminus E'$, such that $C$ intersects $C'$. Observe that the boundary of $C$ is disjoint from the boundary of $C'$ (because these boundaries are respectively included in $E$ and $E'$). Hence, up to exchanging $C$ and $C'$, the closure of $C$ must be included in $C'$. Since the boundary of $C$ is included in $E$, and since $E$ is disjoint from the boundary of $C'$, it follows that there must be a connected component of $E$ included in $C'$. But this is impossible since every connected connected component of $E$ is proper and $C$ is relatively compact.
\end{proof}

\subsection{Putting lines in quasi-transverse position}
\label{ss.transverse}

Recall that two injectively immersed topological lines $\alpha,\alpha'$ in a surface $M$ are called \emph{(topologically) transverse} if: 
\begin{enumerate}
\item the intersection of $E:=\alpha\cap\alpha'$ with any compact set of $M$ is finite,
\item for every $x\in E$, there is a neighbourhood $D_x$ of $x$ in $M$, and a homeomorphism $h$ from $D_x$ to the closed unit disc $\mathbb{D}^2$ so that $h(\alpha\cap D_x)$ and $h(\alpha'\cap D_x)$ are two different diameters of $\mathbb{D}^2$.
\end{enumerate}
As explained before, we want to work with pairs of properly embedded topological lines $\alpha,\alpha'$ that coincide outside a compact set. Such topological lines are never transverse in the above sense. This is the reason why we introduce a slightly twisted notion of transversality:


%
%

\begin{defi}
\label{d.quasi-transverse}
Let $M$ be a connected surface, and $\alpha,\alpha'$ be two properly embedded topological lines in $M$ which coincide outside a compact set. We say that $\alpha$ and $\alpha'$ are \emph{quasi-transverse} if:
\begin{enumerate}
\item $\alpha\cap\alpha'$ is the union of two properly embedded half-lines and a finite set $E$,
\item for every $x\in E$, there is a neighbourhood $D_x$ of $x$ in $M$, and a homeomorphism $h$ from $D_x$ to the closed unit disc $\mathbb{D}^2$ so that $h(\alpha\cap D_x)$ and $h(\alpha'\cap D_x)$ are two different diameters of $\mathbb{D}^2$.
\end{enumerate}
\end{defi}

\begin{prop}
\label{p.transverse}
Let $M$ be a connected surface, and $\alpha,\alpha'$ be two properly embedded topological lines in $M$ which coincide outside a compact set. There exists an isotopy $I$ of homeomorphisms of $M$, joining the identity to a homeomorphism $f$ so that $\alpha$ and $f(\alpha')$ are quasi-transverse. 

Moreover, given any neighborhood $U'$ of $\alpha'$, the isotopy $I$ can be chosen so that its support is compact and contained $U'$.
\end{prop}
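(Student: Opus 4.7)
The plan is to localize the problem to a compact region and perform a topological ``straightening'' disk by disk.

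\medskip

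\noindent\textbf{Reduction to a compact region.} Since $\alpha$ and $\alpha'$ coincide outside a compact set, I choose $J=[a,b]\subset\bbR$ with $\alpha(s)=\alpha'(s)$ for every $s\notin J$. The half-lines $\alpha((-\infty,a])$ and $\alpha([b,+\infty))$ already lie in $\alpha\cap\alpha'$ and will play the role of the two properly embedded half-lines required by Definition~\ref{d.quasi-transverse}, so all the work takes place on the compact set $K:=\alpha(J)\cup\alpha'(J)$. Up to shrinking $U'$, I may assume $U'$ is a relatively compact neighborhood of $\alpha'([a,b])$; thus any isotopy supported in $U'$ will be compactly supported and the points of $\alpha\cap\alpha'$ outside $K$ will remain untouched, yielding the two required half-lines.

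\medskip

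\noindent\textbf{Cover by disks.} Using properness of $\alpha$ and $\alpha'$, I will cover $K$ by finitely many open topological disks $D_1,\dots,D_n$ with $\overline{D_i}\subset U'$ such that, for each $i$, both $\overline{D_i}\cap\alpha$ and $\overline{D_i}\cap\alpha'$ consist of finitely many embedded arcs with endpoints on $\partial D_i$ and interiors in $D_i$, and such that $\partial D_i$ is disjoint from $\alpha\cap\alpha'$. Existence of such a cover follows from the local structure of properly embedded lines (each point of $K$ has arbitrarily small disk neighborhoods crossed by $\alpha$ and by $\alpha'$ in finitely many arc components, obtained via Schoenflies) together with the compactness of $K$; the boundary circles can be perturbed slightly to avoid the (at this stage possibly wild) intersection set.

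\medskip

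\noindent\textbf{Local straightening lemma.} The technical core of the proof is the following statement in the closed disk $\mathbb{D}^2$: given two embedded arcs $\beta,\beta'\subset\mathbb{D}^2$ with endpoints on $\partial\mathbb{D}^2$ and interiors in $\mathrm{int}(\mathbb{D}^2)$, there exists an isotopy supported in $\mathrm{int}(\mathbb{D}^2)$ from the identity to a homeomorphism $\phi$ of $\mathbb{D}^2$ such that $\beta\cap\phi(\beta')$ is finite, and at each intersection point $\beta$ and $\phi(\beta')$ cross transversely in the sense of Definition~\ref{d.quasi-transverse}. The proof applies the Schoenflies theorem to identify $\beta$ with a straight chord of $\mathbb{D}^2$, then approximates $\beta'$ by a piecewise linear arc crossing this chord transversely at finitely many points; the approximating arc is the image of $\beta'$ under a homeomorphism of $\mathbb{D}^2$ fixing $\partial\mathbb{D}^2$ pointwise, and Alexander's trick (Proposition~\ref{p.alexander}) provides an isotopy realizing this homeomorphism with support in $\mathrm{int}(\mathbb{D}^2)$.

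\medskip

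\noindent\textbf{Iteration and patching.} Finally, I apply the local straightening in $D_1,D_2,\dots,D_n$ successively and concatenate the resulting isotopies. The subtle point, which is the main obstacle, is to make sure that the straightening performed inside $D_{i+1}$ does not destroy the transversality already achieved in $D_1,\dots,D_i$: after the $i$-th step only finitely many transverse crossings have been produced, and transversality is an open local condition, so by refining the cover (shrinking $D_{i+1}$ slightly, using Lemma-type perturbations to move disk boundaries off the already-produced crossings) one can ensure that the isotopy in $D_{i+1}$ fixes a neighborhood of each of these crossings. The composition of the $n$ isotopies is supported in $\bigcup_i D_i\subset U'$, has compact support in $M$, and yields a homeomorphism $f$ for which $\alpha$ and $f(\alpha')$ are quasi-transverse. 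Except for the local lemma, which is the topological analogue of smooth general position, the argument is essentially bookkeeping built on Schoenflies and the properness of the two lines.
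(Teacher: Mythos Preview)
Your strategy is workable in principle, but you have made the problem substantially harder than necessary, and the ``subtle point'' you flag in the patching step is a genuine gap that you do not fully close. When you straighten inside $D_{i+1}$, the isotopy is the identity on $\partial D_{i+1}$, but the curve $\alpha'_i$ you are moving may well cross $\alpha$ on $\partial D_{i+1}\cap(D_1\cup\dots\cup D_i)$ (these are new crossings produced by earlier steps), and after step $i+1$ the local picture at such a point has been altered on one side of $\partial D_{i+1}$ only: you have no control over the resulting tangency. Your fix (``shrink $D_{i+1}$ and move its boundary off the finitely many existing crossings'') is plausible, but you must simultaneously keep the shrunk disks covering the locus where $\alpha$ and $\alpha'_i$ are not yet transverse, and this locus changes at every step. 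None of this is carefully tracked. Similarly, your local lemma is stated for a single pair of chords $\beta,\beta'$, whereas each $D_i$ will contain several arcs of each line.

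The paper avoids all of this with a single observation: because $\alpha'$ is a properly embedded line and the region where $\alpha$ and $\alpha'$ differ is a compact subarc of $\alpha'$, one can take a \emph{single} closed disk $D\subset U'$ (a tubular thickening of that subarc) so that $\alpha'\cap D$ is one topological diameter of $D$ and $\alpha=\alpha'$ outside a compact subset of $\mathrm{int}(D)$. Only finitely many components $\bar\alpha_1,\dots,\bar\alpha_n$ of $\alpha\cap\mathrm{int}(D)$ meet this diameter; Schoenflies straightens them all to straight chords at once, one then draws a new diameter $\bar\alpha''$ with the same endpoints as $\bar\alpha'$ that is transverse to each $\bar\alpha_j$ and avoids the rest of $\alpha\cap D$, and a single application of Schoenflies plus the Alexander trick provides the isotopy supported in $D$. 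There is no cover, no iteration, and no patching. Your multi-disk scheme buys nothing here; the compactness of the non-coincidence locus already localizes the problem to one disk.
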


\begin{figure}[ht]
\def\svgwidth{\textwidth}
\begin{center}
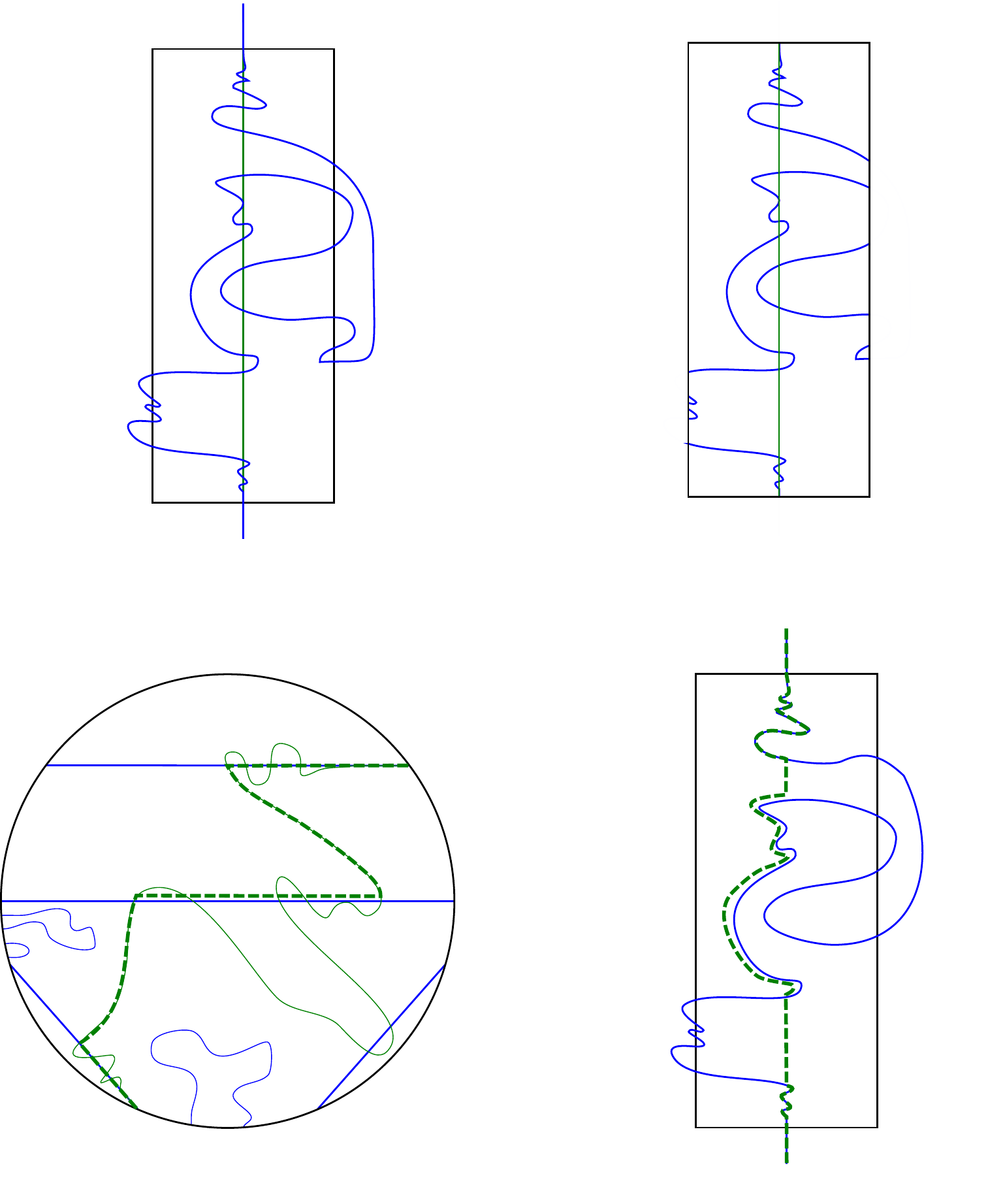
\caption{\label{f.quasi-transversification}Putting topological lines in quasi-transverse positions: proof of Proposition~\ref{p.transverse}}
\end{center}
\end{figure}

\begin{proof}
One can find a topological closed disc $D\subset U'$, such that $\bar\alpha':=\alpha'\cap D$ is a topological diameter\footnote{If $D$ is a closed topological disc, we will call \emph{topological diameter} of $D$ any arc whose endpoints lie on $\partial D$, and whose interior is contained in the interior of $D$.} of $D$, and such that $\alpha$ coincides with $\alpha'$ outside a compact subset of $\mathrm{int}(D)$ (see Figure~\ref{f.quasi-transversification}). Since $\alpha$ is a properly embedded line, and since $\alpha$ coincides with $\alpha'$ outside a compact subset of $\mathrm{int}(D)$, there are only finitely many connected components of $\alpha\cap\mathrm{int}(D)$ which meet $\bar\alpha'$. Denote by $\bar\alpha_1,\dots,\bar\alpha_n$ the closures of these connected components. These are topological diameters of $D$, with pariwise disjoint interior. Morever, $\check\alpha:=(\alpha\cap D)\setminus(\alpha_1\cup\dots\cup\alpha_n)$ is a compact set, whose distance to $\bar\alpha'$ is strictly positive. We can find a topological diameter $\bar\alpha''$ of $D$ with the same endpoints as $\bar\alpha'$, so that $\bar\alpha''$ is transverse to $\bar\alpha_1,\dots,\bar\alpha_n$ and disjoint from $\check\alpha$ (to see this, for instance one may first use repeatedly the Schoenflies theorem  to build a homeomorphism between $D$ and the standard unit disk, which sends each curve $\alpha_{i}$ onto a chord of the unit disk; see subsection~\ref{subsection.schoenflies}).
 Applying the Schoenflies theorem in the disc $D$, one gets a homeomorphism $f$ of $M$, supported in $D$, such that $f(\bar\alpha')=\bar\alpha''$. The topological lines $f(\alpha')$ and $\alpha$ are quasi-transverse. Furthermore Alexander trick (Proposition~\ref{p.alexander}) provides an isotopy from the identity to $f$, supported in $D$, as wanted.
\end{proof}

%
%

\subsection{Existence of bigons}
\label{ss.bigons}

\begin{defi}
\label{d.bigon}
A \emph{bigon} in a surface $M$ is a triple $(B,\bar\alpha,\bar\alpha')$ where $B$ is a topological closed disc in $M$, and where $\bar\alpha,\bar\alpha'$ are two arcs with the same endpoints, with disjoint interiors, such that $\bar\alpha\cup\bar\alpha'=\partial B$. The arcs $\bar\alpha,\bar\alpha'$ are the \emph{edges} of the bigon, the endpoints of $\bar\alpha,\bar\alpha'$ are the \emph{vertices} of the bigon. 

Let $\alpha,\alpha'$ are topological lines or arcs or simple closed curves in a surface $M$. A \emph{bigon for $\alpha,\alpha'$} is a bigon $(B,\bar\alpha,\bar\alpha')$ where $\bar\alpha$ is a subarc of $\alpha$, and $\bar\alpha'$ is a subarc of $\alpha'$. This bigon is called \emph{minimal} if the interior of $B$ is disjoint from $\alpha$ and $\alpha'$. 
\end{defi}

\begin{prop}
\label{p.exists-bigon}
Let $\alpha,\alpha'$ be two compactly homotopic quasi-transverse properly embedded topological lines in a surface~$M$. Then, either $\alpha=\alpha'$, or one can find a minimal bigon for $\alpha,\alpha'$ in $M$.
\end{prop}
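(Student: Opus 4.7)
The plan is to pass to the universal cover, locate a minimal bigon there by an innermost-chord argument, and then project it back down to $M$. Assume $\alpha\neq\alpha'$ (otherwise there is nothing to prove). Since the existence of a properly embedded line forces $M$ to be non-compact, its universal cover $\tilde M$ is homeomorphic to the plane. Fix a lift $\tilde\alpha$ of $\alpha$ in $\tilde M$ and lift the compact homotopy starting from $\tilde\alpha$ to obtain a properly embedded line $\tilde\alpha'\subset\tilde M$ which coincides with $\tilde\alpha$ outside a compact set, projects to $\alpha'$, and satisfies $\tilde\alpha'\neq\tilde\alpha$. Write $\tilde G:=\pi^{-1}(\alpha\cup\alpha')\subset\tilde M$ for the full preimage graph; proper discontinuity of the deck action guarantees that only finitely many translates of $\tilde\alpha$ or of $\tilde\alpha'$ meet a given compact set.

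By the Jordan curve theorem for proper lines, $\tilde\alpha$ separates $\tilde M$ into two half-planes $U$ and~$L$, and each component of $\tilde\alpha'\setminus\tilde\alpha$ lies in exactly one of them. Swapping $U$ and $L$ if necessary, pick one such component $\tilde a'\subset U$; together with the subarc $\tilde a$ of $\tilde\alpha$ between its endpoints it bounds a closed topological disc $R\subset\bar U$. Call a \emph{chord} of $R$ any arc of $\tilde G$ whose interior lies in $\inte(R)$ and whose endpoints lie on~$\partial R$. Because distinct lifts of the embedded line $\alpha$ are pairwise disjoint in $\tilde M$ (and likewise for $\alpha'$), a chord lying on a lift of $\alpha$ must have its two endpoints on $\tilde a'\subset\tilde\alpha'$, while a chord lying on a lift of $\alpha'$ must have its two endpoints on $\tilde a\subset\tilde\alpha$. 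In either case such a chord, together with the corresponding subarc of $\partial R$, bounds a closed sub-disc of~$R$ whose boundary is one subarc of a lift of~$\alpha$ and one subarc of a lift of~$\alpha'$. Since $\tilde G\cap R$ is finite, I can choose a chord $c$ whose sub-disc $\tilde B$ is minimal for inclusion (and set $\tilde B:=R$ if there is no chord at all). Then $\inte(\tilde B)$ contains no chord and is therefore disjoint from $\tilde G$, so $\inte(\tilde B)$ is an entire connected component of $\tilde M\setminus\tilde G$.

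The main remaining step, which I expect to be the subtle one, is to show that $\pi$ is injective on $\tilde B$, so that $B:=\pi(\tilde B)$ is a genuinely embedded closed disc in~$M$. On the interior, suppose some non-trivial deck transformation $T$ satisfies $T(\inte\tilde B)\cap\inte\tilde B\neq\emptyset$; since $T(\inte\tilde B)$ and $\inte\tilde B$ are both connected components of $\tilde M\setminus\tilde G$, they must coincide, and $T$ restricts to a self-homeomorphism of the closed disc~$\tilde B$. Brouwer's fixed point theorem then produces a fixed point of $T$, contradicting the freeness of the deck action on~$\tilde M$. On the boundary, each of the two edges of $\partial\tilde B$ lies in a single lift of $\alpha$ or of $\alpha'$ (hence projects injectively), the interiors of the two projected arcs lie in the disjoint open sets $\alpha\setminus\alpha'$ and $\alpha'\setminus\alpha$, and the two corners of $\tilde B$ are distinct points of a common lift (both on $\tilde a'$ or both on~$\tilde a$), hence project to distinct points of~$M$. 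Therefore $B=\pi(\tilde B)$ is a closed topological disc bounded by one subarc of $\alpha$ and one subarc of $\alpha'$ meeting at two vertices, with interior disjoint from $\alpha\cup\alpha'$, which is precisely a minimal bigon for $\alpha,\alpha'$.
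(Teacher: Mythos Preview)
Your overall architecture is the same as the paper's --- lift to the universal cover, find a minimal bigon there, show it projects injectively --- but the innermost-chord step has a genuine gap.

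The problematic sentence is ``Then $\inte(\tilde B)$ contains no chord and is therefore disjoint from $\tilde G$.'' Minimality of $\tilde B$ among the sub-discs cut off by chords of $R$ only tells you that no chord of $R$ lies \emph{entirely} inside $\tilde B$; it does not prevent a chord from passing \emph{through} $\tilde B$. Concretely, suppose your minimal chord $c$ lies on a lift of $\alpha$ and has its endpoints on $\tilde a'$, so $\tilde B$ is bounded by $c$ and a subarc $d\subset\tilde a'$. Any chord $e'$ lying on a lift of $\alpha'$ has its endpoints on $\tilde a$ (which is disjoint from $\tilde B$), so $e'$ can never be contained in $\tilde B$ --- but nothing stops $e'$ from crossing $c$ twice and leaving an arc inside $\inte(\tilde B)$. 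In that case $\inte(\tilde B)$ meets $\tilde G$, it is not a component of $\tilde M\setminus\tilde G$, and the projection $\pi(\tilde B)$ is not a minimal bigon (its interior meets $\alpha'$). The sub-disc $D_{e'}$ cut off by $e'$ is incomparable with $\tilde B$ for inclusion (each contains a piece of $\partial R$ that the other misses), so both can be minimal in your partial order simultaneously.

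This is exactly why the paper iterates Lemma~\ref{l.minimal-bigon} three times rather than once: the first pass produces a bigon whose interior avoids $\tilde\alpha$ and $\tilde\alpha'$; the second pass, run \emph{inside} that bigon, clears out all lifts of $\alpha'$; and the third pass clears out the lifts of $\alpha$ that may have entered through the new $\alpha'$-edge. Each step changes which boundary arc the chords attach to, so one round of minimisation is not enough. Your argument is easily repaired by the same iteration.
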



In order to prove this proposition, we will work in the universal cover of $M$. The following lemma will be used several times. 

\begin{lemm}
\label{l.minimal-bigon}
Let $(B,\alpha,\alpha')$ be a bigon in a surface $X$, and $\cA$ be a non-empty finite collection of pairwise disjoint arcs, whose ends lie in the interior of $\alpha'$, and whose interiors lie in $\mathrm{int}(B)$. Then one can find a bigon $(\widehat B,\hat\alpha,\hat\alpha')$ such that $\widehat B\subset B$, such that $\hat\alpha$ is an element of  $\cA$, such that $\hat\alpha'$ is a sub-arc of $\alpha'$, and such that the interior $\widehat B$ is disjoint from the elements of $\cA$. 
\end{lemm}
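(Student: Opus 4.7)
The plan is to associate to each arc in $\cA$ a sub-bigon of $B$, and then to select the one that is minimal for inclusion. More precisely, for each $a\in\cA$ with endpoints $p,q$ in the interior of $\alpha'$, the arc $a$ together with the subarc $\alpha'_a$ of $\alpha'$ joining $p$ to $q$ forms a simple closed curve contained in $B$. By the Jordan--Schoenflies theorem applied inside the disc $B$, this simple closed curve bounds a closed topological disc $B_a\subset B$ (the one not containing $\alpha$), and $(B_a,a,\alpha'_a)$ is a bigon. Equivalently, $a$ is a properly embedded arc in $B$ with endpoints on $\alpha'$, so $B\setminus a$ has two components, exactly one of which has closure disjoint from $\alpha$, and that closure is $B_a$.

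Since $\cA$ is finite and non-empty, we can pick an arc $\hat\alpha\in\cA$ for which $B_{\hat\alpha}$ is minimal with respect to inclusion among the discs $B_a$, $a\in\cA$. Set $\widehat B:=B_{\hat\alpha}$ and $\hat\alpha':=\alpha'_{\hat\alpha}$. It remains to check that the interior of $\widehat B$ is disjoint from every element of $\cA$.

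Suppose by contradiction that some $b\in\cA$ meets $\mathrm{int}(\widehat B)$. Since the arcs of $\cA$ are pairwise disjoint, $b$ is disjoint from $\hat\alpha$; in particular $\mathrm{int}(b)$ is a connected subset of $\mathrm{int}(B)\setminus\hat\alpha$, which has two connected components separated by $\hat\alpha$. One of these components is $\mathrm{int}(\widehat B)$, and since $\mathrm{int}(b)$ meets it, $\mathrm{int}(b)$ is entirely contained in $\mathrm{int}(\widehat B)$. Taking closures, $b\subset\widehat B$, and the endpoints of $b$ lie in $\alpha'\cap\widehat B=\hat\alpha'$. Thus $b$ has both endpoints on $\hat\alpha'$, and the associated disc $B_b$ satisfies $B_b\subset\widehat B=B_{\hat\alpha}$. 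Since $b\neq\hat\alpha$ (they are disjoint), the inclusion is strict, contradicting the minimality of $B_{\hat\alpha}$. Hence $\mathrm{int}(\widehat B)$ is disjoint from every element of $\cA$, which is the desired conclusion.

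The argument is essentially an innermost-disc argument and I do not anticipate a serious obstacle; the only point that deserves care is the verification that $B\setminus a$ has exactly two components and that one of them, $\mathrm{int}(\widehat B)$, is characterised as ``the side not containing $\alpha$''. This follows from a direct application of the Schoenflies theorem to a closed disc containing $B$, and allows one to identify the two pieces of $\mathrm{int}(B)\setminus\hat\alpha$ unambiguously in the last paragraph.
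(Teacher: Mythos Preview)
Your proof is correct and follows essentially the same approach as the paper's: both associate to each arc $a\in\cA$ the closure $B_a$ of the component of $B\setminus a$ not containing $\alpha$, and then select a minimal element of the finite family $\{B_a : a\in\cA\}$ for inclusion. The paper simply asserts that this minimal bigon ``suits our needs'' without spelling out the verification, whereas you supply the innermost-arc contradiction argument explicitly; this extra detail is welcome and correct.
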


\begin{proof} 
For each arc $\hat \alpha$ in $\cA$,, the set $ B \setminus \hat \alpha$ has two connected components (see  See figure~\ref{f.trouver-bigone}). One of these two connected components contains $\alpha$. Denote by $B(\hat \alpha)$ the closure of the other connected component. Since the elements of $\cA$ are pairwise disjoint, the set $\{B(\hat \alpha), \hat \alpha \in \cA \}$ is partially ordered by inclusion. Since it is a finite set, there exists an element $\hat \alpha$ of $\cA$ such that $B(\hat \alpha)$ is a minimal element for this partial order. There exists a subarc $\hat \alpha'$ of $\alpha'$ such that the triple $(B(\hat \alpha), \hat \alpha,\hat \alpha')$ is a bigon. This bigon suits our needs.
\end{proof}

\begin{figure}[ht]
\def\svgwidth{0.8\textwidth}
\def\hap{\hat \alpha'}
\begin{center}
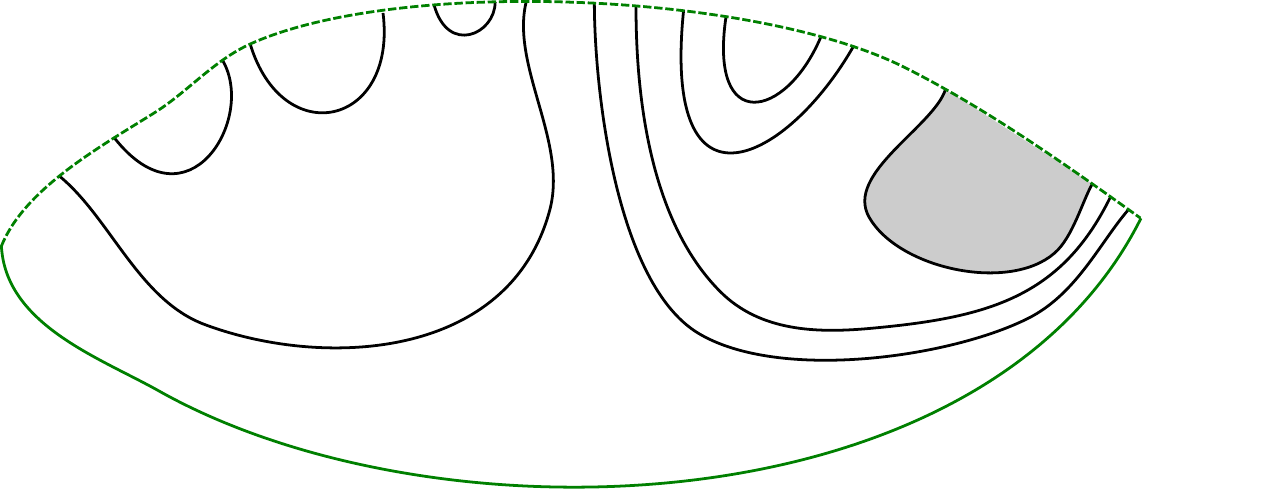
\caption{\label{f.trouver-bigone}The minimal bigon $(B(\hat \alpha), \hat \alpha,\hat \alpha')$ provided by the proof of Lemma~\ref{l.minimal-bigon}}
\end{center}
\end{figure}

\begin{proof}[Proof of Proposition~\ref{p.exists-bigon}]
Let $\pi:X\to M$ be a universal cover of the (non compact) surface $M$. We choose a lift $\tilde \alpha$ of $\alpha$ in $X$. Since $X$ is a topological disc and $\tilde\alpha$ is a properly embedded topological line in $X$, using the Schoenflies theorem, we can, and we will, see $X$ as the unit disc in $\mathbb{R}^2$, and $\tilde\alpha$ as the horizontal diameter of this disc. Lifting a compactly supported homotopy between $\alpha,\alpha'$, we get a lift $\tilde\alpha'$ of $\alpha'$ which is compactly homotopic and quasi-transverse to $\tilde\alpha$. Without loss of generality, we will assume that $\tilde\alpha'$ intersects the upper half disc.
In order to find the desired bigon, we will apply Lemma~\ref{l.minimal-bigon} three times. 

We first consider the bigon $(B_0,\alpha_0,\alpha_0')$ in $\mathbb{R}^2$, where $B_0$ is the closed upper-half unit disc, $\alpha_0$ is the upper-half unit circle, and $\alpha_0'$ is the horizontal diameter of the unit disc (\emph{i.e.} the closure of $\tilde\alpha$ in $\mathbb{R}^2$). We denote by $\cA_{0}$ the family of the closures of the connected components of $\tilde \alpha'\cap \mathrm{int}(B_0)$. Since $\tilde \alpha$ and $\tilde \alpha'$ coincide outside a compact set of $X$ and are quasi-transverse,  $\cA_0$ is a finite collection of arcs whose ends lie in the interior of $\alpha_0'$, and whose interiors lie in the interior of $B_0$. We apply Lemma~\ref{l.minimal-bigon} to the bigon $(B_0,\alpha_0,\alpha_0')$ and the collection $\cA_0$. It provides us with a bigon $(B_1,\alpha_1,\alpha_1')$, where $\alpha_1$ is a subarc of $\tilde\alpha'$, where $\alpha_1'$ is a subarc of $\tilde\alpha$, and where the interior of $B_1$ is disjoint from $\tilde\alpha$ and $\tilde\alpha'$. 
In other words, $(B_1,\alpha_1,\alpha_1')$ is a minimal bigon in $X$ for $\tilde\alpha,\tilde\alpha'$ in the sense of Definition~\ref{d.bigon}.

Now let $\cA_1$ be the collection of the closures of the connected components of $\mathrm{int}(B_{1}) \cap \pi^{-1}(\alpha')$.  Note that  $\cA_1$ is a collection of arcs whose ends lie on $\alpha_1'$, because $\pi^{-1}(\alpha')$ is a collection of pairwise disjoint properly embedded topological lines, which are topologically transverse to $\alpha_1'\subset\tilde\alpha$. Moreover, $\cA_1$ is finite,
because of the quasi-transversality between $\pi^{-1}(\alpha')$ and $\alpha_1'\subset\tilde\alpha$. Hence, we can apply Lemma~\ref{l.minimal-bigon} to the bigon $(B_{1},\alpha_{1},\alpha_{1}')$ and the collection $\cA_{1}$. It provides us with a bigon $(B_2,\alpha_2,\alpha_2')$, where $\alpha_2$ is a subarc of a lift of $\alpha'$, where $\alpha_2'$ is a subarc of $\tilde\alpha$, and where the interior of $B_2$ is disjoint from $\pi^{-1}(\alpha')$. Moreover the interior of $B_2$ is also disjoint from $\tilde \alpha$ since $B_2\subset B_1$.

Finally, we consider the collection $\cA_2$ of the connected components of  $B_{2} \cap \pi^{-1}(\alpha)$. By the same type of arguments as above, $\cA_2$ is a finite collection of arcs, whose ends lie in $\alpha_2$, and whose interiors are contained in the interior of $B_2$. Hence, we can apply Lemma~\ref{l.minimal-bigon} to the bigon $(B_{2} ,\alpha_{2},\alpha_{2}')$ and the collection of arcs $\cA_{2}$. It provides us with a bigon  $(B_3,\alpha_3,\alpha_3')$, where $\alpha_3$ is a subarc of a lift of $\alpha$, where $\alpha_3'$ is a subarc of $\alpha_2$ (hence a subarc of a lift of $\alpha'$), and where the interior of $B_3$ is disjoint from $\pi^{-1}(\alpha)$. The interior of $B_3$ is disjoint from $\pi^{-1}(\alpha')$ since $B_3\subset B_2$. 
   
It remains to check that the projection of $(B_3,\alpha_3,\alpha_3')$ in $M$ is a bigon satisfying the desired properties. Let $\theta$ be a deck transformation of the universal cover $\pi:X\to M$. Note that the situation $B_3\subset \theta(B_3)$ is excluded since this would imply the existence of a fixed point for $\theta$ by Brouwer fixed point theorem. On the other hand, by construction, the interior of $B_3$ is disjoint from  $\pi^{-1} (\alpha \cup \alpha')$. It follows that  the interior of $B_3$ is disjoint from the image under $\theta$ of the boundary of $B_3$. Hence, the only possibility is that the interior $B_3$ is disjoint from its image under $\theta$. Furthermore, each edge is disjoint from its image under $\theta$, since $\alpha$ and $\alpha'$ are embedded lines; and by transversality each edge is disjoint from the image of the other edge. Thus the closed disk  $B_{3}$ is disjoint from its image under $\theta$.
Since this is true for any deck transformation $\theta$, it follows that the projection $(\bar B_3,\bar\alpha_3,\bar\alpha_3')$ of $(B_3,\alpha_3,\alpha_3')$ in $M$ is one-to-one. Clearly, this projection is a bigon, for $\alpha,\alpha'$. Moreover, this bigon is minimal: indeed the interior of $\bar B_3$ is disjoint from $\alpha$ and $\alpha'$, since the interior of $B_3$ is disjoint from $\pi^{-1}(\alpha)$ and also from $\pi^{-1} (\alpha)$. 
\end{proof}

\begin{figure}[ht]
\begin{center}
\def\svgwidth{0.8\textwidth}
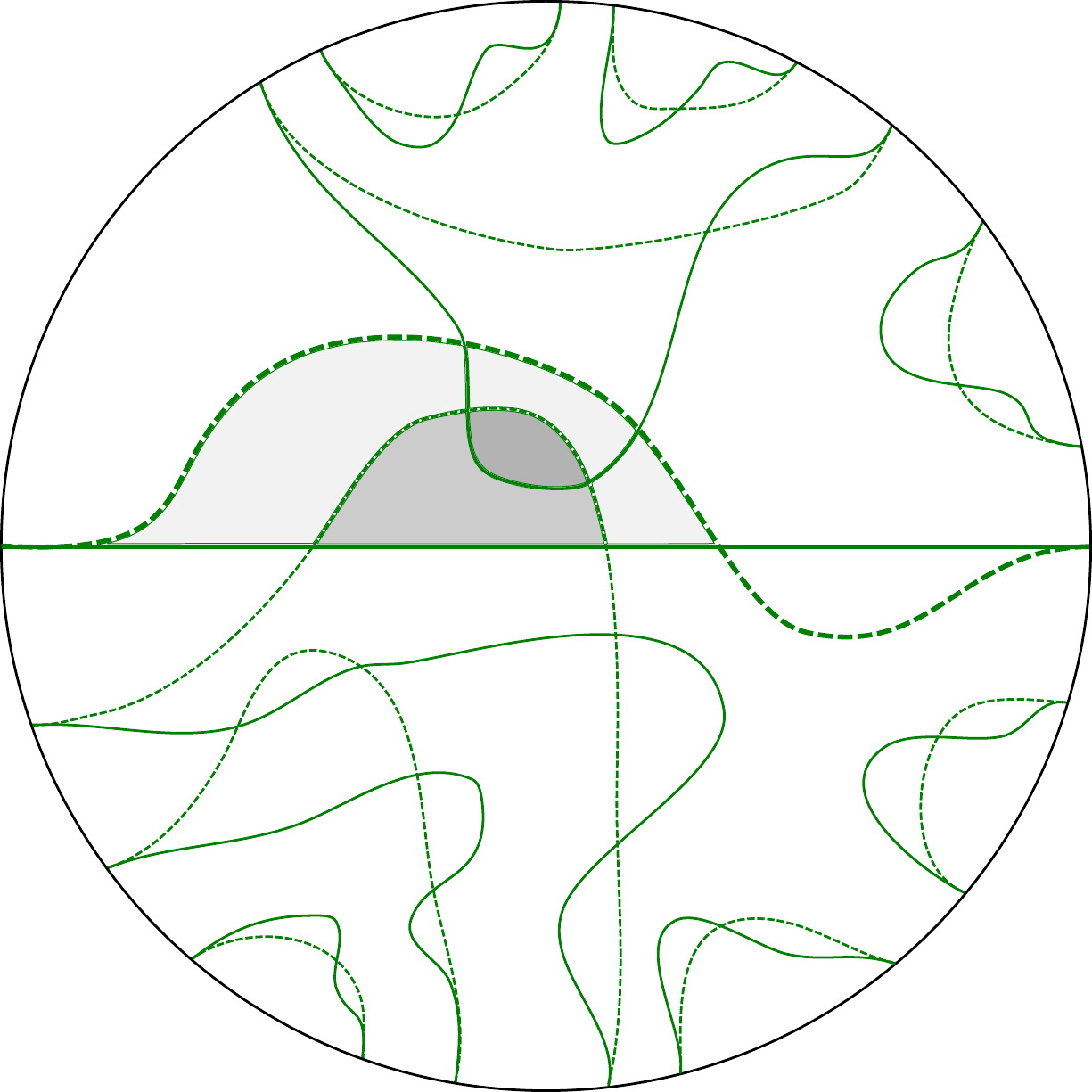
\end{center}
\caption{The successive bigons in the proof of Proposition~\ref{p.exists-bigon}}
\end{figure}


\subsection{Construction of the isotopy}
\label{ss.construction-isotopy-arc}

We will prove the following precise version of Proposition~\ref{p.straightening-lines-non-precise}. 

\begin{prop}
\label{p.straightening-arc-easy}
Let $\alpha,\alpha'$ be two compactly homotopic properly embedded lines in a surface $M$. Then there exists an isotopy of homeomorphisms of $M$, joining the identity to a homeomorphism $f$ such that $f \circ \alpha' = \alpha$. 

Moreover, let $\hat \alpha,\hat \alpha'$ be compact subarcs of $\alpha,\alpha'$ respectively, such that $\alpha$ and $\alpha'$ coincide outside $\hat \alpha$ and $\hat \alpha'$.
Let $W$ be a neighbourhood of the set $\fill(\hat \alpha\cup \hat \alpha')$ in $M$. Let $C,C'$ be two closed subsets of $M$ such that $C \cap \alpha  = C' \cap \alpha'= \emptyset$.
 Then the isotopy $I$ can be chosen such that its support is included in $W$ and such that $C\cap f(C')\subset C\cap C'$.
\end{prop}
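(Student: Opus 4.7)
My plan is to carry out the bigon-pushing scheme sketched in subsection~\ref{ss.strategy}. First I apply Proposition~\ref{p.transverse} with a neighborhood $U'$ of $\hat\alpha'$ contained in $W$ to obtain an isotopy supported in $U'$, from the identity to a homeomorphism $f_{0}$, such that $\alpha'_{0}:=f_{0}(\alpha')$ is quasi-transverse to $\alpha$. The intersection $\alpha\cap\alpha'_{0}$ then consists of the two common half-lines at infinity together with a \emph{finite} set $E$ of transverse crossings.

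Next I iterate a bigon-removal step. While $\alpha\neq\alpha'_{0}$, Proposition~\ref{p.exists-bigon} furnishes a minimal bigon $(B,\bar\alpha,\bar\alpha')$ for the pair $(\alpha,\alpha'_{0})$. Using the Schoenflies theorem I pick a closed topological disc $D$ which is a filled neighborhood of $B$ (Lemma~\ref{l.filled-neighbourhood}) together with a Schoenflies chart $D\to\bbD^{2}$. Alexander's trick (Proposition~\ref{p.alexander}) then provides a compactly supported isotopy inside $D$ from the identity to a homeomorphism $g$ equal to the identity near the two vertices of $B$ and satisfying $g(\bar\alpha')=\bar\alpha$. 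Because $\mathrm{int}(B)$ is disjoint from $\alpha\cup\alpha'_{0}$, post-composing with $g$ erases exactly the two crossings at the vertices of $B$ and creates none; since $|E|$ is finite, after finitely many such elementary pushes the current arc coincides with $\alpha$ as a subset of $M$. Let $f_{1}$ be the cumulative homeomorphism; then $f_{1}(\alpha'(\bbR))=\alpha(\bbR)$.

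A last step turns this set equality into a pointwise equality. The self-map $\alpha^{-1}\circ f_{1}\circ\alpha':\bbR\to\bbR$ is a homeomorphism equal to $\id_{\bbR}$ outside a compact interval, hence orientation-preserving, hence isotopic to $\id_{\bbR}$ through compactly supported self-homeomorphisms of $\bbR$; lifting such a reparametrization isotopy to a thin tubular neighborhood of $\alpha$ chosen inside $W$ (using Schoenflies once more) yields a final isotopy whose concatenation with the previous ones produces the desired $f$ with $f\circ\alpha'=\alpha$.

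Two side constraints must be respected throughout, and the second is the main difficulty. For the support, each minimal bigon $B$ is relatively compact with interior avoiding $\alpha\cup\alpha'_{0}$, so $B\subset\fill(\hat\alpha\cup\hat\alpha'_{0})$; by taking $U'$ thin enough in the quasi-transversification step this filled region can be forced inside any prescribed open neighborhood of $\fill(\hat\alpha\cup\hat\alpha')$, and Lemma~\ref{l.filled-neighbourhood} then lets each disc $D$ be chosen inside $W$, keeping the total support in $W$. The condition $C\cap f(C')\subset C\cap C'$ amounts to demanding that no point of $C'\setminus C$ be moved into $C\setminus C'$ by $f$. I plan to maintain it inductively: at step $i$, I choose the disc $D$ and the Alexander push inside it so that the trajectory of every point of $f_{i-1}(C')\cap D$ remains in the open set $M\setminus(C\setminus C')$. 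This is possible because $C\setminus C'$ is closed and, crucially, disjoint from $\bar\alpha$ (as $C\cap\alpha=\emptyset$), so along $\bar\alpha$ there is room to deflect the push within the Schoenflies chart away from $C\setminus C'$ by shrinking $D$; the genuinely delicate case, which I expect to be the main obstacle, is when $f_{i-1}(C')$ accumulates on $\bar\alpha$ and requires a coordinated choice of the successive discs $D$ to prevent any unwanted image from creeping into $C\setminus C'$.
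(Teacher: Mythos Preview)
Your overall scheme---quasi-transversify, then remove minimal bigons one by one---is exactly the paper's approach, but there is a genuine gap in how you handle the condition $C\cap f(C')\subset C\cap C'$, which you yourself flag as ``the main obstacle'' without resolving. The paper's resolution is not by controlling trajectories or ``deflecting the push'', but by a clean structural choice of neighbourhoods. At step $i$ the invariant $C_i'\cap\alpha_i'=\emptyset$ holds automatically (since $C_i'$ and $\alpha_i'$ are the images of $C'$ and $\alpha'$ under the same homeomorphism). Given a minimal bigon $(B,\bar\alpha,\bar\alpha_i')$, pick a neighbourhood $U$ of $\bar\alpha$ with $U\cap C=\emptyset$ (possible since $C\cap\alpha=\emptyset$) and a neighbourhood $U'$ of $\bar\alpha_i'$ with $U'\cap C_i'=\emptyset$ (possible by the invariant). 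The bigon-removing isotopy of Lemma~\ref{l.remove-bigon-1} is then constructed with support $V\subset B\cup U\cup U'$ and, crucially, with the property $f_{i+1}(V\setminus U')\subset U$. This single inclusion does all the work: every point of $C_i'\cap V$ lies in $V\setminus U'$, hence lands in $U$, hence misses $C$; points outside $V$ are fixed. Thus $C_{i+1}'\cap C\subset C_i'\cap C$. The case you worried about---$C_i'$ accumulating on $\bar\alpha$---is irrelevant, because what matters is that $C_i'$ avoids $\bar\alpha_i'$, not $\bar\alpha$.

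Two smaller points. First, requiring $g(\bar\alpha')=\bar\alpha$ while $g$ is the identity near the vertices is self-contradictory when the vertices are transverse crossings; the paper instead pushes $\bar\alpha'$ \emph{past} $\bar\alpha$ into the opposite half-disc, which also preserves quasi-transversality and makes the decrease in $\#\pi_0(\alpha\cap\alpha_i')$ easy to check (using that $V\cap\alpha$ is connected). Second, you should first replace $W$ by a filled sub-neighbourhood (Lemma~\ref{l.filled-neighbourhood}), which is what guarantees that the filling of each successive pair stays in $W$; and choose the initial $U'$ disjoint from $C'$ so that the quasi-transversification already fixes $C'$.
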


Let us anticipate the use of Proposition~\ref{p.straightening-arc-easy} to explain the role that will be played by the sets $C$ and $C'$. In section~\ref{sec.arc-straightening} we will consider a surface $S$ with some closed subset $F$. Given two arcs $\gamma,\gamma'$ in $S$ (with some specific properties), we will construct an isotopy that sends $\gamma'$ to $\gamma$. For an elementary step of the construction, we will consider some component $\alpha$ of $\gamma \setminus F$, and a corresponding component $\alpha'$ of $\gamma' \setminus F$, and we will get an isotopy sending $\alpha'$ to $\alpha$ by applying Proposition~\ref{p.straightening-arc-easy}  in the surface $M = S \setminus F$. Letting $C= \gamma \setminus (F \cup \alpha)$, and likewise $C'= \gamma' \setminus ( F \cup \alpha')$, the last sentence of the statement of Proposition~\ref{p.straightening-arc-easy} will guarantee that the image of $\gamma'$ under the isotopy has no new intersection point with $\gamma$ outside $\alpha$.

The proof of Proposition~\ref{p.straightening-arc-easy} relies on Proposition~\ref{p.exists-bigon} which allows to find a bigon, and on the two following Lemmas~\ref{l.remove-bigon-1} and~\ref{l.remove-bigon-2} which allow to ``remove" a bigon. 

\begin{lemm}
\label{l.remove-bigon-1}
Let $\alpha,\alpha'$ be two properly embedded lines a surface $M$, which coincide outside a compact subset of $M$. Assume that $\alpha,\alpha'$ are quasi-transverse, and that there exists a minimal bigon $(B,\bar\alpha,\bar\alpha')$ for $\alpha,\alpha'$. Then, one can find a neighbourhood $V$ of $B$, and an isotopy $I$ of homeomorphisms of $M$ supported in $V$, joining the identity to a homeomorphism $f$, so that $f(\alpha')\cap\alpha\cap \mathrm{int}(V)=\emptyset$. 

Moreover, given any neighbourhoods $U$ and $U'$ of $\bar\alpha$ and $\bar\alpha'$ respectively, $V$ and $I$ can be chosen so that $V \cap \alpha$ is connected,
 $V$ is included in $B\cup U\cup U'$, $f(V\setminus U')$ is included in $U$, and $\alpha$ and $f(\alpha')$ are quasi-transverse.
\end{lemm}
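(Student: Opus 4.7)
The strategy is the classical ``push a bigon across'' argument, carried out in a small Schoenflies chart around $B$.

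First, I would thicken the bigon to a disc chart. Using the quasi-transversality at the two vertices $v_1, v_2$ of $B$ together with the minimality of the bigon (whose interior is disjoint from $\alpha \cup \alpha'$), I would choose a closed topological disc $D \supset B$ contained in $B \cup U \cup U'$ with the following structure: $D \cap \alpha$ is a single compact arc $\hat\alpha \subset \alpha$ containing $\bar\alpha$ in its interior and strictly extending it past each vertex, and $D \cap \alpha'$ is a single compact arc $\hat\alpha' \subset \alpha'$ which likewise strictly extends $\bar\alpha'$; furthermore $\hat\alpha \cap \hat\alpha' = \{v_1, v_2\}$, and the two tips of $D$ outside $B$ (the small slivers beyond each vertex) lie in $U \cap U'$. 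The Schoenflies theorem (applied twice if needed, first to straighten $\hat\alpha$, then $\hat\alpha'$) lets us identify $D$ with the closed unit disc of $\mathbb{R}^2$, $\hat\alpha$ with the horizontal diameter and $\hat\alpha'$ with a piecewise linear chord meeting $\hat\alpha$ quasi-transversally at $v_1, v_2$, so that $B$ corresponds to the ``lens'' between the two chords.

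Next, I would construct the isotopy explicitly in the disc model. By transversality at the vertices, the two components of $\hat\alpha' \setminus \bar\alpha'$ lie on the side of $\hat\alpha$ opposite to $B$. I would construct a homeomorphism $f_1$ of the model disc, equal to the identity on $\partial D$, that pushes $\hat\alpha'$ across $B$ onto a curve $\hat\alpha'' = f_1(\hat\alpha')$ lying, except at its two endpoints on $\partial D$, strictly on the side of $\hat\alpha$ opposite to $B$, and which is quasi-transverse to $\hat\alpha$. Choose a slightly shrunk open subdisc $V \subset \mathrm{int}(D)$ containing $B$ such that $V \cap \alpha = V \cap \hat\alpha$ is connected and such that $f_1(\hat\alpha') \cap \hat\alpha \cap V = \emptyset$. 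The Alexander trick (Proposition~\ref{p.alexander}) then furnishes an isotopy $I$ from the identity to $f_1$ supported in $\mathrm{int}(D) \subset B \cup U \cup U'$.

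Finally, I would verify the required properties. Outside $V$ the isotopy is the identity, so $f(\alpha') = \alpha'$ there and coincides with $\alpha$ outside a compact set; quasi-transversality of $\alpha$ and $f(\alpha')$ is automatic outside $V$ and follows from the model inside $V$. The equality $f(\alpha') \cap \alpha \cap \mathrm{int}(V) = \emptyset$ is built into the construction. The inclusion $f(V \setminus U') \subset U$ is obtained by taking $V$ sufficiently thin transversally to $\hat\alpha$: then $V \setminus U'$ is contained in the union of the $B$-part of $V$ with a sliver along $\hat\alpha$, both of which $f_1$ sends into a thin neighbourhood of $\hat\alpha$, which can be assumed included in $U$. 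The main (mild) obstacle is the bookkeeping at the vertices, where one must simultaneously destroy the two vertex crossings, avoid creating any new intersection between $\alpha$ and $f(\alpha')$ inside $V$, and preserve quasi-transversality; the Schoenflies reduction makes this a purely elementary planar construction.
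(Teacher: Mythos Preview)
Your approach matches the paper's: thicken the bigon to a disc, use Schoenflies to straighten, push $\hat\alpha'$ across $\hat\alpha$, verify the inclusions. Two points need fixing, however.

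First, there is a support inconsistency: you take $V\subsetneq\mathrm{int}(D)$, build the isotopy supported in $\mathrm{int}(D)$ via Alexander, and then assert ``outside $V$ the isotopy is the identity''. The paper simply takes $V$ to be the closed disc itself, with the isotopy supported in $V$; the conclusion $f(\alpha')\cap\alpha\cap\mathrm{int}(V)=\emptyset$ then holds because, in the transverse case, the endpoints of $\hat\alpha'$ on $\partial V$ already lie on the $H_-$ side of $\hat\alpha$, so one can push $f(\hat\alpha')$ entirely into that open half-disc. More substantively, you implicitly assume both vertices are isolated crossings when you write ``by transversality at the vertices, the two components of $\hat\alpha'\setminus\bar\alpha'$ lie on the side of $\hat\alpha$ opposite to $B$''. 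Quasi-transversality also allows a vertex to be the endpoint of a common half-line of $\alpha\cap\alpha'$; then $\hat\alpha$ and $\hat\alpha'$ share that endpoint on $\partial V$, and $f(\hat\alpha')$ cannot be made disjoint from $\hat\alpha$ there. The paper treats this case separately, settling for $f(\hat\alpha')\cap\mathrm{int}(V)\subset H_-$, which still yields the required statement about $\mathrm{int}(V)$.

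Second, your argument for $f(V\setminus U')\subset U$ is too loose: the portion of $B$ away from $\bar\alpha'$ lies in $V\setminus U'$ but need not be ``a sliver along $\hat\alpha$'', and nothing you wrote forces $f_1$ to send it near $\hat\alpha$. The paper's device is to name the four open half-discs $H_\pm$ (the two sides of $\hat\alpha$ in $V$) and $H_\pm'$ (the two sides of $\hat\alpha'$), shrink $V$ so that $H_-\subset U$ and $H_+'\subset U'$, and then observe that $V\setminus U'\subset V\setminus\clos(H_+')=H_-'$ while $f(H_-')\subset H_-\subset U$, the latter inclusion being forced by $f(\hat\alpha')\subset H_-$.
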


\begin{figure}[ht]
\def\svgwidth{0.9\textwidth}
\begin{center}
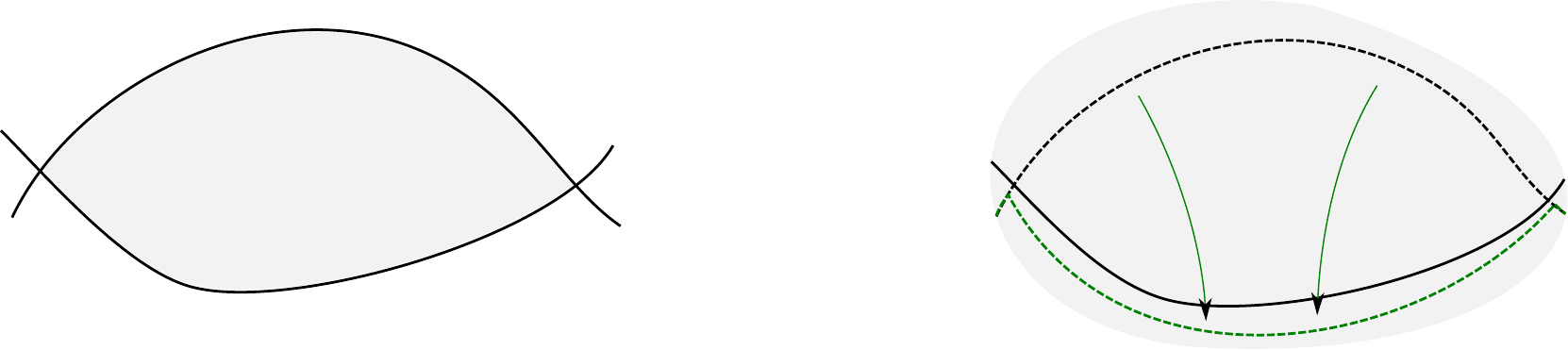
\caption{\label{f.cas-1}Removing a bigon: illustration of the proof of Lemma~\ref{l.remove-bigon-1}}
\end{center}
\end{figure}

\begin{proof}
Let $x,y$ be the vertices of the bigon $(B,\bar\alpha,\bar\alpha')$ and $\alpha_x,\alpha_y$ the connected components in $\alpha\cap\alpha'$ of $x,y$ respectively. Since the properly embedded lines $\alpha$ and $\alpha'$ are quasi-transverse, $\alpha_x$ (resp. $\alpha_y$) either is reduced to the point $x$ (resp. $y$), or is a properly embedded topological half-line with endpoint $x$ (resp. $y$). 

\bigskip

Let us first consider the case where $\alpha_x=\{x\}$ and $\alpha_y=\{y\}$ (see figure~\ref{f.cas-1}); the other cases will be treated at the end of the proof. Since $\alpha$ and $\alpha'$ are quasi-transverse, the point $x$ and $y$ are isolated in $\alpha\cap\alpha'$. Since the bigon $(B,\bar\alpha,\bar\alpha')$ is minimal, $\alpha\setminus B$ (resp. $\alpha'\setminus B$) is the union of two disjoint properly embedded half-lines~: one with endpoint $x$, the other with end point $y$. Using these properties, one can find a neighbourhood $V$ of $B$ so that 
\begin{itemize}
\item $V$ is a closed topological disc, 
\item $\hat\alpha:=V\cap \alpha$ and $\hat\alpha':=V\cap \alpha'$ are ``topological diameters" of $V$ (\emph{i.e.} arcs with endpoints in $\partial V$ whose interiors are contained in the interior of $V$),
\item $V\cap \alpha\cap\alpha'=\{x,y\}$.
\end{itemize}
Moreover, $V$ can be chosen ``as small as desired", \emph{i.e.} included in any given neighbourhood of $B$. 

We denote by $H_+$ the connected component of $V\setminus\hat\alpha$ containing $\mathrm{int}(B)$, and by $H_-$ the connected component disjoint from $B$. We denote by $H_-'$ the connected component of $V\setminus\hat\alpha'$ containing $\mathrm{int}(B)$, and by $H_+'$ the connected component disjoint from $B$. Up to replacing $V$ by a smaller topological disc, we may (and we will) assume that $H_-\subset U$ and $H_+'\subset U'$. This implies that $V\subset B\cup U\cup U'$.

Since  $\hat\alpha$ and $\hat\alpha'$ are quasi-transverse, and since $\hat\alpha$ and $\hat\alpha'$ have exactly two intersection points in $V$, the two endpoints of $\hat\alpha'$ lie in $H_-$. As a consequence, by Schoenflies theorem, one can find a isotopy $I$ supported in $V$ joining the identity to a homeomorphism $f$ such that $f(\hat\alpha')\subset H_-$ (see figure~\ref{f.cas-1}). It follows that 
\begin{equation}
\label{e.inclusion-1}
f(\alpha')\cap \alpha\cap V =f(\hat\alpha')\cap\hat\alpha\subset H_-\cap\hat\alpha = \emptyset.
\end{equation}
Moreover, since $\clos(H_+')\subset U'$ and $H_-\subset U$, one has $V\setminus U'\subset V\setminus\clos(H_+')=H_-'$. Also note that $f(H_-')\subset H_-$ (this follows from the inclusion $f(\hat\alpha')\subset H_-$). As a consequence, on gets
\begin{equation}
\label{e.inclusion-2}
f(V\setminus U')\subset f(V\setminus\clos(H_+'))=f(H_-')\subset H_-\subset U.
\end{equation}
Hence the homeomorphism $f$ satisfies all the required properties, and the proof is complete in the case where $\alpha_x=\{x\}$ and $\alpha_y=\{y\}$.

\bigskip

Now, we explain how to modify the proof in the case where $\alpha_x$ and/or $\alpha_y$ is a properly embedded half-line. The only difference with the previous case is that the ``diameters" $\hat\alpha$ and $\hat\alpha'$ have one or two common endpoint(s), and coincide on some neighbourhood of this (these) endpoint(s). Since the homeomorphism $f$ must coincide with the identity on $\partial V$,  the arcs $\alpha$ and $f(\hat\alpha')$ will have one or two common endpoints as well, and therefore will not be disjoint. Nevertheless, he isotopy $I$ can be chosen such that $f(\hat\alpha')\cap\mathrm{int}(V)\subset H_-$. This inclusion implies $f(H_-')\subset H_-$, and therefore~\eqref{e.inclusion-2} is still valid. Moreover, replacing $V$ by $\mathrm{int}(V)$ in \eqref{e.inclusion-1}, one gets $f(\alpha')\cap \alpha\cap \mathrm{int}(V) = \emptyset$ as desired.
\end{proof}

\begin{lemm}
\label{l.remove-bigon-2}
Let $\alpha,\alpha'$ be two properly embedded lines a surface $M$, which coincide outside a compact subset of $M$. Assume that $\alpha\cap\alpha'$ has only two connected components (\emph{i.e.} $\alpha\cap\alpha'$ is the union of two disjoint properly embedded half-lines). Then there an isotopy $I$, joining the identity the identity to a homeomorphism $f$, so that $f \circ\alpha'=\alpha$. 

Moreover, given any neighbourhoods $U$ and $U'$ of $\hat\alpha:=\clos(\alpha\setminus\alpha')$ and $\hat\alpha':=\clos(\alpha'\setminus\alpha)$ respectively, $I$ can be chosen so that it is supported in $V := \fill(\hat \alpha\cup\hat \alpha')\cup U\cup U'$ and, so that $f(V\setminus U')\subset U$. 
\end{lemm}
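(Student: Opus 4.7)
The plan is to realize the pair $(\hat\alpha,\hat\alpha')$ as a ``degenerate bigon''---two arcs sharing their endpoints, with the continuations of $\alpha,\alpha'$ past those endpoints coinciding rather than crossing transversely---and to straighten it by a single slice-by-slice push inside a local disk model, in the spirit of the proof of Lemma~\ref{l.remove-bigon-1}. Because $\alpha\cap\alpha'$ is the union of two properly embedded half-lines $L_1,L_2$, the arcs $\hat\alpha,\hat\alpha'$ meet only at the two endpoints $p,q$ of $L_1,L_2$, so $C:=\hat\alpha\cup\hat\alpha'$ is an embedded simple closed curve in $M$. In the setting in which the lemma is applied, $\alpha$ is compactly homotopic to $\alpha'$, so $\hat\alpha$ is homotopic to $\hat\alpha'$ relative to endpoints; thus $C$ is null-homotopic and, by Lemma~\ref{lemm.bounding-disk}, bounds an open topological disk $D\subset M$, and $\overline D\subset\fill(\hat\alpha\cup\hat\alpha')\subset V$.

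I would then choose a closed topological disk $V'\subset V$ containing $\overline D$ in its interior, together with two short arcs $\ell_1\subset L_1, \ell_2\subset L_2$ joining $p,q$ to points $a_1,a_2\in\partial V'$. Specifically, take $V'$ to be the union of $\overline D\cup\ell_1\cup\ell_2$ with two thin collars of $C$: one above $\hat\alpha$ chosen inside $U$, the other below $\hat\alpha'$ chosen inside $U'$. This arrangement guarantees $V'\subset\overline D\cup U\cup U'$, together with the key set-theoretic inclusions $V\setminus U'\subset V'\cup U$ and $V'\setminus U'\subset\overline D\cup U$. Via the Schoenflies theorem, identify $V'$ with the closed unit disk $\bbD^2$, placing $\ell_1=[-1,-\tfrac12]\times\{0\}$, $\ell_2=[\tfrac12,1]\times\{0\}$, $\hat\alpha$ as an arc from $p=(-\tfrac12,0)$ to $q=(\tfrac12,0)$ in the open upper half disk, and $\hat\alpha'$ as the corresponding arc in the open lower half disk. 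The freedom in the Schoenflies identification lets one arrange that the parameterizations of $\hat\alpha,\hat\alpha'$ inherited from $\alpha,\alpha'$ share the same $x$-coordinate at each time.

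Now define $f$ slice by slice: for $x\in[-1,1]$ let $J_x$ be the vertical slice of $V'$ at abscissa $x$, and let $y_\alpha(x), y_{\alpha'}(x)$ denote the $y$-coordinates of the unique points of $\alpha\cap J_x$ and $\alpha'\cap J_x$. Put $f(x,y)=(x,h_x(y))$, where $h_x$ is the piecewise-linear self-homeomorphism of $J_x$ fixing $\partial J_x$ and sending $y_{\alpha'}(x)$ to $y_\alpha(x)$; this is the identity when $x\notin(-\tfrac12,\tfrac12)$. Extend $f$ by the identity outside $V'$, and produce the isotopy by replacing $y_\alpha(x)$ by $(1-t)y_{\alpha'}(x)+ty_\alpha(x)$ in the definition of $h_x$. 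The relation $f\circ\alpha'=\alpha$ on the nose follows from the alignment of parameterizations, and the support lies in $V'\subset V$. The main obstacle I anticipate is checking $f(V\setminus U')\subset U$: on $V\setminus V'$ the map is the identity and the inclusion reduces to $V\setminus U'\subset V'\cup U$; inside $V'$ the slice formula sends any point with $y\ge y_{\alpha'}(x)$ to a point with $h_x(y)\ge y_\alpha(x)$, which lies in the upper collar of $V'$, hence in $U$ by construction. The delicate bookkeeping is in thinning $V'$ enough that these inclusions hold while still containing $\overline D$; a routine compactness argument on the widths of the collars produces such a $V'$.
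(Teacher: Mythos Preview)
Your approach is essentially the same as the paper's: the paper simply says the proof is ``entirely similar to those of Lemma~\ref{l.remove-bigon-1}, in the case where $\alpha_x$ and $\alpha_y$ [are] properly embedded half-line[s]'', replacing the requirement $f(\hat\alpha')\subset H_-$ by $f(\hat\alpha')=\hat\alpha$. Your explicit slice-by-slice realization is one concrete way to implement that Schoenflies-based isotopy, and your verification of $f(V\setminus U')\subset U$ follows the same pattern as the paper's inclusions~\eqref{e.inclusion-1}--\eqref{e.inclusion-2}.

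One point worth flagging: you correctly observe that the lemma as stated does not assume $\alpha,\alpha'$ are compactly homotopic, and that this hypothesis is needed to conclude (via Lemma~\ref{lemm.bounding-disk}) that $C=\hat\alpha\cup\hat\alpha'$ bounds a disk. The paper silently imports this from the context in which the lemma is applied (Proposition~\ref{p.straightening-arc-easy}); your making it explicit is a genuine improvement. A small detail you leave implicit: the short arcs $\ell_1,\ell_2$ must be chosen short enough to lie (together with their thin collars) in $U\cap U'$, which is possible since $p,q\in\hat\alpha\cap\hat\alpha'\subset U\cap U'$; this is what makes $V'$ an honest closed disk and keeps $V'\setminus U'\subset\{y\ge y_{\alpha'}(x)\}$ valid near the ends.
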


\begin{proof}
The proof is entirely similar to those of Lemma~\ref{l.remove-bigon-1}, in the case where $\alpha_x$ and $\alpha_y$ is a properly embedded half-line. The only difference is that we demand $f(\hat\alpha')=\hat\alpha$ instead $f(\hat\alpha')\subset H_-$.
\end{proof}

\begin{proof}[Proof of Proposition~\ref{p.straightening-arc-easy}]
Given a pair of compactly homotopic properly embedded lines $\beta,\beta'$, we will denote by $\hat \beta$ the minimal compact subarc of $\beta$ such that $\beta' = \beta$ outside $\hat \beta$. The subarc $\hat \beta'$ of $\beta'$ is defined symmetrically. The set $\fill(\hat \beta \cup \hat \beta')$ will be denoted by $\widehat \fill(\beta,\beta')$.

By assumption, the set $W$ is a neighborhood of the filled compact set $\widehat \fill(\alpha,\alpha')$.
Every neighbourhood of a filled compact subset of $M$ contains a filled neighbourhood of this subset (see Lemma~\ref{l.filled-neighbourhood}). Therefore, up to replacing $W$ by a smaller neighbourhood of  $\fill(\alpha\cup\alpha')$, we can, and we will, assume that $W$ is filled. 

We will construct a finite sequence $I_0,I_1,\dots$ of isotopies of homeomorphisms of $M$ compactly supported in $W$, so that $I_i$ is joining the identity to a homeomorphism $f_i$. We will define a sequence of properly embedded topological lines $\alpha_0',\alpha_1',\dots$ by setting $\alpha_0':=f_0(\alpha')$ and $\alpha_{i+1}':=f_{i+1}(\alpha_i')$. The main property that we want is that the number of connected component of $\alpha_i'\cap \alpha$ decreases with $i$. So after a finite number of step, we will obtain a line $\alpha_{i_0+1}'$ which coincides with $\alpha$. In other words, the homeomorphism $f:=f_{i_0+1}\circ\dots\circ f_0$ will satisfy $f \circ \alpha'=\alpha$ as desired, and the concatenation of the isotopies $I_0,\dots,I_{i_0+1}$ will provide an isotopy $I$, supported in $W$, joining the identity to $f$. Moreover, we will consider the closed sets $C_0',C_1',\dots$ defined by $C_0':=f_0(C')$ and $C_{i+1}':=f_{i+1}(C_i')$. The construction will be made in such a way that $C_i' \cap  C\subset C_{i-1}'\cap C$. In particular, we will have $f(C')\cap C=C_{i_0+1}'\cap C\subset C'\cap C$, as desired.

\bigskip

\noindent \textit{Construction of the isotopy $I_0$: putting the arcs in quasi-transverse position.} 

\smallskip

Since $C'$ is disjoint from $\alpha'$ and since $W$ is a neighbourhood of $\widehat \fill(\alpha,\alpha')\supset\hat \alpha'$, we can find a neighbourhood $U'$ of $\hat \alpha'$ so that $U'\subset W$ and $C'\cap U'=\emptyset$. Proposition~\ref{p.transverse} provides us with an isotopy $I_0$, compactly supported in $U'$, joining the identity to a homeomorphism $f_0$ so that the topological lines $\alpha$ and $f_0(\alpha')$ are quasi-transverse. Since the support of $I_0$ is disjoint from $C'$,  we have $C\cap f_0(C')=C\cap C'$.

So, we have constructed an isotopy $I_0$, supported in $W$, joining the identity to a homeomorphism $f_0$, such that, if we set $\alpha_0':=f_0(\alpha')$ and $C_0':=f_0(C')$, then the properly embedded lines $\alpha$ and $\alpha_0'$ are quasi-transverse (in particular, $\alpha\cap\alpha_0'$ has finitely many connected components), and $C_0'\cap C_0=C'\cap C$. 

\bigskip

\noindent \textit{Construction of the isotopies $I_1,\dots,I_{i_0}$ : reducing the number of intersection points.} 

\smallskip

Suppose that we are given a properly embedded line $\alpha_i'$ and a closed set $C_i'$ in $M$, such that $C_i'\cap\alpha_i'=\emptyset$. Assume that:
\begin{itemize}
\item[$(\star)$] $\alpha$ and $\alpha_i'$ are quasi-transverse and compactly homotopic, $W$ is a neighbourhood of the $\widehat \fill(\alpha,\alpha_i')$.
\end{itemize}
Assume moreover that $\#\pi_0(\alpha\cap\alpha_i')\geq 3$, where $\#\pi_{0}$ denotes the number of connected components. 
 
The lines $\alpha$ and $\alpha_i'$ satisfy the hypotheses of Proposition~\ref{p.exists-bigon}, which provides us with a minimal bigon $(B,\bar\alpha,\bar\alpha_i')$ for $\alpha,\alpha_i'$. On the one hand, $B$ is contained in the set $\widehat\fill(\alpha\cup\alpha_i')$; hence $W$ is a neighbourhood of $B$. On the other hand, the closed sets $C$ and $C_i'$ satisfy $C\cap\alpha=\emptyset$ and $C_i'\cap \alpha_i'=\emptyset$. Thus we can find some neighborhoods $U$ and $U'$ of the arcs $\bar\alpha$ and $\bar\alpha_i'$ respectively, so that $U\cup U'\subset W$, $C\cap U=\emptyset$ and $C_{i}'\cap U'=\emptyset$. We apply Lemma~\ref{l.remove-bigon-1} to the properly embedded lines $\alpha$ and $\alpha_i'$, the bigon $(B,\bar\alpha,\bar\alpha_i'))$, and the neighbourhoods $U,U'$. It provides us with a neighborhood $V$ of $B$ and an isotopy $I_{i+1}$ supported in $V$, joining the identity to a homeomorphism $f_{i+1}$. We set $\alpha_{i+1}':=f_{i+1}(\alpha_i')$ and $C_{i+1}':=f_{i+1}(C_i')$. Note that $C_{i+1}'\cap\alpha_{i+1}'=f_{i+1}(C_i'\cap\alpha_i')=\emptyset$.

Since $I_{i+1}$ is an isotopy with compact support, $\alpha_{i+1}'$ is a properly embedded line, compactly homotopic to $\alpha_i'$. It follows that $\alpha_{i+1}'$ is compactly homotopic to $\alpha$. According to the statement of Lemma~\ref{l.remove-bigon-1}, the lines $\alpha_{i+1}'=f_{i+1}(\alpha_i')$ and $\alpha$ are quasi-transverse. Since $W$ is filled, and since $\alpha$ and $\alpha_{i+1}'$ coincide outside $W$, it follows that $W$ is a neighborhood of $\widehat \fill(\alpha,\alpha_{i+1}')$. Hence the properly embedded line $\alpha_{i+1}'$ satisfies property $(\star)$.

Let us prove that $C_{i+1}'\cap C\subset C_i'\cap C$. According to the statement of Lemma~\ref{l.remove-bigon-1}, the neighbourhood $V$ and the isotopy $I_{i+1}$ can be chosen such that $V\subset B\cup U\cup U'$ and $f_{i+1}(V \setminus U')\subset U$. Let us write 
$$C_{i+1}'\cap C=f_{i+1}(C_i')\cap C= \left(f_{i+1}(C_i'\cap V)\cap C\right) \sqcup  \left(f_{i+1}(C_i'\setminus V)\cap C\right).$$ 
On the one hand, we have $f_{i+1}(C_i'\setminus V)=C_i'\cap V$, since $f_{i+1}$ is supported in $V$. On the other hand, we have $f_{i+1}(C_i'\cap V)\cap C=\emptyset$: indeed $C_{i}'$ is disjoint from $U'$, thus 
$C_{i}' \cap V \subset V \setminus U'$ which implies $f_{i+1}(C_{i}' \cap V) \subset f_{i+1}(V \setminus U') \subset U$ and $U$ is disjoint from $C$.
Hence we obtain
$$C_{i+1}'\cap C=(C_i'\setminus V)\cap C\subset C_i'\cap C.$$

Now let us prove that $\#\pi_0(\alpha\cap\alpha_i')<\#\pi_0(\alpha\cap\alpha_{i+1}')$. By assumption, the arcs $\alpha,\alpha_i'$ are quasi-transverse, and the intersection $\alpha\cap\alpha_i'$ has at least three connected components. Hence, at least one of the two vertices of the bigon $B$ is an isolated point $z$ in $\alpha\cap\alpha_i'$. 
The isotopy is supported in $V$ and $\alpha_{i+1}' \cap \alpha$ does not meet $V$. Thus
 (1) every connected component of $\alpha_{i+1}' \cap \alpha$ is included in a connected component of $\alpha_{i}' \cap \alpha$. Furthermore, according to the statement of Lemma~\ref{l.remove-bigon-1}, the neighborhood $V$ can be chosen so that $V \cap \alpha$ is connected; this entails that (2) no two connected components of $\alpha_{i+1}' \cap \alpha$ are included in the same connected component of $\alpha_{i}' \cap \alpha$. Finally, we have that (3) no connected component of $\alpha_{i+1}' \cap \alpha$ is included in the connected component $\{z\}$ of $\alpha_{i}' \cap \alpha$ (since $z\in V$ and since $\alpha_{i+1}'\cap \alpha\cap V=\emptyset$). From properties (1), (2) and (3), we get that   $\#\pi_0(\alpha\cap\alpha_i')<\#\pi_0(\alpha\cap\alpha_{i+1}')$, as wanted.


\bigskip

Since the line $\alpha_{i+1}'$ satisfies Property~$(\star)$, we can iterate the construction, as long as $\#\pi_0(\alpha\cap\alpha_{i+1}')\geq 3$. We obtain a sequence of properly embedded topological lines $\alpha_1',\alpha_2',\dots$. Since $\#\pi_0(\alpha\cap\alpha_{i+1}')<\#\pi_0(\alpha\cap\alpha_i')$, the construction stops after a finite number of steps, and we get a properly embedded line $\alpha_{i_0}'$ such that $\#\pi_0(\alpha\cap\alpha_{i_0}')<3$. Note that we necessarily have $\#\pi_0(\alpha\cap\alpha_{i_0}')=2$, since $\alpha$ and $\alpha_{i_0}$ are two properly embedded lines which coincide outside a compact set, and since $\alpha_{i_0}\neq\alpha$ (because $\alpha_{i_0}'$ is obtained by applying Lemma~\ref{l.remove-bigon-1}).
 
\bigskip

\noindent \textit{Construction of the isotopy $I_{i_0+1}$ : removing the last bigon.} 

\smallskip

Now, we have constructed a properly embedded topological line $\alpha_{i_0}'$ and a closed set $C_{i_0}'$, such that $C_{i_0}'\cap\alpha_{i_0}'=\emptyset$, such that $\alpha_{i_0}'$ satisfies property $(\star)$ 
and such that $\#\pi_0(\alpha\cap\alpha_{i_0}')=2$.


By assumption $W$ is a neighborhood of $\widehat \fill(\alpha,\alpha_{i_0}')$, and we have $C\cap\alpha=C_{i_0}'\cap \alpha_{i_0}'=\emptyset$. Hence, we can find  some neighborhoods $U_{i_0}$ and $U_{i_0}'$ of $\clos(\alpha\setminus\alpha_{i_0}')$ and $\clos(\alpha_{i_0}'\setminus\alpha)$ respectively, so that $U_{i_0}\cup U_{i_0}'\subset W$, $C\cap U_{i_0}=\emptyset$ and $C_{i_0}'\cap U_{i_0}'=\emptyset$. We can apply Lemma~\ref{l.remove-bigon-2} to the arcs $\alpha,\alpha_{i_0}'$ and the neighbourhoods $U_{i_0},U_{i_0'}$. We get an isotopy $I_{i_0+1}$, supported in $W$, joining the identity to a homeomorphism $f_{i_0+1}$ so that $f_{i_0+1} \circ \alpha_{i_0'}=\alpha$. We denote $\alpha_{i_0+1}':=f_{i_0+1}(\alpha_{i_0}')$ and $C_{i_0+1}':=f_{i_0+1}(C_{i_0}')$. The same arguments as above (in the construction of the isotopy $I_i$) show that $f_{i_0+1}(C_{i_0}')\cap C\subset C_{i_0}'\cap C.$

\bigskip

As explained at the beginning of the proof, we set $f:=f_{i_0+1}\circ \dots\circ f_0$. The isotopies $I_0,I_1,\dots,I_{i_0+1}$ can be glued together to obtain an isotopy $I$, supported in $W$, joining the identity to $f$. We get $f \circ \alpha'=\alpha_{i_0+1}'=\alpha$. Similarly, the inclusions $C_{i+1}\cap C=f_{i+1}(C_i)\cap C\subset C_i\cap C$ imply that $f(C')\cap C\subset C'\cap C$. This completes the proof of Proposition~\ref{p.straightening-arc-easy}.
\end{proof}

\section{Straightening a closed curve}
\label{sec.redresser-les-courbes}

In this section, we adapt the statement of the previous section to the case of simple closed curves. Basically, we will see that two simple closed curves that are homotopic are also isotopic. Remember that, if $E$ is a subset of a surface $M$, the filling of $E$ in $M$ is the union of $E$ and of the connected components of $M \setminus  E$ which have compact closures. The following is a refinement on~\cite{epstein}, Theorem 4.1.

\begin{prop}[Simple closed curves] \label{prop.redresser-les-courbes}
Let $c,c': \bbS^1 \to M$ be two essential two-sided simple closed curves on a surface $M$, and $V$ be a neighborhood of the filling $\fill(c \cup c')$. Assume there exists a homotopy $H$ from $c$ to $c'$. Then there exists an isotopy $I$ on $M$, supported in $V$, joining the identity to a homeomorphism $g$ such that $g \circ c = c'$.

Moreover, if $c(0)=c'(0)$ and if the trajectory of $c(0)$ under $H$ is nulhomotopic in $M$, 
then we may require that $I$ fixes $c(0)$.
\end{prop}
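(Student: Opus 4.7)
The plan is to adapt the strategy of Proposition~\ref{p.straightening-arc-easy} to the closed curve setting. Since closed curves, unlike properly embedded lines, do not have compactly-supported lifts in the universal cover, I would first pass to an appropriate annular covering space of $M$ in which both $c$ and $c'$ lift to simple closed curves. The overall scheme is then: put $c,c'$ in transverse position by an isotopy; iteratively find minimal bigons and remove them by isotopies supported in $V$; and once the curves are disjoint, use the annulus they co-bound to realize the final straightening.

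Since $c$ is essential, $M$ is neither the sphere nor the projective plane, so the universal cover of $M$ is a topological plane. By Proposition~\ref{prop.primitive}, either $[c]$ is primitive in $\pi_1(M)$, or $c$ bounds a compact M\"obius strip and $[c]=a^2$ for some primitive $a\in\pi_1(M)$. In either case, since $c$ is two-sided, one can choose a subgroup $\Gamma_0\subset \pi_1(M)$ containing $[c]$ such that the associated cover $p\colon \tilde M\to M$ is an open annulus and $c$ lifts to an essential simple closed curve $\tilde c\subset\tilde M$. Lifting the homotopy $H$ provides a lift $\tilde c'$ of $c'$ as an essential simple closed curve freely homotopic to $\tilde c$ in $\tilde M$.

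Working in $\tilde M$, I would first put $\tilde c'$ in transverse position with $\tilde c$ by a compactly supported isotopy, exactly as in Proposition~\ref{p.transverse}. If $\tilde c\cap\tilde c'=\emptyset$ already, then by Lemma~\ref{l.bound-annulus} the two disjoint freely homotopic essential simple closed curves bound an embedded annulus $A\subset \fill(\tilde c\cup\tilde c')$; the product structure on $A$ gives an isotopy sending $\tilde c'$ setwise onto $\tilde c$, and a final twist supported in a thin annular neighborhood of $\tilde c$ realizes the prescribed parametrization $g\circ c=c'$. Otherwise I would locate a minimal bigon $(B,\bar c,\bar c')$ for $\tilde c,\tilde c'$ in $\tilde M$ by the innermost-disk argument of Proposition~\ref{p.exists-bigon}, carried out in the universal cover of $\tilde M$ (a topological plane): extract an innermost disk bounded by subarcs of lifts of $\tilde c,\tilde c'$, and verify injectivity of its projection to $\tilde M$ via the Brouwer fixed-point argument. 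Then Lemma~\ref{l.remove-bigon-1} provides an isotopy supported in an arbitrarily small neighborhood of $B$ that strictly decreases the intersection number, so after finitely many iterations we reach the disjoint case. Each elementary isotopy is compactly supported in a neighborhood of $\fill(\tilde c\cup\tilde c')$ and can be chosen disjoint from all its nontrivial deck translates, so that it descends to an isotopy on $M$ supported inside $V$.

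For the moreover clause, assume $c(0)=c'(0)=x_0$ and that the trajectory $\alpha\colon t\mapsto H(0,t)$ is nulhomotopic in $M$. Under this hypothesis, $[c']=[c]$ in $\pi_1(M,x_0)$, so the lifts of $c,c'$ may be chosen to both pass through a common lift $\tilde x_0$ of $x_0$. The argument above produces an isotopy $I$ with $I_1(x_0)=g(x_0)=x_0$; the trajectory $\delta\colon t\mapsto I_t(x_0)$ is then a loop at $x_0$. Combining Fact~\ref{fact.commute} applied to the composite homotopy ``$I_t\circ c$ followed by the reverse of $H$'' with the nulhomotopy of $\alpha$, and lifting everything to $\tilde M$, one shows $[\delta]=0$ in $\pi_1(M,x_0)$. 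The fiber bundle argument of Lemma~\ref{l.fibration}, combined with Alexander's trick (Proposition~\ref{p.alexander}), then allows a modification of $I$ supported in a small disk around $x_0$ that unwinds $\delta$ and yields an isotopy fixing $c(0)$ throughout. The most delicate step is this last compatibility check for the basepoint $x_0$, together with ensuring that each bigon-removal isotopy in the annular cover descends to an isotopy on $M$ supported in $V$.
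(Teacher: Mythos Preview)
Your bigon-removal strategy is the same as the paper's in spirit, but the detour through an annular cover $\tilde M$ creates an unnecessary descent problem. You find a minimal bigon for the specific lifts $\tilde c,\tilde c'$ in $\tilde M$, then assert the associated isotopy ``can be chosen disjoint from all its nontrivial deck translates'' and hence descends to $M$. But a minimal bigon for $\tilde c,\tilde c'$ in $\tilde M$ may very well meet \emph{other} lifts of $c$ or $c'$ in $\tilde M$ (those that are deck translates of $\tilde c,\tilde c'$), so this disjointness is not automatic. The paper avoids this by working directly in $M$: it finds a minimal bigon in the universal cover and checks that it projects injectively \emph{to $M$} (using the same three-step innermost argument as Proposition~\ref{p.exists-bigon}, where one clears out \emph{all} lifts of $c$ and $c'$, not just those over $\tilde c,\tilde c'$).

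The more serious gap is in the ``moreover'' part. From Fact~\ref{fact.commute} and the nulhomotopy of $\alpha$ you only get that $[\delta]$ \emph{commutes} with $[c]$ in $\pi_1(M,x_0)$; lifting to the annular cover does not improve this, since $\pi_1(\tilde M)\cong\bbZ$ is abelian and the conclusion becomes vacuous. The deduction $[\delta]=0$ is simply false in general: a priori $[\delta]=[c]^k$ for some $k\in\bbZ$. The paper handles this as follows. If $M$ is the torus or the Klein bottle, the evaluation map $\homeo(V)\to V$ is $\pi_1$-surjective, so one can pre-compose $I$ with a loop of homeomorphisms to make $[\delta]$ trivial. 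Otherwise, abelian subgroups of $\pi_1(M)$ are cyclic and $[c]$ is primitive (Proposition~\ref{prop.primitive}, using that $c$ is two-sided), so $[\delta]=[c]^k$; one then composes $I$ with an isotopy that ``rotates $c$ along itself'' $k$ times to kill this power. Only after this correction does one get $[\delta]=0$. Finally, the paper checks (using that $V$ can be taken filled, together with Lemma~\ref{lemma.approximation-disc}) that $\delta$ is nulhomotopic \emph{in $V$}, not just in $M$; this is needed because the correcting isotopy obtained from Lemma~\ref{l.fibration} must stay supported in $V$. Your sketch omits both the rotation step and the ``nulhomotopic in $V$'' verification.
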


The proof of Proposition~\ref{prop.redresser-les-courbes} is very similar to those of Proposition~\ref{p.straightening-lines-non-precise}. We outline it in the remainder of this section. Note that last sentence of the statement of Proposition~\ref{prop.redresser-les-courbes} is not true when the curves are one-sided, see~\cite{Epstein-Zieschang}. 

The proof of the following lemma is similar to that of Proposition~\ref{p.transverse}, and is left to the reader.

\begin{lemm}
Let $c,c': \bbS^1 \to M$ be two simple closed curves on a surface $M$. Then there exists an isotopy of homeomorphisms of $M$, from the identity to a homeomorphism $f$ such that $f(c')$ and $c$ are transverse.
Furthermore, if $c(0)=c'(0)$, then the isotopy can be chosen to fix $c(0)$.
\end{lemm}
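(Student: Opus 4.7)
The plan is to adapt the proof of Proposition~\ref{p.transverse} to the compact-intersection setting, replacing the single Schoenflies chart used there by a finite family of pairwise disjoint closed topological discs $D_1,\ldots,D_n$ whose interiors cover the compact set $c\cap c'$. First I would construct these discs with the following properties: for each $i$, $c\cap D_i$ is a topological diameter of $D_i$, and $c'\cap D_i$ is a finite disjoint union of topological arcs whose endpoints lie on $\partial D_i$ and whose interiors lie in $\mathrm{int}(D_i)$; moreover, in the case $c(0)=c'(0)$, the point $c(0)$ lies in the interior of one of the discs, say $D_1$, on both an arc of $c$ and an arc of $c'$ inside $D_1$.

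To produce such a family, I would first fix a coordinate chart around each point $x\in c\cap c'$ in which $c$ is a straight line. Inside each such chart one picks a small closed disc $B_x$ whose boundary meets $c'$ in only finitely many points. This last requirement is the delicate one, as it can fail \emph{a priori} if $c'$ oscillates wildly near $\partial B_x$. I would address it by first performing a preliminary arbitrarily small ambient isotopy, produced via the Alexander trick (Proposition~\ref{p.alexander}) inside coordinate charts, so that $c'$ becomes piecewise linear in coordinate neighborhoods of each point of $c\cap c'$; the finiteness of $\partial B_x\cap c'$ is then immediate for a generic choice of radius. By compactness of $c\cap c'$ one extracts a finite subcover, and by shrinking the discs one makes them pairwise disjoint while preserving the diameter condition for $c$.

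Then for each $D_i$, I would invoke the Schoenflies theorem (see subsection~\ref{subsection.schoenflies}) to identify $D_i$ with the closed unit disc, sending $c\cap D_i$ to the horizontal diameter. The arcs forming $c'\cap D_i$ become finitely many topological arcs with endpoints on the unit circle. Exactly as in the proof of Proposition~\ref{p.transverse}, I replace each such arc by a piecewise linear arc with the same endpoints, chosen transverse to the horizontal diameter and disjoint from the compact part of $c\cap D_i$ that does not meet $c'$. A further application of Schoenflies yields a homeomorphism $f_i$ of $D_i$, supported in $\mathrm{int}(D_i)$, such that $f_i(c'\cap D_i)$ is transverse to $c$, and Alexander's trick provides an isotopy from the identity to $f_i$ inside $D_i$.

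Since the $D_i$ are pairwise disjoint, these isotopies glue together into an isotopy from the identity to a homeomorphism $f$ of $M$ with $f(c')$ transverse to $c$. In the case $c(0)=c'(0)$, the perturbation inside $D_1$ must produce a transverse crossing at $c(0)$ while fixing $c(0)$: this is achieved by choosing the piecewise linear replacement arc through $c(0)$ with a direction transverse to the horizontal diameter, and realizing the replacement by an Alexander-style isotopy centered at $c(0)$ rather than at the centre of the disc, so that $c(0)$ remains fixed throughout. The main obstacle is Step~1, in particular the control of $\partial D_i\cap c'$; once the pairwise disjoint, well-positioned family of discs has been produced, the rest is a routine adaptation of the argument in the properly embedded line case.
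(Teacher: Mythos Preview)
The paper does not give a proof of this lemma; it says the argument is ``similar to that of Proposition~\ref{p.transverse}, and is left to the reader''. Your adaptation is in the intended spirit, and you correctly isolate the one genuinely new difficulty: in Proposition~\ref{p.transverse} the hypothesis that $\alpha$ and $\alpha'$ coincide outside a compact set forces $\alpha\cap\partial D$ to consist of two points, whereas for two arbitrary simple closed curves no such hypothesis is available and $c'\cap\partial D_i$ could a priori be infinite.

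Your preliminary PL-ification is a reasonable device, but as written it has a gap. You propose to make $c'$ piecewise linear ``in coordinate neighbourhoods of each point of $c\cap c'$'' via Alexander isotopies in charts; however you do not explain why $c'$ meets each such chart in a single arc (so that there is a well-defined arc to straighten), nor how overlaps between charts are handled (straightening in one chart may destroy the single-arc property in an overlapping chart, and the boundary of a later chart may meet the already-modified $c'$ in infinitely many points, so the problem recurs). The first point is easily fixed: since $c'$ is a compactly embedded circle, every $x\in c'$ has a disc neighbourhood meeting $c'$ in exactly one arc. The overlap issue is more delicate; the cleanest repair is to PL-ify $c'$ once and for all inside a tubular neighbourhood of $c'$ (equivalently, to fix a PL or smooth structure on $M$ in which $c$ is tame and then approximate $c'$ by a curve in general position, realizing the approximation by an ambient isotopy supported near $c'$). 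This global step also accommodates the fixed point $c(0)$, by choosing the approximating curve through that point and centring the Alexander contraction there.

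Once finiteness of $c'\cap\partial D_i$ is secured, the remainder of your argument --- Schoenflies plus Alexander in each pairwise disjoint $D_i$, and the treatment of $c(0)$ --- is correct and is exactly the adaptation the paper has in mind.
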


We define a \emph{bigon} and a \emph{minimal bigon} for two simple closed curves $c,c'$ as we have done for two properly embedded lines (see definition~\ref{d.bigon}). The proof of the lemma below is similar to that of Lemma~\ref{p.exists-bigon}: one first find a minimal bigon in the universal cover, and then  prove that the bigon projects injectively on $M$, using that the curve is two-sided; details are left to the reader.

\begin{lemm}
Let $c,c'$ be two transverse two-sided essential simple closed curves on a surface $M$. Assume there exists a homotopy $H$ from $c$ to $c'$.
If $c$ and $c'$ are not disjoint, then there exists a minimal bigon for $c,c'$ in $M$.
\end{lemm}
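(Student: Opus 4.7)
My plan is to follow the strategy used for Proposition~\ref{p.exists-bigon}: find a minimal bigon in the universal cover, then show that it projects injectively to $M$. Since $c$ is essential and two-sided, $M$ is neither the sphere nor the projective plane, so the universal cover $\pi\colon X\to M$ has $X$ homeomorphic to a plane. I would first use the homotopy $H$ between $c$ and $c'$ to select compatible lifts. Pick a lift $\tilde c$ of $c$: its stabilizer in the deck group is generated by a translation $\tau$ of $\tilde c$ (this is where two-sidedness enters: for one-sided curves $\tau$ would be a glide reflection). Lifting $H$ to the intermediate annulus cover $X/\langle\tau\rangle$, one gets that $c'$ has a lift $\bar c'$ that is a simple closed curve freely homotopic to the core circle $\tilde c/\tau$; pulling $\bar c'$ back to $X$ gives a $\tau$-invariant lift $\tilde c'$ of $c'$. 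Using that $\bar c$ and $\bar c'$ are homotopic simple closed curves in an annulus and that $c\cap c'\ne\emptyset$, the set $\tilde c\cap\tilde c'$ is $\tau$-invariant and non-empty, hence infinite and discrete.

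Next, picking two consecutive intersection points $\tilde p_1,\tilde p_2\in \tilde c\cap \tilde c'$ along $\tilde c$, the subarc $\alpha_0\subset \tilde c$ between them meets $\tilde c'$ only at its endpoints, and together with the unique compact subarc $\alpha_0'\subset \tilde c'$ with these endpoints it forms a simple closed curve in the plane $X$, bounding a disk $B_0$ by the Schoenflies theorem. Applying Lemma~\ref{l.minimal-bigon} twice to the initial bigon $(B_0,\alpha_0,\alpha_0')$ (first to the collection of arcs of $\pi^{-1}(c')\setminus\tilde c'$ contained in $B_0$, whose endpoints lie on $\alpha_0$; then to the arcs of $\pi^{-1}(c)\setminus\tilde c$ in the resulting sub-bigon), just as in the proof of Proposition~\ref{p.exists-bigon}, I would extract a minimal bigon $(\tilde D,\tilde{\bar c},\tilde{\bar c}')\subset B_0$ whose interior is disjoint from $\pi^{-1}(c\cup c')$.

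The last step is to show that $\pi$ is injective on $\tilde D$, making its image an embedded bigon for $c,c'$ in $M$. Suppose $\theta(\tilde D)\cap\tilde D\ne\emptyset$ for some non-trivial deck transformation $\theta$. Since $\mathrm{int}(\tilde D)$ is disjoint from $\pi^{-1}(c\cup c')\supset \partial \theta(\tilde D)$, and symmetrically, if the interiors intersect then one disk must be contained in the other; Brouwer's theorem applied to $\theta^{\pm 1}\colon\tilde D\to\tilde D$ then produces a fixed point, contradicting the freeness of the deck action. The subtle case, in which the interiors are disjoint but the boundaries meet, is where two-sidedness is crucial. Two-sidedness guarantees that the stabilizer $\langle\tau\rangle$ of any lift acts by a translation of bounded displacement, so $\tau$-translates of the edges of $\tilde D\subset B_0$ stay disjoint (as $B_0$ is contained in one $\tau$-period of the annulus); and distinct lifts of $c$ (resp.\ $c'$) in $X$ are disjoint, which rules out edge-same-type intersections for $\theta\notin\langle\tau\rangle$. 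The remaining transverse boundary crossings between edges of opposite types can be eliminated by a minimality argument imitating the final paragraph of the proof of Proposition~\ref{p.exists-bigon}: any such crossing would allow us to cut $\tilde D$ and produce a strictly smaller sub-bigon with interior still disjoint from $\pi^{-1}(c\cup c')$, contradicting the minimal choice of $\tilde D$. This last cut-and-paste step is the main technical hurdle, and the fact that it cannot be carried out in the one-sided case is precisely why two-sidedness is an essential hypothesis, as confirmed by the counterexamples of~\cite{Epstein-Zieschang}.
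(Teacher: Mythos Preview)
Your overall strategy is exactly the one the paper intends: find a bigon in the universal cover whose interior avoids $\pi^{-1}(c\cup c')$, then show it projects injectively. However, your injectivity argument has two genuine gaps. First, the claim that ``$B_0$ is contained in one $\tau$-period of the annulus'' is neither justified nor obviously true (consecutive along $\tilde c$ does not force the corresponding subarc of $\tilde c'$ to lie in one period), and in any case after applying Lemma~\ref{l.minimal-bigon} the edges of $\tilde D$ may lie on lifts of $c,c'$ \emph{other} than $\tilde c,\tilde c'$, whose stabilizers are conjugates of $\langle\tau\rangle$ rather than $\langle\tau\rangle$ itself. Second, your ``minimality argument'' for ruling out crossings between edges of opposite type is unfounded: you never chose $\tilde D$ minimal in any sense, only applied Lemma~\ref{l.minimal-bigon} a fixed number of times.

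Both gaps have straightforward fixes that more closely imitate the final paragraph of Proposition~\ref{p.exists-bigon}. For opposite-type edges, transversality alone suffices exactly as in that proof: if $\tilde{\bar c}$ meets $\theta(\tilde{\bar c}')$ at an interior point of $\tilde{\bar c}$, then $\theta(\hat c')$ crosses into $\mathrm{int}(\tilde D)$, contradicting $\mathrm{int}(\tilde D)\cap\pi^{-1}(c')=\emptyset$; intersections at a vertex force $\theta$ to stabilize the lift $\hat c'$ containing $\tilde{\bar c}'$. For same-type edges (and this residual vertex case), the clean use of two-sidedness is not a period estimate but a side argument: if $\theta$ stabilizes $\hat c$, then since $c$ is two-sided $\theta$ preserves the two complementary half-planes of $\hat c$ in $X$; both $\tilde D$ and $\theta(\tilde D)$ therefore lie on the \emph{same} side of $\hat c$ along their edges $\tilde{\bar c}$ and $\theta(\tilde{\bar c})$, so any overlap of these edges on $\hat c$ would force $\mathrm{int}(\tilde D)\cap\mathrm{int}(\theta(\tilde D))\neq\emptyset$, contradicting what you already established via Brouwer. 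This is precisely the step that fails for one-sided curves (there $\theta$ swaps the half-planes), explaining the hypothesis.
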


\begin{proof}[Proof of Proposition~\ref{prop.redresser-les-courbes}]
Let $c,c'$ as in the statement. Applyling the first lemma, we may assume that the curves are transverse. 
If $c$ and $c'$ are not disjoint, then according to the second lemma we may find a minimal bigon. Thus, proceeding as in the proof of Proposition~\ref{p.straightening-arc-easy}, we can find an isotopy that removes all the  intersection points. We are left with the case of two disjoint curves. Then they bound an annulus (Lemma~\ref{l.bound-annulus}), and they are isotopic. Let $I$ denote the isotopy, whose support is included in some given neighborhood $V$ of $\fill(c \cup c')$, and $f$ denotes its time one, we have $f(c')=c$.

It remains to see that the point $c(0)$ may be kept fixed under the isotopy $I$, provided that it has a nulhomotopic trajectory $\beta$ under the given homotopy $H$. For this purpose, we introduce the map $\Phi : \homeo(V) \to V$ given by $h \mapsto h(c(0))$. This map is a locally trivial fibration (Lemma~\ref{l.fibration}). If $\gamma$ is a loop based at $c(0)$, we denote by $[\gamma]$ the class of $\gamma$ in the fundamental group $\pi_1(M,c(0))$.  Let $\alpha$ be the trajectory of $c(0)$ under the inverse of the  isotopy $I$. The concatenation of the homotopy $H$ and the inverse of the isotopy $I$ is a homotopy $\widehat H$ from $c$ to $c$. The trajectory of  $c(0)$ under $\widehat H$ is the concatenation $\beta \star \alpha$. According to Fact~\ref{fact.commute}, $[\beta \star \alpha]$ commutes with $[c]$ in $\pi_1(M,c(0))$. Since $[\beta]$ is trivial by assumption, we deduce that $[\alpha]$ commutes with $[c]$.

If $M$ is the torus or the Klein bottle, then $V=M$ and the map $\Phi$ induces a surjective map on the fundamental groups. Thus, up to composing $I$ with an isotopy from the identity to the identity, we can assume that $[\alpha]$ is trivial. In the other cases, every abelian subgroup of the fundamental group of $M$ is cyclic (see section~\ref{ss.surface-groups}). Furthermore, since $c$ is a two-sided simple closed curve, $[c]$ is a primitive element (Proposition~\ref{prop.primitive}). Thus there exists $k$ such that $[\alpha] = [c]^k$. Again up to composing $I$ with an isotopy that ``rotates $c$'' from the identity to a homeomorphism $g$ such that $g(c) = c$, we may assume that $[\alpha]$ is trivial.

We deduce that $\alpha$ is also nulhomotopic in $V$. Indeed, according to Lemma~\ref{l.filled-neighbourhood}, up to decreasing $V$ we may assume that $V$ is filled. Let $P: X \to M$ be the universal cover, and $\tilde V = P^{-1}(V)$. Note that $\tilde V$ is also filled. Since $\alpha$ is nulhomotopic in $M$, it  lifts to a closed curve $\tilde \alpha$. Since $\tilde V$ is a neighborhood of the compact set $\tilde \alpha$, there exists a topological disk $D$ containing $\tilde \alpha$ bounded by a curve contained in $\tilde V$ (Lemma~\ref{lemma.approximation-disc}). Since $\tilde V$ is filled it contains $D$. Now the curve $\tilde \alpha$ is nulhomotopic in $D$, thus in $\tilde V$, and the homotopy projects to a homotopy in $V$ from $\alpha$ to a trivial curve.

Seeing $I: [0,1] \to \homeo(V)$ as a path in $\homeo(V)$, we have $\alpha = \Phi \circ I$.
Let $(\alpha_{t})_{t \in [0,1]}$ be a homotopy of closed curves, included in $V$, with basepoint fixed, from $\alpha$ to the constant curve $\alpha(0) = c(0)$. Since the map $\Phi: \homeo(V) \to M$ is a locally trivial fibration, there exists a homotopy $(I_{t})_{t \in [0,1]}$ of paths in $\homeo(V)$, with fixed end-points, such that $I_{0}=I$ and $\Phi \circ I_{t} = \alpha_{t}$ for every $t$. Then $I_{1}$ is an isotopy supported in $V$, from the identity to $f$, for which the trajectory of $c(0)$ is fixed.
\end{proof}

\bigskip

For completeness we state a lemma which allows to extend the proof of the first part of Proposition~\ref{prop.redresser-les-courbes} to the case of one-sided curve. This case  will not be used in the present text, and the proof is left to the reader.
\begin{lemm}
Let $c,c'$ be two transverse $1$-sided essential simple closed curves on a surface $M$. Assume there exists a homotopy $H$ from $c$ to $c'$.
Then either $c$ and $c'$ have more than one intersection point, and then there exists a minimal bigon for $c,c'$ in $M$; or $c$ and $c'$ have exactly one intersection point, and then there is a neighborhood of $c$ in $M$ which is homeomorphic to the M\"obius band, and in which $c$ and $c'$ are cores of the M\"obius band.
\end{lemm}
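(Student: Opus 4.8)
The plan is to imitate, in the non‑orientable setting, the proofs of Proposition~\ref{p.exists-bigon} and of the two‑sided case treated just above: find a minimal bigon (or a Möbius‑band picture) in the universal cover, then project it to $M$. We may assume $M$ is neither $\bbS^2$ (no essential simple closed curve) nor the projective plane $\mathbb{RP}^2$ (for which one checks by hand that two essential simple closed curves always meet, minimally in one point, and that $\mathbb{RP}^2$ minus an open disc is a Möbius band containing them as cores); so the universal cover $\pi\colon X\to M$ has $X\cong\bbR^2$. First note that $c$ and $c'$ cannot be disjoint: two disjoint freely homotopic essential simple closed curves bound an annulus (Lemma~\ref{l.bound-annulus}), and a core of that annulus would show $c$ is two‑sided. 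As $c\cap c'$ is finite (transversality, compactness of the curves), it remains to treat the two stated cases. Fix a lift $\tilde c$ of $c$, straightened to a line in $X$ by Schoenflies, and let $T$ be the generator of the stabiliser of $\tilde c$ in the deck group. Since $X/\langle T\rangle$ is an open Möbius band (this is what one‑sidedness means; compare the proof of Proposition~\ref{prop.primitive}), $T$ is orientation‑reversing; since $c'$ is freely homotopic to the one‑sided curve $c$ it is one‑sided too, and, lifting a free homotopy from $c$ to $c'$, we get a line $\tilde c'$ with $T\tilde c'=\tilde c'$, transverse to $\tilde c$ and agreeing with it at infinity. As $T$ preserves and acts freely on each of the lines $\tilde c,\tilde c'$, it exchanges their two sides. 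Set $E:=\tilde c\cap\tilde c'$, a nonempty, $T$‑invariant, discrete subset of each line.

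\emph{Case $\#(c\cap c')\ge 2$.} Applying Lemma~\ref{l.minimal-bigon} three times exactly as in the proof of Proposition~\ref{p.exists-bigon} --- first to the arcs of $\tilde c'$ cut off by $\tilde c$, then to the arcs of $\pi^{-1}(c')$, then to the arcs of $\pi^{-1}(c)$ --- one produces a minimal bigon $(B,\bar c_0,\bar c_0')$ in $X$ whose edges are subarcs of a lift of $c$ and of a lift of $c'$ and whose interior is disjoint from $\pi^{-1}(c\cup c')$. One then checks that $\pi|_B$ is injective: for a nontrivial deck transformation $\theta$, the interiors of $B$ and $\theta B$ are disjoint, since a nesting $B\subset\theta B$ (or $\theta B\subset B$) would give, by Brouwer's theorem applied to the self‑map $\theta^{\mp 1}$ of the disc $B$, a fixed point of the fixed‑point‑free $\theta$; the edges are disjoint from their $\theta$‑images because $c,c'$ are simple (distinct lifts are disjoint), and when $\theta$ stabilises one of the lifts one uses minimality of the bigon together with the freedom, built into Lemma~\ref{l.minimal-bigon}, to shrink the bigon so that its edges project injectively. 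Hence $\pi(B)$ is a minimal bigon for $c,c'$ in $M$.

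\emph{Case $\#(c\cap c')=1$.} Now $E$ is a single $T$‑orbit $\{T^np\}_{n\in\bbZ}$, which lists the intersection points in order along both $\tilde c$ and $\tilde c'$. Let $a\subset\tilde c$ and $a'\subset\tilde c'$ be the subarcs from $p$ to $Tp$; they have the same endpoints and disjoint interiors, so $a\cup a'$ bounds a disc $D\subset X$. Because $T$ exchanges the sides of $\tilde c$, the discs $T^nD$ have pairwise disjoint interiors, with $T^nD\cap T^{n+1}D=\{T^{n+1}p\}$ and $T^nD\cap T^mD=\emptyset$ for $|n-m|\ge 2$; choosing $D$ innermost (so that the interiors of the $T^nD$ miss every lift of $c$ and of $c'$), the union $\bigcup_n T^nD$ together with a thin $\langle T\rangle$‑invariant neighbourhood is a $\langle T\rangle$‑invariant topological plane containing $\tilde c\cup\tilde c'$ on which $T$ acts as a side‑exchanging generator. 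Its quotient by $\langle T\rangle$ is an embedded Möbius band $N$ in $M$ (which, after shrinking, is a neighbourhood of $c$), and $c,c'$ --- the images of $\tilde c,\tilde c'$ --- are cores of $N$.

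\textbf{Main obstacle.} The delicate point, in both cases, is the descent to $M$: one must rule out that the disc or strip produced upstairs is carried into a nesting, or identified across the fibre, by the orientation‑reversing generator $T$, and, in the second case, recognise the quotient of $\bigcup_n T^nD$ as a genuine embedded Möbius band. Brouwer's theorem disposes of nestings regardless of orientation, but controlling the edges (resp. the strip) against \emph{all} lifts of $c$ and $c'$ so that the projection is injective is where the bookkeeping lies, and is exactly what forces the innermost‑disc refinement of Lemma~\ref{l.minimal-bigon} and, in the one‑sided case, the separate discussion above.
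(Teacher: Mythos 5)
The paper itself leaves this proof to the reader, and your overall strategy (pass to the universal cover, produce an innermost bigon, use Brouwer against nestings, and build the chain of discs $\bigcup_n T^nD$ in the one--intersection case) is the intended one; but the two places where one-sidedness and the hypothesis on $\#(c\cap c')$ actually enter are not carried out, and as written they fail. First, in the case $\#(c\cap c')\ge 2$ you cannot apply Lemma~\ref{l.minimal-bigon} ``exactly as in the proof of Proposition~\ref{p.exists-bigon}'': there the starting bigon was the upper half-disc bounded by $\tilde\alpha$ and half of the circle at infinity, and the crossing arcs formed a \emph{finite} family because $\alpha,\alpha'$ coincide outside a compact set. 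Here $E=\tilde c\cap\tilde c'$ is a nonempty $T$-invariant discrete set, hence infinite: there is no ambient bigon to start from and the family of arcs of $\tilde c'$ cut off by $\tilde c$ is infinite, so the lemma does not apply; one must instead use that $E/\langle T\rangle$ is finite (each disc bounded by an arc of $\tilde c'\setminus\tilde c$ and a subarc of $\tilde c$ contains only finitely many others, so an innermost one exists). Second, and more seriously, the descent step is not an argument: Lemma~\ref{l.minimal-bigon} contains no ``freedom to shrink the bigon so that its edges project injectively''. The actual mechanism --- and the only place where $\#(c\cap c')\ge 2$ is used --- is that minimality forbids wrapping: if an edge of $B$ on a lift $L$ of $c$ contained two points of the same $\mathrm{Stab}(L)$-orbit, or if the two vertices of $B$ were in the same orbit, the edge would cover at least a full period of $L$, so its interior would contain a preimage of a \emph{second} intersection point; that point lies on a lift of $c'$, which then crosses into $\mathrm{int}(B)$, contradicting minimality. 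Your write-up never uses $\#(c\cap c')\ge 2$, yet it is essential: when $\#(c\cap c')=1$ a minimal bigon upstairs still exists (your disc $D$), but its vertices $p,Tp$ lie in one orbit and it does not project to an embedded bigon.

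In the case $\#(c\cap c')=1$, the parenthesis ``choosing $D$ innermost (so that the interiors of the $T^nD$ miss every lift of $c$ and of $c'$)'' mis-describes the key verification: $D$ is determined by $p$, there is nothing to choose, and what must be \emph{proved} is that no lift other than $\tilde c,\tilde c'$ meets $D$. This is exactly where $\#(c\cap c')=1$ is used: every crossing of a lift of $c$ with a lift of $c'$ projects to the unique intersection point, whose preimages on $\tilde c$ are the orbit $\{T^np\}$, and the lift of $c'$ through each $T^np$ is $\tilde c'$ itself; hence every other lift of $c'$ is disjoint from $\tilde c$ (and symmetrically), so it misses $\partial D$ and, being a properly embedded line, cannot lie in the compact disc $D$. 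Granting this, you still owe the argument that the thickened chain is disjoint from all its $\theta$-translates for $\theta\notin\langle T\rangle$, and that the covering $X/\langle T\rangle\to M$, being injective on the compact image of $D$, is injective on a neighbourhood of it containing a compact M\"obius band neighbourhood of $c$ --- precisely the points your ``main obstacle'' paragraph flags but does not resolve. Two smaller issues: your projective-plane discussion does not treat $\#(c\cap c')\ge 2$ there (the universal cover is the sphere, so the main construction does not apply, and a minimal bigon must still be produced); and ``$T$ preserves and acts freely on each line, hence exchanges its sides'' is false as stated (a translation is a counterexample) --- the side exchange follows from $T$ being orientation-reversing, i.e.\ from one-sidedness, as you correctly note earlier.
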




\section{Straightening an arc relatively to a closed set}
\label{sec.arc-straightening}

All along this section, we consider a connected surface $S$ (not necessarily compact nor orientable), and a closed subset $F$ of this surface. 

\subsection{Statement and strategy}

The aim of the present section is to prove the following result.

\begin{prop}[``Arc straightening lemma'']
\label{l.multicoupure}
Let  $f \in \homeo(S, F)$. Assume that $f$ is strongly homotopic to the identity relatively to $F$. Let $\alpha$ be an arc whose endpoints belong to $F$. Then there exists an isotopy $(f_{t})_{t \in [0,1]}$ in $\homeo(S,F)$ such that $f_{0} = f$ and $f_{1}$ fixes $\alpha$ pointwise.
\end{prop}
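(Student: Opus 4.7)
The plan is to decompose $\alpha$ along $F$, straighten each piece outside $F$ by an elementary isotopy provided by the previous section, and assemble these into a single convergent isotopy through $\homeo(S,F)$. Parametrize $\alpha\colon[0,1]\to S$ and set $E=\alpha^{-1}(F)$; the complement $[0,1]\setminus E$ decomposes into a countable family of open intervals $(a_i,b_i)$ whose images $\beta_i=\alpha((a_i,b_i))$ are the connected components of $\alpha\setminus F$. Each $\beta_i$ is a properly embedded topological line in $M:=S\setminus F$, as is $f(\beta_i)$, and the strong homotopy from the identity to $f$ restricts to a homotopy in $M$ from $\beta_i$ to $f(\beta_i)$ respecting the limit points in $F$. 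However, this homotopy is \emph{not} compactly supported in $M$, so Proposition~\ref{p.straightening-arc-easy} is not directly applicable.

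\textbf{Preliminary matching near $F$.} The first task is to produce an isotopy $(g_t)$ in $\homeo(S,F)$ from the identity to some $g$, such that $f':=g\circ f$ satisfies $f'(\beta_i)=\beta_i$ outside a compact subset of $(a_i,b_i)$ for every $i$; equivalently, $f'(\alpha)$ and $\alpha$ coincide in some neighborhood of $F$. The construction is local near each point of $F$: continuity of the strong homotopy guarantees that $f$ stays close to the identity on a small disk around a chosen point $p\in F$, so the Schoenflies theorem together with the Alexander trick (Proposition~\ref{p.alexander}) yields a homeomorphism supported near $p$, fixing $F$, that brings $f(\alpha)$ onto $\alpha$ inside a smaller disk. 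Performing such local adjustments on a suitably chosen locally finite family of shrinking disks around $F$ and concatenating them provides the desired $g$; after this step, each pair $(\beta_i,f'(\beta_i))$ consists of two compactly homotopic properly embedded lines in $M$.

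\textbf{Elementary isotopies.} For each $i$, apply Proposition~\ref{p.straightening-arc-easy} to the pair $(\beta_i,f'(\beta_i))$ in $M$, with the choices $C=\alpha\setminus\beta_i$ and $C'=f'(\alpha)\setminus f'(\beta_i)$: one obtains an isotopy $I_i$ from the identity to a homeomorphism $h_i$ of $M$, compactly supported in a prescribed neighborhood $W_i\subset M$ of the filled compact set where $\beta_i$ and $f'(\beta_i)$ differ, such that $h_i\circ f'(\beta_i)=\beta_i$ and $h_i$ creates no new intersection of $f'(\alpha)$ with $\alpha$ outside $\beta_i$. Extending $I_i$ by the identity on $F$ yields an isotopy in $\homeo(S,F)$.

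\textbf{Concatenation and convergence---the main obstacle.} The remaining task is to assemble the infinitely many $I_i$'s into one isotopy whose final map, post-composed with $g\circ f$, fixes $\alpha$ pointwise. This is the delicate step, since the supports $W_i$ accumulate on $F$ and if a point of $M$ were successively pushed by infinitely many $I_i$'s its trajectory could converge to $F$, making the concatenation non-injective in the limit. The idea is that the preliminary matching allows the filled sets $\fill(\hat\beta_i\cup\widehat{f'(\beta_i)})$ to be bounded away from $F$ at a rate controlled by the diameter of $\beta_i$, so the $W_i$'s can be chosen to form a locally finite family in $M$; ordering the $I_i$'s with those closest to $F$ performed last, on time subintervals $[t_i,t_{i+1}]$ with $t_i\to 1$, one arranges that each point of $M$ is moved by only finitely many of them. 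The concatenation then extends continuously by the identity on $F$ to an isotopy in $\homeo(S,F)$, whose time-$1$ composed with $g\circ f$ is the identity on $\alpha$; a reparametrization produces the required isotopy $(f_t)$ from $f_0=f$ to $f_1$ fixing $\alpha$ pointwise.
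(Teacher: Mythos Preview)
The proposal has the right overall architecture, but there is a genuine gap at the preliminary matching step, and the convergence step is too vague to stand.

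\textbf{The preliminary matching.} You claim that near each $p\in F$ one can use Schoenflies and the Alexander trick to obtain a homeomorphism supported near $p$, \emph{fixing $F$}, that brings $f(\alpha)$ onto $\alpha$ in a smaller disk. But the Alexander trick only fixes the \emph{boundary} of a disk, not an arbitrary closed subset inside it. When $p$ is not isolated in $F$, any small disk around $p$ may contain a complicated piece of $F$ (a Cantor set, say) around which $f(\alpha)$ may wind; there is no obvious isotopy pushing $f(\alpha)$ onto $\alpha$ while fixing all of $F$. And even granting such local moves, the ``locally finite family of shrinking disks around $F$'' you invoke cannot exist in general: the endpoints of the $\beta_i$'s may be dense in $F\cap\alpha$, so no locally finite family of disjoint disks covers neighborhoods of all of them.

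The paper handles this quite differently. It does not try to match near $F$ directly. Instead, on each component $\beta^\lambda$ with a non-isolated endpoint $z^\lambda_\infty$, it inserts a sequence $(z^\lambda_k)\to z^\lambda_\infty$ and makes an isotopy rel $F$ after which these countably many points are fixed (Lemma~\ref{l.straightening-points}). The hard part --- which your sketch entirely misses --- is to show that the resulting map is still strongly homotopic to the identity relative to the enlarged set $F'=F\cup\{z^\lambda_k\}$; this requires the finite criterion (Proposition~\ref{prop.finitely-homotopic}) together with the curve-straightening of Section~\ref{sec.redresser-les-courbes}, and is singled out in the paper as ``surprisingly difficult''. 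Only after this can one, now that every endpoint of a component of $\alpha\setminus F'$ is \emph{isolated} in $F'$, safely fix disk neighborhoods of these endpoints (Lemma~\ref{l.bring-back-neighborhoods}). The net effect is that $F$ is replaced by a larger set so that the components of $\alpha\setminus F$ become \emph{isolated in $S$} (Property~1), not merely compactly homotopic to their images (Property~2).

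\textbf{The convergence.} Your ordering ``those closest to $F$ performed last'' does not by itself guarantee convergence: applying $I_i$ moves $f'(\beta_j)$ for $j\neq i$, and without structural control the concatenation may fail to converge to a homeomorphism. The paper builds an incidence graph on the components, proves it is \emph{locally finite} (this is exactly where Property~1 is used), partitions the vertices into finite layers $\cA_r$ with no edges between layers at distance $>1$, and straightens in two passes (even $r$, then odd $r$), each pass having pairwise disjoint supports with diameters tending to zero. Without Property~1 this scheme is unavailable, and your sketch supplies no substitute.
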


The proof has two main steps. In the first step, we will make a preliminary isotopy relative to $F$, from $f$ to a homeomorphism $f''$ that fixes lots of subarcs of $\alpha \setminus F$. We denote by $F''$ the union of $F$ and these subarcs of $\alpha$. The important property is that each component of $\alpha \setminus F''$ is at positive distance from the union of the other components of $\alpha \setminus F''$. We also prove that $f''$ is strongly homotopic to the identity relatively to $F''$. Surprinsingly, this last property appears to be quite difficult to prove: we need to apply the machinery of the previous sections, and in particular the finite criterium of section~\ref{sec.fini-homo-implique-homo}. This first step is detailed in sections~\ref{sub.bringing-points} and~\ref{sub.bringing-neighborhoods}.

In the second step, we will build the isotopy required by Proposition~\ref{l.multicoupure}. Up to replacing $F$ by $F''$ and $f$ by $f''$, we may now assume that each component $\alpha_{i}$ of $\alpha \setminus F$ is compactly homotopic to its image. Section~\ref{s.isotopy-arc} provides an isotopy that sends $f(\alpha_{i})$ back to $\alpha_{i}$. Of course in general there are infinitely many $\alpha_{i}$'s and we have to make sure that the infinite concatenation of the corresponding isotopies converges. The convergence will be obtained by performing the isotopies in a carefully chosen order. More precisely, we will show that these isotopies may be gathered as a first sequence $(I_{i})$ having supports that are pairwise disjoint, disjoint from $F$, and with diameters converging to zero, and a second sequence $(I'_{i})$ sharing the same properties. These properties ensure the convergence. The second step is detailed in section~\ref{sub.end-isotopy}.

\subsection{Bringing back sequences of points}
\label{sub.bringing-points}

We consider $f$ and $\alpha$ as in Proposition~\ref{l.multicoupure}. We denote by $\Lambda$ the set of all couples $\lambda:=(\beta^\lambda,z^\lambda_\infty)$, where $\beta^\lambda$ is a connected component of $\alpha \setminus F$, where $z^\lambda_\infty$ is one of the two endpoints of $\beta^\lambda$, and such that $z^\lambda_\infty$ is not isolated in $F$. For every  $\lambda=(\beta^\lambda,z^\lambda_\infty)\in\Lambda$, we choose a sequence of points $(z_k^\lambda)_{k\in\mathbb{N}}$ on $\beta^\lambda$ converging to $z^\lambda_\infty$. 
The lemma below provides a first isotopy relative to $F$ which ``brings back" subsequences of the points $\{z_k^\lambda \mid k\in\bbN, \lambda\in\Lambda\}$.

\begin{lemm}\label{l.straightening-points}
Up to replacing, for each $\lambda\in\Lambda$, the sequence of points $(z^\lambda_k)_{k\in\bbN}$ by a subsequence, there exists a homeomorphism $f'\in\homeo(S,F)$ such that
\begin{enumerate}
\item $f'$ is isotopic to $f$ relatively to $F$,
\item $f'$ fixes the set $F' := F \cup \left\{z^\lambda_k \mid \lambda\in \Lambda, k \in\mathbb{N} \right\}$ pointwise,
\item $f'$ is strongly homotopic to the identity relatively to $F'$.
\end{enumerate}
\end{lemm}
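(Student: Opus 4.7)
The plan is to use the given strong homotopy $H$ from the identity to $f$ relative to $F$ to define, near each $z^\lambda_\infty$, a local correction that brings $f(z^\lambda_k)$ back to $z^\lambda_k$. Since $H(z^\lambda_\infty,\cdot)\equiv z^\lambda_\infty$ and $H$ is continuous, the trajectory $\gamma^\lambda_k:=H(z^\lambda_k,\cdot)$ shrinks towards $z^\lambda_\infty$ as $k\to\infty$; moreover, since $H$ is a \emph{strong} homotopy, $\gamma^\lambda_k$ is a compact arc contained in $S\setminus F$. The first step is therefore to extract subsequences and choose, for each $(\lambda,k)$, an open topological disk $D^\lambda_k\subset S\setminus F$ containing $\gamma^\lambda_k$ (hence both $z^\lambda_k$ and $f(z^\lambda_k)$), in such a way that the family $\{D^\lambda_k\}_{\lambda,k}$ is pairwise disjoint and locally finite in $S\setminus F$. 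The extraction proceeds diagonally: distinct accumulation points $z^\lambda_\infty$ are separated at the outset, and for each such accumulation point (which is shared by at most two values of $\lambda$, since $\alpha$ is an arc) the sequence $(z^\lambda_k)$ is thinned so that the disks $D^\lambda_k$ shrink onto $z^\lambda_\infty$.

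With the disks in hand, I will apply Alexander's trick (Proposition~\ref{p.alexander}) inside each $D^\lambda_k$ to produce an isotopy $(g^{\lambda,k}_t)_{t\in[0,1]}$ compactly supported in $D^\lambda_k$, with $g^{\lambda,k}_0=\mathrm{Id}$ and $g^{\lambda,k}_1(f(z^\lambda_k))=z^\lambda_k$. Local finiteness and pairwise disjointness of the $D^\lambda_k$ allow me to glue them into a single isotopy $(g_t)_{t\in[0,1]}$ of $S$, supported in $\bigcup D^\lambda_k\subset S\setminus F$, which therefore fixes every point of $F$. Setting $f':=g_1\circ f$, the path $t\mapsto g_t\circ f$ is an isotopy in $\homeo(S,F)$ from $f$ to $f'$ and, by construction, $f'(z^\lambda_k)=z^\lambda_k$ for every $(\lambda,k)$. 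This yields items 1 and 2 of the lemma.

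For item 3, the strategy is to apply the finite criterion (Proposition~\ref{prop.finitely-homotopic}) to $f'$ and $F'$. Concatenating $H$ with $t\mapsto g_t\circ f$ yields a strong homotopy $H'$ from $\mathrm{Id}$ to $f'$ relative to $F$. For each $(\lambda,k)$, the $H'$-trajectory of $z^\lambda_k$ is a closed loop based at $z^\lambda_k$ entirely contained in $D^\lambda_k$, hence contractible in the disk $D^\lambda_k$. Using the local fibration $\homeo(D^\lambda_k)\to D^\lambda_k$ given by evaluation at $z^\lambda_k$ (see Lemma~\ref{l.fibration}), contractibility of this loop allows one to lift it to a loop of compactly supported homeomorphisms $(\phi^{\lambda,k}_t)$ of $D^\lambda_k$ with $\phi^{\lambda,k}_0=\phi^{\lambda,k}_1=\mathrm{Id}$ and $\phi^{\lambda,k}_t(z^\lambda_k)=H'(z^\lambda_k,t)$; replacing $H'(x,t)$ by $(\phi^{\lambda,k}_t)^{-1}(H'(x,t))$ inside $D^\lambda_k$ therefore produces a modified strong homotopy, still relative to $F$, that additionally fixes $z^\lambda_k$. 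Since the disks are disjoint these modifications can be performed independently on any finite collection $Z\subset\{z^\lambda_k\}$, giving a strong homotopy from $\mathrm{Id}$ to $f'$ relative to $F\cup Z$. Taking a countable dense subset $F^d$ of $F$, the set $F^d\cup\{z^\lambda_k:\lambda,k\}$ is dense in $F'$, and every finite subset of it enjoys the required property; Proposition~\ref{prop.finitely-homotopic} then delivers the desired strong homotopy relative to all of $F'$.

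The delicate part is the simultaneous book-keeping in the first step: arranging pairwise disjointness of the $D^\lambda_k$, containment of the \emph{whole} trajectories $\gamma^\lambda_k$ (not merely of their endpoints), and local finiteness in $S\setminus F$, all while coping with the possibility that $z^\lambda_\infty$ may itself be an accumulation point of sequences $(z^{\lambda'}_{k'})$ attached to other values of $\lambda'$. Once this combinatorial arrangement is achieved, the local modification of homotopies in the third step is rather standard and uses only the contractibility of loops in a disk together with the triviality of the evaluation fibration over a disk.
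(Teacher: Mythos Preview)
Your construction of $f'$ (items 1 and 2) is essentially the paper's Claim~\ref{claim.straightening-points}. The genuine gap is in your argument for item~3.

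The modified homotopy $\tilde H(x,t):=(\phi^{\lambda,k}_t)^{-1}\bigl(H'(x,t)\bigr)$ does fix $z^\lambda_k$, but being a \emph{strong} homotopy relative to $F\cup Z$ requires more: for every $x\notin F\cup Z$ and every $t$ one needs $\tilde H(x,t)\notin Z$. Now $\tilde H(x,t)=z^\lambda_k$ holds exactly when $H'(x,t)=\phi^{\lambda,k}_t(z^\lambda_k)=H'(z^\lambda_k,t)$, and since $H'(\,\cdot\,,t)$ is merely a continuous map, not a homeomorphism, nothing prevents such coincidences. Post-composition by a homeomorphism cannot separate two points that the original homotopy has already identified. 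This obstruction is precisely why the paper's argument is so much longer: instead of modifying the given homotopy, they pass through an auxiliary family of essential simple closed curves $c^\lambda_k$ through the points of the given finite set $Z_1$ (Claim~\ref{claim.exist-curves}), use Proposition~\ref{prop.redresser-les-courbes} to isotope $f'$ to a map $f''$ fixing these curves pointwise (Claim~\ref{c.redresser-courbes}), and then rebuild a strong homotopy relative to $F_1\cup c_1\cup\dots\cup c_r$ from scratch, via hyperbolic geodesics or the selection machinery of Section~\ref{sec.selection} (Claim~\ref{c.homotopie}). The curves provide enough separation structure that the new homotopy can be forced to respect them, hence the points $z_i$.

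A secondary issue: you assert the existence of disks $D^\lambda_k\subset S\setminus F$ containing the full trajectory $\gamma^\lambda_k$, but since $z^\lambda_\infty$ is non-isolated in $F$, every small ball around it meets $F$, and $\gamma^\lambda_k$ may wind around those nearby points of $F$; in that case no simply connected neighbourhood of $\gamma^\lambda_k$ exists in $S\setminus F$. The paper only takes open neighbourhoods $V^\lambda_k$ (not disks) and arranges the return isotopy so that the $H\star I$-trajectory of $z^\lambda_k$ is $\gamma^\lambda_k$ followed by its own reverse, hence nullhomotopic in any neighbourhood of $\gamma^\lambda_k$. But even with this fix, the main gap above remains.
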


\begin{rema}
\label{rema.endpoints-1}
For further use, let us note that every endpoint of a connected component of $\alpha \setminus F'$ is an isolated point of $F'$.
\end{rema}

Roughly speaking, the proof of Lemma~\ref{l.straightening-points} goes as follows.
We consider the strong homotopy relative to $F$ from the identity to $f$. Up to extracting, the trajectories of the $z^\lambda_{k}$ are pairwise disjoint.
By pushing the points backwards along their trajectories, we first construct an isotopy from $f$ to a homeomorphism $f'$ that fixes all the $z^\lambda_{k}$. Then comes the hard part, namely proving that $f'$ is strongly homotopic to the identity relatively to the union $F'$ of $F$ and the $z^\lambda_{k}$'s. We apply the criterium of section 4: it suffices to prove the existence of a strong homotopy relative to any finite subset of $F'$.   
For this, we first make an isotopy from $f'$ to a homeomorphism that pointwise fixes a finite collection of simple closed curves, one through each point $z^\lambda_{k}$ in the given finite subset of $F'$ (this uses section 6).
Then we prove that the resulting homeomorphism is strongly homotopic to the identity relatively to the union of $F$ and the curves (this uses either hyperbolic geometry or the selection result of section~\ref{sec.selection}).

\begin{proof}[Proof of Lemma~\ref{l.straightening-points}]
We remark that the lemma is useless in the case when every point of $F$ is isolated (since, in this case, $\Lambda$ is empty). Thus we may, and we will, assume that $F$ is infinite. By assumption, there exists a strong homotopy $H$ from the identity to $f$, relative to $F$.

\begin{claim}[Figure~\ref{f.bringin-back1}]
\label{claim.straightening-points}
Up to replacing, for each $\lambda\in\Lambda$, the sequence $(z^\lambda_k)_{k\in\mathbb{N}}$ by a subsequence, there exists a homeomorphism $f'$ such that
\begin{enumerate}
\item there is an isotopy $I$ from $f$ to $f'$ relative to $F$,
\item $f'$ fixes  the set $F' := F \cup \{z^\lambda_{k} \mid \lambda\in\Lambda, k\in\mathbb{N} \}$ pointwise,
\item for every $k\in\mathbb{N}$ and every $\lambda\in\Lambda$, the trajectory of the point $z^\lambda_{k}$ under the homotopy $H \star I$ is nulhomotopic in $S \setminus F$.
\end{enumerate}
\end{claim}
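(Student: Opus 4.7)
The plan is to construct the isotopy $I$ by concatenating, spatially, countably many ``small'' compactly supported local isotopies, each of which pushes one point $f(z^\lambda_k)$ back to $z^\lambda_k$ along (a path homotopic to) the reverse of its $H$-trajectory. The extractions of subsequences are used exactly to make this concatenation well-defined and continuous, especially at points of $F$.

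First I would prepare the data. For each $\lambda \in \Lambda$ and each $k$, set $\gamma^\lambda_k := H(z^\lambda_k,\cdot)$. Since $H$ is a strong homotopy relative to $F$ and $z^\lambda_k \notin F$, each $\gamma^\lambda_k$ is a compact arc in $S \setminus F$, with endpoints $z^\lambda_k$ and $f(z^\lambda_k)$. By uniform continuity of $H$ on compact sets and the convergence $z^\lambda_k \to z^\lambda_\infty \in F$, we have $\mathrm{diam}(\gamma^\lambda_k) \to 0$ and $\gamma^\lambda_k \to \{z^\lambda_\infty\}$ in the Hausdorff sense as $k \to \infty$. Enumerating $\Lambda \times \bbN$ and proceeding diagonally, we may extract subsequences $(z^\lambda_k)_{k}$ and then choose pairwise disjoint open neighborhoods $U^\lambda_k \subset S \setminus F$ of the corresponding $\gamma^\lambda_k$, with $\mathrm{diam}(U^\lambda_k)$ going to $0$ so fast that the family $\{U^\lambda_k\}$ is locally finite in $S$ and accumulates only at points of $F$ (more precisely, each accumulation point of the $U^\lambda_k$'s belongs to $F$). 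Inside each $U^\lambda_k$ choose a closed topological disk $D^\lambda_k$ whose interior still contains $\gamma^\lambda_k$; the family $\{D^\lambda_k\}$ then shares the same local finiteness and accumulation properties.

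Next I would build the isotopy. On each disk $D^\lambda_k$, since the space of homeomorphisms of a closed disk with compact support in its interior is arcwise connected (Alexander's trick, Proposition~\ref{p.alexander}), there is an isotopy $(g^\lambda_{k,t})_{t\in[0,1]}$ supported in $D^\lambda_k$ such that $g^\lambda_{k,0}=\mathrm{Id}$, $g^\lambda_{k,1}(f(z^\lambda_k)) = z^\lambda_k$, and the trajectory $t\mapsto g^\lambda_{k,t}(f(z^\lambda_k))$ is any prescribed path in $D^\lambda_k$ from $f(z^\lambda_k)$ to $z^\lambda_k$; I will take this path to be the reverse of $\gamma^\lambda_k$ (which lies in $D^\lambda_k$ by construction). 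Then define $g_t$ on $S$ to be $g^\lambda_{k,t}$ on each $D^\lambda_k$ and the identity elsewhere; set $I_t := g_t \circ f$. The local finiteness of the $D^\lambda_k$'s ensures that $g_t$ is well-defined and continuous on $S \setminus F$. The extra condition that the $D^\lambda_k$'s only accumulate in $F$, together with $\mathrm{diam}(D^\lambda_k) \to 0$ along any such accumulation, guarantees that $g_t$ extends continuously by the identity to $F$: given $x\in F$ and $\varepsilon>0$, only finitely many disks meet a neighborhood of $x$ and are not contained in $B(x,\varepsilon)$, so $g_t(y)$ stays within $\varepsilon$ of $x$ for $y$ close enough to $x$. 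Hence $I$ is a well-defined isotopy of $S$ with $I_0=f$ and $f':=I_1$ fixes $F$ (because each $D^\lambda_k$ is disjoint from $F$ and $f$ fixes $F$) as well as each $z^\lambda_k$ (by the choice of $g^\lambda_{k,1}$), giving conditions 1 and 2. For condition 3, the trajectory of $z^\lambda_k$ under $H\star I$ is the loop $\gamma^\lambda_k$ concatenated with a path from $f(z^\lambda_k)$ back to $z^\lambda_k$ lying entirely in the disk $D^\lambda_k$; since $D^\lambda_k \subset S\setminus F$ is simply connected, this loop is nulhomotopic in $S \setminus F$.

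The main technical obstacle is therefore the diagonal extraction followed by the careful choice of the neighborhoods $U^\lambda_k$ and $D^\lambda_k$: we must simultaneously (i) make them pairwise disjoint, (ii) keep them in $S\setminus F$ (which is delicate precisely because $z^\lambda_\infty$ is not isolated in $F$, so points of $F$ cluster near the trajectories), and (iii) arrange that the family is locally finite in $S$ and that its only accumulation points lie in $F$, with diameters tending to $0$. Once this bookkeeping is done, the construction of $g_t$ on each disk is routine (Alexander's trick), and continuity at points of $F$ together with the nulhomotopy of the concatenated trajectory are immediate consequences of the disjointness and the simple connectedness of each $D^\lambda_k$.
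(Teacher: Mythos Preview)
Your proof is correct and follows essentially the same approach as the paper: extract subsequences so the $H$-trajectories $\gamma^\lambda_k$ are pairwise disjoint with diameters tending to $0$, choose pairwise disjoint neighborhoods in $S\setminus F$ with the same properties, build a compactly supported isotopy on each neighborhood pushing $f(z^\lambda_k)$ back to $z^\lambda_k$ along (the reverse of) $\gamma^\lambda_k$, and compose. The only minor differences are that the paper cites the fibration Lemma~\ref{l.fibration} rather than Alexander's trick to obtain the local isotopy with prescribed trajectory (your invocation of Proposition~\ref{p.alexander} gives arcwise connectedness but not directly the prescribed path; the explicit construction in the proof of Lemma~\ref{l.fibration} is really what you want), and that your choice of \emph{disks} $D^\lambda_k$ lets you argue nulhomotopy via simple connectedness, whereas the paper's neighborhoods $V^\lambda_k$ need not be disks and nulhomotopy comes from the concatenation $\gamma^\lambda_k\star(\gamma^\lambda_k)^{-1}$ being trivially contractible.
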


\begin{figure}[ht]	
\begin{center}
\def\svgwidth{0.8\textwidth}
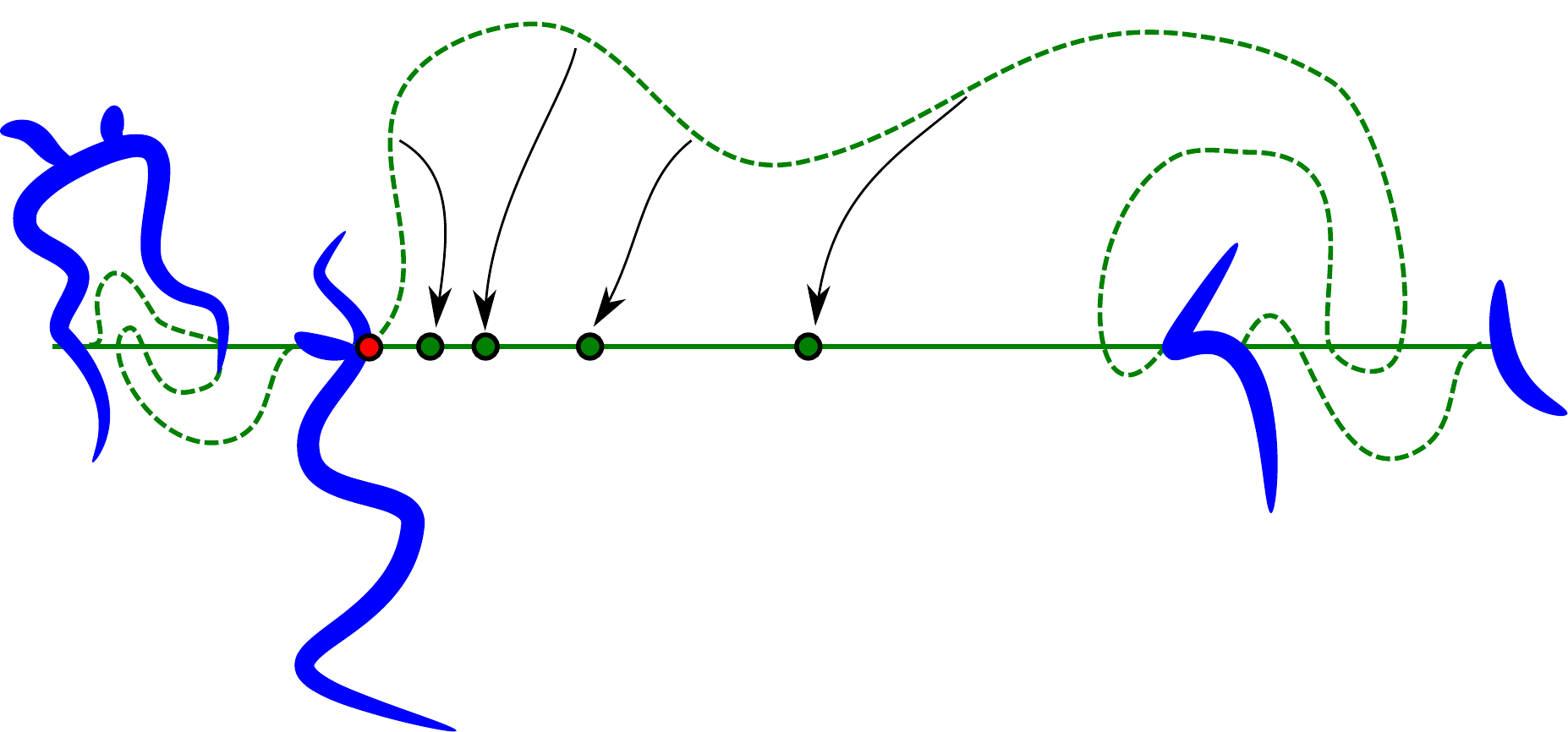
\end{center}
\caption{\label{f.bringin-back1}Bringing back points (I)}
\end{figure}

\begin{proof}[Proof of Claim~\ref{claim.straightening-points}]
For every $\lambda\in\Lambda$ and every  $k\in\bbN$, let $\gamma_k^\lambda$ denote the trajectory of $z_k^\lambda$ under the homotopy $H$. Note that since $H$ is a strong homotopy, the image of $\gamma_k^\lambda$ is a compact subset of $S \setminus F$  for every $\lambda\in\Lambda$ and every $k\in\bbN$. Furthermore, for every $\lambda\in\Lambda$, the sequence $(\gamma^\lambda_k)$ uniformly converges to the point $z^\lambda_{\infty}$. Thus, up to taking subsequences, we may assume that the trajectories $\gamma^\lambda_{k}$ and $\gamma^{\lambda'}_{k'}$ are disjoint for $(\lambda,k)\neq (\lambda',k')$, and that, for every $\varepsilon>0$, the diameter of $\gamma^\lambda_{k}$ is less than $\varepsilon$ except for a finite number of $(\lambda,k)$. Then, for each $(\lambda,k)$, we may choose a neighborhood $V^\lambda_{k}$ of the image of the trajectory $\gamma_{k}^\lambda$ in $S\setminus F$ with the same properties, that is such that $V^\lambda_{k}$ and $V^{\lambda'}_{k'}$ are disjoint for $(\lambda,k) \neq (\lambda',k')$ and such that, for every $\varepsilon>0$, the diameter of $V^\lambda_{k}$ is less than $\varepsilon$ except for a finite number of $(\lambda,k)$. The map associating to any homeomorphism $h$ of a surface the image under $h$ of a given point is a fiber bundle (lemma~\ref{l.fibration}): thus we may find for each $(\lambda,k)$, an isotopy $I_k^\lambda$ of homeomorphisms of $S$, supported in $V^\lambda_{k}$, such that the trajectory of the point $z^\lambda_{k}$ for the isotopy $I_k^\lambda$ is the same as the trajectory $\gamma_k^\lambda$ but in the opposite direction of travel. Note that the isotopies $\{I_k^\lambda, k\in\bbN, \lambda\in\Lambda\}$ are pairwise commuting, since their supports are pairwise disjoint. Let $I'$ be the infinite composition of all these isotopies. Pre-composing with $f$, we get an isotopy $I$ from $f$ to a homeomorphism $f'$ having the desired properties.
\end{proof}

To complete the proof of Lemma~\ref{l.straightening-points} it remains to see that, up to a new extraction of subsequences, the homeomorphism $f'$ provided by Claim~\ref{claim.straightening-points} is strongly homotopic to the identity relatively to the set $F'$. We will apply the results of section~\ref{sec.fini-homo-implique-homo} which roughly say that it is enough to check that $f'$ is homotopic to the identity relatively to every finite subset of $F'$.

Let $Z_\infty = \{z^\lambda_\infty \mid \lambda \in \Lambda\}$. Remember that every point in $Z_\infty$ is non-isolated in $F$. In other words, each singleton $\{z^\lambda_\infty\}$ has empty interior in $F$. Since $\Lambda$ is countable, it follows by Baire theorem that the set $Z_\infty$ has empty interior in $F$. Thus we may choose a countable dense subset $F_0$ of $F$ which is disjoint from $Z_\infty$.

\begin{claim}[Figure~\ref{f.bringing-back2}]
\label{claim.exist-curves}
Up to replacing $(z_k^\lambda)_{k\in\mathbb{N}}$ by a subsequence for every $\lambda$, one can find some simple closed curves $\{c^\lambda_{k} \mid \lambda\in\Lambda, k \in \bbN\}$ in $S$ with the following properties:
\begin{enumerate}
\item for every $(\lambda,k)$, the curve $c^\lambda_{k}$ goes through the point $z^\lambda_{k}$,
\item  for every $(\lambda,k)$, the curve $c^\lambda_{k}$ is disjoint from the set $F_0 \cup Z_\infty$,
\item for $(\lambda,k)\neq (\lambda',k')$, 
the curves $c^\lambda_{k}$ and $c^{\lambda'}_{k'}$ are essential, and not homotopic in $S \setminus F_{0}$,
\item for $(\lambda,k)\neq (\lambda',k')$, 
the fillings of the sets $c^\lambda_{k}\cup f'(c^\lambda_{k})$ and $c^{\lambda'}_{k'}\cup f'(c^{\lambda'}_{k'})$ in the surface $S\setminus F_{0}$ are disjoint,
\item for every $\lambda$, the sequence $(c^\lambda_{k})_{k \geq 0}$ uniformly converges towards $z^\lambda_\infty$.
\end{enumerate}
\end{claim}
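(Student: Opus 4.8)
The plan is to build each curve $c^\lambda_k$ as a small \emph{lasso}: a simple closed curve through $z^\lambda_k$ bounding a thin topological disc $D^\lambda_k$ which contains a point $y^\lambda_k\in F_0$ chosen very close to $z^\lambda_\infty$. Before starting I would record a few facts. Each $z^\lambda_k$ lies on $\beta^\lambda\subset S\setminus F$, hence $z^\lambda_k\notin F_0\cup Z_\infty$, and it is fixed by $f'$ (Claim~\ref{claim.straightening-points}). Each $z^\lambda_\infty$ is a limit of points of $\beta^\lambda\subset S\setminus F$, so it lies in the frontier $\partial F:=F\setminus\mathrm{int}(F)$; in particular $Z_\infty\subset\partial F$, so $Z_\infty\cap\mathrm{int}(F)=\emptyset$, and every neighbourhood of $z^\lambda_\infty$ meets both $S\setminus F$ and $F_0$ (by density of $F_0$ and $z^\lambda_\infty\notin F_0$). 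Finally, removing a countable set from a connected surface keeps it connected, so $S\setminus Z_\infty$ is a connected surface, and the same holds for any small disc around $z^\lambda_\infty$ with $Z_\infty$ removed.

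With these in hand I would first pass to subsequences so that $d(z^\lambda_k,z^\lambda_\infty)\to 0$ for every $\lambda$. Then for each pair $(\lambda,k)$ I would pick $y^\lambda_k\in F_0$ with $d(y^\lambda_k,z^\lambda_\infty)<1/k$, choose an embedded arc from $z^\lambda_k$ to $y^\lambda_k$ lying in a tiny ball around $z^\lambda_\infty$ and inside $S\setminus Z_\infty$ (possible by the remarks above, since both endpoints are in $S\setminus Z_\infty$), and take $D^\lambda_k$ to be a regular neighbourhood of this arc, ``bitten'' near $z^\lambda_k$ so that $z^\lambda_k\in\partial D^\lambda_k$ while $y^\lambda_k\in\mathrm{int}(D^\lambda_k)$, with $c^\lambda_k:=\partial D^\lambda_k$ avoiding the countable set $F_0$ and with $\mathrm{diam}(D^\lambda_k)\to 0$; I would also keep $D^\lambda_k$ thin enough that at least one point of $F_0$ stays outside it. This already yields Properties~1, 2 and~5, and $c^\lambda_k$ is essential in $S\setminus F_0$, since it separates the punctured disc $D^\lambda_k\setminus F_0$ from a surface that still contains a point of $F_0$ (so neither side is a disc in $S\setminus F_0$).

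The heart of the matter is the simultaneous disjointness (Properties~3 and~4), which I would get by making the choices inductively along an enumeration of $\Lambda\times\bbN$ in which each $\lambda$ recurs infinitely often. Since $f'$ fixes $Z_\infty$ pointwise, $D^\lambda_k\cup f'(D^\lambda_k)\subset S\setminus Z_\infty$; and since $f'$ fixes $z^\lambda_\infty$ and is continuous, $f'(D^\lambda_k)$ also has small diameter and sits near $z^\lambda_\infty$. At each stage, the finitely many sets $D\cup f'(D)$ chosen so far form a compact set $K$ disjoint from $Z_\infty$, hence with $z^\lambda_\infty\notin K$; so, going to a further subsequence of $(z^\lambda_k)_k$ if needed, I may take the new disc small enough that $D^\lambda_k\cup f'(D^\lambda_k)$ — together with the bounded components of its complement, which shrink to $z^\lambda_\infty$ as well — is disjoint from $K$. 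The resulting sets are then pairwise disjoint, and each filling $\fill_{S\setminus F_0}(c^\lambda_k\cup f'(c^\lambda_k))$ is contained in $D^\lambda_k\cup f'(D^\lambda_k)$ enlarged by those bounded complementary components (the unbounded complementary component is never relatively compact in $S\setminus F_0$, since the point of $F_0$ kept outside lies in its closure), so the fillings are pairwise disjoint, giving Property~4. For Property~3: if $c^\lambda_k$ were freely homotopic in $S\setminus F_0$ to $c^{\lambda'}_{k'}$ with $(\lambda,k)\neq(\lambda',k')$, then, being disjoint, the two curves would bound an annulus $A\subset S\setminus F_0$ (Lemma~\ref{l.bound-annulus}); $A$ would have to lie on the non-disc side of each curve, forcing $S=A\cup D^\lambda_k\cup D^{\lambda'}_{k'}$ to be a sphere with $F_0\subset D^\lambda_k\cup D^{\lambda'}_{k'}$, which contradicts the existence of a point of $F_0$ outside these two small discs (for instance one lying in a third disc $D^{\lambda''}_{k''}$).

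The step I expect to be the main obstacle is exactly this inductive bookkeeping. Distinct components $\beta^\lambda,\beta^{\lambda'}$ of $\alpha\setminus F$ may share the endpoint $z^\lambda_\infty=z^{\lambda'}_\infty$, and the set $Z_\infty$ may accumulate, so the points $z^\mu_\infty$ cannot all be separated by disjoint neighbourhoods; the construction must nevertheless produce globally pairwise disjoint lassos and fillings. What makes the induction go through is that $z^\lambda_\infty\in\partial F$ (so a lasso reaching $F_0$ can still be drawn arbitrarily close to $z^\lambda_\infty$ while staying in $S\setminus Z_\infty$, and so $z^\lambda_\infty$ is never swallowed by a previously chosen set) together with $f'(Z_\infty)=Z_\infty$ and $f'(z^\lambda_\infty)=z^\lambda_\infty$ (so applying $f'$ preserves these features). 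Some additional care is needed to keep the bounded complementary components of $D^\lambda_k\cup f'(D^\lambda_k)$ small and to ensure $z^\lambda_\infty$ itself is not enclosed by $c^\lambda_k\cup f'(c^\lambda_k)$; both can be arranged by choosing the defining arc so as to leave an ``escape route'' from $z^\lambda_\infty$ into $S\setminus F$.
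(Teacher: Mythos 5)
Your construction hinges on a requirement that cannot be met in general: that each closed lasso disc $D^\lambda_k$ (which must contain a point $y^\lambda_k\in F_0$ in its interior), together with $f'(D^\lambda_k)$, avoids $Z_\infty$, so that later points $z^\mu_\infty$ are ``never swallowed'' and the sets $D^\lambda_k\cup f'(D^\lambda_k)$ can be made pairwise disjoint. Take the basic example $S=\bbR^2$, $\alpha=[0,1]\times\{0\}$, $F=C\times\{0\}$ with $C$ the standard Cantor set: every point of $C$ is non-isolated, so $Z_\infty$ is the (dense) set of gap endpoints, hence $\mathrm{Clos}(Z_\infty)=F$ and \emph{every} point of $F_0$ is an accumulation point of $Z_\infty$. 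Any disc whose interior contains a point of $F_0$ therefore contains points of $Z_\infty$ in its interior — only the boundary curve can be made to avoid the countable set $F_0\cup Z_\infty$, not the disc. Consequently a lasso built at some stage encloses a nonempty open chunk of $F$, hence infinitely many points $z^\mu_\infty$ together with, for $j$ large, the points $z^\mu_j$ through which later lassos must pass; those later lassos are then forced to be nested inside (or to cross) the earlier disc, and your inductive bookkeeping (``$K$ is disjoint from $Z_\infty$, hence $z^\lambda_\infty\notin K$'') breaks down. Since your proofs of Properties 3 and 4 (the annulus argument ``$A$ lies on the non-disc side of each curve'', and the containment of each filling in its own disjoint pocket) both rest on this pairwise disjointness of the two-dimensional regions, the heart of the argument fails exactly in the situation the claim is designed for. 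A secondary problem: on a general (e.g.\ compact) surface, a compact connected set of small diameter can have a relatively compact complementary component of large diameter, so ``the bounded components of the complement shrink to $z^\lambda_\infty$ as well'' also needs an argument.

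The paper's proof sidesteps all of this by never asking the enclosed regions to be disjoint. Only the \emph{curves} are kept pairwise disjoint (easy: they avoid the countable set $F_0\cup Z_\infty$, so at each stage a small disc $D$ around $z^{\lambda_n}_\infty$ misses the finitely many previous curves and their $f'$-images). Inside $D$ one takes a smaller disc $D'$ with $D'\cup f'(D')\subset\mathrm{int}(D)$ and, crucially, a point $y\in F_0$ in $D\setminus D'$; the new curve is drawn in $D'$, essential in $D'\setminus\{z^{\lambda_n}_\infty\}$, so it encircles some $x\in F_0$ near $z^{\lambda_n}_\infty$. Disjointness of the fillings \emph{in $S\setminus F_0$} then comes from Fact~\ref{fact.separating-fillings}: the puncture $y$ blocks the filling of the new set $c_n\cup f'(c_n)$ from leaving $D$ and blocks the fillings of the old sets (which lie outside $D$) from entering $D$ — even though the discs the curves bound in $S$ may well be nested. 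Likewise, essentiality and pairwise non-homotopy are obtained from the two punctures $x,y$ via Fact~\ref{fact.not-homotopic}, not from an annulus between disjointly embedded lassos. If you want to salvage your approach, you would have to replace ``disjoint lasso discs avoiding $Z_\infty$'' by this kind of puncture-blocking argument, at which point you are essentially reproducing the paper's proof.
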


\begin{figure}[ht]	
\begin{center}
\def\svgwidth{0.8\textwidth}
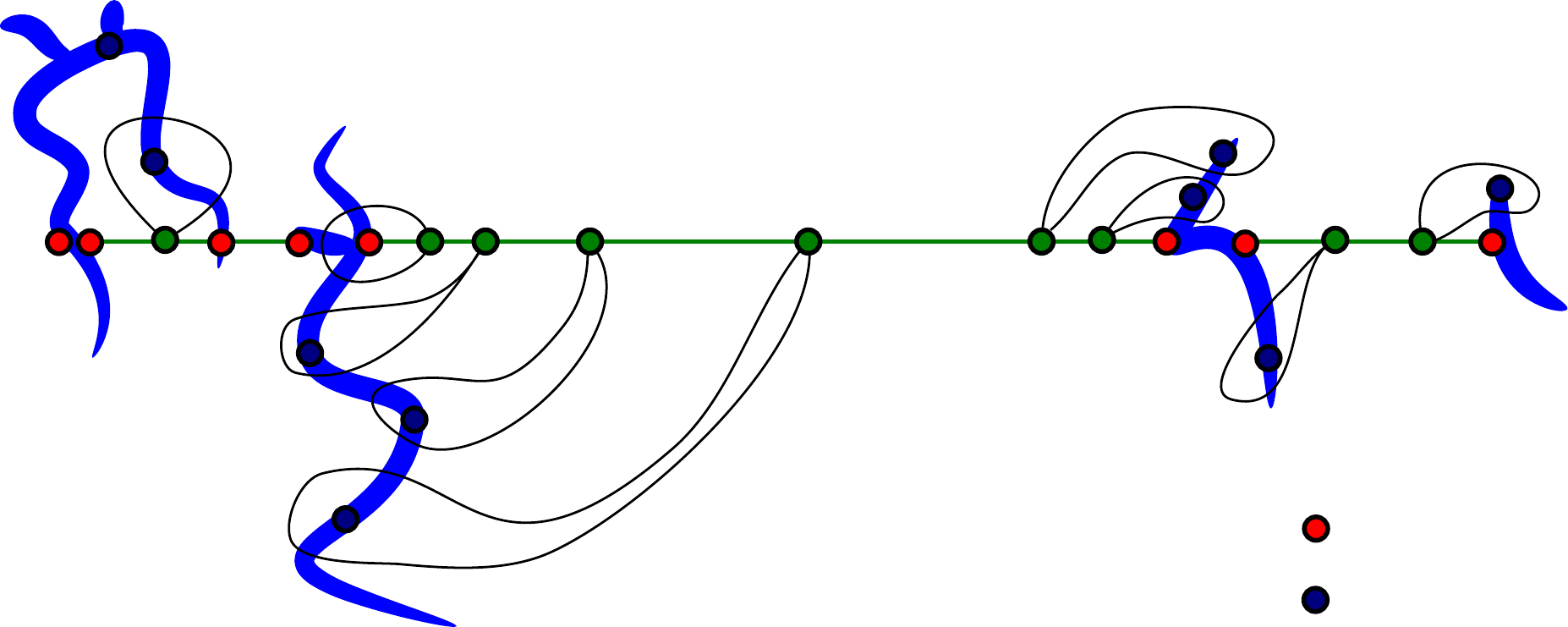
\end{center}
\caption{\label{f.bringing-back2}Bringing back points (II)}
\end{figure}

We refer to section~\ref{ss.filling} for the definition of the filling of a set. Note that the curves are allowed to intersect the set $F$. During the proof of this claim, we shall use the two easy facts below, whose proofs are left to the reader.

\begin{fact}
\label{fact.separating-fillings}
Let $D$ be a topological closed disc in $S$, $E$ be a closed subset of $\mathrm{int}(D)$ and $E'$ be a closed subset of $S\setminus D$. Let $x$ be a point in the connected component of $D\setminus E$ containing $\partial D$. Then the fillings of $E$ and $E'$ in the surface $S\setminus\{x\}$ are disjoint. More precisely, the filling of $E$ is include in $D$ while the filling of $E'$ is disjoint from $D$.
\end{fact}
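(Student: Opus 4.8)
The plan is to carry everything out in the punctured surface $M:=S\setminus\{x\}$, using one guiding observation: a connected component $W$ of the complement of a closed set in $M$ cannot be relatively compact in $M$ as soon as its closure in $S$ contains the point $x$. Indeed, if $\overline{W}^{M}$ were compact it would be closed in $S$, hence would coincide with $\overline{W}^{S}$; but $x\notin M\supseteq\overline{W}^{M}$, contradicting $x\in\overline{W}^{S}$. I would record this first, together with two standard point-set facts: (i) removing an open disc from a connected surface leaves it connected (if $S\setminus\mathrm{int}(D)=A_{1}\sqcup A_{2}$ with $\partial D\subseteq A_{1}$, then $A_{2}$ is open in $S$, closed in $S$, non-empty and proper, contradicting connectedness of $S$); and (ii) removing a single point from a connected surface-with-boundary leaves it connected. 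Note that $x\in D$ forces $x\notin E$ and $x\notin E'$, so $E$ and $E'$ are genuine closed subsets of $M$; I write $\fill_{M}$ for the filling operation in $M$.

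For $E$: let $U_{0}$ be the connected component of $D\setminus E$ containing $\partial D$ (it exists since $E\subseteq\mathrm{int}(D)$ gives $\partial D\subseteq D\setminus E$), and recall that by hypothesis $x\in U_{0}$. The set $(S\setminus D)\cup U_{0}$ is an \emph{open} subset of $S$ (each of its points has a full $S$-neighbourhood inside it, using that $U_{0}$ is relatively open in $D$) and it is connected, being the union of the connected set $S\setminus\mathrm{int}(D)$ with the connected set $U_{0}$ along $\partial D$. By fact (ii) it stays connected after deleting $x$; and it is disjoint from $E$, so $\bigl((S\setminus D)\cup U_{0}\bigr)\setminus\{x\}$ lies in a single component $W_{0}$ of $M\setminus E$. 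Since $x\in U_{0}\subseteq\overline{W_{0}}^{S}$, the observation above shows that $W_{0}$ is not relatively compact in $M$. Every component of $M\setminus E$ other than $W_{0}$ is disjoint from $W_{0}$, hence from $(S\setminus D)\cup\partial D$, hence contained in $\mathrm{int}(D)$; as $E\subseteq\mathrm{int}(D)$ as well, I conclude $\fill_{M}(E)\subseteq M\setminus W_{0}\subseteq D\setminus\{x\}$.

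For $E'$ the argument is the mirror image and a little shorter. Since $E'\subseteq S\setminus D$, the set $D\setminus\{x\}$ --- connected by fact (ii) --- lies in $M\setminus E'$, hence in a single component $W_{0}'$ of $M\setminus E'$; as $x\in D\subseteq\overline{W_{0}'}^{S}$, the observation gives that $W_{0}'$ is not relatively compact in $M$. Any other component of $M\setminus E'$ is disjoint from $W_{0}'\supseteq D\setminus\{x\}$ and misses $x$, hence is contained in $S\setminus D$; together with $E'\subseteq S\setminus D$ this yields $\fill_{M}(E')\subseteq S\setminus D$. Combining the two, $\fill_{M}(E)\subseteq D$ and $\fill_{M}(E')\subseteq S\setminus D$ are disjoint, which is the assertion; and these two inclusions are exactly the ``more precise'' form in the statement.

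The one point that genuinely needs care --- and which I would emphasise in the write-up --- is the interaction between the puncture and relative compactness, i.e. the observation of the first paragraph: it is precisely the hypothesis that $x$ lies in the component of $D\setminus E$ meeting $\partial D$ that forces $x\in\overline{W_{0}}^{S}$ and thereby keeps the ``outer'' component $W_{0}$ non-relatively-compact in $M$. Without that hypothesis the conclusion is false (take $S$ a sphere, $E$ an annulus inside $\mathrm{int}(D)$, and $x$ in the bounded complementary disc of $E$). Everything else is routine topology of components of complements of closed sets, and I expect no difficulty there.
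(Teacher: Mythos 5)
Your proof is correct. The paper offers no argument to compare it with: this fact is explicitly ``left to the reader'', so your write-up simply supplies the omitted routine verification. The one genuinely load-bearing point is exactly the one you single out --- a component of the complement whose closure in $S$ contains the puncture $x$ cannot be relatively compact in $S\setminus\{x\}$, and the hypothesis that $x$ lies in the component of $D\setminus E$ meeting $\partial D$ puts the puncture in the closure of the ``outer'' component both for $E$ and for $E'$ --- and your annulus example correctly shows this hypothesis cannot be dropped.
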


\begin{fact}
\label{fact.not-homotopic}
Let $c,c'$ be two simple closed curves in $S$. Assume $c$ is included in a disk $D$, that $c'$ is disjoint from $D$, and $x, y$ are two points of $D \setminus c$ that are separated by $c$ in $D$. Then $c$ and $c'$ are not homotopic in $S \setminus \{x, y\}$.
\end{fact}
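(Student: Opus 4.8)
The plan is to distinguish $c$ from $c'$ by a mod~$2$ intersection number with a properly embedded line joining the two punctures. Put $M=S\setminus\{x,y\}$; both curves lie in $M$, since $x,y\notin c$ while $c'$ is disjoint from $D\ni x,y$. First I would invoke the topological Schoenflies theorem: the simple closed curve $c\subset\mathrm{int}(D)$ bounds a closed disk $B$ with $B\subset\mathrm{int}(D)$. Since $D\setminus c=\mathrm{int}(B)\sqcup(D\setminus B)$ and $x,y$ are separated by $c$ in $D$, exactly one of them, say $x$, lies in $\mathrm{int}(B)$ and the other, $y$, lies in $D\setminus B$.

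Next I would choose an embedded arc $\delta\subset D$ from $x$ to $y$ meeting $c$ transversally in exactly one point; this is easy to arrange inside $D$ after transporting $(D,B,x,y)$ to a round model by Schoenflies (a chord through the round circle $c$ is the local model for topological transversality, as in Definition~\ref{d.quasi-transverse}). Set $\delta^\circ=\delta\setminus\{x,y\}$. Then $\delta^\circ$ is a \emph{properly embedded line} in $M$: a sequence in $\delta^\circ$ converging to $x$ or $y$ leaves every compact subset of $M$, so $\delta^\circ$ is closed in $M$. Moreover $\delta^\circ\subset D$ is disjoint from $c'$, while $\delta^\circ\cap c=\delta\cap c$ is a single transverse point.

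The key point is that for any loop $\gamma$ in $M$, the mod~$2$ intersection number $i_2(\gamma,\delta^\circ)\in\mathbb{Z}/2\mathbb{Z}$ with the fixed properly embedded line $\delta^\circ$ depends only on the free homotopy class of $\gamma$ in $M$. Indeed, given a free homotopy $H\colon\bbS^1\times[0,1]\to M$ from $\gamma_0$ to $\gamma_1$, one puts $H$ in general position with respect to $\delta^\circ$; since $\delta^\circ$ is closed in $M$, the preimage $H^{-1}(\delta^\circ)$ is a compact $1$-manifold whose boundary lies in $(\bbS^1\times\{0\})\cup(\bbS^1\times\{1\})$, hence has an even number of points, so $\#(\gamma_0\cap\delta^\circ)$ and $\#(\gamma_1\cap\delta^\circ)$ have the same parity. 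Applying this to $\gamma_0=c$ and $\gamma_1=c'$ yields $i_2(c,\delta^\circ)=1\neq 0=i_2(c',\delta^\circ)$, so $c$ and $c'$ are not freely homotopic in $M=S\setminus\{x,y\}$, which is the assertion of the Fact. (Orientability of $S$ plays no role, since everything is done with $\mathbb{Z}/2\mathbb{Z}$ coefficients.)

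The only real obstacle is technical: justifying general position in the purely topological category. I expect to handle it either by passing to a smooth structure on the surface (every surface carries one, and the question is unaffected), or — to stay within the topological framework used elsewhere in the paper — by reformulating $i_2(\,\cdot\,,\delta^\circ)$ as the algebraic pairing $H_1(M;\mathbb{Z}/2\mathbb{Z})\times H_1^{\mathrm{lf}}(M;\mathbb{Z}/2\mathbb{Z})\to\mathbb{Z}/2\mathbb{Z}$ between ordinary homology of $\gamma$ and locally finite (Borel--Moore) homology of the proper line $\delta^\circ$, which is manifestly a free homotopy invariant and for which the two evaluations above ("one crossing" versus "no crossing") are immediate.
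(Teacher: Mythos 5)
Your argument is correct; note that the paper offers no proof to compare against, since Facts~\ref{fact.separating-fillings} and~\ref{fact.not-homotopic} are explicitly ``left to the reader''. The structure is sound: Schoenflies gives the subdisk $B\subset\mathrm{int}(D)$ bounded by $c$ with exactly one of the two points, say $x$, in $\mathrm{int}(B)$; the arc $\delta$ from $x$ to $y$ inside $D$ crossing $c$ exactly once becomes, after deleting its endpoints, a properly embedded line $\delta^{\circ}$ in $M=S\setminus\{x,y\}$ which is disjoint from $c'$ (because $\delta^{\circ}\subset D$ and $c'\cap D=\emptyset$); and a free-homotopy invariant that evaluates to $1$ on $c$ and $0$ on $c'$ finishes the job. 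Working mod $2$ correctly sidesteps orientability, which matters since $S$ is allowed to be non-orientable.

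The one point you flag — justifying ``general position'' for a merely continuous homotopy against a merely topological line — is indeed the only real issue, and either of your two fixes closes it. The cleanest is the homological one: a free homotopy $\bbS^1\times[0,1]\to M$ shows $[c]=[c']$ in $H_1(M;\mathbb{Z}/2\mathbb{Z})$, and the class of the proper line $\delta^{\circ}$ in locally finite homology (equivalently, its Poincar\'e dual in $H^1(M;\mathbb{Z}/2\mathbb{Z})$) pairs to $1$ with $[c]$ and to $0$ with $[c']$; for the evaluation one uses that the dual cocycle can be supported in a neighbourhood of $\delta^{\circ}$ disjoint from the compact set $c'$, and that near the single crossing point the topologically transverse local model computes the pairing. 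If you prefer to avoid Borel--Moore homology and smoothing altogether, there is an elementary variant in the same spirit: $\delta^{\circ}$ is two-sided (its normal bundle is trivial because a line is simply connected), so it has a closed product neighbourhood $N\cong\delta^{\circ}\times[-1,1]$ that can be taken disjoint from $c'$ and not containing $c$; collapsing $M\setminus\mathrm{int}(N)$ to a point defines a map $u:M\to\bbS^1$, and counting the side-changing components of $c\cap N$ (their number is odd because $c$ meets the core exactly once, and each component changes side exactly when it meets the core an odd number of times) shows $u\circ c$ has odd degree while $u\circ c'$ is constant, contradicting a free homotopy in $M$. Either way your proposal is a complete and legitimate proof of the Fact, and it proves exactly the free-homotopy statement that is used in the proof of Claim~\ref{claim.exist-curves}.
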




\begin{proof}[Proof of Claim~\ref{claim.exist-curves}]
We proceed by induction. Assume that one has already constructed some curves $c_1=c_{k_1}^{\lambda_1},\dots,c_{n-1}=c_{k_{n-1}}^{\lambda_{n-1}}$ satisfying Properties 1 to 4. Fix $\lambda_n\in\Lambda$, an integer $k_{min}\in\bbN$, and a neighbourhood $U_n$ of the point $z^{\lambda_n}_\infty$ in $S$. We want to construct an interger $k_n\geq k_{min}$ and a simple closed curve  $c_n=c_{k_{n}}^{\lambda_{n}}$ included in $U_n$, so that the curves $c_1,\dots,c_n$ satisfy Properties 1 to 4. Note that Property 5 will follow automatically from the construction, provided that we choose $U_n$ in such a way that $\mathrm{diam}(U_n)$ goes to $0$ as $n$ goes to infinity.

Let us proceed to the construction of the interger $k_n$ and the curve $c_n$. Since the curves $c_1,\dots,c_{n-1}$ are disjoint from the set $Z_\infty$ (Property 2), we can find a topological closed disc $D$ disjoint from the curves $c_1,\dots,c_{n-1},f'(c_1),\dots,f'(c_n)$, included in $U_{n}$, and such that $z_\infty^{\lambda_n}\in\mathrm{int}(D)$. Then we can choose another topological closed disc $D'$, such that $z_\infty^{\lambda_n}\in\mathrm{int}(D')$, and such that $D'\cup f'(D')\subset D$. Since $z_\infty^{\lambda_n}$ is not isolated in $F$ and since $F_0$ is dense in $F$, we can assume moreover that $(D\setminus D')\cap F_0\neq\emptyset$. The sequence $(z_k^{\lambda_n})$ converges  towards the point $z_\infty^{\lambda_n}$. Hence, we can pick an interger $k_n\geq k_{min}$, such that the point $z_{k_n}^{\lambda_n}$ is in the interior of the disc $D'$. Finally, we may choose the simple closed curve   $c_n$ which goes through the point $z_{k_n}^{\lambda_n}$, which is contained in $D'$, and which is not nullhomotopic in $D'\setminus\{z_{\infty}^{\lambda_n}\}$. Since $F_0 \cup Z_\infty$ is countable, we may assume moreover that the curve $c_n$ is disjoint from $F_0 \cup Z_\infty$. 

By construction, the curve $c_n$ satisfies Properties 1 and 2, and is included in $U_{n}$, and it is not nullhomotopic in $D'\setminus\{z_\infty^{\lambda_n}\}$. Since $z_\infty^{\lambda_n}$ is non-isolated in $F$ and since $F_0$ is dense in $F$, we can find a point $x\in D'\cap F_0$ such that  the curve $c_n$ is not nullhomotopic in $D'\setminus\{x\}$. Again by construction, we can find a point $y\in F_0$ in the annulus $(D\setminus D')$.  The points $x$ and $y$ are separeted by the curve $c_n$ in the disc $D$. From Lemma~\ref{lemm.bounding-disk}, it follows that $c_n$ is essential in $S \setminus \{x,y\}$, hence essential in $S\setminus F_0$. Moreover, from Fact~\ref{fact.not-homotopic}, it follows that the curves $c_n$ and  $c_i$ for $i=1,\dots,n-1$ are not homotopic in $S\setminus\{x,y\}$, thus neither homtopic in $S \setminus F_{0}$, and we get Property 3. Moreover, $c_n$ is included in the disc $D'$, and the interior of the disc $D$ contains 
  $D'\cup f'(D')$, thus also $c_{n} \cup f'(c_{n})$, while
the curves $c_1,\dots,c_{n-1},f'(c_1),\dots,f'(c_{n-1})$ are disjoint from the disc $D$.
 Applying Fact~\ref{fact.separating-fillings} 
 we obtain that for $i=1,\dots,n-1$, the fillings of  $c_n\cup f'(c_n)$ and $c_i\cup f'(c_i)$ in $S\setminus\{y\}$ are disjoint, hence their fillings in $S \setminus F_{0}$ are also disjoint, and we get property 4. The proof is complete.
\end{proof}

Let $Z:=\{z^\lambda_{k}\mid \lambda\in\Lambda, k \in \bbN\}$. We want to show that $f'$ is strongly homotopic to the identity relatively to the set $F'=F\cup Z$. Note that $F_0 \cup Z$ is a dense subset of $F'$. According to proposition~\ref{prop.finitely-homotopic}, it suffices to check that $f'$ is strongly homotopic to the identity relatively to any finite subset of $F_0 \cup Z$. Let  $F_{1}$ and  $Z_{1}$ be finite subsets of $F_0$ and $Z$ respectively. We denote by $z_1=z_{k_1}^{\lambda_1},\dots,z_r=z_{k_r}^{\lambda_r}$ the points of $Z_1$, and we consider the corresponding simple closed curves $c_1=c_{k_1}^{\lambda_1},\dots,c_r=c_{k_r}^{\lambda_r}$ provided by Claim~\ref{claim.exist-curves}.  According to properties 3 and 4, up to increasing $F_{1}$, we may assume that:
\begin{itemize}
\item[a.] the curves $c_1,\dots,c_r$ are essential and pairwise non homotopic in $S-F_{1}$, 
\item[b.] the fillings of $c_1\cup f'(c_1),\dots,c_r\cup f'(c_r)$ in $S-F_1$ are pairwise disjoint. 
\end{itemize}

\begin{claim}\label{c.redresser-courbes}
The homeomorphism $f'$ is isotopic relatively to $F_{1} \cup Z_{1}$ to a homeomorphism $f''$ which fixes the curves $c_{0}, ... , c_{r}$ pointwise.
\end{claim}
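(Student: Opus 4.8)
The plan is to straighten the curves $f'(c_{1}),\dots,f'(c_{r})$ back onto $c_{1},\dots,c_{r}$ one curve at a time, applying the closed curve straightening lemma (Proposition~\ref{prop.redresser-les-courbes}) inside the surface $M:=S\setminus F_{1}$, and then to concatenate the isotopies so obtained. The role of condition~(b) above will be to guarantee that these isotopies have pairwise disjoint supports, so that the concatenation makes sense and fixes the finite set $F_{1}\cup Z_{1}$ throughout.

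First I would set up the data. Since $f'$ fixes $F_{1}$ pointwise it induces a homeomorphism of $M$. For each $i$ I would parametrise $c_{i}\colon\bbS^{1}\to S$ so that $c_{i}(0)=z_{i}$ and set $c_{i}':=f'\circ c_{i}$; then $c_{i}'(0)=f'(z_{i})=z_{i}$ because $z_{i}\in Z\subset F'$. Then I would check that $(c_{i},c_{i}')$ meets the hypotheses of Proposition~\ref{prop.redresser-les-courbes} in $M$: the concatenation $H\star I$ of the strong homotopy $H$ from the identity to $f$ relative to $F$ with the isotopy $I$ from $f$ to $f'$ relative to $F$ (both from Claim~\ref{claim.straightening-points}) is a strong homotopy from the identity to $f'$ relative to $F$, so restricting it to $c_{i}$ produces a homotopy from $c_{i}$ to $c_{i}'$ lying in $S\setminus F\subset M$, whose trajectory of $z_{i}$ is nulhomotopic in $S\setminus F$ by item~3 of Claim~\ref{claim.straightening-points}, hence nulhomotopic in $M$. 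Moreover $c_{i}$ is essential in $M=S\setminus F_{1}$ by condition~(a), and two-sided since it was constructed inside a disc; being the homeomorphic image of $c_{i}$ and homotopic to it in $M$, the curve $c_{i}'$ is two-sided and essential as well.

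Next I would localise. By condition~(b) the fillings of $c_{i}\cup c_{i}'$ in $M$ are pairwise disjoint, and each is compact (the filling of a compact set is compact), so I can choose pairwise disjoint, relatively compact neighbourhoods $V_{1},\dots,V_{r}$ of them in $M$. Applying Proposition~\ref{prop.redresser-les-courbes} to $c_{i}$, $c_{i}'$, $V_{i}$ — using the ``moreover'' clause, since $c_{i}(0)=c_{i}'(0)=z_{i}$ and the trajectory of $z_{i}$ is nulhomotopic in $M$ — I get an isotopy $J_{i}$ of homeomorphisms of $M$, supported in $V_{i}$, fixing $z_{i}$, from the identity to a homeomorphism $g_{i}$ with $g_{i}\circ c_{i}=c_{i}'$. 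Extending the time-$t$ maps of $J_{i}$ by the identity on $S\setminus V_{i}$ turns $J_{i}$ into an isotopy of $S$ that fixes $F_{1}$ (disjoint from $V_{i}$), fixes $z_{i}$ by construction, and fixes every $z_{j}$ with $j\neq i$ since $z_{j}$ lies in the filling of $c_{j}\cup c_{j}'$, which is disjoint from $V_{i}$; thus each $J_{i}$ is an isotopy in $\homeo(S,F_{1}\cup Z_{1})$.

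Finally I would assemble. As the $V_{i}$ are pairwise disjoint, the $g_{i}$ commute; putting $g:=g_{r}\circ\cdots\circ g_{1}$, the map $g$ agrees with $g_{j}$ on $V_{j}\supset c_{j}$, so $g\circ c_{j}=f'\circ c_{j}$ for every $j$, whence $f'':=g^{-1}\circ f'$ fixes each $c_{j}$ pointwise. Concatenating $J_{1},\dots,J_{r}$ yields an isotopy from the identity to $g$ inside $\homeo(S,F_{1}\cup Z_{1})$, and post-composing its inverse with $f'$ gives an isotopy in $\homeo(S,F_{1}\cup Z_{1})$ from $f'$ to $f''$, which is the statement. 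The only genuinely delicate points are the appeal to the basepoint-fixing version of Proposition~\ref{prop.redresser-les-courbes} — precisely what item~3 of Claim~\ref{claim.straightening-points} was arranged for — together with condition~(b), which is what keeps the supports disjoint; everything else is bookkeeping.
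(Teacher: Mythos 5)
Your proposal is correct and follows essentially the same route as the paper: apply Proposition~\ref{prop.redresser-les-courbes} in $M_1=S\setminus F_1$ to each pair $c_i$, $f'(c_i)$, using point~3 of Claim~\ref{claim.straightening-points} to invoke the basepoint-fixing clause at $z_i$, and using property~(b) on disjoint fillings to choose supports that are pairwise disjoint and avoid the other $z_j$'s, then combine the finitely many isotopies into one relative to $F_1\cup Z_1$. The only difference is presentational (you build isotopies from the identity to commuting $g_i$'s and set $f''=g^{-1}\circ f'$, while the paper concatenates the isotopies starting from $f'$), which is just bookkeeping.
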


\begin{proof}[Proof of Claim~\ref{c.redresser-courbes}]
We work in the surface $M_1 = S \setminus F_{1}$. We will use Proposition~\ref{prop.redresser-les-courbes}. For each $i\in\{1,\dots,r\}$, we consider the closed curves $c_i$ and $c_i':=f'(c_i)$. These curves are homotopic in $M_1$, since $f'$ is strongly homotopic to the identity relatively to $F$ (more precisely, $f'$ is isotopic to $f$ relatively to $F$, and $f$ is strongly homotopic to the identity relatively to $F$). The point $z_i$ is on the curve $c_i$, and its trajectory under the homotopy is contractible (this is point 3 of Claim~\ref{claim.straightening-points}). Proposition~\ref{prop.redresser-les-courbes} provides an isotopy $I_i$ on $M_1$, fixing the points $\{z_i\}$, joining $f'$ to a homeomorphism which fixes $c_i$ pointwise. Moreover, the support of $I_{i}$ in included in an arbitrarily small neighbourhood of the filling of $c_i\cup f'(c_i)$ in $M_1$. Hence, by Property b above, we can choose the isotopies $I_1,\dots,I_n$ with pairwise disjoint supports, and such that the point $z_j$ is outside the support of $I_i$ for $j\neq i$. It follows that the isotopies $I_1,\dots,I_n$ may be combined into an isotopy on $M_1$ relative to $Z_1$ (or equivalently an isotopy on $S$ relative to $F_1\cup Z_1$), from $f'$ to a homeomorphism $f''$ which fixes the curve $c_i$ pointwise for every $i$. 
\end{proof}

\begin{claim}\label{c.homotopie}
The homeomorphism $f''$ is strongly homotopic to the identity relatively to $ F_{1} \cup Z_1 $.
\end{claim}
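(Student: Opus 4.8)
The plan is to prove the slightly stronger statement that $f''$ is strongly homotopic to the identity relatively to the closed set
$$G:=F_{1}\cup c_{1}\cup\dots\cup c_{r}.$$
This suffices: indeed $f''$ fixes $G$ pointwise (it fixes $F_{1}$ and each curve $c_{i}$), one has $F_{1}\cup Z_{1}\subset G$ since $z_{i}\in c_{i}$, and a strong homotopy relative to a closed set is automatically a strong homotopy relative to any closed subset. To build a strong homotopy relative to $G$ I would apply the finite criterion, Proposition~\ref{prop.finitely-homotopic}, to the pair $(f'',G)$: choosing a countable dense subset $G_{0}$ of $G$ of the form $F_{1}\cup D_{1}\cup\dots\cup D_{r}$ with $D_{i}$ countable dense in $c_{i}$, one is reduced to producing, for each finite subset of $G_{0}$ — which is of the form $F_{1}\cup\{p_{1},\dots,p_{m}\}$ with the $p_{j}$ lying on the curves — a strong homotopy from the identity to $f''$ relative to it.

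Two preliminary observations feed this reduction. \emph{First}, $f''$ is strongly homotopic to the identity relatively to the finite set $F_{1}$: concatenate the strong homotopy $\mathrm{Id}\to f$ relative to $F$ furnished by the hypothesis (which is a strong homotopy relative to $F_{1}\subset F$), the isotopy $f\to f'$ relative to $F$ of Claim~\ref{claim.straightening-points}, and the curve-straightening isotopy $f'\to f''$ relative to $F_{1}\cup Z_{1}$ of Claim~\ref{c.redresser-courbes}; call the resulting strong homotopy $\mathcal H$ (isotopies relative to $F_{1}$ are strong homotopies relative to $F_{1}$, and concatenation preserves this). \emph{Second}, under $\mathcal H$ the trajectory of \emph{every} point of $c_{1}\cup\dots\cup c_{r}$ is nullhomotopic in $S\setminus F_{1}$. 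Indeed, by point~3 of Claim~\ref{claim.straightening-points} the trajectory of $z_{i}$ under $H\star I$ is nullhomotopic in $S\setminus F$, and the curve-straightening isotopy fixes $z_{i}$, so the trajectory of $z_{i}$ under $\mathcal H$ is nullhomotopic in $S\setminus F$, hence in $S\setminus F_{1}$; since $f''$ fixes $c_{i}$ pointwise, the restriction of $\mathcal H$ to $c_{i}\times[0,1]$ is a free homotopy of the loop $c_{i}$ to itself in $S\setminus F_{1}$, and the commuting-square argument of Fact~\ref{fact.commute} shows that for every $s$ the trajectory of $c_{i}(s)$ is conjugate in $\pi_{1}(S\setminus F_{1})$, by an arc of $c_{i}$, to the trajectory of $c_{i}(0)=z_{i}$; therefore it is trivial as well.

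It then remains to handle a finite set $E:=F_{1}\cup\{p_{1},\dots,p_{m}\}$ with each $p_{j}$ on the curves. Here $E$ is totally disconnected and $f''$ fixes $E$, so, using that property (S1) implies property (S2) and the totally-disconnected case of the dictionary between unlinkedness of $f$ and of its restriction, it is enough to check that $f''_{\mid S\setminus E}$ is isotopic to the identity, equivalently (Theorem~\ref{theo.equivalences-weak}) that it acts trivially on $\pi_{1}(S\setminus E)$. This follows from the fact that $f''$ acts trivially on $\pi_{1}(S\setminus F_{1})$ — it is homotopic to the identity there via $\mathcal H$ — together with the second observation above: the supplementary generators of $\pi_{1}(S\setminus E)$ are small loops encircling the $p_{j}$, and $f''$ sends such a loop to a loop around $f''(p_{j})=p_{j}$ whose correcting path is the trajectory of $p_{j}$, which is nullhomotopic. (The surfaces excluded from Theorem~\ref{theo.equivalences-weak} cause no trouble: $F$, hence $F\setminus E$, is infinite, so $S\setminus E$ is not among them — or one appeals to the refinements of Remark~\ref{r.exceptions}.)

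The step I expect to be the real obstacle is, as everywhere in the paper, the passage from a homotopy to a \emph{strong} homotopy, i.e. arranging that the homotopy extend continuously by the identity on $G$; this is precisely why Proposition~\ref{prop.finitely-homotopic}, and behind it the continuous-selection machinery of Section~\ref{sec.selection}, is invoked rather than a soft homotopy-theoretic argument. An alternative route, matching the hint, is to cut $S$ along the two-sided essential curves $c_{i}$, geometrize the pieces, and run the hyperbolic-geometry argument of Section~\ref{sub.hyperbolic-geometry}; in either approach the decisive structural input is the second observation above, since it is what guarantees that the homotopy classes $C_{z}$ of paths from $z$ to $f''(z)$ avoiding $G$ are well defined, and it lets one verify the lower semi-continuity and the equi-local (simple) connectedness at points of $G$ required by Proposition~\ref{prop.selection} by exactly the arguments used in the proofs of Lemmas~\ref{lemm.semi-continuity-proper} and~\ref{lemm.ELC-ELSC}.
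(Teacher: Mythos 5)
Your two preliminary observations are correct (the first is exactly the paper's starting point, and your Fact~\ref{fact.commute}-argument that every point of $c_1\cup\dots\cup c_r$ has nullhomotopic trajectory in $S\setminus F_1$ matches the paper). The gap is in the final step, where you handle a finite set $E=F_1\cup\{p_1,\dots,p_m\}$ by arguing that $f''$ acts trivially on $\pi_1(S\setminus E)$ because the ``supplementary generators'' around the $p_j$ are sent to loops around $p_j$ with nullhomotopic correcting path. This is a generator-by-generator free-homotopy statement, and acting trivially on $\pi_1$ in the sense of (W4) is not: you need a \emph{single} correcting path $\alpha$ in $S\setminus E$ with $f''_*=\alpha_*$ on all of $\pi_1(S\setminus E)$, whereas the conjugators you control (trajectories under $\mathcal H$) are only known to be nullhomotopic in $S\setminus F_1$, not in $S\setminus E$, and a priori differ from generator to generator. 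The hypotheses you actually use at this stage --- $f''$ homotopic to the identity in $S\setminus F_1$, fixing the $p_j$, with trajectories nullhomotopic in $S\setminus F_1$ --- are genuinely insufficient: take $S\setminus F_1$ a plane, $p_1,p_2$ two fixed points, and $f''$ the time-one map of a full twist of $p_1$ around $p_2$; it is isotopic to the identity in the plane, all trajectories are (vacuously) nullhomotopic there, yet it acts nontrivially on $\pi_1$ of the twice-punctured plane. So the truth of your finite-set statement depends on the unused structural fact that $f''$ fixes the \emph{whole} curves $c_i$ pointwise, and a correct proof at that point has to exploit it --- e.g.\ as in the proof of Lemma~\ref{l.still-homotopic}: pass to the universal cover $X$ of $S\setminus F_1$, note that the lift $\tilde f''$ determined by $\mathcal H$ fixes the full preimage of $c_1\cup\dots\cup c_r$ pointwise and (by orientation preservation) preserves each complementary component, then cut lifted loops along this fixed $1$-complex. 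Once you are doing that, the detour through Proposition~\ref{prop.finitely-homotopic} buys nothing: you are back to the structure the paper uses directly.

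Indeed the paper never reduces to finite sets inside this claim: it builds the strong homotopy relative to $F_1\cup c_1\cup\dots\cup c_r$ outright, either by choosing a hyperbolic metric on $S\setminus F_1$ in which the $c_i$ are geodesics and taking the geodesic homotopy, or by defining the classes $C_z$ of paths disjoint from the curves and feeding them to Proposition~\ref{prop.selection}; in both versions the decisive point is precisely that $\tilde f''$ fixes the lifted curves pointwise and preserves the complementary components. Your last paragraph gestures at these two routes, but does not carry either out, and mixes them with the finite-criterion reduction (if you verify the hypotheses of Proposition~\ref{prop.selection} for paths avoiding $G$, you no longer need Proposition~\ref{prop.finitely-homotopic} at all). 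Two smaller points: the exceptional surfaces in Theorem~\ref{theo.equivalences-weak} are ruled out not because $F$ is infinite but because you may enlarge $E$ (a strong homotopy relative to a larger set restricts to one relative to a smaller set) until $S\setminus E$ is none of the plane, open annulus or sphere; and the passage from (W1) for $f''|_{S\setminus E}$ to (S1) relative to $E$ is fine for finite $E$.
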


We propose two alternative proofs of this claim.

\begin{proof}[Proof of claim~\ref{c.homotopie} using hyperbolic geometry]
Let $H$ be a homotopy on $S\setminus F_1$ from the identity to the restriction of $f''$, obtained by concatenating a homotopy from the identity to $f$ which exists by hypothesis, the isotopy from $f$ to $f'$ provided by Claim~\ref{claim.straightening-points}, and the isotopy from $f'$ to $f''$ provided by Claim~\ref{c.redresser-courbes}.
 Observe that the trajectory under $H$ of the points $z_{1}, \dots , z_{r}$ are contractible in $S \setminus F_{0}$: indeed they are fixed under the isotopy from $f''$ to $f'$ given by Claim~\ref{c.redresser-courbes}, and contractible under the concatenation of the isotopies from $f'$ to $f$ and from $f$ to the identity (point 3 of Claim~\ref{claim.straightening-points}). Since the curve $c_{i}$ is fixed by $f''$ and contains the point $z_{i}$ whose trajectory is contractible, the trajectory of every point in $c_i$ is also contractible. 

As noted just after the statement of Lemma~\ref{l.straightening-points}, we may assume that the set $F$ is infinite. Thus we may also assume that $F_{1}$ has at least three points, so that there exists a hyperbolic metric on $S \setminus F_{1}$. Since by Claim~\ref{claim.exist-curves} the simple closed  curves $c_{1},\dots,c_r$ are essential, pairwise disjoint, and pairwise non homotopic in $S \setminus F_{1}$, the hyperbolic metric can be chosen so that these curves are geodesics. For every $z$ in $S \setminus F_{1}$, let $\gamma_z$ be the unique geodesic arc joining  $z$ to $f''(z)$ in $S \setminus F_{1}$ and homotopic to the trajectory of $z$ under $H$. Let $\widehat H$ be the homotopy from the identity to $f''$ on $S\setminus F_1$, obtained by following the geodesic arc $\gamma_z$ for every $z$, as in section~\ref{sub.hyperbolic-geometry}. 

We will prove that $\widehat H$ extends on $S$, and provides a strong homotopy on $S$ relative to $F_1\cup Z_1$. For this purpose, we consider the hyperbolic plane which is a universal cover of $S \setminus F_{1}$, and the lift $\tilde f''$ of $f''$ associated to the homotopy $\widehat H$. Note that $\tilde f''$ fixes pointwise the lifts of the curves $c_{i}$ (indeed, the trajectory of every point of $c_i$ under $H$ is  contractible in $S\setminus F_1$, and thus the same is true for the trajectory under $\widehat H$). In particular, $\tilde f''$ fixes all the lifts of the $z_i$'s. The homeomorphism $f''$ preserves the orientation at every $z_{i}$ (see the proof of Claim~\ref{c.preserves-orientation}). Thus $\tilde f''$ preserves the orientation, and it leaves invariant every connected component of the complement of the union of the lifts of the $c_{i}$'s. Since the curves $c_{i}$ lift to geodesics, the lift of the homotopy $\widehat H$ also leaves these components invariant. Thus, $\widehat H$ is a strong homotopy on $S\setminus F_1$ relative to the union of the curves $c_1,\dots,c_r$. In particular, this is a strong homotopy relative to $Z_1=\{z_1,\dots,z_r\}$. Furthermore, the arguments of section~\ref{sub.hyperbolic-geometry} apply and show that $\widehat H$ is proper. Hence $\widehat H$ extends to a homotopy on $S$, which is of course a strong homotopy relative to  $F_1\cup Z_1$.
\end{proof}

\begin{proof}[Proof of claim~\ref{c.homotopie} using selection techniques]
Just as in the first proof, we consider a homotopy $H$  from the identity to the restriction of $f''$ on $S\setminus F_1$, so that the trajectory under $H$ of any point on the curves $c_1,\dots,c_r$ is contractible. For every $z\in S$, we consider the set of paths $C_z$ from $z$ to $f''(z)$ defined as follows: 
\begin{itemize}
\item if $z$ belongs to $F_{1}$ or to $c_1\cup\dots\cup c_r$, then $C_z$ is reduced to the constant trajectory $\{z\}$,
\item otherwise, $C_z$ is the set of all paths from $z$ to $f''(z)$, homotopic to the trajectory of $z$ under $H$, and disjoint from the curves $c_1,\dots,c_r$. 
\end{itemize}
We want to prove that the map $z\mapsto C_z$ satisfies the hypotheses of Proposition~\ref{prop.selection}. For this purpose, we consider the universal cover $X$ of $S\setminus F_1$, 
the lift $\tilde f''$ of $f''$ obtained by lifting $\widetilde H$, and  the union $\widetilde \Gamma \subset X$ of all the lifts of the curves $c_1,\dots,c_r$. Since these curves are essential in $S\setminus F_1$, their lifts are topological lines, and the connected components of $ X \setminus \widetilde \Gamma$ are planes.
Furthermore $\tilde f''$ fixes $\widetilde \Gamma$ pointwise, and the same arguments as in the proof of Claim~\ref{c.preserves-orientation} show that $f''$ preserves the orientation, and thus it leaves invariant each connected component of $X\setminus \widetilde \Gamma$. Observe that, for $z\notin c_1\cup\dots\cup c_r$, for any lift $\tilde z$ of $z$, the paths in $C_z$ are exactly the projections of the paths from $\tilde z$ to $\tilde f''(\tilde z)$ in $X\setminus \widetilde \Gamma$. Since  $\tilde f''$ preserves every connected component of $X\setminus \widetilde \Gamma$, and since these connected components are simply connected, one deduces that $C_z$ is non-empty, arcwise connected and simply connected for every $z$, and that the map $z\mapsto C_z$ is locally trivial on $S\setminus (F_1 \cup c_1\cup\dots\cup c_r)$,  lower semi-continuous, equi-locally arcwise connected and equi-locally arcwise simply connected at every point of $c_1\cup\dots\cup c_r$. Furthermore, it is clearly  lower semi-continuous, equi-locally arcwise connected and equi-locally arcwise simply connected at every point of $F_{1}$.
Proposition~\ref{prop.selection} provides us with a homotopy $\widehat H$ on $S$ from the identity to $f''$. By definition of the sets $C_z$'s, this homotopy is a strong homotopy relative to 
$F_1 \cup c_1\cup\dots\cup c_r$, hence also relative to $F_{1} \cup Z_1$.
\end{proof}

According to claim~\ref{c.homotopie}, $f''$ is strongly homotopic to the identity relatively to $F_{1} \cup Z_{1}$. Therefore the same is true for $f'$ by Claim~\ref{c.redresser-courbes}. 

We have checked that $f'$ is strongly homotopic to the identity relatively to every finite subset of $F_0\cup Z$, which is a dense subset of $F'$. We may apply Proposition~\ref{prop.finitely-homotopic} which tells us that $f'$ is strongly homotopic to the identity relatively to $F'$. This completes the proof of Lemma~\ref{l.straightening-points}.
\end{proof}

\subsection{Bringing back neighborhoods of isolated points}
\label{sub.bringing-neighborhoods}

Let $f'$ and $F'$ be given by Lemma~\ref{l.straightening-points}: in particular $f'$ is strongly homotopic to the identity relatively to $F'$. The next lemma allows to ``bring back'' neighborhoods of isolated points of $F'$.

\begin{lemm}\label{l.bring-back-neighborhoods}
There exists a closed set $F''$ of $S$ containing $F'$, and an element $f''$ of $\homeo(S,F)$ such that
\begin{enumerate}
\item $f''$ is isotopic to $f'$ relatively to $F'$,
\item $f''$ fixes $F''$ pointwise,
\item $f''$ is strongly homotopic to the identity relatively to $F''$,
\item every isolated point of $F'$ is included in the interior of $F''$.
\end{enumerate}
\end{lemm}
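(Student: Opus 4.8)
The plan is to mimic the structure of the proof of Lemma~\ref{l.straightening-points}, but working with small discs instead of points. Let $Y$ denote the set of isolated points of $F'$; by Remark~\ref{rema.endpoints-1}, these are exactly the endpoints of the connected components of $\alpha\setminus F'$, so $Y$ is countable. For each $y\in Y$ choose a small closed disc neighborhood $D_y$ of $y$ in $S$, with $D_y\cap F'=\{y\}$ and $D_y\cap f'(D_y)$ small, arranged so that the family $\{D_y\}_{y\in Y}$ is locally finite and the diameters of $D_y$ and of $f'(D_y)$ tend to $0$. Since $f'$ is strongly homotopic to the identity relative to $F'$, each $y$ is a contractible fixed point of $f'$ in $S\setminus F'$, and moreover $f'$ locally preserves the orientation near $y$ (Claim~\ref{c.preserves-orientation}); so by the local Brown--Kister theorem the component of $S\setminus F'$ containing $y$ is preserved by $f'$, and one can push $f'(D_y)$ back onto $D_y$ by an isotopy relative to $F'$ supported near $\fill(D_y\cup f'(D_y))$. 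The trajectory of $y$ under the concatenation of the chosen homotopy with this pushing isotopy is contractible in $S\setminus F'$ (use Fact~\ref{fact.commute} as in the curve case to control the freedom in the choice of isotopy). Composing all these commuting isotopies (their supports are pairwise disjoint for $y$ ranging over a suitable subset — after a possible extraction in each sequence $(z^\lambda_k)$, exactly as in Lemma~\ref{l.straightening-points}, to separate the relevant discs) gives an isotopy relative to $F'$ from $f'$ to a homeomorphism $f''$ fixing $F'':=F'\cup\bigcup_{y\in Y'}D_y$ pointwise, where $Y'$ is the (cofinite-in-each-sequence, hence still with the same closure) set of retained isolated points; this yields Properties 1, 2, 4, and also shows $F''$ is closed since $\{D_y\}$ is locally finite and accumulates only on $\overline{F'}=F'\subset F''$.

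The substantial part, exactly as in Lemma~\ref{l.straightening-points}, is Property 3: that $f''$ is strongly homotopic to the identity relative to $F''$. Here I would invoke Proposition~\ref{prop.finitely-homotopic} (the finite criterion), with a dense subset of $F''$ of the form $F_0\cup\bigcup_{y}Q_y$, where $F_0$ is a countable dense subset of $F$ avoiding $Z_\infty$ and $Q_y$ is a countable dense subset of $D_y$. For a finite subset, one has finitely many full discs $D_{y_1},\dots,D_{y_r}$ (after enlarging to include a disc whenever a point of the finite set lies in some $D_y$, which is legitimate since being strongly homotopic relative to a larger set is what we want to prove) plus finitely many points of $F_0$. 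Inside $S\setminus F_1$ for an appropriate finite $F_1\subset F_0$, the boundary curves $\partial D_{y_i}$ are essential, pairwise non-homotopic simple closed curves with pairwise disjoint fillings of $\partial D_{y_i}\cup f''(\partial D_{y_i})$ — this is arranged by the same Fact~\ref{fact.separating-fillings}/Fact~\ref{fact.not-homotopic} argument as in Claim~\ref{claim.exist-curves}, using that each $y_i$ is isolated in $F'$ but can be surrounded by points of $F_0$ approximating nearby points of $F$. Then Proposition~\ref{prop.redresser-les-courbes} straightens $f''$ on $S\setminus F_1$ to fix each $\partial D_{y_i}$ pointwise, with the basepoint-fixing option available because the trajectory of a point of $\partial D_{y_i}$ is contractible; and either the hyperbolic-geometry argument or the selection argument of Claim~\ref{c.homotopie} produces a strong homotopy relative to $F_1\cup\bigcup_i\partial D_{y_i}$. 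Finally, since $f''$ already fixes each disc $D_{y_i}$ pointwise and is a strong homotopy relative to its boundary, one extends the strong homotopy across each $D_{y_i}$ by the constant (identity) homotopy on $D_{y_i}$: this is where the step differs slightly from Lemma~\ref{l.straightening-points}, but it is trivial because $f''|_{D_{y_i}}=\mathrm{Id}$, so gluing the constant homotopy on $D_{y_i}$ to the strong homotopy on $S\setminus\mathrm{int}(D_{y_i})$ (which fixes $\partial D_{y_i}$) gives a strong homotopy relative to $F_1\cup\bigcup_i D_{y_i}$, hence relative to our finite subset.

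I expect the main obstacle to be the same as in Lemma~\ref{l.straightening-points}: arranging the discs (choosing subsequences of the $(z^\lambda_k)$ and shrinking the $D_y$) so that the finitely-many boundary curves occurring in any finite subcollection are simultaneously essential, pairwise non-homotopic, and have disjoint fillings in some $S\setminus F_1$ — and then carefully carrying the contractibility of trajectories through the successive isotopies (from $f$ to $f'$ via Lemma~\ref{l.straightening-points}, from $f'$ to $f''$ via the present disc-pushing, and from $f''$ to the curve-fixing homeomorphism via Proposition~\ref{prop.redresser-les-courbes}), so that the hypotheses of Claim~\ref{c.homotopie}'s arguments are met. Everything else is a routine adaptation of the machinery already developed: Proposition~\ref{prop.finitely-homotopic}, Proposition~\ref{prop.redresser-les-courbes}, Proposition~\ref{p.Brown-Kister}, and Lemma~\ref{l.fibration}. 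One should also note, for later use (cf. the strategy in subsection~\ref{ss.strategy}), that after this step each connected component of $\alpha\setminus F''$ has compact closure at positive distance from the union of the others, since its two endpoints now lie in the interiors of the discs $D_y\subset F''$; this is the payoff that makes the second step of the Arc Straightening Lemma possible.
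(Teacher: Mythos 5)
Your construction of $f''$ and the reduction of Property 3 to the finite criterion follow the right general lines, but there are two genuine gaps, and the first one is fatal as stated. You never kill the \emph{local twisting} of $f'$ around each isolated point $y$ (the paper does this by composing $f''$ with Dehn twists supported in $U_y$, which are isotopic to the identity relatively to $F'$ because they encircle the single point $y$). This is not a technicality: if the trajectories, under the homotopy $H\star I$ from the identity to $f''$, of points of $\mathrm{int}(D_y)\setminus\{y\}$ wind $n\neq 0$ times around $y$, then $f''$ is \emph{not} strongly homotopic to the identity relatively to $F'\cup D_y$. Indeed, such a strong homotopy would in particular be a homotopy from the identity to $f''$ on $S\setminus F'$ under which those trajectories are constant; by uniqueness of homotopies on $S\setminus F'$ (Corollary~\ref{coro.uniqueness-homotopy}), they would have to be null-homotopic in $S\setminus F'$, whereas a loop winding $n\neq0$ times around $y$ is essential in $S\setminus F'$ since $D_y\cap F'=\{y\}$ and $F'$ has other points. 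So without a correction of the winding (either by Dehn twists, or by a careful choice of the ``pushing'' isotopies), Property 3 simply fails for your $f''$; your appeal to Fact~\ref{fact.commute} about the trajectory of $y$ itself (which is constant) does not see this obstruction.

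The second gap is the essentiality claim for the circles $\partial D_{y_i}$. For $y=z^\lambda_k$ the disc $D_y$ meets $F'$ only at $y\notin F$, hence $D_y$ is disjoint from $F$, hence from every finite $F_1\subset F_0\subset F$; so $\partial D_y$ bounds a disc in $S\setminus F_1$ and is null-homotopic there. The trick of Claim~\ref{claim.exist-curves} (surrounding points of $F_0$, via Facts~\ref{fact.separating-fillings} and~\ref{fact.not-homotopic}) is unavailable precisely because your curves are boundaries of discs containing no point of $F$; consequently neither the hyperbolic-geodesic nor the selection version of Claim~\ref{c.homotopie} applies to them (both need the curves essential in $S\setminus F_1$ so that their lifts are lines). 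One can repair this by also puncturing at the centers $y_i$ (legitimate, since $y_i\in F'\subset F''$), but then the contractibility of the trajectories of points of $\partial D_{y_i}$ in $S\setminus(F_1\cup\{y_i\})$ is exactly the vanishing of the winding number above, so you are back to the first gap. Note finally that the paper's own proof is more direct and avoids the finite criterion altogether: it takes $f''$ equal to the identity near each isolated point, kills the local winding by Dehn twists, and then modifies the strong homotopy $H\star I$ so that it is constant on small discs $W_y$, obtaining the discs $D_y\subset F''$ from a properness argument (with a radial-retraction trick when the homotopy is not proper) which guarantees that the trajectories of all other points avoid $D_y$.
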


\begin{rema}
\label{rema.endpoints-2}
Remember that every endpoint of every connected component $\beta$ of $\alpha \setminus F'$ was an isolated point of $F'$ (remark~\ref{rema.endpoints-1}). Thus these endpoints belong to the interior of the set $F''$ provided by Lemma~\ref{l.bring-back-neighborhoods}. 
\end{rema}

\begin{proof}[Proof of Lemma~\ref{l.bring-back-neighborhoods}]
We fix a metric on the surface $S$. Let $Y$ be the set of isolated points of $F'$. 

For every point $y\in Y$, we choose an open topological disc $U_y$, such that $U_y\cap F'=\{y\}$. We make this choice in such way that all the $U_{y}$'s are pairwise disjoint, and such that, for every $\epsilon>0$, there are only finitely many points $y\in Y$ such that the diameter of $U_y$ is bigger than $\epsilon$. These properties will ensure that all the future constructions can be made independently in each chart $U_{y}$. Now, for every point $y\in Y$, we consider a homeomorphism $g_y$ of $S$, which is supported in $U_{y}$, and which coincides with $(f')^{-1}$ in some smaller neighborhood of $y$. Note that, due to the properties of the $U_y$'s, the homeomorphism 
$$
f'' = \left( \prod_{y \in Y} g_{y} \right) f'
$$
is well-defined, it coincides with $f'$ outside the union of the $U_{y}$'s, and coincides with the identity on some neighborhood of $Y$. For every $y\in Y$, the Alexander trick (Proposition~\ref{p.alexander}) provides an isotopy from the identity to $g_{y}$ supported in $U_{y}$, and fixing $y$. Considering the product of these isotopies (which, again, is well-defined, thanks to the properties of the $U_y$'s), and composing with $f'$, we get an isotopy $I$ joining from $f'$ to $f''$. This isotopy $I$ fixes $F'$. 

It remains to show that, up to composing $f''$ with some Dehn twists, $f''$ is strongly homotopic to the identity relatively to the union of $F'$ and of some neighborhoods of $Y$.  

Denote by $H$ the strong homotopy from the identity to $f'$ provided by Lemma~\ref{l.straightening-points}. The concatenation $H_{1} = H \star I$ is a strong homotopy relative to $F'$ from the identity to $f''$. Since $f''$ is the identity near every isolated point $y$ of $F'$, the trajectory of a point $z$ near $y$ under this homotopy is a loop in the disc $U_{y}$ around $y$. Up to composing $f''$ with a Dehn twist supported in $U_{y}$, we may assume that this loop is nulhomotopic in $U_{y} \setminus \{y\}$. We will now modify $H_{1}$ to get a strong homotopy relative to a larger set $F''$.

For every $y$ in $Y$, we choose a closed topological disc $V_y\subset U_y$ such that $y\in\mathrm{int}(V_{y})$, such that $f''$ coincides with the identity on $V_y$, and such that the trajectories of all the points of $V_{y}$ under $H_{1}$ are included in $U_{y}$. We choose a smaller closed topological disc $W_{y}\subset \mathrm{int}(V_{y})$. It is easy to see that there is a homotopy $H_{2}$ from the identity to $f''$ which equals $H_{1}$ outside $V_{y}$ and which is the constant homotopy on $W_{y}$ for every $y$. Up to replacing $W_y$ by a smaller disc, we may also demand that the trajectories under $H_{2}$ of all the points in the boundary of $V_{y}$ are disjoint from $W_{y}$. 

For the sake of simplicity, let us first consider the case where the homotopy $H$ provided by Lemma~\ref{l.straightening-points} is proper. Observe that, in this case, the homotopy $H_2$ is also proper. Let $y\in Y$. The homotopy $H_2$ fixes every point in $W_y$. Moreover the trajectory under $H_2$ of every point $S\setminus W_y$ is disjoint from $y$ (recall that $H_2$ is a strong homotopy relative to $F'$ and that $Y\subset F'$). By properness, there exists a positive number $\varepsilon_{y}$, such that the distance from the point $y$ to the trajectory of every point $z\in S\setminus W_{y}$ is larger than $\varepsilon_{y}$. Let $D_y$ be the disc centered at $y$ of radius $\varepsilon_y$. By construction $H_2$ is a strong homotopy relative to $D_y$. We define $F''$ to be the union of $F'$ and of all the discs $D_y$'s for $y\in Y$. By construction, every point of $F'$ is in the interior of $F''$, and $H_2$ is a strong homotopy relative to $F''$. This completes the proof in this case where the homotopy $H$ is proper.

When $H$ is not necessarily proper, we modify $H_{2}$ as follows. For $y\in Y$, let $r_{y}: S \setminus \{y\} \to S \setminus \inte(W_{y})$ be the radial retraction, \emph{i. e.}  the map which is the identity outside the interior of $W_{y}$ and which sends (radially) each point in the punctured disc $W_{y} \setminus \{y\}$ to a point on the circle boundary of $W_{y}$. Observe that the $r_y$'s are pairwise commuting. For every $z$ in $S$, define $R_{z}$ to be the identity if $z$ belongs to $\bigcup_{y\in Y} V_{y}$, and the composition of all the $r_y$'s  otherwise. Now let $H_{3}$ be the homotopy defined by
$$
H_{3}(z,t) = R_{z} ( H_{2}(z,t)).
$$
Note that $H_3$ is continuous, since, for every $y\in Y$, the trajectories under $H_{2}$ of all the points in the boundaries of $V_{y}$ are disjoint from all the $W_{y'}$'s. The homotopy $H_{3}$ may still be not proper, but the above argument nevertheless applies to show that it is a strong homotopy relative to a set $F''$ which is the union of $F'$ and of a neighborhood of $Y$, as wanted.
\end{proof}

\subsection{End of the isotopy}
\label{sub.end-isotopy}

Let $f \in \homeo(S,F)$ be a homeomorphism which is strongly homotopic to the identity. Let $\alpha$ be an arc with end-points in $F$ as in Proposition~\ref{l.multicoupure}. We want to construct an isotopy relative to $F$ from $f$ to a homeomorphism that fixes $\alpha$.  Let $F',F'', f', f''$ be given by Lemmas~\ref{l.straightening-points} and~\ref{l.bring-back-neighborhoods}. Let $Z'$ be the set of all the end-points of all the connected components of $\alpha \setminus F'$. Recall that every point of $Z'$ is in the interior of $F''$ (remark~\ref{rema.endpoints-2}). Hence, for $z\in Z'$, one can find a topological closed disk neighborhood $D_{z}$, such that $z$ is in the interior of $D_z$, such that $D_z$ is included in the interior of $F''$, and such that $\alpha \cap D_{z}$ is connected. We choose such a disc $D_z$ for every $z\in Z'$, and we assume that the $D_z$' are pairwise disjoint. We denote by $F'''$ the union of $F'$ with all the $D_{z}$'s. Note that $F'''$ contains $F$, and that $f''$ is isotopic to $f$ relatively to $F$ and strongly homotopic to the identity relatively to $F'''$. Thus up to replacing $f$ by $f''$ and $F$ by $F'''$, we may assume the following two properties.


Let $\cA:= \{\alpha_{0}, \alpha_{1}, \dots, \}$ be the collection of the connected components of $\alpha \setminus F$. This is a countable collection of properly embedded topological lines in $S\setminus F$. For each $i$, let $\alpha'_{i}:=f(\alpha_{i})$. Then (see Figure~\ref{f.properties12}):
\begin{enumerate}
\item[Property 1] The elements of $\cA$ are \emph{isolated} is $S$: there exists a family $\{V_{0}, V_{1}, \dots \}$ of pairwise disjoint open sets of $S$ such that the closure of $\alpha_i$ in $S$ is contained in $V_{i}$.
\item[Property 2] For each $i$, the topological lines $\alpha_{i}$ and $\alpha'_{i}$ are compactly homotopic in $S\setminus F$. 
\end{enumerate}

\begin{figure}[ht]	
\begin{center}
\def\svgwidth{0.8\textwidth}
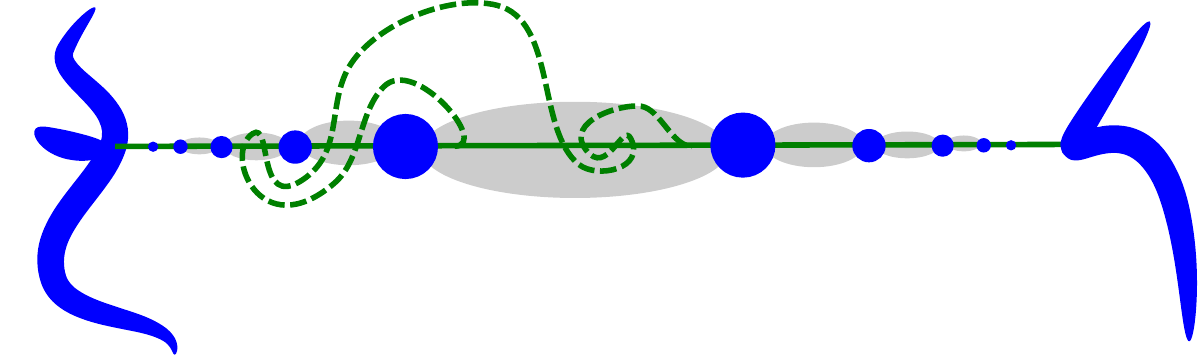
\end{center}
\caption{\label{f.properties12}Properties 1 and 2}
\end{figure}

To complete the proof, it suffices to construct an isotopy relative to $F$, from the identity to a homeomorphism $\varphi$ such that $\varphi \alpha'_{i} = \alpha_{i}$ for each $i$. Indeed, by composing this isotopy with $f$ we get an isotopy from $f$ to a map $g$ that globally fixes $\alpha$. A final isotopy, moving each point of $\alpha$ along $\alpha$, reach a map $f_{1}$ that fixes $\alpha$ pointwise as wanted. 

In Section~\ref{s.isotopy-arc} we have seen how to get an isotopy $I_{i}$ that sends $\alpha'_{i}$ to $\alpha_{i}$ for a given $i$. Furthermore, the sequence of diameters of the $\alpha_{i}$'s tends to zero (for any given metric on $S$). Thus one is tempted to argue that it suffices to successively concatenate the isotopies $I_0,I_1,\dots$. But a given point $x$ close to $\alpha'_{0}$ could be successively pushed  by all these isotopies, and the successive trajectories of $x$ could converge to a point of $F$. Then the infinite concatenation of the isotopies would converge to a non invertible map. Therefore our task is to combine all the isotopies $I_{i}$ in a carefully chosen order, in such a way that their infinite concatenation converges to a homeomorphism of $S$, and thus provides an isotopy. Note that the naive strategy works in the case when the set $\cA$ is finite. Thus in what follows we may consider only the case when $\cA$ is infinite (this slightly simplifies the notations). 

\bigskip

We consider the \emph{incidence graph} $\cG$ defined as follows. The vertices of the graph are the elements of $\cA$. There is an edge in the graph between two given vertices $\alpha_{i}, \alpha_{j}$ if and only if $i \neq j$ and $\alpha'_{i}$ meets $\alpha_{j}$ or $\alpha'_{j}$ meets $\alpha_{i}$.  

This graph is locally finite (this will be a key point in what follows). Indeed, let us fix a distance on $S$.  Property 1 implies that the distance between $\alpha_i$ and the other elements of $\cA$ is bounded away from $0$. Furthermore, for any metric on $S$, the sequence of diameters $(\diam(\alpha_{j}))_{j \geq 0}$ converges to zero. By uniform continuity of $f$ on $\alpha$, the same is true for  the sequence $(\diam(\alpha'_{j}))_{j \geq 0}$. Since $\alpha'_{j}$ meets $\alpha_{j}$, it follows that only a finite number of the $\alpha'_{j}$'s meet $\alpha_{i}$. Likewise and symetrically, only a finite number of the $\alpha_{j}$'s meet $\alpha'_{i}$.

Since the graph $\cG$ is locally finite, one can find a partition $\{\cA_{r} , r\in\bbN\}$ of $\cA$ into finite sets with the following property: for every integers $r,s$, if $\abs{r-s} > 1$ then there is no edge in $\cG$ joining a vertex in $\cA_{r}$ to a vertex in $\cA_{s}$. (For instance if $\cG$ is connected then one can define $\cA_{r}$ to be the sphere of radius $r$ and center $\alpha_{0}$ in $\cG$.) For each $r$, let $F_{r}$ be the union of the elements of $\cA_r$ and their images under $f$: 
$$
F_{r} = \bigcup_{\alpha_i\in \cA_r} \alpha_{i} \cup \alpha'_{i} .
$$

Note that each $F_{r}$ is a finite union of interior of arcs in $S$, in particular it has finitely many connected components, and  that $F_{r}$ does not meet $F_{s}$ when $\abs{r-s} > 1$. In particular, the $F_{2r}$'s are pairwise disjoint; this will play a key role in the sequel. 

\begin{lemm}\label{l.neighborhoods}~
There exists a sequence of sets $(W_{2r})_{r \geq 0}$ with the following properties.
\begin{enumerate}
\item The set $W_{2r}$ is a neighborhood of $F_{2r}$ in $S \setminus F$, and it contains the filling of $F_{2r}$ in $S \setminus F$.
\item The sets $W_{2r}, r \geq 0$ are pairwise disjoint.
\item Let $\delta'_{2r}$ be the supremum of the diameters of the connected components of $W_{2r}$. Then the sequence $(\delta'_{2r})$ tends to zero.
\end{enumerate}

\end{lemm}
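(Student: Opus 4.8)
The plan is to construct the $W_{2r}$'s one at a time, or rather all at once after suitable preparations, by thickening each finite set $F_{2r}$ into a filled neighbourhood while keeping enough room between consecutive indices. First I would invoke Lemma~\ref{l.filled-neighbourhood}: each $F_{2r}$ is a finite union of interiors of arcs, hence not compact, so one cannot apply that lemma directly to $F_{2r}$; instead one should work with $\clos(F_{2r})$ relative to $S\setminus F$, or more precisely note that $\fill(F_{2r})$ in $S\setminus F$ is a closed set whose closure in $S$ meets $F$ only in the endpoints of the finitely many arcs involved. The key geometric input is that the $F_{2r}$ are pairwise disjoint (since $F_{2\ell}\cap F_{2m}=\emptyset$ whenever $\ell\neq m$, because $|2\ell-2m|>1$), and moreover that their fillings in $S\setminus F$ are pairwise disjoint: this follows from Lemma~\ref{lemm.fillings-disjoint}, once one checks that no connected component of $F_{2r}$ is relatively compact in $S\setminus F$ — which holds because each component is (contained in) a properly embedded line $\alpha_i$ or its image $\alpha_i'$, and properly embedded lines are not relatively compact.

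Next I would fix a metric on $S$ and use Properties~1 and~2 together with uniform continuity of $f$ on $\alpha$ to record that $\diam(\alpha_i)\to 0$ and $\diam(\alpha_i')\to 0$; since each $\cA_{2r}$ is finite and the $F_{2r}$ exhaust the $\alpha_i$'s with even-block index, we also get that $\sup_{\alpha_i\in\cA_{2r}}\diam(\alpha_i\cup\alpha_i')\to 0$ as $r\to\infty$. Now I would choose, for each $r$, a neighbourhood $W_{2r}$ of $\fill(F_{2r})$ in $S\setminus F$ with three features: it is filled in $S\setminus F$ (apply Lemma~\ref{l.filled-neighbourhood} to the closed filled set $\fill(F_{2r})$, noting it is a finite union of compact filled pieces together with a controlled neighbourhood near the finitely many endpoints in $F$, so the lemma applies componentwise); it is contained in a prescribed small neighbourhood of $\fill(F_{2r})$; and distinct $W_{2r}$'s are disjoint. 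Disjointness is arranged because the $\fill(F_{2r})$'s are pairwise disjoint closed subsets of $S\setminus F$ — but they need not be at positive distance from each other globally, so the argument must be done with some care: one enumerates the pairs and shrinks inductively, or better, one uses that for each fixed $r$ only finitely many other blocks $F_{2s}$ can come close (by the diameter decay and the isolation in Property~1), so a finite shrinking suffices at each stage. For the third property, since $W_{2r}$ can be chosen inside an arbitrarily small neighbourhood of $\fill(F_{2r})$, and since $\diam\bigl(\fill(F_{2r})\bigr)\le \sum_{\alpha_i\in\cA_{2r}}\diam(\alpha_i\cup\alpha_i')$ plus the finitely many bridging distances — which I would control by shrinking — the supremum $\delta'_{2r}$ of the diameters of the components of $W_{2r}$ can be forced to tend to $0$.

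The main obstacle I anticipate is the interplay between the three requirements: making $W_{2r}$ filled (Lemma~\ref{l.filled-neighbourhood}), keeping the $W_{2r}$'s pairwise disjoint, and forcing the diameters $\delta'_{2r}\to 0$ all at once. Filling a neighbourhood can a priori enlarge it unpredictably — the filling operation absorbs relatively compact complementary components — so one must check that $\fill(F_{2r})$ itself already has small diameter (not just $F_{2r}$); this is where the hypothesis that the $\alpha_i$'s are properly embedded and isolated (Property~1) is essential, since it prevents a small arc from ``wrapping around'' a large relatively compact region. Concretely I would argue: a relatively compact complementary component $C$ of $F_{2r}$ in $S\setminus F$ has $\partial C\subset F_{2r}$, hence $C$ is contained in the union of the relatively compact components cut off by the finitely many arcs, each of which lies in the prescribed small chart $V_i$ from Property~1 (after intersecting with a large compact core), so $\diam(\fill(F_{2r}))$ is comparable to $\max_i\diam(V_i\cap(\text{core}))$, which we may take $\to 0$. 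Once this is in hand, choosing $W_{2r}$ as a filled neighbourhood inside a small metric neighbourhood of $\fill(F_{2r})$, and performing the inductive shrinking for disjointness (only finitely many interactions per block), delivers all three properties, completing the proof of Lemma~\ref{l.neighborhoods}.
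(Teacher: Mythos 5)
Your plan correctly isolates the real difficulty (the filling operation can a priori enlarge a small neighbourhood), but the argument you offer to resolve it does not work. You claim that a relatively compact complementary component $C$ of $F_{2r}$ in $S\setminus F$ must lie in the charts $V_i$ of Property~1, so that $\diam(\fill(F_{2r}))$ is controlled by $\max_i \diam(V_i\cap(\text{core}))$. Property~1 only provides pairwise disjoint open sets $V_i$ containing $\clos(\alpha_i)$; nothing makes them small, nothing places $\alpha_i'=f(\alpha_i)$ or the complementary regions inside them, and the underlying principle ``a region bounded by a small set is small'' is simply false in this setting: on a compact surface a curve of tiny diameter encircling \emph{all} of $F$ cuts off a huge component whose closure avoids $F$, which is therefore relatively compact in $S\setminus F$ and gets absorbed by the filling. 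So smallness of $\fill(F_{2r})$ (and of the filled neighbourhood) cannot be deduced from the local data of Property~1 alone; one must exhibit something far away that is \emph{not} relatively compact and blocks the filling. This is exactly what the paper does: it proves $\varepsilon_{2r}=\dist(F_{2r},\bigcup_{s\neq r}F_{2s})>0$, takes the metric $c_{2r}$-neighbourhood $\hat W_{2r}$, encloses each (small) component of $\hat W_{2r}$ in a small topological disc disjoint from $\clos(F_{0})$ (possible because $\varepsilon<\varepsilon_0$), arranges the discs to be pairwise disjoint, and then uses that the complement of the disc union is connected and contains a point of $F$ (an endpoint of an arc of $\cA_0$) to conclude that $\fill(\hat W_{2r})$ stays inside the discs. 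Without that ``anchor'' in $F$ away from $F_{2r}$, the diameter estimate for the filling has no proof.

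Two secondary gaps. First, you invoke Lemma~\ref{l.filled-neighbourhood} ``componentwise'' for $\fill(F_{2r})$, but that lemma is stated for \emph{compact} filled sets, and $\fill(F_{2r})$ is non-compact in $S\setminus F$ (its components contain properly embedded lines); the paper sidesteps this by filling the metric neighbourhood $\hat W_{2r}$ directly rather than appealing to that lemma, and your sketch of how to adapt it near the endpoints is not an argument. Second, your diameter bookkeeping uses $\sup_{\alpha_i\in\cA_{2r}}\diam(\alpha_i\cup\alpha_i')\to 0$ and a sum over the block, but a connected component of $F_{2r}$ can be a chain of many arcs and the blocks $\cA_{2r}$ have unbounded cardinality, so the sum is not controlled; the paper proves $\delta_{2r}\to 0$ by a compactness (Hausdorff limit) argument showing that a sequence of large-diameter subarcs of the tails $\bigcup_{i\geq n}\alpha_i\cup\alpha_i'$ would converge to a nontrivial subarc of $\alpha$ contained in $F$, a contradiction. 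Both points would need to be supplied before your construction of the $W_{2r}$'s goes through.
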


\begin{proof}
We introduce the following notations.
Let $\dist$ denote the distance between two subsets of $S$ defined by the formula
$$
\dist(E,E') = \inf\{d(x,x') \mid x \in E, x' \in E'\}.
$$ 
\begin{itemize}
\item Let  $\mu_{2r}$ denotes the minimum of the distances between two connected components of $F_{2r}$.
\item Let $\delta_{2r}$ denotes the maximum of the diameters of the connected components of $F_{2r}$.
\item Let 
$$
\varepsilon_{2r} := \mathrm{dist}\left(F_{2r}, \bigcup_{s \neq r} F_{2s}\right).
$$
\end{itemize}

\begin{claim} 
The sequence $(\delta_{2r})$ tends to zero, and $\mu_{2r}$ and $\varepsilon_{2r}$ are positive.
\end{claim}

\begin{proof}
The positivity of $\mu_{2r}$ follows from Property 1.

\medskip

Let us prove that $\varepsilon_{2r}$ is positive. Let $i_{0} \geq 0$. According to Property 1, there exists a neighbourhood $V_{i_0}$ of the closure of $\alpha_{i_0}$ in $S$ such that the topological line $\alpha_i$'s is disjoint from $V_{i_{0}}$ for $i\neq i_0$. Moreover, each $\alpha'_{i}$ meets $\alpha_{i}$, and the sequence of diameters $(\diam(\alpha'_{i}))$ tends to zero. Thus there exists $\eta_{i_0}>0$ such that all the $\alpha'_{i}$'s that are disjoint from $\alpha_{i_{0}}$ satisfy $\mathrm{dist}(\alpha'_{i}, \alpha_{i_{0}})>\eta_{i_0}$. Symmetrically, there exists $\eta_{i_0}'$  such that all the $\alpha_{i}$'s that are disjoint from $\alpha_{i_{0}}'$ satisfy $\mathrm{dist}(\alpha_{i}, \alpha_{i_{0}}')>\eta_{i_0}'$. By definition of $\varepsilon_{2r}$, $\eta_{i}$
 and $\eta_{i}$, we have 
$$\varepsilon_{2r} \geq \mathop{\min}_{\alpha_i\subset F_{2r}} \min(\eta_i,\eta_i').$$ 
Since $F_{2r}$ contains only finitely many $\alpha_i$'s, it follows that $\varepsilon_{2r}$ is positive.

\medskip

Finally let us prove that $\delta_{2r}$ tends to zero as $r$ goes to infinity. We argue by contradiction. If $\delta_{2r}$ does not tend to zero, there exists $\delta>0$, and a sequence of arcs $(\beta_{n})$ of diameters greater than $\delta$, and such that for each $n$, $\beta_{n}$ is included in 
$$
\bigcup_{i \geq n} \alpha_{i} \cup \alpha'_{i}.
$$
 Up to extracting subsequences, we may assume that $(\beta_{n})$ converges towards a compact connected subset $B$ of $S$ for the Hausdorff distance. Obviously, $B$ is included in the closure of the set $\bigcup_i \alpha_i\cup \alpha_i'$. Actually, since the $\alpha_{i}'$'s are pairwise disjoint and since ithe sequence $(\diam(\alpha_{i}'))$ tends to zero, it follows that $B$ must actually be included in the closure of the set $\bigcup_i \alpha_i$. In particular, it is included in the arc $\alpha$. Since $B$ is connected, it a a subarc of $\alpha$. This subarc is non-trivial, since its diameter must be larger than $\delta$. Moreover, Property~1 provides a neighbourhood $V_i$ of $\alpha_i$ so that $\beta_n$ is disjoined from $V_i$ for every $n\geq i$. It follows that $B$ must be disjoint from $\alpha_i$ for every $i$. As a consequence, $B$ is included in $\alpha\setminus \bigcup_i \alpha_i=F$. This gives the desired contradiction : by construction, every point of $B$ is accumulated by $\bigcup_{i} \alpha_{i} \cup \alpha'_{i}$, but no point of the interior of $F$ in $\alpha$ is accumulated by $\bigcup_{i} \alpha_{i} \cup \alpha'_{i}$.
 \end{proof}

For a fixed value of $r$, let us consider the $c_{2r}$-neighborhood $\hat W_{2r}$of $F_{2r}$, where 
$$
c_{2r} = \frac{1}{2} \min\left(\mu_{2r}, \varepsilon_{2r}, \frac{1}{r}\right).
$$
The set $\hat W_{2r}$ is an open neighborhood of $F_{2r}$. Since $c_{2r}$ is less than one half of $\varepsilon_{2r}$, the sets $\hat W_{2r}$ and $\hat W_{2s}$ are disjoint for $r \neq s$.
Furthermore, since $c_{2r}$ is less than one half of $\mu_{2r}$, every connected component of $\hat W_{2r}$ meet a unique connected component of $F_{2r}$; thus $\hat W_{2r}$ has finitely many connected components, and they have diameter less than
$$
2\mu_{2r} + \delta_{2r}.
$$
Note that this number tends to zero when $r$ tends to $+\infty$. 

\medskip
We define $W_{2r}$ as the filling of $\hat W_{2r}$.
Remember that this is the union of $W_{2r}$ and the connected components of $(S\setminus F)\setminus \hat W_{2r}$ that are relatively compact in $S\setminus F$ (see subsection~\ref{ss.filling}). 

Clearly $W_{2r}$ contains $\hat W_{2r}$ which is open and contains $F_{2r}$. It is filled, thus it contains the filling of $F_{2r}$. The sets $\hat W_{2r}$'s are pairwise disjoint, and furthermore
every connected component of $\hat W_{2r}$ contains a connected component of $F_{2r}$, and thus is proper in $S \setminus F$. By Lemma~\ref{lemm.fillings-disjoint}, the fillings $W_{2r}$ are also pairwise disjoint. It remains to see that the sequence $(\delta'_{2r})$ goes to zero.

Let $\varepsilon>0$. We assume that $\varepsilon$ is smaller than $\varepsilon_{0}$.
The compact set $\alpha$ is included in a compact subsurface of $S$; thus there exists $\eta>0$ such that every subset of $S$ of diameter less than $\eta$ is included in a topological disc of diameter less than $\varepsilon$. Let $r$ be large enough, so that the diameter of every connected component of $\hat W_{2r}$ is less than $\eta$. Let $K_{1}, ..., K_{i_{0}}$ be the connected components of $\hat W_{2r}$. Then each  $K_{i}$ is included in a closed topological disc $D_{i}$ of diameter less than $\varepsilon$. Note that $D_{i}$ is disjoint from the closure of $F_{0}$, because $\varepsilon$ is less than $\varepsilon_{0}$ and $D_{i}$ meets $F_{2r}$.
Furthermore, we can choose $D_{i}$ so that its boundary is arbitrarily close to $K_{i}$ (see Lemma~\ref{lemma.approximation-disc}); then for $i \neq j$ the boundaries of $D_{i}$ and $D_{j}$ are disjoint, and thus the two discs are either disjoint or one contains the other. We conclude that $\hat W_{2r}$ is included in a finite union $O$ of pairwise disjoint discs of diameters less than $\varepsilon$. The set $S \setminus O$ is connected and contains the closure of $F_{0}$, thus it contains a point of $F$. This entails that $O \setminus F$ is filled in $S \setminus F$. Thus $W_{2r}= \fill(\hat W_{2r})$ is included in $O$. This in turn implies that every connected component of $W_{2r}$ has diameter less than $\varepsilon$, and completes the proof of Lemma~\ref{l.neighborhoods}.
\end{proof}

\begin{figure}[ht]	
\begin{center}
\def\svgwidth{\textwidth}
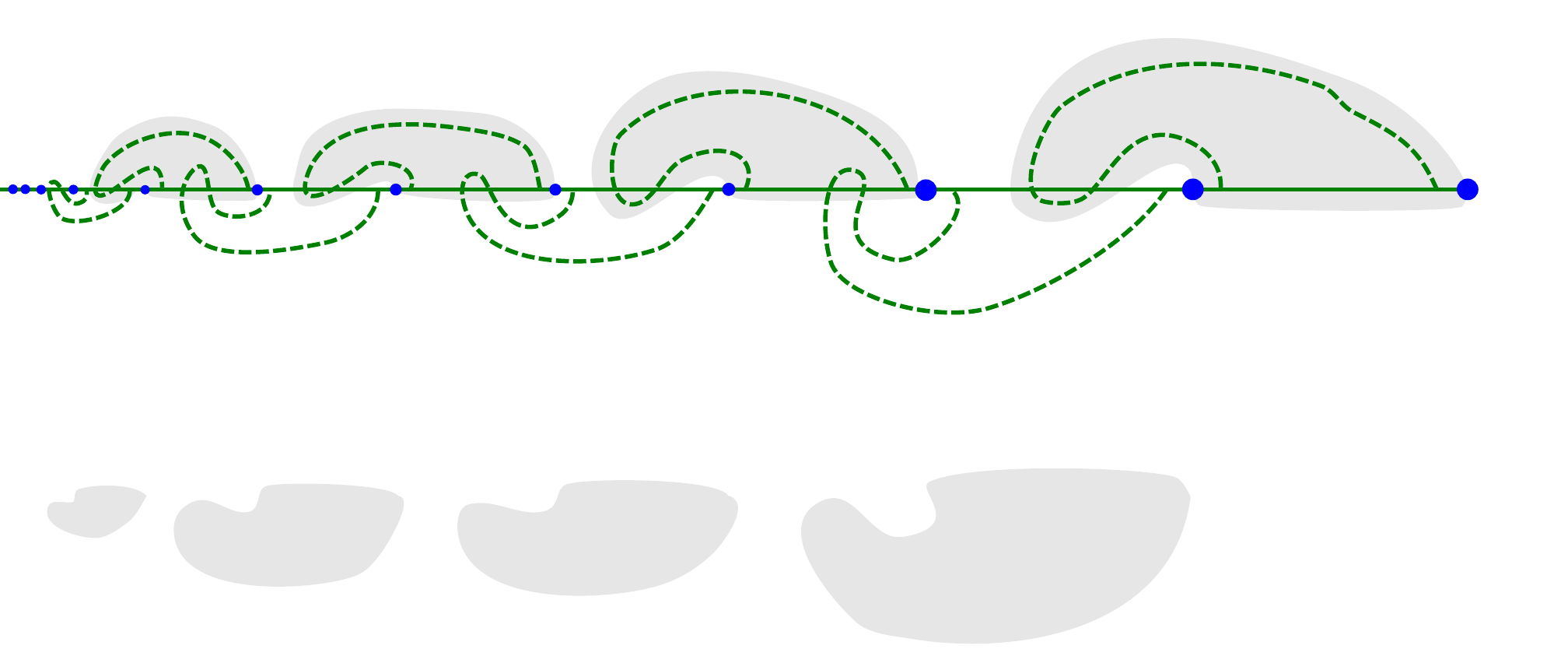
\end{center}
\caption{\label{f.fin-isotopie-A}The isotopy in the easy case when the incidence graph $\cal G$ is linear, and ${\cal A}_{i} = \{\alpha_{i} \}$ for each $i$. Above, the construction of $g_{\infty}$; below, the construction of $h_{\infty}$.}
\end{figure}

We go back to the proof of Proposition~\ref{l.multicoupure} (see Figure~\ref{f.fin-isotopie-A}).
For every $i\in \bbN$ we will denote by $r(i)$ the integer such that $\alpha_{i} \in \cA_{r(i)}$.
Let $\alpha_{i_{0}},\alpha_{i_{1}}, ...$ be a numbering of the elements of all the $\cA_{r}$'s with $r$ even. (We may assume that the sequence $r(i_{0}), r(i_{1}), ...$ is non decreasing, but this is unimportant.) 
Remember that the properly embedded topological lines $\alpha_{i_{0}}$ and $\alpha'_{i_{0}}$ are compactly homotopic in $S \setminus F$. 
Thus we may apply Proposition~\ref{p.straightening-arc-easy}, and we get an isotopy $I_0$ from the identity to a homeomorphism $f_{0}$ such that $f_{0}(\alpha'_{i_{0}}) = \alpha_{i_{0}}$. Since both $\alpha_{i_{0}},\alpha'_{i_{0}}$ are included in $F_{r(i_{0})}$, their filling $\fill(\alpha_{i_{0}}, \alpha'_{i_{0}})$ is included in $W_{r(i_{0})}$, and we may assume that the support of the isotopy $I_0$ is also included in $W_{r(i_{0})}$. Let 
$$
C_{0} = \bigcup_{j \neq i_{0}} \alpha_{j}, \ \ C'_{0} = \bigcup_{j \neq i_{0}} \alpha'_{j}.
$$
Then $C_{0}$ is disjoint from $\alpha_{i_{0}}$ and $C'_{0}$ is disjoint from $\alpha'_{i_{0}}$, and both are closed subsets of $S \setminus F$. According to Proposition~\ref{p.straightening-arc-easy}, we may further assume that $f_{0}(C'_{0}) \cap C_0\subset C_0'\cap C_0$. 
From this we deduce that we have not created any new intersection: more precisely, for every $i,j$, 
$$
f_{0}(\alpha'_{j}) \cap \alpha_{i} \neq \emptyset \Rightarrow \alpha'_{j} \cap \alpha_{i} \neq \emptyset.
$$

The topological lines $\alpha_{i_{1}}$ and $f_{0}(\alpha'_{i_{1}})$ are again compactly homotopic in $S \setminus F$. Furthermore, since the $W_{r}$'s for even values of $r$ are pairwise disjoint, and $f_{0}$ is supported in one of them,  the arc $f_{0}(\alpha'_{i_{1}})$ is still included in $W_{r(i_{1})}$.
We apply Proposition~\ref{p.straightening-arc-easy} a second time to get an isotopy with support included in $W_{r(i_{1})}$, whose time one is a  homeomorphism $f_{1}$ such that $f_{1}f_{0}(\alpha'_{i_{1}}) = \alpha_{i_{1}}$. 
Note that since $f_{0}(\alpha'_{i_{0}}) = \alpha_{i_{0}}$,  this arc is disjoint from both arcs $\alpha_{i_{1}}$ and $f_{0}(\alpha'_{i_{1}})$, and since it meets $F$ it is also disjoint from the set $\fill(\alpha_{i_{1}}, f_{0}(\alpha'_{i_{1}}))$. Thus we can also assume that the support of the isotopy is disjoint from $\alpha_{i_{0}}$, and in particular we get that $f_{1}f_{0}(\alpha'_{i_{0}})$ is still equal to $\alpha_{i_{0}}$.
Again we may assume that this isotopy do not create any new intersection. We proceed inductively to get a sequence of isotopies with time one $f_{k}$ supported in $W_{r(i_{k})}$ such that, for each $\ell \leq k$, 
$$
f_{k} \cdots f_{0} \alpha'_{i_{\ell}} = \alpha_{i_{\ell}},
$$
and which do not create any new intersection.

For each even value of $r$, we concatenate in the obvious order those isotopies which are supported in $W_{r}$ (corresponding to the $k$'s such that $r(i_{k}) = r$). Note that this is a finite concatenation since $\cA_{r}$ is finite. Let $\cI_{r}$ be the resulting isotopy.  
Consider the infinite concatenation $\cI_{0} \star \cI_{1} \star \cdots = (g_{t})_{t \in [0,+\infty)}$ where $g_{t}$ follows $\cI_{r}$ for $t$ in $[r,r+1]$. The isotopies $\cI_{r}$'s have pairwise disjoint supports, and according to Lemma~\ref{l.neighborhoods} the sequence of diameters of the supports is converging to zero. Therefore the limit
$$
g_{\infty} = \lim_{t \to +\infty} g_{t}
$$
exists in $\homeo(S,F)$. Thus we get an isotopy $(g_{t})_{t \in [0,+\infty]}$ such that $g_{\infty}(\alpha'_{i}) = \alpha_{i}$ for every $i$ such that $\alpha_{i}$ belongs to $\cA_{2r}$ for some $r$. 

Denote $\alpha''_{i} = g_{\infty}(\alpha'_{i})$.
Since no new intersection has been created, the sets
$$
F'_{r} = \bigcup \{\alpha_{i} \cup \alpha''_{i} , i \in \cA_{r} \} 
$$
for odd values of $r$ are still pairwise disjoint. Furthermore since $\alpha''_{i} = \alpha_{i}$ for all $i$ in $\cA_{r}$ when $r$ is even, the sets $F'_{r}$ are now pairwise disjoint for every (odd and even) values of $r$. Note that the families $\cA$ and $\cA'' := \{\alpha''_{0}, \dots, \}$ still satisfies the properties 1 and 2 introduced ate the beginning of this subsection.

As in Lemma~\ref{l.neighborhoods}, let us choose for each odd value of $r$ a neighborhood $W_{r}'$ of the set $\fill(F'_{r})$, in such a way  that the sequence of diameters of the $W_{r}'$'s tends to zero, the $W_{r}'$ are pairwise disjoint and disjoint from $F$ and from all the $F'_{s}$'s for even values of $s$. Exactly as before we get an isotopy $(h_{t})_{t \in [0,+\infty]}$ relative to 
$$
F \cup \bigcup \{F'_{r}, r  \mbox{ is even}   \}
$$ 
such that $h_{\infty} \alpha''_{i} = \alpha_{i}$ for every $i$. 
The composition $\Phi = h_{\infty} g_{\infty}$ sends each $\alpha'_{i}$ to $\alpha_{i}$, as wanted.
The proof of Proposition~\ref{l.multicoupure} is now complete.


\section{Straightening a homeomorphism}
\label{sec.homo-implique-iso}

The purpose of this section is to prove that Property (S2) implies Property (S1), that is:

\begin{prop}
\label{p.strong-homotopy-implies-strong-isotopy}
Let $S$ be a boundaryless connected surface, $F$ be a non-empty closed subset of $S$, and $f$ be a homeomorphism of $S$ fixing $F$ pointwise. If $f$ is strongly homotopic to the identity relatively to $F$, then  $f$ is isotopic to the identity relatively to $F$.
\end{prop}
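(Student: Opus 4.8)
The plan is to follow the strategy outlined in Section~\ref{ss.strategy}: build the isotopy from $f^{-1}$ to the identity as an infinite concatenation of isotopies, each of which ``brings back into place'' one arc of an exhausting family of arcs, using the Arc Straightening Lemma (Proposition~\ref{l.multicoupure}) at each step, and controlling supports so that the infinite concatenation converges.

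First I would reduce to a convenient model surface. Since $S$ is a connected surface without boundary, fix an exhaustion of $S$ by compact subsurfaces, and fix a locally finite family of properly embedded arcs $(\alpha_n)_{n\geq 0}$ in $S$ (each $\alpha_n$ a topological line or a compact arc, possibly with endpoints escaping to infinity) such that the connected components of $S\setminus\bigcup_{n}\alpha_n$ form a locally finite family of open cells with diameters (for some fixed metric, or rather exhausting the surface) ``shrinking to nothing'' in the sense that every compact set meets only finitely many of them and, after a fixed choice of exhaustion, the pieces cut out at stage $n$ become uniformly small inside each compact piece of the exhaustion. In the model case of the closed square this is literally the family of dyadic segments $\alpha_1,\alpha_2,\dots$ of Figure~\ref{f.strategy}; in general one triangulates $S$ (using that $S$, being a surface, is triangulable), lists the edges of successive barycentric subdivisions, and takes the $\alpha_n$ to be these edges. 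The point of the ordering is that after straightening $\alpha_1,\dots,\alpha_n$, the ``cells'' bounded by them have controlled size.

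Then I would run the inductive construction. Set $F_0:=F$ and $g_0:=f^{-1}$; note $g_0$ is strongly homotopic to the identity relatively to $F_0$, since $f$ is (invert the strong homotopy). Inductively, suppose $g_{n-1}\in\homeo(S,F_{n-1})$ is strongly homotopic to the identity relatively to $F_{n-1}$, where $F_{n-1}=F\cup\alpha_1\cup\dots\cup\alpha_{n-1}$ (union of the already-straightened arcs). Apply the Arc Straightening Lemma (Proposition~\ref{l.multicoupure}) to $g_{n-1}$ and the arc $\alpha_n$: if $\alpha_n$ is a compact arc with endpoints in $F_{n-1}$ the statement applies directly, and for the non-compact or half-line cases one adapts using Proposition~\ref{p.straightening-arc-easy} / Proposition~\ref{prop.redresser-les-courbes} — actually since the arcs of a triangulation of $S$ have endpoints at vertices, and one may always arrange the vertices to lie in $F$ (or just work with the arcs $\alpha_n$ cut by $F_{n-1}$), Proposition~\ref{l.multicoupure} applies componentwise. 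This produces an isotopy $(J^n_t)_{t\in[0,1]}$ in $\homeo(S,F_{n-1})$ with $J^n_0=\mathrm{Id}$ and $J^n_1 g_{n-1}$ fixing $\alpha_n$ pointwise; set $g_n:=J^n_1 g_{n-1}$, which then fixes $F_n=F_{n-1}\cup\alpha_n$ pointwise. Crucially I also need that $g_n$ is strongly homotopic to the identity relatively to $F_n$; this is exactly the second bullet in Section~\ref{ss.strategy} (Lemma~\ref{l.still-homotopic} in the paper's plan), which should be invoked/proved: composing $g_{n-1}$ with a homeomorphism isotopic to the identity relative to $F_{n-1}$ keeps it strongly homotopic to the identity relative to $F_{n-1}$, and since $g_n$ now pointwise fixes $\alpha_n$ and a strong homotopy relative to $F_{n-1}$ can be upgraded (by the uniqueness-of-homotopies and selection machinery of Sections~\ref{ss.uniqueness}, \ref{sec.selection}) to one fixing $\alpha_n$ as well, one gets strong homotopy relative to $F_n$. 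This is the step I expect to be the main obstacle, because ``a homeomorphism that fixes $\alpha_n$ and is strongly homotopic to the identity rel $F_{n-1}$ is strongly homotopic to the identity rel $F_{n-1}\cup\alpha_n$'' is not formal — it requires choosing the homotopy so that the trajectory of each point of $\alpha_n$ is constant, which one does via the continuous-selection result Proposition~\ref{prop.selection} applied to the family of homotopy classes of paths avoiding $\alpha_n$, after checking that the trajectory of $\alpha_n$-points is already contractible in complements, exactly as in the proof of Claim~\ref{c.homotopie}.

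Finally I would concatenate and pass to the limit. The composite $g_n = J^n_1 J^{n-1}_1\cdots J^1_1\, f^{-1}$ agrees with a fixed homeomorphism on larger and larger compact sets: because the arcs are ordered so that $J^n$ can be taken supported in an arbitrarily small neighborhood $W_n$ of $\widehat{\mathrm{Fill}}$ of the relevant pieces (Proposition~\ref{l.multicoupure} and the support control in Proposition~\ref{p.straightening-arc-easy} give this), and since after stage $n$ all the ``active'' cells lie deep in the exhaustion with diameters tending to zero, we may arrange $\sum_n \mathrm{diam}(W_n)|_K<\infty$ on every compact $K$, so that $(g_n)$ converges uniformly on compacta to a homeomorphism $g_\infty$. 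Moreover $g_\infty$ fixes $\bigcup_n\alpha_n\cup F$ pointwise and acts as the identity on $S\setminus\bigcup_n\alpha_n$, hence $g_\infty=\mathrm{Id}$; and reparametrizing the concatenation $J^1\star J^2\star\cdots$ on a half-open interval with the convergence giving a limit at the endpoint yields an isotopy in $\homeo(S,F)$ from $\mathrm{Id}$ to $f^{-1}$. Inverting this isotopy gives an isotopy from $\mathrm{Id}$ to $f$ relative to $F$, proving $(S2)\Rightarrow(S1)$.
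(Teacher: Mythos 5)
Your overall skeleton (alternate Proposition~\ref{l.multicoupure} with Lemma~\ref{l.still-homotopic}, then pass to the limit) is indeed the paper's strategy, but two steps of your proposal are genuine gaps. First, the case where $F$ is a single point is not covered, and your induction cannot even start there: Proposition~\ref{l.multicoupure} requires an arc with both endpoints in $F$, and when $\#F=1$ no such arc exists. Relatedly, your claim that one can ``arrange the vertices of the triangulation (and of its subdivisions) to lie in $F$'' is false in general (take $F$ to be a two-point set, or a circle); the arcs must instead be chosen with endpoints in the union of $F$ and the previously straightened arcs, as in subsection~\ref{ss.two-different-points}, and this again needs $F$ to contain at least two points. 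The paper handles $\#F=1$ by a genuinely different argument (subsection~\ref{s.epstein}): normalize $f$ to be the identity on a disc $D$ around the point, compose with a twist so that nearby trajectories become nulhomotopic, upgrade the strong homotopy to one relative to $D$ (Lemma~\ref{l.from-point-to-disk}, via selection), and only then invoke the case $\#F\geq 2$ with $F=D$. Nothing in your proposal produces this case.

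Second, your convergence argument does not work as written. Proposition~\ref{l.multicoupure} gives no control whatsoever on the support of the straightening isotopy (its proof moves points far from the arc), so the summability ``$\sum_n \mathrm{diam}(W_n)<\infty$ on compacta'' has no source; moreover ``after stage $n$ all the active cells lie deep in the exhaustion'' is incompatible with making the cells shrink on every compact set (one must return to each compact region with finer and finer arcs, so the total family of arcs cannot be locally finite either, contrary to your first paragraph). The mechanism that actually yields convergence --- the one used in Proposition~\ref{p.disc-case} --- is that every homeomorphism occurring after time $n$ fixes $F_n=F\cup\alpha_0\cup\dots\cup\alpha_n$ pointwise and preserves each connected component of $S\setminus F_n$ (this uses the strong homotopy, or the Brown--Kister Theorem~\ref{t.brown-kister}), hence moves each point by at most the diameter of its component; one then needs the components of $S\setminus F_n$ meeting a given compact set to have diameters tending to $0$. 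On a compact disc this can be arranged directly, but on a general surface the paper does not arrange it globally: it first straightens a locally finite family of arcs cutting $S$ into closed discs $D_p$ of definite size, modifies the isotopy so that it stabilizes on each $D_p$ (the limit is \emph{not} the identity), and only then runs the shrinking-cell argument inside each compact disc via Proposition~\ref{p.disc-case}, gluing by local finiteness. Your single-scale variant could plausibly be repaired along these lines when $\#F\geq 2$, but the argument you give (support control plus summability) is not available from the cited statements. Finally, your sketch of Lemma~\ref{l.still-homotopic} is only a gesture: the actual proof must dispose of the exceptional case where $S$ is the sphere and $F$ consists of the two endpoints of $\alpha$ (Sublemma~\ref{sl.loop-trivial}), and proceeds through the finite criterion and free homotopy of loops rather than by a direct selection.
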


Very roughly speaking, the proof of Proposition~\ref{p.strong-homotopy-implies-strong-isotopy} will consist in considering a countable family of arcs $(\alpha_n)_{n>0}$ so that the union $\bigcup_n \alpha_n$ is dense in $S$, and constructing an isotopy $(f_t)_{t\in [0,+\infty]}$ joiging $f_0=f$ to $f_\infty=\mathrm{Id}$, so that $f_t$ fixes the arcs $\alpha_1,\dots,\alpha_n$ pointwise for $t\geq n$. The construction of the isotopy $(f_t)_{t\in [0,+\infty]}$ mainly relies on the Arc Straightening Lemma (Proposition~\ref{l.multicoupure}).

The analogous statement in the case where $F$ is empty is ``(W2) implies (W1)". This is the main result of Epstein's paper~\cite{epstein}. We will treat this case in subsection~\ref{ss.empty-F} for completeness.

\subsection{A key lemma}

The following lemma will allow us to apply Proposition~\ref{l.multicoupure} inductively, and construct an isotopy $(f_t)_{t\in [0,+\infty]}$ so that $f_t$ fixes more and more arcs as $t$ increases.

\begin{lemm}
\label{l.still-homotopic}
Let $\alpha$ be an arc whose end-points belong to $F$. Assume $f$ is strongly homotopic to the identity relatively to $F$, and $f$ fixes $\alpha$ pointwise. Then $f$  is strongly homotopic to the identity relatively to $F \cup \alpha$.
\end{lemm}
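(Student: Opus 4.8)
The plan is to compare the trajectory of a point $z$ under the given strong homotopy with the constant path when $z$ lies on $\alpha$, and to deduce that the strong homotopy class of trajectories in $S\setminus F$ actually realizes the constant path near $\alpha$. More precisely, let $H$ be a strong homotopy from the identity to $f$ relative to $F$, and for $z\in S\setminus F$ let $C_z$ denote the homotopy class in $S\setminus F$ of the trajectory $H.z$; for $z\in F$ set $C_z=\{z\}$. The goal is to show that $f$ is strongly homotopic to the identity relative to $F\cup\alpha$, i.e.\ that we can choose trajectories that stay in $S\setminus(F\cup\alpha)$ for $z\notin F\cup\alpha$ and extend continuously by the identity on $F\cup\alpha$. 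Since $f$ already fixes $\alpha$ pointwise, for $z\in\alpha$ the constant path is available; the content is to push the homotopy off $\alpha$ for nearby points and to set up the selection machinery of Section~\ref{sec.selection} relative to the larger set $F\cup\alpha$.

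First I would invoke Proposition~\ref{prop.finitely-homotopic}: it suffices to prove that $f$ is strongly homotopic to the identity relative to $(F\cup\alpha)_0$ for a dense subset, and in fact it suffices to treat every finite subset of such a dense set. Choosing a countable dense subset $A_0$ of the arc $\alpha$ together with a dense subset $F_0$ of $F$ as in the hypothesis, the problem reduces to: given a finite set $G\subset F$ and finitely many points $a_1,\dots,a_k\in\alpha$, show $f$ is strongly homotopic to the identity relative to $G\cup\{a_1,\dots,a_k\}$. This is the key finite step. Here $\alpha$ is genuinely an arc with endpoints in $F$, so each $a_j$ lies on a connected component of $\alpha\setminus F$; we may take a short subarc $\beta_j$ of $\alpha$ through $a_j$ avoiding $F$ and the other $a_i$'s. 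The plan for the finite step is to mimic the argument used in the proof of Lemma~\ref{l.straightening-points} (Claims~\ref{c.redresser-courbes}, \ref{c.homotopie}): use that the trajectory of $a_j$ under $H$ is nulhomotopic in $S\setminus F$ (because $f$ fixes $\alpha$ pointwise, so the trajectory of $a_j$ is a loop, and by Fact~\ref{fact.commute} combined with the fact that $H$ is a homotopy from identity to identity after concatenation — more directly, $f(a_j)=a_j$ and the class of $H.a_j$ is central hence, with the algebraic input of Proposition~\ref{prop.algebra-fundamental-group}, trivial when the relevant surface is not exceptional; the exceptional cases are handled separately as in earlier sections). Then Proposition~\ref{prop.redresser-les-courbes} (or directly Proposition~\ref{p.straightening-arc-easy}, straightening arcs) produces the needed strong homotopy relative to $G$ and small curves/arcs through the $a_j$'s, and the selection result Proposition~\ref{prop.selection} — applied with $F$ replaced by $G\cup(\text{small arcs})$ — upgrades a homotopy on the complement to one extending continuously by the identity.

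The main obstacle I anticipate is the lower semi-continuity of $z\mapsto C_z$ at points of $\alpha$ (as opposed to points of $F$). Near a point $x$ in the interior of $\alpha\setminus F$, the surface $S\setminus(F\cup\alpha)$ looks like $S\setminus F$ cut along an arc, and one must show that for $z$ near $x$ the trajectory of $z$ can be chosen inside a small neighborhood of $x$ and disjoint from $\alpha$. This is where one uses that $f$ fixes $\alpha$ pointwise: the local picture is that of a homeomorphism fixing a boundary arc, and the constant-near-$\alpha$ homotopy is locally available by an Alexander-type argument; the work is to patch this local choice with the global homotopy class $C_z$ coming from $H$, using the uniqueness of homotopies (Corollary~\ref{coro.uniqueness-homotopy}) to see the two agree. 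The points of $\alpha$ that lie in $F$ (the endpoints, and any accumulation of $F$ on $\alpha$) need the argument of Lemma~\ref{l.cz-continuite} and Lemma~\ref{lemm.semi-continuity-proper}, adapted to the set $F\cup\alpha$; the equi-local connectedness and simple connectedness at points of $\alpha$ follow as in Lemma~\ref{lemm.ELC-ELSC} since $\alpha$ is one-dimensional and locally a slit, so a small neighborhood minus $F\cup\alpha$ has trivial $\pi_1$ and $\pi_2$ after passing to the universal cover, exactly as in the footnote to the proof of Proposition~\ref{prop.finitely-homotopic}. Once all four hypotheses of Proposition~\ref{prop.selection} are verified for the family $(C_z)$ over $S$ with the distinguished closed set $F\cup\alpha$, the proposition yields the continuous selection, hence the strong homotopy relative to $F\cup\alpha$, completing the proof.
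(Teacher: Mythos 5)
Your overall skeleton (reduce to finite subsets of $F\cup\alpha$ via Proposition~\ref{prop.finitely-homotopic}, then handle each finite case) matches the paper's, but the pivotal step is asserted with an argument that does not work. You claim that for a point $a_j\in\alpha\setminus F$ the trajectory loop $H.a_j$ is nulhomotopic in $S\setminus F$ ``because $f(a_j)=a_j$ and the class of $H.a_j$ is central''. Centrality is not available here: Fact~\ref{fact.commute} applies to a homotopy whose initial and final loops coincide, whereas $H$ is a homotopy from the identity to $f$, so for a loop $\gamma$ based at $a_j$ it only gives a free homotopy from $\gamma$ to $f(\gamma)$, i.e.\ $[f(\gamma)]$ is the conjugate of $[\gamma]$ by $[H.a_j]$ --- no commutation unless $f(\gamma)$ is the \emph{same} loop as $\gamma$, which would force $\gamma$ to lie in the fixed-point set, and an arc carries no essential loops. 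Indeed the statement you want is false in general: on the sphere with $F$ the two endpoints of $\alpha$, take $H$ a full rotation about the axis through the endpoints; then $f=\mathrm{Id}$ fixes $\alpha$ pointwise, $H$ is a strong homotopy relative to $F$, and the trajectory of every interior point of $\alpha$ is an essential loop in the annulus $S\setminus F$. So the nulhomotopy of trajectories is a genuine theorem, not a formal consequence of $f$ fixing $\alpha$; this is exactly the content of the paper's Sublemma~\ref{sl.loop-trivial}, whose proof uses the \emph{whole} fixed arc: one studies the continuous family of trajectory loops $\gamma_s$ of the points $\beta(s)$ of a component of $\alpha\setminus F$, lifts it to the annular cover $\widetilde U/T$ determined by one of these loops, and uses Fact~\ref{f.curves-in-annulus} to see that the family fills neighbourhoods of both ends; this forces $S=U\cup\{\beta(0),\beta(1)\}$ to be the sphere with $F$ equal to the two endpoints, a case then settled directly by the Alexander isotopy (Proposition~\ref{p.alexander}). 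Your passing remark that ``the exceptional cases are handled separately as in earlier sections'' does not identify this specific configuration nor supply the dichotomy; without it your argument has no valid starting point.

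A secondary remark: even granting the nulhomotopy of trajectories, your plan for the finite step (straightening curves via Proposition~\ref{prop.redresser-les-courbes} or~\ref{p.straightening-arc-easy} and then re-running the selection machinery, including semi-continuity of $z\mapsto C_z$ along $\alpha$) is much heavier than necessary and is left at the level of a sketch precisely where care is needed (lower semi-continuity at interior points of $\alpha$, patching the local Alexander-type choice with the global class $C_z$). The paper avoids all of this: for a finite set $F'\cup E$ with $F'\supset\partial\alpha$ it verifies property (W5) directly, by lifting to the universal cover of $S\setminus F'$, noting that $\pi^{-1}(\alpha\setminus F')$ is a union of disjoint proper lines fixed pointwise by the lift $\tilde f$ (this is where the nulhomotopic trajectories enter) and that the complementary components are simply connected, so every loop is freely homotopic to its image; then (W5)$\Rightarrow$(W4)$\Rightarrow$(W3) closes the finite case. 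You would do well to replace your finite-step machinery by this argument, but the essential missing ingredient remains the dichotomy of Sublemma~\ref{sl.loop-trivial}.
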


\begin{proof}
By assumption $f$ is strongly homotopic to the identity relatively to $F$.
Hence there exists a homotopy $(f_t)_{t\in [0,1]}$ joining the identity to $f$, so that $f_t$ fixes $F$ pointwise for every $t$, and so that $f_t(S\setminus F)\subset S\setminus F$. The following statement is the key point of the proof of Lemma~\ref{l.still-homotopic}:

\begin{sublemm}
\label{sl.loop-trivial}
If there exists a point $x_0$ in $\alpha\setminus F$ such that the loop $(f_{t}(x_0))_{t \in [0,1]}$ is not homotopic to $0$ in $S\setminus F$, then $S$ is the sphere
and $F$ is made of only two points (the ends of $\alpha$). 
\end{sublemm}

We will use the following fact, whose proof is left to the reader: 

\begin{fact}
\label{f.curves-in-annulus}
Let $A$ be an open annulus, and $(\tilde\gamma_s)_{s\in (0,1)}$ be a continuous family of closed curves in $A$ that are not homotopically trivial. Assume that, for every compact set $\widetilde K\subset A$, the curve $\tilde\gamma_s$ is disjoint from $\widetilde K$ for $s$ small enough. Then, for every $\epsilon>0$, the set $\widetilde V_\epsilon:=\bigcup_{s\in (0,\epsilon]}\widetilde\gamma_s$ is a neighbourhood of one of the two ends of $A$.
\end{fact}

\begin{proof}[Proof of Sublemma~\ref{sl.loop-trivial}]
Assume that there is a point $x_0$ in $\alpha\setminus F$ such that the loop $(f_{t}(x_0))_{t \in [0,1]}$ is not homotopic to $0$ in $S\setminus F$. Denote by $\beta$ the closure of the connected component of $\alpha\setminus F$ which contains $x_0$, and choose an injective parametrization of $\beta$ by $[0,1]$ so that $\beta(1/2)=x_0$. Note that $\beta(0),\beta(1)\in F$ and $\beta((0,1))$ is disjoint from $F$.  The loop $\gamma_s:=(f_{t}(\beta(s)))_{t \in [0,1]}$ depends continously on $s$, is disjoint from $F$ for $s\in (0,1)$, is non-homotopic to $0$ in $S\setminus F$ for $s=\frac{1}{2}$ (hence for every $s\in (0,1)$), is reduced to the point $\beta(0)$ for $s=0$ and reduced to the point $\beta(1)$ for $s=1$. 

Denote by $U$ the connected component of $S\setminus F$ which contains $\beta$, by $\widetilde U$ the universal cover of $U$, and by $T$ an automorphism of $\widetilde U$ associated to the loop $\gamma_{1/2}$. We will use the intermediate cover $\pi:\widetilde U/T\to U$. Note that $\widetilde U$ is homeomorphic to the plane (since $F$ contains at least two elements). Furthermore, since $\gamma_{1/2}$ is homotopic in $S$ to the constant curve $\gamma_{0}$, it cannot be homotopic to the core of a M\"obius band, and therefore $\widetilde U/T$ is homeomorphic to the open annulus. By definition of $T$, the loop $\gamma_{1/2}$ lifts to an essential loop $\widetilde\gamma_{1/2}$ in $\widetilde U/T$. The loop $\gamma_s$ depends continously on $s$ and is disjoint from $F$ for $s\in (0,1)$. Hence, we can lift $(\gamma_s)_{s\in (0,1)}$ to a continuous family $(\tilde\gamma_s)_{s\in (0,1)}$ of essential loops in $\widetilde U/T$. When $s$ goes to $0$, the loop $\gamma_s$ converges towards the point $\beta(0)$; in particular, for every compact set $K$ of $U$, the loop $\gamma_s$ is disjoint from $K$ for $s$ small enough. As a consequence, for every compact set $\widetilde K\subset \widetilde U/T$, the curve $\tilde\gamma_s$ is disjoint from $\widetilde K$ for $s$ small enough. Using Fact~\ref{f.curves-in-annulus}, it follows that,  for every $\epsilon>0$, the set $\widetilde V_\epsilon:=\bigcup_{s\in (0,\epsilon]}\widetilde\gamma_s$ is a neighbourhood of one of the two ends of the annulus $\widetilde U/T$. Since the two ends of $\beta$ are distinct, for small $s,t$, the curves $\tilde \gamma_{s}$ and $\tilde \gamma_{1-t}$ are disjoint.
Thus symmetrically, for every $\epsilon>0$, the set $\widetilde W_\epsilon:=\bigcup_{s\in [1-\epsilon,0)}\widetilde\gamma_s$ is a neighbourhood of the other end
 of the annulus $\widetilde U/T$. The projection $\pi(\widetilde V_\epsilon)=\bigcup_{s\in (0,\epsilon]}\gamma_s$ is contained in an arbitrary small neighbourhood of the point $\beta(0)$ when $\epsilon$ is small enough.
The projection $\pi(\widetilde W_\epsilon)=\bigcup_{s\in [1-\epsilon,1]}\gamma_s$ is contained in an arbitrary small neighbourhood of the point $\beta(1)$ when $\epsilon$ is small enough.
Let $p,q$ be the two ends of the annulus.
The covering map $\pi: \bbS^2\setminus\{p,q\} \simeq \widetilde U/T\to U$ extends to a continuous onto map $\bar\pi:\bbS^2\to U\cup\{\beta(0),\beta(1)\}$ so that $\bar \pi(p)=\beta(0)$ and $\bar\pi(q)=\beta(1)$.

We deduce that $U\cup\{\beta(0),\beta(1)\}$ is compact, and hence closed in $S$. In particular, the boundary of $U$ is reduced to the set $\{\beta(0),\beta(1)\}$.
This in turn implies that $U\cup\{\beta(0),\beta(1)\}$ is open in $S$.
By connectedness we get $S=U\cup\{\beta(0),\beta(1)\}$, $F=\{\beta(0),\beta(1)\}$ and $\beta=\alpha$; in
particular $F$ has only two points.
As a further consequence, the covering map $\pi$ maps a pointed neighborhood of $p$ (resp. $q$) in $\bbS^2$ onto a pointed neighborhood of $\beta(0)$ (resp. $\beta(1)$) in $S$. In particular, the preimage of a point $x\in U$ by $\pi$ is compact , hence finite.
The fundamental group of the surface $U$ is a finite extension of $\mathbb{Z}$, hence is $\mathbb{Z}$. The surface $U$ has two ends, hence is the
annulus.
This implies that $S$ is homeomorphic to the sphere $\bbS^2$.
%
\end{proof}

\medskip

One can now end the proof of Lemma~\ref{l.still-homotopic}.
By assumption, $F$ contains at least two points (the ends of $\alpha$). In the special case where $S$ is the sphere
and $F$ is made of exactly two points, $f$ is isotopic to the identity relatively to $\alpha \cup F = \alpha$ by the Alexander isotopy (Proposition~\ref{p.alexander}).
In the sequel, we exclude this special case. According to Sublemma~\ref{sl.loop-trivial}, the loop $(f_{t}(x))_{t \in [0,1]}$ is null-homotopic in $S\setminus F$ for every point $x$ in $\alpha\setminus F$.
	
We have to prove that $f$ is strongly homotopic to the identity relatively to $F \cup \alpha$. We will make use of the criterions proven in the previous sections. According to Proposition~\ref{prop.finitely-homotopic}, it is enough to check that $f$ is  homotopic to the identity on the complement of every finite subsets of $F \cup \alpha$. So, we consider a finite set $F'\subset F$, and a finite set $E\subset\alpha\setminus F$. We have to prove that the restriction of $f$ to $S\setminus (F'\cup E)$ is homotopic to the identity (in $S\setminus (F'\cup E)$). According to Propositions~\ref{p.W5-W4} and~\ref{p.W4-W3}, it is enough to check that every closed curve $\gamma$ in $S\setminus (F'\cup E)$ is freely homotopic to its image under $f$ in $S\setminus(F'\cup E)$. The bigger $F'$ is, the more difficult it is. We can thus assume that $F'$ contains the ends of $\alpha$. 

We consider the universal cover 
$$\pi:X\to S\setminus F'.$$
Since $F'$ contains the ends of $\alpha$, the set $\pi^{-1}(\alpha\setminus F')$ is a collection of pairwise disjoint proper lines in $X$. 
 As a consequence, every connected component of $X\setminus\pi^{-1}(\alpha\setminus F')$ is simply connected (see Figure~\ref{f.F-union-alpha}). 

\begin{figure}[ht]	
\begin{center}
\def\svgwidth{0.8\textwidth}
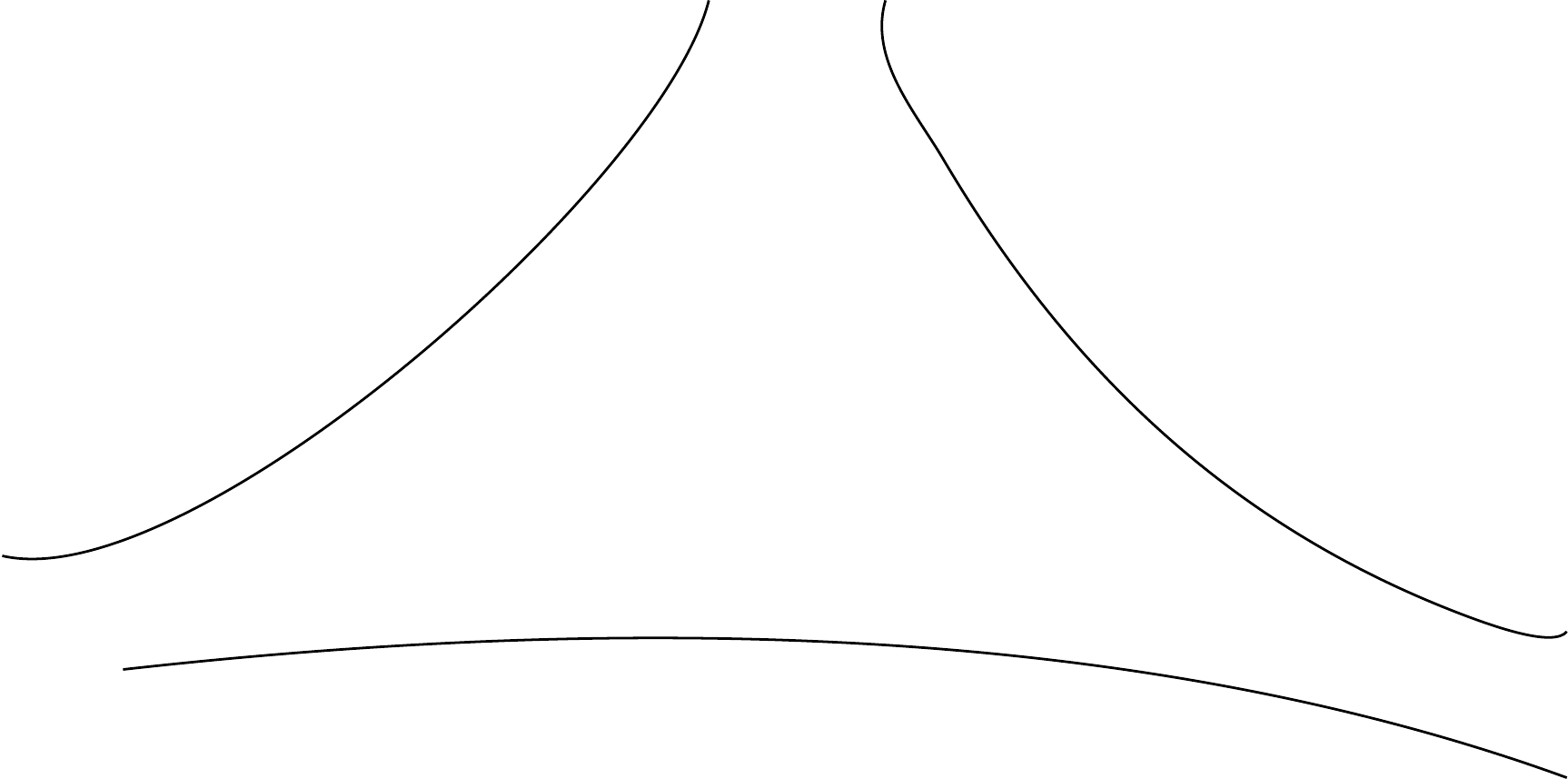
\end{center}
\caption{\label{f.F-union-alpha}Construction of a free homotopy between $\gamma$ and $f(\gamma)$.}
\end{figure}

The restriction to $S\setminus F'$ of the homotopy $(f_t)_{t\in [0,1]}$ can be lifted in $X$. This provides a homotopy $(\tilde f_t)_{t\in [0,1]}$ joining the identity to a lift $\widetilde f$ of $f_{|S\setminus F'}$. Recall that, for every point $x$ in $\alpha\setminus F$, the loop $(f_{t}(x))_{t \in [0,1]}$ is null-homotopic in $S\setminus F$, hence also in $S\setminus F'$. For $x\in \alpha\cap (F\setminus F')$, the loop $(f_{t}(x))_{t \in [0,1]}$ is reduced to a point, hence null-homotopic in $S\setminus F'$. As a consequence, the lift $\widetilde f$ fixes $\pi^{-1}(\alpha\setminus F')$ pointwise. 

Let $\gamma$ be a closed loop in $S\setminus(F'\cup E)$. We have to prove that $\gamma$ is freely homotopic to its image  under $f$ in $S\setminus(F'\cup E)$. 
We lift  $\gamma$ to a curve $\widetilde \gamma$ in $X$. Since every connected component of $X\setminus\pi^{-1}(\alpha\setminus F')$ is simply connected and since $\widetilde f$ fixes $\pi^{-1}(\alpha\setminus F')$ pointwise, the closure of each connected components of $\widetilde\gamma\setminus\pi^{-1}(\alpha\setminus F')$ is homotopic (with fixed ends) to its image under $\widetilde f$ in $X\setminus\pi^{-1}(\alpha\setminus F')$. Projecting in $S\setminus F'$, we obtain a free homotopy between $\gamma$ and $f(\gamma)$ in $S\setminus(F'\cup E)$, as wanted.
This completes the proof of Lemma~\ref{l.still-homotopic}.
\end{proof}

\subsection{The case of the closed disc}

We start by proving a version of Proposition~\ref{p.strong-homotopy-implies-strong-isotopy} in the closed disc (strictly speaking, this will not be a particular case of Proposition~\ref{p.strong-homotopy-implies-strong-isotopy}, since the closed unit disc is not a boundaryless surface). More precisely, we want to prove the following statement: 

\begin{prop}
\label{p.disc-case}
Denote by $D$ be the closed unit disc in $\bbR^2$. Let $f$ be a homeomorphism of $D$ fixing $\partial D$ pointwise, and $F$ be a closed subset of $D$ containing $\partial D$. If $f$ is strongly homotopic to the identity relatively to $F$, then it is isotopic to the identity relatively to $F$.
\end{prop}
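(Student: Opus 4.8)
The idea is to reduce to the sphere, where the whole disc becomes a compact piece with ``trivial'' complement, and then to straighten $f$ on the segments of finer and finer grids.

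First I would pass to $\bbS^2$. Since $f$ fixes $\partial D$ pointwise it extends to a homeomorphism $\hat f$ of $\bbS^2 = D\cup(\bbS^2\setminus\mathrm{int}\,D)$ which is the identity outside $\mathrm{int}\,D$; set $\hat F := F\cup(\bbS^2\setminus\mathrm{int}\,D)$, a closed subset of $\bbS^2$. Extending the given strong homotopy by the identity outside $\mathrm{int}\,D$ (legitimate because $\partial D\subset F$) shows that $\hat f$ is strongly homotopic to the identity relatively to $\hat F$. The crucial feature of this reduction is that $\bbS^2\setminus\hat F\subset\mathrm{int}\,D$; hence for \emph{any} closed set $K\supset\hat F$ all connected components of $\bbS^2\setminus K$ lie inside $\mathrm{int}\,D$, and their diameters can be made uniformly small by adjoining enough arcs to $\hat F$. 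This is exactly the place where the hypothesis ``$D$ is a disc'' (rather than an arbitrary surface) is used.

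Next, choosing a homeomorphism of $D$ with the square $[0,1]^2$, I would let $(\alpha_n)_{n\ge 1}$ be the sequence of simple arcs of $D$ with endpoints on $\partial D$ obtained by enumerating the segments of the successive dyadic grids of $[0,1]^2$, and put $F_0:=\hat F$, $F_n:=F_{n-1}\cup\alpha_n$. By the remark above, $\varepsilon_n:=\sup\{\diam C:\ C\text{ a connected component of }\bbS^2\setminus F_n\}\to 0$. I would then construct inductively a sequence $g_0:=\hat f,g_1,g_2,\dots\in\homeo(\bbS^2)$ together with isotopies $K_n$ from $g_{n-1}$ to $g_n$ so that, for every $n$: $g_n\in\homeo(\bbS^2,F_n)$, $g_n$ is strongly homotopic to the identity relatively to $F_n$, and $K_n$ takes values in $\homeo(\bbS^2,F_{n-1})$. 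Indeed, given $g_{n-1}$, the Arc Straightening Lemma (Proposition~\ref{l.multicoupure}), applied in $\bbS^2$ to the closed set $F_{n-1}$ and the arc $\alpha_n$ (whose endpoints lie in $\partial D\subset F_{n-1}$), produces such a $K_n$ with time-$1$ map $g_n$ fixing $\alpha_n$, hence fixing $F_n$; and since $K_n$ is in particular a strong homotopy relative to $F_{n-1}$, concatenating it with the strong homotopy from $\mathrm{Id}$ to $g_{n-1}$ shows $g_n$ is strongly homotopic to the identity relatively to $F_{n-1}$, so Lemma~\ref{l.still-homotopic} (its exceptional case, the sphere with a two-point $F$, never occurring since $F_{n-1}\supset\bbS^2\setminus\mathrm{int}\,D$ is infinite) upgrades this to $F_{n-1}\cup\alpha_n=F_n$.

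The point I would write out most carefully is the convergence of the infinite concatenation $K_1\star K_2\star\cdots$. Here one uses the elementary fact that a homeomorphism which is strongly homotopic to the identity relatively to a closed set $K$ preserves each connected component of $\bbS^2\setminus K$ (follow the trajectory of a point). Applied to $g_{n-1}$ this gives that $g_{n-1}$ moves every point by at most $\varepsilon_{n-1}$ for the uniform distance; moreover every map $\psi_t$ occurring along $K_n$ fixes $F_{n-1}$, so the trajectory $t\mapsto\psi_t(x)$ stays in the component of $\bbS^2\setminus F_{n-1}$ containing $g_{n-1}(x)$, whence $\psi_t$ is within $\varepsilon_{n-1}$ of $g_{n-1}$, hence within $2\varepsilon_{n-1}$ of $\mathrm{Id}$, for all $t$. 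Reparametrising $K_n$ on $[1-2^{-(n-1)},1-2^{-n}]$, the concatenation is a continuous path $[0,1)\to\homeo(\bbS^2,\hat F)$ starting at $\hat f$ whose maps tend uniformly to $\mathrm{Id}$ as $t\to 1$; it therefore extends to an isotopy on $[0,1]$ from $\hat f$ to $\mathrm{Id}$ inside $\homeo(\bbS^2,\hat F)$. Since every map in this isotopy fixes $\bbS^2\setminus\mathrm{int}\,D$ pointwise, it restricts to an isotopy in $\homeo(D,F)$ from $f$ to $\mathrm{Id}$, which is the assertion. Beyond making this last bookkeeping precise I expect no real obstacle: the substance is packaged into the Arc Straightening Lemma and Lemma~\ref{l.still-homotopic}. (When $F=\partial D$ the whole construction degenerates to Alexander's trick, Proposition~\ref{p.alexander}.)
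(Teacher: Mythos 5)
Your proof is correct and follows essentially the same route as the paper: extend $f$ to a boundaryless surface with the complement of $\mathrm{int}\,D$ adjoined to $F$ (the paper uses an open neighbourhood of $D$ in the plane rather than $\bbS^2$, a purely cosmetic difference), then iterate the Arc Straightening Lemma together with Lemma~\ref{l.still-homotopic} over a sequence of arcs cutting $D$ into pieces of shrinking diameter, and conclude by uniform convergence of the concatenated isotopies to the identity. Your explicit justification of that convergence (trajectories trapped in small complementary components of $F_{n-1}$) is precisely the argument the paper leaves implicit.
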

We refer to Figure~\ref{f.strategy} for a geometric picture of the isotopy constructed in the proof.
\begin{proof}
We want to use the results of the previous sections, which deal with boundaryless surface. For this reason, we consider an open neighbourhood $\widehat D$ of the closed disc $D$ in the plane, the closed set $\widehat F:=F\cup (\widehat D\setminus D)$, and the homeomorphism $\widehat f:\widehat D\to\widehat D$ which coincides with $f$ on $D$ and is the identity outside $D$. 

We fix a sequence of segments $(\alpha_n)_{n\geq 0}$, each of them contained in $D$ and joining two points of $\partial D$, and cutting the disc $D$ into smaller and smaller pieces: more precisely, we demand that the supremum of the diameters of the connected components of $D\setminus (\alpha_{0} \cup \cdots \cup \alpha_{n})$ goes to zero 
when $n$ goes to infinity.

Let $\widehat f_{0}=\widehat f$ and $\widehat F_{0}=\widehat F$. We apply the ``arc straightening lemma'' (Proposition~\ref{l.multicoupure}) to $\widehat f_{0}$, $\widehat F_{0}$ and the segment $\alpha_0$. We get an isotopy $\widehat I_{0} = (\widehat f_{t})_{t \in [0,1]}$ from $\widehat f_{0}$ to a homeomorphism $\widehat f_{1}$ which fixes $\widehat F_1:=\widehat F_0\cup\alpha_0$ pointwise. According to Lemma~\ref{l.still-homotopic}, the homeomorphism $\widehat f_{1}$ is strongly homotopic to the identity relatively to $\widehat F_{1}$. Now we  apply Proposition~\ref{l.multicoupure} to $\widehat f_{1}$, $\widehat F_{1}$, and the segment $\alpha_{1}$. We get a second isotopy $\widehat I_{1} = (\widehat f_{t})_{t \in [1,2]}$ from $\widehat f_{1}$ to a homeomorphism $\widehat f_{2}$ which fixes $\widehat F_2:=\widehat F_1\cup\alpha_1$ pointwise. Repeating this construction endlessly, we obtain an isotopy 
$$\widehat I = \widehat I_{0} \star \widehat I_{1} \star \cdots = (\widehat f_{t})_{t \in [0,+\infty)}$$  so that $\widehat f_t$ fixes $\widehat F\cup\alpha_0\cup\dots\alpha_n$ pointwise for every $t\geq n$. The condition on the connected components of the complement of the $\alpha_{n}$'s implies that the map $(\widehat f_{t})$ uniformly converges to the identity when $t$ tends to infinity. Hence, we get an isotopy $(\widehat f_t)_{t\in [0,+\infty]}$ on $\widehat D$, from $\widehat f$ to the identity, relatively to $\widehat F$. Considering the restriction of this isotopy, we obtain an isotopy $(f_t)_{t\in [0,+\infty]}$ on $D$,  from $f$ to the identity, relatively to $F$.
\end{proof}

\subsection{The case where $F$ contains at least two different points}
\label{ss.two-different-points}

We will now prove Proposition~\ref{p.strong-homotopy-implies-strong-isotopy} in the case where $F$ contains at least two points. In this case, we can find an arc $\alpha_{0}$ in $S$ whose ends are in $F$. Mreover, since $S$ is $\sigma$-compact, we can find a sequence of arcs $(\alpha_{n})_{n>0}$ in $S$ such that
\begin{itemize}
\item the ends of $\alpha_{n+1}$ belong to the set $F_{n} = F \cup \alpha_{0} \cup \cdots \cup \alpha_{n}$,
\item the family  $(\alpha_{n})_{n\geq 0}$ is locally finite, and the $\alpha_{n}$'s are pairwise topologically transverse,
\item the connected components of $S \setminus \bigcup_{n\geq 0} \alpha_{n}$ are topological open disks $(D_p)_{p\geq 0}$,
\item the closure of these disks are topological closed disks. 
\end{itemize}
Note that the second item implies that the family of closed discs $(\mathrm{Clos}(D_{p}))_{p\geq 0}$ is locally finite.

Applying Proposition~\ref{l.multicoupure} and Lemma~\ref{l.still-homotopic} inductively as in the case of the disc, we obtain an isotopy $I = I_{0} \star I_{1} \star \cdots = (f_{t})_{t \in [0,+\infty)}$ so that $f_t$ fixes $F\cup\alpha_0\cup\dots\alpha_n$ pointwise for every $t\geq n$. Note that, for a fixed integer $p_0$, there exists an integer $n_{0}$ such that, for $n\geq n_0$, the arc $\alpha_n$ is disjoint from the closures of the discs $D_0,\dots,D_{p_0}$. In particular, the homeomorphism $f_t$ fixes $\partial D_0\cup \dots\cup\partial D_{p_0}$ pointwise for every $t\geq n_0$. Thus we may modify the isotopy $I =(f_{t})_{t\in [0,+\infty)}$ into an isotopy $I'=(f'_{t})_{t \in [0,+\infty)}$ so that all the $f'_t$'s, $t\geq n_0$, coincide
on $D_0\cup \dots\cup D_{p_0}$. This ensures that $f'_t$ converges towards a homeomorphism $f_{\infty}'$ when $t$ goes to $\infty$. 

On each disk $D_{p}$, the restriction of $f_{\infty}'$ is strongly homotopic to the identity relatively to $F \cap D_{p}$: indeed on $D_{p}$ the homeomorphism $f_{\infty}'$ coincides with $f_{n}'$ for large enough $n$, on $S$ the homeomorphism $f_{n}'$ is strongly homotopic to the identity relatively to $F\cup\alpha_0\cup\dots\alpha_n$, and since this set contains the boundary of $D_{p}$, the strong homotopy restricts to a strong homotopy in $D_{p}$.

It remains to find an isotopy relative to $F_{\infty} = F \cup (\bigcup_p\partial D_{p})$, joining $f'_{\infty}$ to the identity. This is done successively on each disc $D_{p}$, by applying Proposition~\ref{p.disc-case}. More formally, for each $p$, Proposition~\ref{p.disc-case} provides us with an isotopy on $D_p$, joining the restriction of $f$ to the identity, relatively to $F\cap D_p$. We glue the isotopies corresponding to the different discs $D_p$ together. The local finiteness of the family $(D_{p})_{p\geq 0}$ ensures that this defines an isotopy on $S$. 
This completes the proof of Proposition~\ref{p.strong-homotopy-implies-strong-isotopy} in the case where $F$ contains at least two different points.

\subsection{The case where $F$ consists in a single point}
\label{s.epstein}

We end up the proof of Proposition~\ref{p.strong-homotopy-implies-strong-isotopy} by treating the case where the set $F$ is made of a single point $x_{0}$. 

\bigskip

By assumption, there exists a strong homotopy $H : S \times [0,1] \to S$ relative to $x_0$ joining the identity to $f$. 

Let $U$ be a closed disk neighborhood of $x_{0}$, and $D$ be another closed disk neighborhood of $x_{0}$ so that $D\cup f(D)\subset \mathrm{int}(U)$. Any orientation in $U$ induces an orientation of the curves $\partial D$ and $\partial f(D)$. Assuming that $D$ is small enough, the strong homotopy $H$ gives homotopy between the oriented closed curves $\partial D$ and $f(\partial D)$ in $U\setminus\{x_0\}$. Hence, $f$ sends the oriented curve $\partial D$ on the oriented curve $\partial f(D)$. Now, there exists a homeomorphism $\Phi$ between the closed annuli $U \setminus \mathrm{int}(D)$ and $U \setminus \mathrm{int}(f(D))$, and since $f$ sends the oriented curve $\partial D$ on the oriented curve $\partial f(D)$, it is easy to modify $\Phi$ so that it coincides with $f$ on $\partial D$ and with the identity on the boundary of $U$. We extend $\Phi$ by the identity outside $U$ and by $f$ on $D$. By Alexander's trick (Proposition~\ref{p.alexander}), the map $\Phi$ is isotopic to the identity on $U$, relatively to $x_{0}$.  Up to composing $f$ with $\Phi^{-1}$ and composing the strong homotopy $H$ with an isotopy from the identity to $\Phi^{-1}$ relative to $x_0$, we may assume that $f$ is the identity on $D$. Now, if $f$ is the identity on $D$, the trajectory under $H$ of $z\in D\in\{x_0\}$ is a closed curve. This curve is included in $D\setminus \{x_0\}$ provided that $z$ is close enough to $x_0$. Up to composing $f$ and $H$ with a twist around $x_{0}$, we may assume that this curve is homotopic to a constant curve in $D \setminus \{x_{0}\}$.

From now on, we assume that $f$ is the identity on a disk neighbourhood $D$ of $x_0$, and that the trajectory under $H$ of any point $z\in D\setminus \{x_0\}$ close to $x_0$ is homotopic to a constant curve in $D \setminus \{x_{0}\}$.

\begin{lemm}
\label{l.from-point-to-disk}
In this situation, the homeomorphism $f$ is strongly homotopic to the identity relatively to the disk $D$.
\end{lemm}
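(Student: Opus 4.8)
The plan is to use the finite criterion of Proposition~\ref{prop.finitely-homotopic}, together with the ``uniqueness of homotopies'' machinery, to promote the strong homotopy relative to $\{x_{0}\}$ to a strong homotopy relative to the whole disk $D$. Since $D$ has only one point of $F$ in its relevant structure (namely $x_0$ is interior to $D$, and $f$ is the identity on $D$), the set $F\cup D=D$ has a dense subset, say $D_{0}$, consisting of $x_{0}$ together with a countable dense subset of $D\setminus\{x_{0}\}$; by Proposition~\ref{prop.finitely-homotopic} it suffices to show that $f$ is strongly homotopic to the identity relatively to every finite subset $E$ of $D_{0}$. Fix such an $E$; write $E=\{x_{0}\}\cup\{y_{1},\dots,y_{r}\}$ with $y_{i}\in D\setminus\{x_{0}\}$ (the case $x_0\notin E$ being easier and handled the same way).

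First I would observe that, by the hypothesis just established, the trajectory under $H$ of any point of $D\setminus\{x_{0}\}$ sufficiently close to $x_{0}$ is contractible in $D\setminus\{x_{0}\}$; and by Corollary~\ref{coro.uniqueness-homotopy} (uniqueness of homotopies) applied in $S\setminus\{x_{0}\}$, combined with the fact that $f$ is the identity on $D$, the trajectory under $H$ of \emph{every} point of $D\setminus\{x_{0}\}$ is a loop which is contractible in $S\setminus\{x_{0}\}$. Indeed the set of points whose trajectory is a contractible loop is closed, contains a neighbourhood of $x_0$ in $D$, and one checks it is open (using local triviality of the fibration from Lemma~\ref{l.fibration} / Fact~\ref{fact.commute}), so it is all of $D\setminus\{x_0\}$ by connectedness. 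Consequently each point $y_{i}$ has contractible trajectory. Now I would run the same argument as in the proof of Claim~\ref{c.homotopie} (the selection-technique version): pass to the universal cover $X$ of $S\setminus\{x_{0}\}$, lift $H$ to obtain a lift $\tilde f$ of $f$ which fixes every lift of each $y_i$ (contractibility of the trajectories), and fixes a neighbourhood of each lift of $D\setminus\{x_0\}$ pointwise since $f=\mathrm{Id}$ on $D$. Define, for $z\in S$, the class $C_{z}$ to be the singleton $\{z\}$ when $z\in\{x_{0}\}\cup\{y_{1},\dots,y_{r}\}$, and otherwise the set of paths from $z$ to $f(z)$ homotopic to the trajectory of $z$ under $H$ in $S\setminus E$. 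One verifies, exactly as in Claim~\ref{c.homotopie}, that $z\mapsto C_{z}$ satisfies all hypotheses of Proposition~\ref{prop.selection}: the $C_{z}$ are non-empty (because the connected component of $X\setminus(\text{lifts of }E\setminus\{x_0\})$ containing a lift $\tilde z$ is simply connected and invariant under $\tilde f$), arcwise and simply connected, locally trivial off $E$, and lower semi-continuous / equi-locally (simply) connected at the points of $E$ (at $x_{0}$ this uses that $M=S\setminus\{x_0\}$ is not a plane unless $S$ is the sphere — and the sphere-with-one-marked-point case can be absorbed by the Alexander trick, since there $D$ may be taken so that $S\setminus D$ is a disc; at the $y_{i}$'s it is the local model near an interior marked point where $f$ is already the identity).

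Applying Proposition~\ref{prop.selection} gives a continuous selection $z\mapsto\gamma_{z}$, hence a strong homotopy from the identity to $f$ relative to $E$. Since $E$ was an arbitrary finite subset of the dense set $D_{0}\subset D$, Proposition~\ref{prop.finitely-homotopic} yields a strong homotopy from the identity to $f$ relative to $D$, which is the assertion of the lemma. The main obstacle I anticipate is the verification of lower semi-continuity of $z\mapsto C_{z}$ at the point $x_{0}$: one must rule out, or otherwise handle, the degenerate topologies of $S\setminus\{x_{0}\}$ (sphere, plane, annulus, Möbius band) exactly as in Lemma~\ref{lemm.semi-continuity-proper} and Claim~\ref{c.homotopie}, and check that the contractibility of trajectories near $x_{0}$ (which we arranged by the twist normalization before the lemma) is precisely what is needed to make the fundamental-group computation with $\pi_{1}(S\setminus E,z)$ go through. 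Everything else is a routine transcription of the selection argument already carried out twice in the paper.
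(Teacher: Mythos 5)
Your reduction via Proposition~\ref{prop.finitely-homotopic} (check strong homotopy relatively to every finite subset $E$ of a dense subset of $D$, then conclude for $D$) is legitimate and not circular, and your preliminary observation that every point of $D\setminus\{x_0\}$ has a trajectory under $H$ which is a loop contractible in $S\setminus\{x_0\}$ is correct. The gap is in the finite case itself, which is in fact the hard nut. First, your class $C_z$ is ill-defined: $H$ is a strong homotopy relatively to $\{x_0\}$ only, so the trajectory of a point $z\notin E$ may pass through the marked points $y_1,\dots,y_r$, and ``homotopic to the trajectory of $z$ under $H$ in $S\setminus E$'' does not make sense without something like Lemma~\ref{l.1} to push the homotopy off $E$ first. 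Second, the verification you invoke collapses: the lifts of the finite set $\{y_1,\dots,y_r\}$ form a discrete set in the universal cover $X$ of $S\setminus\{x_0\}$, which does \emph{not} disconnect $X$, and $X$ minus a discrete set is certainly not simply connected; so there is no canonical homotopy class of paths from $\tilde z$ to $\tilde f(\tilde z)$ avoiding these lifts, and the hypotheses of Proposition~\ref{prop.selection} (arcwise and simply connected $C_z$, semicontinuity at the $y_i$) cannot be checked ``exactly as in Claim~\ref{c.homotopie}''. That claim rests crucially on the fixed objects being \emph{essential closed curves}, whose lifts are proper lines cutting $X$ into simply connected $\tilde f$-invariant pieces; this is precisely why, in the analogous situation of Lemma~\ref{l.straightening-points}, the paper first threads essential simple closed curves through the marked points (Claim~\ref{claim.exist-curves}), straightens them by isotopies using Proposition~\ref{prop.redresser-les-courbes} (Claim~\ref{c.redresser-courbes}), and only then runs the hyperbolic/selection argument. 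Reproducing that here is possible but is substantial work, not a routine transcription; as written, your finite-set step would fail.

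For comparison, the paper's proof of Lemma~\ref{l.from-point-to-disk} avoids finite subsets altogether and is much shorter: choose a homeomorphism $\Psi$ from $S\setminus\{x_0\}$ to $S\setminus D$ supported in a small disk neighbourhood $V$ of $D$ (a blow-up of $x_0$ into $D$), define $C_z$ for $z\notin D$ as the $\Psi$-image of the homotopy class of the trajectory of $z$ under $H$, set $C_z=\{z\}$ for $z\in D$, and apply Proposition~\ref{prop.selection} directly with $F=D$; the lower semicontinuity at points of $\partial D$ is exactly where the normalization that trajectories of points of $D$ near $x_0$ are contractible in $D\setminus\{x_0\}$ enters. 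If you want to salvage your route, you would have to either import the curve-straightening machinery of Lemma~\ref{l.straightening-points} to handle the points $y_i$, or, better, replace the finite subsets of $D$ by the single obstacle $D$ itself and transport trajectory classes across the blow-up $\Psi$ as the paper does.
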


\begin{proof}
Let $z$ be a point of $S \setminus D$, we define  a homotopy class of curve from $z$ to $f(z)$ in $S \setminus D$ as follows. Let $V$ be a disk neighborhood of $D$ that does not contain $z$, and choose a homeomorphism $\Psi$ between $S \setminus \{x_{0}\}$ and  $S \setminus D$ that is supported in $V$. Now $C_{z}$ is defined as the homotopy class of the image under $\Psi$ of the trajectory of $z$ under the strong homotopy $H$. Note that $C_{z}$ does not depend on the choice of $\Psi$, and in particular $C_{z}$ depends continuously on $z$ in the sense of Claim~\ref{claim.pi-1-Cz}. For $z \in D$, define $C_{z}$ as the constant curve at $z$. We leave it to the reader to check that the map $z \mapsto C_{z}$ satisfies the hypotheses of Proposition~\ref{prop.selection}, with $F=D$. Note in particular that the semi-continuity at points of $\partial D$ follows from the fact that the trajectories of points of $D$ under $H$ are homotopic to constants in $D \setminus \{x_{0}\}$. The conclusion of Proposition~\ref{prop.selection} yields a strong homotopy from the identity to $f$ relatively to $D$, as wanted.
\end{proof}

Lemma~\ref{l.from-point-to-disk} allows to apply the result of subsection~\ref{ss.two-different-points} with $F=D$ (which contains more than two points). Thus we get an isotopy from $f$ to the identity relative to $D$. Since $D$ contains $x_{0}$, this gives an isotopy relative to $\{x_{0}\}$, as wanted.

%
%
%
%
%

\subsection{The case where $F$ is empty}
\label{ss.empty-F}

As already explained at the beginning of the section, the analog of Proposition~\ref{p.strong-homotopy-implies-strong-isotopy} in the case where $F$ is empty is the fact that Property (W2) implies Property (W1). The last statement is the main result of Esptein's classical paper~\cite{epstein}, but we include a sketch of proof below for sake of completness. 

Let $S$ be a boundaryless connected surface (not necessarily compact, nor orientable), and $f$ be a homeomorphism of $S$. We assume that $f$ is properly homotopic to the identity, and we aim to prove that $f$ is actually isotopic to the identity.

\paragraph{\it First case: $S$ has an isolated planar end $z$.}
In this case, one can extend the homeomorphism $f$ as an homeomorphism $\bar f$ of the surface $S\cup \{z\}$, fixing $z$. The proper homotopy extends as a proper homotopy on $S\cup \{z\}$, fixing $z$, between $\bar f$ and the identity. We are thus reduced to the case $F$ has a single point $z$. Proposition~\ref{p.strong-homotopy-implies-strong-isotopy} applies in this case, and provides us with an isotopy joining $f$ to the identity.

\bigskip

\emph{In the sequel, we assume that $S$ has no isolated planar end. In particular, $S$ is not the plane, the annulus, nor the M\"obius band.}

\paragraph{\it Second case: $S$ contains an essential $2$-sided simple closed curve $c$.}
Let $H$ be a homotopy from the identity to $f$.
Up to composing $H$ by an isotopy, one can assume that the point $c(0)$ is fixed by $f$,
and that its trajectory under $H$ is nulhomotopic.
Proposition~\ref{prop.redresser-les-courbes} applies:
there exists an isotopy $I$ from the identity to a homeomorphism $g$ such that
$g \circ f \circ c=c$ and such that $c(0)$ is fixed under $I$.
Up to composing $H$ by the isotopy $I$, one can assume that
$f \circ c=c$ and that the trajectory of $c(0)$ under $H$ is still nulhomotopic.

Let us consider the universal cover $X$ of $S$ and a lift
$\tilde c$ of $c$. We also consider the lift $\tilde f$ of $f$ associated to the homotopy $H$.
Since $f \circ c=c$ and the trajectory of a point of $c$ is contractible,
each lift of $c$ is pointwise fixed by $\tilde f$.
We claim that the two connected components of $X\setminus \tilde c$
are fixed by $\tilde f$. Otherwise, using that any other lift of $c$ is fixed,
one deduces that $\tilde c$ is the unique lift of $c$. It follows that the automorphism
group of the covering is either trivial or $\mathbb{Z}$ and $S$ is either the plane, the annulus
or the M\"obius band. These cases have been excluded.
Arguing as in the proof of Claim~\ref{c.homotopie}, one deduces that $f$ is strongly homotopic to the
identity relatively to $c$. Proposition~\ref{p.strong-homotopy-implies-strong-isotopy} allows to conclude that $f$ is isotopic to the identity.

\paragraph{\it Third case: $S$ does not contain an essential $2$-sided simple closed curve.}
From Corollary~\ref{c.essential-2-sided}, $S$ is the plane, the sphere or the projective plane.
The first case has already been excluded. We solve the two other cases using the following propositions.

\begin{prop}\label{p.sphere}
Let us consider a homeomorphism $f$ of the sphere. The three properties are equivalent:
\begin{itemize}
\item[(i)] $f$ preserves the orientation,
\item[(ii)] $f$ is homotopic to the identity,
\item[(iii)] $f$ is isotopic to the identity.
\end{itemize}
\end{prop}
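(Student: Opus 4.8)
The implications $(iii) \Rightarrow (ii)$ is trivial, and $(ii) \Rightarrow (i)$ follows because an orientation-reversing homeomorphism of $\bbS^2$ acts by $-1$ on $H_2(\bbS^2;\bbZ)$ while a homeomorphism homotopic to the identity acts trivially on homology (this is the last sentence of Remark~\ref{r.exceptions}, already noted in the excerpt). So the only substantive content is the implication $(i) \Rightarrow (iii)$: every orientation-preserving homeomorphism of the sphere is isotopic to the identity. I would first recall that this is classical (Alexander), but since the paper is aiming at self-containedness it is natural to give a short argument using the tools already developed.

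Here is the plan for $(i) \Rightarrow (iii)$. Let $f$ be an orientation-preserving homeomorphism of $\bbS^2$. Pick a point $p \in \bbS^2$; since the group of homeomorphisms acting on $\bbS^2$ is transitive and the point-evaluation map $\homeo(\bbS^2) \to \bbS^2$, $h \mapsto h(p)$, is a fibration (Lemma~\ref{l.fibration}), after composing $f$ with an isotopy we may assume $f(p) = p$. Now choose a small closed disk $D$ around $p$; using the fibration property again, or an elementary direct construction, we may further isotope $f$ so that $f$ fixes $\partial D$ pointwise and preserves the orientation there, hence — since $f$ preserves the orientation globally — $f$ maps the oriented circle $\partial D$ to itself respecting the orientation. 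Applying Proposition~\ref{prop.redresser-les-courbes} (straightening a simple closed curve, the two-sided case, with the basepoint-fixing refinement) to the curve $c = \partial D$, we may assume $f$ fixes $\partial D$ pointwise and moreover, by Alexander's trick (Proposition~\ref{p.alexander}) applied inside the disk $D$, that $f$ restricts to the identity on $D$. So $f$ now restricts to a homeomorphism of $\bbS^2 \setminus \mathrm{int}(D)$, which is a closed disk $D'$, fixing its boundary pointwise; Alexander's trick in $D'$ then gives an isotopy from $f$ to the identity, fixing $\partial D'$ throughout.

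The one point requiring care is the claim that after the preliminary isotopies $f$ genuinely maps the \emph{oriented} circle $\partial D$ onto itself preserving orientation — this is where global orientation-preservation is used. Concretely: once $f(p)=p$, linearize/collar near $p$ so that $f$ near $p$ is conjugate to a homeomorphism of the closed disk fixing the center; such a homeomorphism preserves orientation near the center if and only if $f$ does globally (the sphere is connected), and an orientation-preserving homeomorphism of a disk fixing the center is isotopic, rel center, to one that is the identity on a slightly smaller boundary circle — this is again Alexander's trick. With that in hand the rest is routine. I do not expect any real obstacle here; the main care is just in assembling the elementary isotopies and invoking Proposition~\ref{prop.redresser-les-courbes} with the correct hypotheses (essential: the curve $\partial D$ is \emph{not} essential in $\bbS^2$, so one should either phrase the straightening directly as an Alexander-type argument in the two complementary disks, or observe that the relevant statement for a nullhomotopic simple closed curve reduces to Schoenflies plus Alexander). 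I would therefore write the argument directly via Schoenflies and Alexander rather than literally citing Proposition~\ref{prop.redresser-les-courbes}, keeping the proof short.
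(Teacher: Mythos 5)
Your proposal is correct in substance, but it follows a genuinely different route from the paper. The paper does not argue via a disk at all: it first isotopes $f$ so that it fixes two points $p,q$, looks at the action of $f$ on the fundamental group of the annulus $A=\bbS^2\setminus\{p,q\}$, and in the trivial-action case runs its own machinery — Proposition~\ref{p.W4-W3} to get a homotopy to the identity on $A$, Proposition~\ref{p.homotopic-implies-proper} to make it proper (hence a strong homotopy rel $\{p,q\}$), and then the hard implication (S2)$\Rightarrow$(S1), Proposition~\ref{p.strong-homotopy-implies-strong-isotopy}; in the remaining case $f$ acts as $-\id$ on $\pi_1(A)$, and composing with a reflection shows $f$ is isotopic to that reflection, hence orientation-reversing, of degree $-1$, and not homotopic to the identity. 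Your argument is the classical, more elementary Alexander--Schoenflies proof of (i)$\Rightarrow$(iii): fix a point via Lemma~\ref{l.fibration}, make $f$ the identity on a small disk, and finish with Alexander's trick (Proposition~\ref{p.alexander}) on the complementary disk; you correctly observe that Proposition~\ref{prop.redresser-les-courbes} does not apply to $\partial D$ (it is not essential in $\bbS^2$) and that one should argue directly. The one step you should phrase more carefully is the reduction to ``$f$ is the identity on a disk'': since $f$ need not preserve any disk around $p$, it is not literally ``conjugate to a homeomorphism of the closed disk fixing the center''; the correct maneuver is the one the paper uses in Section~\ref{s.epstein} for $F=\{x_0\}$ — choose $D$ with $D\cup f(D)\subset\inte(U)$, use orientation preservation to see that $f$ carries the oriented circle $\partial D$ to the oriented circle $\partial f(D)$ coherently, build a homeomorphism $\Phi$ supported in $U$ agreeing with $f$ on $D$ (interpolating on the annuli $U\setminus\inte(D)$ and $U\setminus\inte(f(D))$), and replace $f$ by $\Phi^{-1}f$. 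With that fix, your proof is complete and arguably more self-contained; what the paper's route buys is a uniform treatment by the machinery already developed, together with the extra information that every orientation-reversing homeomorphism of $\bbS^2$ is isotopic to a reflection.
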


\begin{proof}
Up to deforming $f$ by an isotopy, one can assume that
it has (at least) two fixed points $p,q$.

If $f$ acts trivially on the fundamental group of the annulus
$A:=S\setminus\{p,q\}$, then from Proposition~\ref{p.W4-W3}, the restriction $f|A$ is
homotopic to the identity; moreover it preserves each end. By Proposition~\ref{p.homotopic-implies-proper},
one deduces that $f|A$ is properly homotopic to the identity, hence $f$ is homotopic to the identity relatively to $\{p,q\}$.
One can then apply  Proposition~\ref{p.strong-homotopy-implies-strong-isotopy}: $f$ is isotopic to the identity.

In the other cases $f$ acts as $-\id$ on the fundamental group of $A$.
Let $h$ be a symmetry of the annulus which preserves the ends:
$f\circ h$ acts trivially on the fundamental group of the annulus, hence is isotopic to the
identity. We have thus shown that $f$ is isotopic to $h$. In particular it reverses the orientation.
Note also that it has degree $-1$, hence cannot be homotopic to the identity.
\end{proof}

\begin{prop}\label{p.projective-plane}
Any homeomorphism $f$ of the projective plane is isotopic to the identity.
\end{prop}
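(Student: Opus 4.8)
The plan is to pass to the orientation double cover $\pi\colon\bbS^2\to\mathbb{RP}^2$ and combine the Lefschetz fixed point theorem with the results already established for the open Möbius band. Since $\bbS^2$ is simply connected, $f$ lifts to a homeomorphism of $\bbS^2$; this lift normalizes, hence commutes with, the deck group, which is generated by the antipodal map $a$ (a free involution, which we may take to be the genuine antipodal map, and which reverses the orientation of $\bbS^2$). Among the two lifts $\tilde f$ and $a\circ\tilde f$ exactly one preserves the orientation of $\bbS^2$, and I choose $\tilde f$ to be that one. Its Lefschetz number is $1-0+1=2\neq0$, so $\tilde f$ has a fixed point $p$; put $\bar p:=\pi(p)$, which is a fixed point of $f$. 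Because $\pi$ restricts to a homeomorphism of a small disc around $p$ onto a small disc around $\bar p$ and $\tilde f$ preserves orientation, $f$ \emph{locally preserves the orientation at $\bar p$} (it sends some chart at $\bar p$ to a chart at $\bar p$ respecting a chosen local orientation).

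Next I would set $M:=\mathbb{RP}^2\setminus\{\bar p\}$, an open Möbius band, and $g:=f_{|M}$, and show that $g$ acts trivially on $\pi_1(M)\cong\bbZ$. Let $\delta$ be the boundary circle of a small disc $D\ni\bar p$; then $[\delta]$ is twice a generator of $\pi_1(M)$, since the boundary circle of a Möbius band double-covers its core. As $f$ locally preserves the orientation at $\bar p$, for $D$ small enough $g(\delta)$ is freely homotopic to $\delta$ inside a punctured neighbourhood of $\bar p$, so $g_\ast[\delta]=[\delta]$; since $g_\ast\in\{\pm\mathrm{id}\}$ and $[\delta]\neq0$ in the torsion-free group $\bbZ$, this forces $g_\ast=\mathrm{id}$. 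Now $M$ is neither the sphere nor the projective plane, so Proposition~\ref{p.W4-W3} shows $g$ is homotopic to the identity in $M$; and $M$ is neither the plane nor the open annulus, so Proposition~\ref{p.homotopic-implies-proper} upgrades this to a \emph{proper} homotopy from the identity to $g$ in $M$.

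Finally, the open Möbius band $M$ has a single end, which is planar and isolated, and whose compactifying point is precisely $\bar p$; thus $\mathbb{RP}^2$ is canonically identified with the end-compactification of $M$. By Remark~\ref{r.characterization-proper-homotopies}, the proper homotopy from $\mathrm{id}_M$ to $g$ extends to a homotopy of $\mathbb{RP}^2$ fixing $\bar p$ at all times, i.e.\ to a strong homotopy from the identity to $f$ relative to $F:=\{\bar p\}$ (property (S2)). Since $F$ is non-empty, Proposition~\ref{p.strong-homotopy-implies-strong-isotopy} then provides an isotopy from the identity to $f$ relative to $\{\bar p\}$, and in particular $f$ is isotopic to the identity, which is the claim.

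The main obstacle — really the only subtle point — is the choice of fixed point in the first step: an arbitrary fixed point of $f$ need not be one where $f$ locally preserves the orientation (for instance the rotation of $\bbS^2$ by angle $\pi$ about an axis meeting the equator descends to a homeomorphism of $\mathbb{RP}^2$ that is isotopic to the identity yet locally orientation-reversing at one of its fixed points). This is why I apply the Lefschetz theorem to the orientation-preserving lift $\tilde f$ rather than to $f$ directly: a fixed point arising from such a lift is automatically ``good''. The remaining verifications are routine — that $\mathbb{RP}^2$ is the end-compactification of the Möbius band (with the added point planar, isolated, and equal to $\bar p$), and the relation $[\delta]=2[\mathrm{core}]$ in $\pi_1(M)$ — and I would spell them out only briefly.
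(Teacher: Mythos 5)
Your argument is correct, and its skeleton is the one the paper uses: puncture $\mathbb{R}P^2$ at a fixed point, apply Proposition~\ref{p.W4-W3} and Proposition~\ref{p.homotopic-implies-proper} to the open M\"obius band, turn the proper homotopy into a strong homotopy relative to the puncture via Remark~\ref{r.characterization-proper-homotopies}, and conclude with Proposition~\ref{p.strong-homotopy-implies-strong-isotopy}. Where you genuinely diverge is in the choice of the fixed point. The paper takes an arbitrary fixed point $p$ (Lefschetz applied to $f$ on $\mathbb{R}P^2$ itself) and must then distinguish two cases according to whether $f$ acts as $+\id$ or $-\id$ on $\pi_1(\mathbb{R}P^2\setminus\{p\})\cong\bbZ$; in the $-\id$ case it composes $f$ with an explicit homeomorphism $h$ coming from a reflection of $\bbS^2$ and checks separately, using the rotation lift of $h$, that $h$ is isotopic to the identity. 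You instead take the fixed point to be the projection of a fixed point of the orientation-preserving lift $\tilde f$ of $f$ to $\bbS^2$ (Lefschetz number $1+\deg\tilde f=2$), which guarantees that $f$ locally preserves orientation at the puncture; since the boundary of a small disc about the puncture is freely homotopic to its image in the punctured disc and represents twice the core, hence a nonzero element of $\bbZ$, the outer action of $f_{|M}$ (which is $\pm\id$) is forced to be trivial and the paper's second case simply never occurs. Your variant thus trades the explicit model homeomorphism and its rotation isotopy for a short lifting-plus-Lefschetz argument on the sphere; both routes are brief, but yours is arguably cleaner in that it isolates the only delicate point (an arbitrary fixed point of $f$ may be one of local orientation reversal, as in your rotation-by-$\pi$ example, where the bad fixed points are those on the projected fixed circle of the reflection lift). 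All the results you invoke are applied within their hypotheses --- the open M\"obius band is neither the sphere, the projective plane, the plane nor the open annulus --- and none of them depends on Proposition~\ref{p.projective-plane}, so there is no circularity.
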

\begin{proof}
By Poincar\'e-Lefschetz Theorem, $f$ has at least one fixed point $p$.

If $f$ acts trivially on the fundamental group of the M\"obius band
$B:=\mathbb{R} P^2\setminus \{p\}$, then from Propositions~\ref{p.W4-W3}
and~\ref{p.homotopic-implies-proper}, the restriction $f|B$ is properly homotopic to the identity. Hence, $f$ is strongly homotopic to the identity relatively to $\{p\}$. One can then apply Proposition~\ref{p.strong-homotopy-implies-strong-isotopy}: $f$ is isotopic to the identity.

In the other cases $f$ acts as $-\id$ on the fundamental group of $B$. One considers the universal cover $\pi\colon \mathbb{S}^2\to \mathbb{R} P^2$
and the two preimages $q_1,q_2$ of $p$ (which are antipodal points). Let $S$ be the symmetry of the sphere $\mathbb{S}^2$ with respect to a plane through $\{q_1,q_2\}$. Since $S$ commutes with the antipodal map, it defines a homeomorphism $h$ of $\mathbb{R} P^2$ which acts as $-\id$ on the fundamental groups of $B$ and of $\mathbb{R} P^2$.
The homeomorphism $f\circ h$ acts trivially on the fundamental group of $B$. Therefore, $f\circ h$ is isotopic to the identity (see the beginning of the proof). Equivalently,  $f$ is isotopic to $h$. Now, consider the composition of the symetry $S$ with the antipodal map of $\mathbb{S}^2$. This is another lift of the homeomorphism $h$. Observe that $R$  is a rotation of $\mathbb{S}^2$. The ``obvious isotopy" (made of rotations) from the identity to $R$ commutes with the antipody, and therefore defines an isotopy of homeomorphisms of $\mathbb{R} P^2$ from the identity to $h$. Hence, $h$ and $f$ are isotopic to the identity.
\end{proof}


\section{Applications to unlinked continua}
\label{sec.unlinked-continua}

The purpose of this section is to explore the consequences of Theorem~\ref{theo.finitely-isotopic} in the particular case where the closed set $F$ is connected. We begin by the case where the underlying surface $S$ is the plane or the sphere.

\bigskip

Given a surface $S$ and a closed subset $F$ of $S$, we denote by $\homeo^+(S, F)$ the set of the homeomorphisms of $S$ that preserve the orientation and fix $F$ pointwise. This set is endowed with the topology of the uniform convergence on compact subsets of $S$. 

\begin{coro}
\label{c.plane-or-sphere}
Let $S$ be either the plane or the two-dimensional sphere, and $F$ be a closed connected subset of $S$. Then  every homeomorphism $f\in \homeo^+(S, F)$ is isotopic to the identity relatively to $F$. Equivalently, the space $\homeo^+(S, F)$ is arcwise connected. 
\end{coro}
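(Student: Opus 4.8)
The plan is to reduce Corollary~\ref{c.plane-or-sphere} to Theorem~\ref{theo.finitely-isotopic}, the ``finitely isotopic'' criterion, by showing that every finite subset of a (conveniently chosen) dense subset $F_0$ of $F$ is unlinked for $f$. Since a connected set in the plane or the sphere with at least two points is uncountable and has no isolated points, and it contains a countable dense subset $F_0$; in fact any finite subset of a connected set in $S^2$ or $\bbR^2$ can be joined by an arc inside $F$, which is the geometric feature we will exploit. The first step is to reduce the sphere case to the plane case: if $F=S^2$ there is nothing to prove, and otherwise, picking a point $\infty\in S^2\setminus F$ fixed by $f$ (after an isotopy supported near a point of the complement we may assume such a point exists, or simply work in $S^2\setminus\{\infty'\}$ for $\infty'$ in the interior of the complement), we view $F$ as a closed connected subset of the plane $S^2\setminus\{\infty\}\cong\bbR^2$. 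So from now on $S=\bbR^2$.

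\textbf{The finite case.} Next I would prove that every finite subset $\{x_1,\dots,x_n\}$ of $F$ is unlinked for $f$, i.e. there is an isotopy from the identity to $f$ fixing each $x_i$. Here is the key point: since $F$ is connected and closed in the plane, and contains the $x_i$'s, one can find a topological tree $T$ (a finite union of arcs) contained in $F$ whose vertex set contains $\{x_1,\dots,x_n\}$; more precisely $T$ is connected, simply connected, compact, hence $T$ is a \emph{cellular} subset of the plane. Since $f$ fixes $F$ pointwise, it fixes $T$ pointwise. A homeomorphism of the plane preserving the orientation and fixing pointwise a cellular compact set is isotopic to the identity relative to that set: indeed $T$ has a neighborhood basis of closed topological disks $D$ with $f(D)=D$ fixing $\partial D$ (here one uses that $f$ preserves orientation, so it does not reverse $\partial D$), and then the Alexander trick (Proposition~\ref{p.alexander}) on $D$, combined with the contractibility of $\homeo(\bbR^2,\bbD^2)$ on the outside (or again Alexander's trick applied after uniformizing the exterior of a large disk), produces the required isotopy fixing $T$, hence fixing $\{x_1,\dots,x_n\}$. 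This shows every finite subset of $F$ — not just of a dense subset, but of all of $F$ — is unlinked, so in particular the hypothesis of Theorem~\ref{theo.finitely-isotopic} holds with $F_0=F$.

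\textbf{Conclusion and main obstacle.} Applying Theorem~\ref{theo.finitely-isotopic} to $f\in\homeo^+(\bbR^2,F)$ then yields directly that $F$ is unlinked for $f$, i.e. $f$ is isotopic to the identity relatively to $F$; since $f$ was arbitrary this proves $\homeo^+(\bbR^2,F)$ is arcwise connected, and the sphere case follows by the reduction above. The routine degenerate cases ($F$ empty or a single point, where one invokes Alexander's trick directly, or $F=S^2$) should be dispatched separately. The step I expect to be the main obstacle is the construction of the tree $T\subset F$ with prescribed vertices: one must be careful that a general closed connected (possibly wildly non-locally-connected, e.g. pseudo-arc-like) subset of the plane need not contain \emph{any} arc at all. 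So the honest argument cannot go through arcs inside $F$; instead I would not try to put a tree inside $F$, but rather argue that $F$ itself (or $F$ together with finitely many complementary components, i.e. $\fill(F)$) is a cellular compact set — this holds because $F$ is connected and, passing to $\fill(F)$ if necessary, its complement is connected and unbounded — and then run the disk-neighborhood/Alexander argument directly with $F$ in place of $T$. This already handles \emph{all} of $F$ at once in the finite-set step, which is why one could even bypass Theorem~\ref{theo.finitely-isotopic}; but keeping the two-step route (finite subsets, then the theorem) is cleaner since the cellularity of $\fill(F)$ and the orientation-preserving disk trick is exactly the substance of the ``finite'' input the theorem needs, and avoids re-deriving the continuity estimates of Theorem~\ref{theo.finitely-isotopic} by hand. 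The genuinely delicate verification is that an orientation-preserving homeomorphism of the plane fixing pointwise a cellular continuum is isotopic to the identity relative to it — this requires the Schoenflies theorem to get the invariant disk neighborhoods and a little care that the gluing of the Alexander isotopies inside and outside the disk is continuous.
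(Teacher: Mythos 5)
Your overall architecture (reduce to the plane, verify unlinkedness of finite subsets of $F$, then invoke Theorem~\ref{theo.finitely-isotopic}) is the same as the paper's, but the substance of the finite case is missing, and that is where the real difficulty lies. Your first attempt (a tree $T\subset F$ through the chosen points) fails for the reason you yourself give, and your fallback is not a proof: the claim that an orientation-preserving homeomorphism fixing a cellular continuum pointwise is isotopic to the identity \emph{relative to that continuum} is (for compact $F$ with connected complement) exactly the corollary you are trying to prove, and the argument you sketch for it is the naive one the paper explicitly flags as breaking down. There is no reason why $f$ should admit a basis of \emph{invariant} disk neighbourhoods $D$ of $\fill(F)$, let alone ones on which $f$ fixes $\partial D$ pointwise ($f$ fixes only $F$); and even after isotoping $f$ rel $F$ to arrange invariance of one disk, the continuity at points of $F$ of a family of Alexander-type isotopies glued over a shrinking basis of disks is precisely the issue that the Riemann-mapping/Carath\'eodory argument settles only when $F$ is locally connected -- the authors even state they suspect the Alexander family is \emph{not} continuous for general $F$. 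So your "genuinely delicate verification" is either circular or at least as hard as the theorem whose machinery you wanted to avoid re-deriving. (Two smaller points: a closed connected $F\subset\bbR^2$ need not be compact, so the cellularity language does not even apply; and in the sphere case you cannot simply work in $S^2\setminus\{\infty'\}$, since $f$ need not fix $\infty'$ -- the paper first uses Theorem~\ref{t.brown-kister} to see that $z$ and $f(z)$ lie in the same component of $S^2\setminus F$ and corrects $f$ by an isotopy supported there.)

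What your proposal never confronts is the actual obstruction in the finite case: for three or more points of $F$, an orientation-preserving homeomorphism fixing them can represent a nontrivial pure braid, and the only thing ruling this out is that the points sit on a common pointwise-fixed \emph{connected} set. The paper converts this into a usable invariant: for any isotopy $I$ from the identity to $f$ in $\homeo^+(\bbR^2)$, the linking number $\mathrm{link}(I,x,y)$ is locally constant on the space of pairs of fixed points, and the space of pairs of a connected set is connected (Lemma~\ref{l.connected}), so the linking number is constant on pairs in $F$ (Corollary~\ref{c.linking-constant}); after composing $I$ with a loop of rotations it is identically zero. Then an induction on the cardinality of the finite set, using the fibration Lemma~\ref{l.fibration} and the connectedness of $F$ to homotope the trajectory of the new point to a small loop near an already-fixed point of $F$ (where the vanishing linking number makes it contractible), produces the isotopy rel the finite set. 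Some argument of this kind -- detecting and killing the braiding via the connectedness of $F$ -- is indispensable, and it is absent from your proof.
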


\begin{rema}
The connectedness of $F$ is crucial. Actually, elaborating on the proof of Corollary~\ref{c.plane-or-sphere} presented below, one can prove the following more precise statements:
\begin{itemize}
\item The space $\homeo^+(\bbR^2, F)$ is arcwise connected if and only if $F$ is empty, or $F$ is connected, or $F$ consists in exactly two points.
\item The space $\homeo^+(\bbS^2, F)$ is arcwise connected if and only if $F$ is empty, or $F$ is connected, or $F$ has two connected components at least one of which is a singleton, or $F$ has three connected components at least two of which are singletons
\footnote{To go further, assume $S$ is the sphere, $F$ a closed connected subset with a finite number $m_{1}$ of connected components, no less than 2. Then the group of connected components of $\homeo^+(\bbS^2, F)$ is isomorphic to the product of the group of the sphere pure braids with $m_{1}$ strands by $\bbZ^{m_{2}}$, where $m_{2}$ is the number of non trivial connected components (each factor $\bbZ$ corresponds to Dehn twists around a non trivial component).}.
\end{itemize}
\end{rema}

\bigskip

Let us discuss the locally connected \emph{vs} non locally connected case. For simplicity we work on the sphere.
Let $F$ be a closed connected subset of the sphere, and $f\in \homeo^+(S, F)$. We want to construct an isotopy from $f$ to the identity relative to $F$.
Assume first that the complement of $F$ is connected, and denote it by $O$: this is a simply connected open set, hence there is a Riemann conformal one-to one mapping  $\Phi: \bbD^2 \to O$. It is well know that the map $\Phi^{-1} f \Phi$ extends to a homeomorphism $\bar f$ of the closed disk which is the identity on the boundary (this is a consequence of Caratheodory's theory of \emph{prime ends}). Alexander's trick provides an isotopy $(\bar f_{t})$ in the closed disk, from $\bar f$ to the identity (see~\ref{sub.alexander}). For a fixed $t \in [0,1]$, we define
$f_{t} = \Phi \bar f_{t} \Phi^{-1}$ on $O$, and extend $f_{t}$ by the identity on $F$. This defines a family of homeomorphism of the two-sphere, such that $f_{0} = f$ and $f_{1}= \mathrm{Id}$.

Assume now that  $F$ is locally connected. Then by Caratheodory's theorem, the map $\Phi$ extends to a continuous map from the closed disk to the closure of $O$.
Using the uniform continuity of this extension, it is not difficult to prove that the map $t \mapsto f_{t}$ is continuous, and in this case $(f_{t})$ is an isotopy from $f$ to the identity, as required by Corollary~\ref{c.plane-or-sphere}. Furthermore, this construction is still valid  when the complement of $F$ has several connected components: indeed the local connectedness entails that for every $\varepsilon>0$, all connected components but a finite number have diameter less than $\varepsilon$, thus a global isotopy may be constructed by applying independently Alexander's trick  on each component.

In the case when $F$ is not locally connected, however, this argument fails. 
We suspect that for some $f$ the family $(f_{t})$ given by the Alexander isotopy on the disk is \emph{not} continuous,
 but we have no proof to confirm this suspicion. 
In order to prove  Corollary~\ref{c.plane-or-sphere} in the general case, we will use the much more sophisticated techniques developped in this paper.

\bigskip

We now introduce some tools which will be used to prove Corollary~\ref{c.plane-or-sphere}. Given a metric space $C$, we will denote by $\mathcal{P}(C)$ the space of pairs $[x,y\}$ of distinct points of $C$, endowed with the natural metric.

\begin{lemm}
\label{l.connected}
If $C$ is connected, then so is $\cP(C)$. 
\end{lemm}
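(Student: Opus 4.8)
The statement is that if $C$ is connected then the space $\cP(C)$ of unordered pairs of distinct points of $C$ is connected. The natural strategy is to work with the ordered configuration space first and then push the connectedness through the quotient. Let $\widehat{\cP}(C) = \{(x,y) \in C \times C : x \neq y\}$ be the ordered pair space. There is an obvious continuous surjection $q\colon \widehat{\cP}(C) \to \cP(C)$, $(x,y) \mapsto \{x,y\}$, so it suffices to prove that $\widehat{\cP}(C)$ is connected. (A continuous image of a connected space is connected.)

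\textbf{Key steps.} First I would fix a basepoint: choose two distinct points $a_0, b_0 \in C$, which is possible since $\cP(C)$ is only nonempty, hence the claim only meaningful, when $C$ has at least two points. I claim every point $(x,y) \in \widehat{\cP}(C)$ lies in the same connected component as $(a_0,b_0)$. Consider the ``slice'' $\{x\} \times (C \setminus \{x\})$: since $C$ is connected and not a single point, $C\setminus\{x\}$ is nonempty, but it need not be connected, so I cannot directly conclude this slice is connected. The cleaner approach: show that for any fixed $x \in C$, the set $S_x := (\{x\} \times (C\setminus\{x\})) \cup ((C \setminus \{x\}) \times \{x\})$ together with a suitable ``bridge'' is connected, or better, argue as follows. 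For points $x \neq y$ in $C$, since $C$ is connected, $C$ is not the union of two disjoint nonempty open sets; in particular $C$ is not totally disconnected, and one shows: the set $A_{x,y} := (C\setminus\{y\}) \times \{y\} \,\cup\, \{x\}\times (C \setminus \{x\})$ is connected because both pieces are connected images — wait, $(C\setminus\{y\})\times\{y\}$ is homeomorphic to $C \setminus \{y\}$ which again may be disconnected. So the honest route is: prove $C \times C$ minus the diagonal is connected using that $C$ connected implies $C\times C$ connected, and that removing the diagonal $\Delta$ from a connected space $X = C\times C$ leaves a connected space provided $\Delta$ is ``small'' in the appropriate sense. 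The correct general lemma here is that if $C$ is connected then $C\times C \setminus \Delta$ is connected: given $(x,y)$ and $(x',y')$ off the diagonal, one connects them through the ordered configuration space by a chain — first move the first coordinate within $C\setminus\{y\}$... which fails if disconnected.

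\textbf{The honest argument.} I would instead use the following: since $C$ is connected with at least two points, $C$ is infinite (a connected metric space with $\geq 2$ points is uncountable, in fact), so removing finitely many points keeps it nonempty; and crucially one uses that $C$ connected implies: for any two points $x,y$, the space $C \setminus \{x\}$, while possibly disconnected, has the property that its closure is all of $C$ and every quasi-component meets near $x$. Rather than fight this, the slick proof is: define $f\colon C \times C \to \cP(C) \cup \{\ast\}$... Actually the intended proof is surely the elementary one: $\widehat{\cP}(C) \supset (\{a\}\times(C\setminus\{a\}))$, and one connects $(a,b)$ to $(b,a)$ via a path-like chain using connectedness of $C$ to find, for the subdivision, intermediate points — this is where the real work is. So the plan is: (1) reduce to $\widehat{\cP}(C)$ via the quotient map; (2) show $\widehat{\cP}(C)$ is connected by showing any two points lie in one connected subset, constructed as a finite union of sets each homeomorphic to an open subset of $C$ (slices with one coordinate fixed), glued along common points, using that $C$ connected forbids a clopen separation — concretely, if $\widehat{\cP}(C) = U \sqcup V$ were a separation, then for each $x$ the set $\{y : (x,y) \in U\}$ and $\{y : (x,y)\in V\}$ would give a clopen partition of $C\setminus\{x\}$, and combining over $x$ using connectedness of $C$ one derives a clopen partition of $C$ itself, contradiction. \textbf{The main obstacle} is exactly making this ``combining over $x$'' rigorous: one must check that the partition label (``which piece does $(x,y)$ land in'') is locally constant in $x$ as well as in $y$, and then use connectedness of $C$ in the $x$-variable to show the label of $(x,y)$ depends only on... nothing, forcing $U$ or $V$ empty. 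I expect this to take a short but careful topological argument rather than a calculation; I would present it as: assume $\widehat{\cP}(C) = U\sqcup V$ clopen, nonempty; pick $(x_0,y_0)\in U$; show $\{x : (x,y)\in U \text{ for all } y\neq x \text{ near } y_0\}$ is clopen and nonempty in $C$, hence all of $C$; iterate to get $V = \emptyset$.
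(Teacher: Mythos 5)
Your reduction to the ordered configuration space is the fatal step. It is simply not true that $\widehat{\cP}(C)=C\times C\setminus\Delta$ is connected when $C$ is: already for $C=[0,1]$ (or $C=\mathbb{R}$, or any arc --- exactly the kind of set that occurs as a connected fixed point set in this paper) the sets $\{(x,y):x<y\}$ and $\{(x,y):x>y\}$ form a clopen partition of $\widehat{\cP}(C)$. So the statement you set out to prove in step (2) is false, and your concluding sketch (assume a clopen partition $U\sqcup V$ of $\widehat{\cP}(C)$ and ``combine over $x$'' to get a clopen partition of $C$) cannot be made rigorous: with $U,V$ the two half-squares above, every slice $\{y:(x,y)\in U\}$, $\{y:(x,y)\in V\}$ is a clopen partition of $C\setminus\{x\}$, and no contradiction with the connectedness of $C$ arises. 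Connectedness of $\cP(C)$ is genuinely a statement about \emph{unordered} pairs --- it is the involution $(x,y)\mapsto (y,x)$ that glues the two halves back together --- so passing to ordered pairs loses exactly the point, and a continuous surjection $\widehat{\cP}(C)\to\cP(C)$ only transfers connectedness in the direction you cannot use.

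The obstacle you correctly identified (the slices are copies of $C\setminus\{x\}$, which may be disconnected) is the real content of the lemma, and the paper deals with it by a separation argument carried out directly in $\cP(C)$, not by clopen bookkeeping. Say that $z$ separates $x$ from $y$ if $x$ and $y$ lie in different connected components of $C\setminus\{z\}$. If $y$ does \emph{not} separate $x$ from $x'$, then $\{x,y\}$ and $\{x',y\}$ lie in the same connected component of $\cP(C)$, since the image of the component of $C\setminus\{y\}$ containing $x$ and $x'$ under the continuous map $w\mapsto\{w,y\}$ is connected. The key Fact --- proved in the paper using connectedness of $C$, via the observation that $z$ lies in the closure of the components of $C\setminus\{z\}$ in question --- is that if $y$ \emph{does} separate $x$ from $x'$, then $x$ does not separate $x'$ from $y$ and $x'$ does not separate $x$ from $y$; one then gets $\{x,y\}\sim\{x,x'\}\sim\{x',y\}$, and afterwards changes the second coordinate by the symmetric argument. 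Your proposal contains no analogue of this Fact, and without such an input no version of the ``combining over $x$'' step can go through, as the $C=[0,1]$ example shows.
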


\begin{proof}
Given three pairwise distinct points $x,y,z$ of $C$, we say that $z$ separates $x$ from $y$ if $x$ and $y$ do not lie in the same connected component of $C\setminus\{z\}$. We will use the following fact.

\begin{fact}
If $z$ separates $x$ from $y$, then $x$ does not separate $y$ from $z$ and $y$ does not separate $x$ from $z$. 
\end{fact}

\begin{proof}
Denote by $X$ and $Y$ the connected components of $C\setminus\{z\}$ containing $x$ and $y$ respectively. Since $C$ is connected, the point $z$ belongs to the closure of both $X$ and $Y$. Hence, $Y\cup\{z\}$ is a connected subset of $C$. Both the points $y$ and $z$ belong to $Y\cup\{z\}$, but the point $x$ does not belong to $Y\cup\{z\}$. This shows that $x$ does not separate $y$ from $z$. A symmetric argument shows that $y$ does not separate $x$ from $z$. 
\end{proof}

Let us come back to the proof of the lemma. We denote by $\sim$ the equivalence relation ``being in the same connected component" of $\mathcal{P}(C)$. We want to prove that all the elements of $\mathcal{P}(C)$ lie in the same equivalence class. For this purpose, we consider two arbitrary elements $\{x,y\}$ and $\{x',y'\}$ of $\mathcal{P}(C)$. 
If $y$ does not separate $x$ from $x'$, then we immediately get $\{x,y\}\sim \{x',y\}$. Otherwise, the fact above states that $x$ does not separate $x'$ from $y$, and that $x'$ does not separate $x$ from $y$. Since $x$ does not separate $x'$ from $y$, we have $\{x,x'\}\sim \{x,y\}$. And since $x'$ does not separate $x$ from $y$, we have $\{x,x'\}=\{x',x\}\sim \{x',y\}$. By transitivity, we get $\{x,y\}\sim\{x',y\}$. So we have proved that $\{x,y\}\sim\{x',y\}$ in every case. By symmetry, we also have $\{x',y\}\sim \{x',y'\}$. Again by transitivity, this yields $\{x,y\}\sim \{x',y'\}$. Since $\{x,y\}$ and $\{x',y'\}$ are arbitrary elements of $\mathcal{P}(C)$, this shows that $\cP(C)$ is connected and completes the proof Lemma~\ref{l.connected}.
\end{proof}

The main tool to prove Corollary~\ref{c.plane-or-sphere} is the concept of \emph{linking number}. Let $f$ be an orientation-preserving homeomorphism of the plane $\mathbb{R}^2$ and $I=(f_t)_{t\in [0,1]}$ be an isotopy from the identity to a homeomorphism $f$. If $x$ and $y$ are two distinct fixed points of $f$, we define $\mathrm{link}(I,x,y)$ to be the number of turns made by the vector $\overrightarrow{f_t(x)f_t(y)}$ as $t$ ranges from $0$ to $1$ (we insist on the fact that this number is defined only if $x\neq y$). Obviously, we have $\mathrm{link}(I,x,y) = \mathrm{link}(I,y,x)$, and this number depends continuously on the pair $\{x,y\}$. Since it is an integer, it is locally constant on the space $\cP(\fix(f))$.  Using Lemma~\ref{l.connected}, we get the following consequence:

\begin{coro}
\label{c.linking-constant}
If $I$ is an isotopy of planar homeomorphisms joining the identity to a homeorphism $f$, then the linking number $\mathrm{link}(I,x,y)$ is constant on $\cP(C)$ for every connected subset $C$ of $\fix(f)$.
\end{coro}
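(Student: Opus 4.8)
The plan is to deduce the statement immediately by combining Lemma~\ref{l.connected} with the local constancy of the linking number that was noted just before the corollary.

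First I would make precise why $\{x,y\}\mapsto\mathrm{link}(I,x,y)$ is locally constant on $\cP(\fix(f))$. For $\{x,y\}\in\cP(\fix(f))$ the curve $t\mapsto\overrightarrow{f_t(x)f_t(y)}$ is a loop in $\bbR^2\setminus\{0\}$: it avoids $0$ because each $f_t$ is a homeomorphism, hence injective, so $f_t(x)\ne f_t(y)$; and it is a loop because $f_0=\mathrm{Id}$ and $f_1=f$ fixes $x$ and $y$, so both its endpoints equal $y-x$. Its winding number is, by definition, $\mathrm{link}(I,x,y)$. If $\{x',y'\}$ is close to $\{x,y\}$ in $\cP(\fix(f))$, then after matching the two points up, uniform continuity of $(t,z)\mapsto f_t(z)$ on $[0,1]$ times a compact neighbourhood of $\{x,y\}$ shows that the loop $t\mapsto\overrightarrow{f_t(x')f_t(y')}$ is uniformly close to $t\mapsto\overrightarrow{f_t(x)f_t(y)}$ and still misses $0$; since the winding number is a homotopy invariant, the two linking numbers agree. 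So the function is locally constant (equivalently: continuous and $\bbZ$-valued).

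Then, given a connected subset $C\subset\fix(f)$: if $C$ has at most one point, $\cP(C)=\emptyset$ and there is nothing to prove; otherwise $\cP(C)$ is connected by Lemma~\ref{l.connected}. The restriction of the above locally constant function to $\cP(C)$ is still locally constant, and a locally constant function on a connected topological space is constant. Hence $\mathrm{link}(I,x,y)$ is constant on $\cP(C)$, which is exactly the claim. I do not anticipate any real obstacle here: all of the substance sits in Lemma~\ref{l.connected}, whose proof (already given) only uses the elementary separation fact for connected spaces, together with the evident continuity of a winding number in its data.
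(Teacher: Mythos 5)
Your proof is correct and follows the same route as the paper: the paper's argument is precisely that $\mathrm{link}(I,\cdot,\cdot)$ is integer-valued and continuous (hence locally constant) on $\cP(\fix(f))$, and that $\cP(C)$ is connected by Lemma~\ref{l.connected}, so the linking number is constant there. Your additional details on why the curve $t\mapsto\overrightarrow{f_t(x)f_t(y)}$ is a loop avoiding the origin and why its winding number varies continuously simply make explicit what the paper dismisses as obvious.
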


Now we are ready to complete the proof of Corollary~\ref{c.plane-or-sphere}. 

\begin{proof}[Proof of Corollary~\ref{c.plane-or-sphere}]
We first treat the case where the surface $S$ is the plane. 

According to Theorem~\ref{theo.equivalences-weak}, there exists an isotopy $I$ joining the identity to $f$ in $\homeo^+(\mathbb{R}^2)$. According to Corollary~\ref{c.linking-constant}, since $F$ is connected, the linking number $\mathrm{link}(I,x,y)$ is constant on $\mathcal{P}(F)$. Therefore, composing $I$ by a loop of planar rotations if necessary, we may --- and we will --- assume that  $\mathrm{link}(I,x,y)$ is equal to $0$ for every $\{x,y\}\in\mathcal{P}(F)$.

According to Theorem~\ref{theo.finitely-isotopic}, in order to prove that $F$ is unlinked, it suffices to check that every finite subset $F_{0}$ of $F$ is unlinked, that is, $f$ is isotopic to the identity relatively to $F_{0}$. We proceed inductively on the cardinal of $F_{0}$. If $F_{0}$ is empty there is nothing to prove. If $F_{0}$ has only one element $z$, then the trajectory $I.z$ of $z$ under $I$ is contractible in the plane. By Lemma~\ref{l.fibration}, this implies that there exists an isotopy $I'$ from the identity to $f$ which fixes $z$. In other words, $f$ is isotopic to the identity relatively to $F_0=\{z\}$. Now we assume that $F_{0}$ has at least two elements (see Figure~\ref{f.unlinked-continua}). Let $z_{0} \in F_{0}$ and let $F'_{0} = F_{0} \setminus \{z_{0}\}$. By induction, we may assume that the isotopy $I$ fixes $F'_{0}$. We will prove that the trajectory $Iz_{0}$ of $z_{0}$ is contractible in $\bbR^2 \setminus F'_{0}$. Let $C_0$ be the connected component of $F \setminus F'_{0}$ that contains $z_{0}$. Since $F$ is connected, there exists a point $z_{1} \in F'_{0}$ in the closure of $C_{0}$. Thus the trajectory $I.z_{0}$ is freely homotopic in $\bbR^2 \setminus F'_{0}$ to the trajectory $I.z$ of a point $z \in C_{0}$ arbitrarily close to $z_{1}$. By continuity of $I$, since $z_{1}$ is fixed by $I$, the loop $I.z$ in included in an arbitrarily small neighborhood of $z_{1}$. In particular, we may assume that the trajectory $I.z$ is contained in a disc $D$, centered at $z_1$, and disjoint from the finite set $F_0'\setminus\{z_1\}$. Since the linking number $\mathrm{link}(I,z,z_{1})$ is zero, the trajectory $I.z$ must be contractible in $D\setminus\{z_1\}$. And since $D$ is disjoint from the finite set $F_0'\setminus\{z_1\}$, this implies that $I.z$ is contractible in $\mathbb{R}^2\setminus F_0'$, and so is $I.z_{0}$, as wanted. By Lemma~\ref{l.fibration}, this implies there exists of an isotopy $I'$ from the identity to $f$ which fixes $F_0'\cup\{z_0\}=F_0$. In other words, $f$ is isotopic to the identity relatively to $F_0$. We have proved Corollary~\ref{c.plane-or-sphere} in the case where the surface $S$ is the plane.  

\begin{figure}[ht]	
\begin{center}
\def\svgwidth{0.7\textwidth}
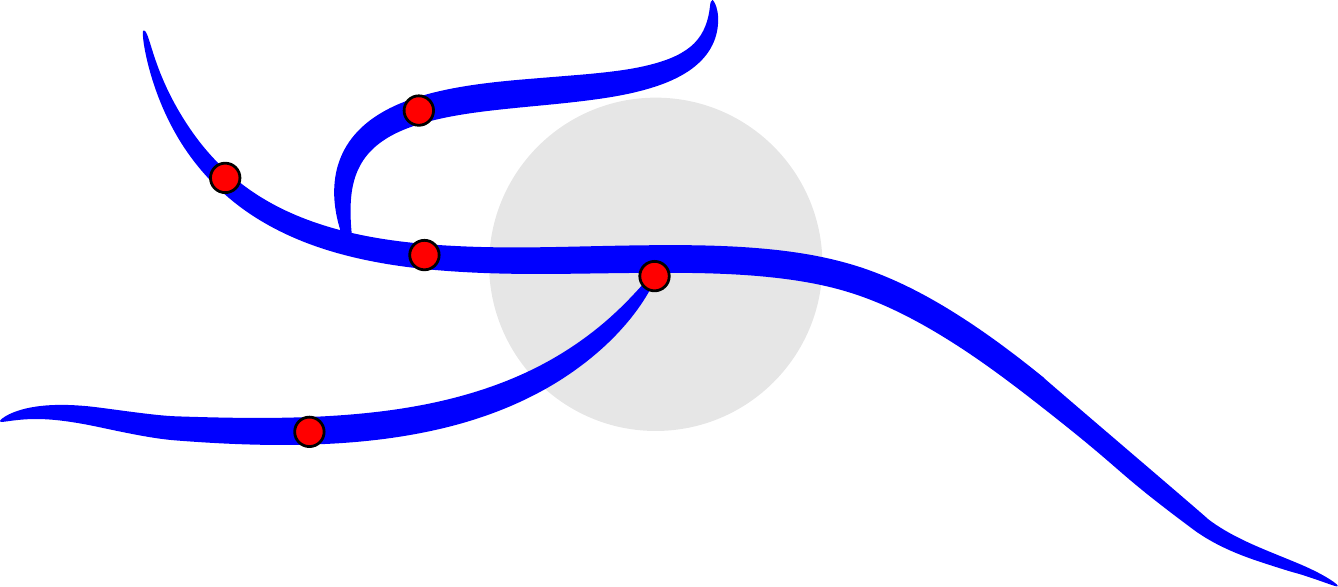
\end{center}
\caption{\label{f.unlinked-continua}The trajectory of $z_{0}$ is nulhomotopic in the complement of $F'_{0}$.}
\end{figure}

\bigskip

Now, we treat the case where the surface $S$ is the sphere $\mathbb{S}^2$. If $F=\bbS^2$, then $f$ is the identity, and there is nothing to prove. Otherwise we consider a point $z \in \mathbb{S}^2\setminus F$. By Brown-Kister Theorem~\ref{t.brown-kister}, the points $z$ and  $f(z)$ belong to the same connected component $O$ of $\bbS^2 \setminus F$. The open set $O$ is arcwise connected, hence we can find an isotopy $J$ supported in $O$ joining $f$ to a homeomorphism $f'$ so that $f'(f(z))=z$. Note that  $J$ is an isotopy relative to $F$, since it is supported in $O$. Now we identify $\bbS^2 \setminus \{z\}$ to the plane. Under this identification, the set $F$ becomes a compact connected subset of $\mathbb{R}^2$, and the restriction of the homeomorphism $f'$ becomes an element of $\homeo^+(\mathbb{R}^2,F)$. The planar case provides an isotopy $I$ from the identity to the restriction of $f'$ in $\homeo^+(\mathbb{R}^2,F)$. The isotopy extends to an isotopy $\bar I$ joining the identity to $f'$ in $\homeo^+(\mathbb{S}^2,F\cup\{z\})$. Concatenating this isotopy $\bar I$ with the reversed isotopy $J$, we obtained an isotopy joining the identity to $f$ in $\homeo^+(\mathbb{S}^2,F)$, and the proof is complete. 
\end{proof}

Now we briefly explain how to generalize Corollary~\ref{c.plane-or-sphere} to orientable surfaces $S$ different from the plane and the sphere. Let $F$ be a compact connected subset of $S$ which is not reduced to a single point.  Denote by $\homeo_{0}(S,F)$ the arcwise connected component of the identity in the space $\homeo(S,F)$, \emph{i.e.} the set of homeomorphisms of $S$ which are isotopic to the identity relatively to $F$. This subgroup may be described as follows. Let $f$ be an element of $\homeo(S,F)$ isotopic to the identity on $S$, and pick any isotopy $I$ from the identity to $f$. If $f \in \homeo_{0}(S,F)$, then we can assume that the trajectory under $I$ of every point $z\in F$ is a contractible loop in $S$: this is automatic when $S$ is neither the annulus nor the torus by Corollary~\ref{coro.uniqueness-homotopy}, and this can be achieved by composing $I$ by an appropriate loop of rigid rotations when $S$ is the annulus or the torus. Now consider the universal cover $\pi:\tilde S\to S$ of the surface $S$, and the lift $\tilde I$ of the isotopy $I$ starting at the identity. The surface $\widetilde S$ is a topological plane. Hence, given two points $\tilde x, \tilde y$ in the same connected component of $\tilde F:=\pi^{-1}(S)$, we can consider the linking number $\mathrm{link}(\widetilde I,\tilde x,\tilde y)$, as defined above. This number does not depend on the choice of the points $\tilde x, \tilde y$. Moreover, one can check that it also does not depend on the isotopy $I$. So this defines a linking number $\mathrm{link}_{S,F}(f)$ associated to $f$. If $f\in \homeo_{0}(S,F)$ then the linking number $\mathrm{link}_{S,F}(f)$ must be equal to zero since there exists an isotopy from the identity to $f$ which fixes $F$ pointwise. Using the same  tools as in the case of the sphere and the plane, one can prove  the converse implication: if the linking number $\mathrm{link}_{S,F}(f)$ is equal to zero, then $f$ is isotopic to the identity relatively to $F$. In other words, $\homeo_{0}(S,F)$ is exactly the subgroup of homeomorphisms $f$ of $\homeo(S,F)$ that are isotopic to the identity, such that  there is an isotopy from the identity to $f$ under which the trajectory of points of $F$ is contractible, and whose linking number $\mathrm{link}_{S,F}(f)$ vanishes.

\newpage

\appendix

\section{Miscellani on surface topology}\label{appe.surface-topology}

As before $S$ is a (non-necessarily orientable) surface with no boundary.
Throughout the text, we use the words sphere, plane, annulus, disk, and so on, to mean a topological space homeomorphic to the standard sphere, plane, annulus, disk, and so on. Here we collect some basic results, mainly about the topology of surfaces.

\subsection{Ends}\label{ss.ends}

Let $\Sigma$ be the end-compactification of $S$, and $F$ be the set of ends.
By Urysohn's metrisation theorem, $\Sigma$ is a compact metrizable space.
Moreover $F$ is a closed subset of $\Sigma$ which is totally discontinuous. Every point $x$ of $F$ has a basis of neighborhoods $U$, each of which is bounded by a simple closed curve $\alpha$ included in $S$ such that $U$ is one of the two connected components of $S \setminus \alpha$. An end is called \emph{planar} if it has a neighborhood in $\Sigma$ homeomorphic to the open disk. See~\cite{richards} for more details.

\subsection{Orientation}\label{ss.orientation}

When $S$ is oriented, any parametrized loop that bounds a disc is positively or negatively oriented as the boundary of the oriented disk.
One says that a homeomorphism $f$ \emph{preserves the orientation} if it sends positively oriented loops to positively oriented loops
(when $S$ is connected this property can be checked on a single loop since the space of loops that bound a disc is connected).
When $M$ is not orientable, this notion still makes sense near a fixed point or near a fixed planar end $x$. Then we say that $f$ \emph{locally preserves the orientation near $x$}.

Any simple closed curve admits arbitrarily small neighborhoods that are homeomorphic
to the annulus or to the M\"obius band. In the first case one says that the curve is \emph{$2$-sided}
and in the second that it is \emph{$1$-sided}.

\subsection{The Schoenflies theorem}\label{subsection.schoenflies}

The original Schoenflies theorem says that for every Jordan curve $\gamma$ in the sphere, every orientation preserving embedding of $\gamma$ in the sphere extends to an orientation preserving homeomorphism. The same statement holds for an arc (an embedding of the segment $[0,1]$). The same statements hold in the plane, where we can moreover require the resulting homeomorphism to be compactly supported.

\begin{theo*}
In the closed unit disk $D$, let $\gamma$ be a topological chord, \emph{i. e.} a simple curve with endpoints on $\partial D$ and interior disjoint from $\partial D$. Then there exists a homeomorphism of $D$, which is the identity on the boundary, and sends $\gamma$ on a (usual) chord of $D$.
\end{theo*}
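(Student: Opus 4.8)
The plan is to cut $D$ along $\gamma$ into two closed topological disks, straighten each of them separately by means of the Schoenflies theorem on the sphere (or the plane), and then glue the two straightenings back together along $\gamma$.

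First I would fix notation. Let $a,b$ be the two endpoints of $\gamma$ on $\partial D$, let $c_{1},c_{2}$ be the two arcs into which $a$ and $b$ cut $\partial D$, and let $\delta$ be the usual chord of $D$ with endpoints $a$ and $b$, so that $\delta$ cuts $D$ into two standard half-disks $D_{1},D_{2}$ with $\partial D_{i}=\delta\cup c_{i}$. The first step is to show that $D\setminus\gamma$ has exactly two connected components $U_{1},U_{2}$, indexed so that $c_{i}\setminus\{a,b\}\subset U_{i}$, and that each $\Delta_{i}:=\overline{U_{i}}$ is a closed topological disk with $\partial\Delta_{i}=\gamma\cup c_{i}$. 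This follows from the Jordan curve theorem applied to the Jordan curves $J_{i}=\gamma\cup c_{i}$ inside the plane containing $D$: each $J_{i}$ separates the plane into a bounded domain and an unbounded one, and one checks that the bounded domain coincides with $U_{i}$ and is contained in $D$, while $\mathrm{int}(\Delta_{i})=U_{i}$.

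Next, for $i=1,2$ the Schoenflies theorem (in the sphere or plane form recalled above) applies to $J_{i}$, so $\Delta_{i}$ is homeomorphic to $D_{i}$; pick a homeomorphism $\psi_{i}\colon\Delta_{i}\to D_{i}$. I would then correct $\psi_{i}$ on the boundary. Composing $\psi_{i}$ with a suitable self-homeomorphism of $D_{i}$ — which exists because every homeomorphism of the circle $\partial D_{i}$ extends over $D_{i}$ (a form of the Alexander trick, Proposition~\ref{p.alexander}) — one arranges that $\psi_{i}$ restricts to the identity on $c_{i}$ and that $\psi_{1}$ and $\psi_{2}$ restrict on $\gamma$ to one and the same homeomorphism $g\colon\gamma\to\delta$ with $g(a)=a$ and $g(b)=b$. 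There is no obstruction to this correction, since the two prescribed boundary maps $J_{i}=\gamma\cup c_{i}\to\delta\cup c_{i}=\partial D_{i}$ agree at the corner points $a$ and $b$.

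Finally, since $\psi_{1}$ and $\psi_{2}$ agree on $\Delta_{1}\cap\Delta_{2}=\gamma$, they glue to a bijection $\psi\colon D=\Delta_{1}\cup\Delta_{2}\to D_{1}\cup D_{2}=D$ (bijectivity uses that $\psi_{i}(U_{i})=\mathrm{int}(D_{i})$, $\psi_{i}(c_{i})=c_{i}$ and $\psi_{i}(\gamma)=\delta$ are arranged to fit together). It is continuous by the gluing lemma, hence a homeomorphism because $D$ is compact Hausdorff. By construction $\psi$ is the identity on $c_{1}\cup c_{2}=\partial D$ and $\psi(\gamma)=\delta$ is a usual chord, as wanted. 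I expect the main obstacle to be the first step — extracting from the Jordan curve theorem that $\gamma$ splits $D$ into exactly two pieces, each a closed disk bounded by $\gamma$ together with one boundary arc; once this is in place, the rest is bookkeeping with boundary homeomorphisms and coning.
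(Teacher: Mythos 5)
Your proof is correct. Note that the paper itself offers no proof of this statement: it is recorded in subsection~\ref{subsection.schoenflies} as a classical companion of the Schoenflies theorem (and then used, e.g., in the proof of Proposition~\ref{p.transverse}), so there is no argument of the paper to diverge from; your cut-along-$\gamma$-and-reglue derivation from the Jordan--Schoenflies theorem is a complete and standard way to justify it. Two small precisions. First, $U_i$ is not literally the bounded Jordan domain $B_i$ of $J_i=\gamma\cup c_i$: one has $U_i=B_i\cup(c_i\setminus\{a,b\})$, since the open boundary arc belongs to $D\setminus\gamma$; but $\overline{U_i}=\overline{B_i}$ and the interior of $\Delta_i$ relative to $D$ is $U_i$, so your step 1 stands once phrased this way. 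The genuinely non-trivial content of that step is the ``theta-curve'' decomposition $\mathrm{int}(D)\setminus\gamma=B_1\sqcup B_2$ together with $\Delta_1\cap\Delta_2=\gamma$, and it does follow along the lines you indicate: the exterior of $D$, being connected and unbounded and disjoint from $J_i$, lies in the unbounded component $E_i$, so $B_i\subset\mathrm{int}(D)$; points of $\mathrm{int}(D)$ close to an interior point of $c_i$ lie in $B_i$; a path in $E_1$ from a point of $\mathrm{int}(D)\setminus\gamma$ to infinity must first exit $D$ through $c_2\setminus\{a,b\}$, which places the point in $B_2$; and $B_1\cap B_2=\emptyset$ because $B_1$, connected and disjoint from $J_2$, sits in a single complementary component of $J_2$, the option $B_1\subset B_2$ being excluded by a one-line case check. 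Second, the boundary correction rests on the fact that every homeomorphism of the circle extends to the disk by coning (radial extension); this is the Alexander cone construction rather than Proposition~\ref{p.alexander} itself (which isotopes boundary-fixing homeomorphisms of the ball), but the formula is the same, and the compatibility you impose at the corners $a,b$ is exactly what makes the two prescribed boundary maps $J_i\to\partial D_i$ circle homeomorphisms. With these provisos, the gluing step is routine and your argument is a valid self-contained proof of the statement.
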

More generally, given a family of topological chords with pairwise disjoint interior, there exists a homeomorphism of the disk which is the identity on the boundary and sends each topological chord of the family to a chord of $D$. This generalization is proved by applying the theorem iteratively.

\subsection{The Brown-Kister theorem: a local version}

Let us first recall Brown-Kister Theorem.

\begin{theo2}[Brown-Kister~\cite{brownkister}]\label{t.brown-kister}
Consider an orientation-preserving homeomorphism $f$ of an orientable connected surface $S$ and a closed set of fixed point $F$, then
$f$ preserves each connected component of $S \setminus F$.
\end{theo2}

We will need the following local version (which readily implies Theorem~\ref{t.brown-kister}).
\begin{prop}
\label{p.Brown-Kister}
Consider a homeomorphism $f$ of a surface $S$, a closed set $F$ of fixed points of $f$,
a point $x$ in  $F$ and a disk neighborhood $U$ of $x$.

Assume that $f$ locally preserves the orientation at $x$,
and let $V$ be a connected open set such that $V$, $f(V)$ and $f^2(V)$ are included in 
$U$.
Then for any $z\in V\setminus F$ there exists a path $\gamma_z$
between $z$ and $f(z)$ that is contained in $U\setminus F$.
\end{prop}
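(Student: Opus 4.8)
<br>

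The plan is to reduce the statement to a one-dimensional linking argument in the disk $U$, using that $f$ locally preserves the orientation at $x$. First I would set up coordinates: since $U$ is a disk neighborhood of $x$ and $f$ is orientation-preserving near $x$, I may identify $U$ with the closed unit disk $\mathbb{D}^2$ (or with the plane) in such a way that $f_{|U}$ behaves like an orientation-preserving embedding on the part of $U$ that stays inside $U$. The key point is that $f(x)=x$, $x \in F$, and for the connected open set $V$ we have $V, f(V), f^2(V) \subset U$. Pick $z \in V \setminus F$; then $z, f(z), f^2(z)$ all lie in $U \setminus F$ (since $F$ is $f$-invariant and $z \notin F$), and I want to join $z$ to $f(z)$ by a path avoiding $F$ inside $U$.

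The main idea: consider the connected component $O$ of $U \setminus F$ containing $z$. I claim $f(z) \in O$ as well, and then any arc in the (open, connected, hence arcwise connected) surface $O$ does the job. To prove $f(z)$ and $z$ lie in the same component of $U \setminus F$, I would argue by contradiction using a separation/linking argument. Suppose $z$ and $f(z)$ are separated in $U$ by $F$; since $F \cap U$ is closed in $U$ and $x \in F$, there is a connected piece of $F$ (or more precisely a separating closed subset contained in $F$) between $z$ and $f(z)$. Apply $f$: because $f$ fixes $F$ pointwise and locally preserves orientation, $f$ must respect this separation, so $f(z)$ and $f^2(z)=f(f(z))$ are also separated by $F$ in $f(U) \supset f(V), f^2(V)$; but all of $z, f(z), f^2(z)$ lie in $U$, and iterating the separation relation three times inside a disk forces an orientation contradiction — this is exactly the mechanism behind Brown--Kister (Theorem~\ref{t.brown-kister}), localized to $U$ by the hypothesis that two successive iterates of $V$ stay in $U$. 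Concretely, I would invoke the plane Schoenflies theorem (subsection~\ref{subsection.schoenflies}) to straighten a separating arc or Jordan curve through $F$, reducing to the model case where $F \cap U$ contains a chord or a circle separating $U$, and then the orientation-preservation of $f$ near $x$ rules out $f$ sending $z$ to the other side.

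More carefully, I would proceed as in the standard proof of Brown--Kister but keeping track of the fact that we only control $f$ and $f^2$ on $V$. Fix a small round disk (or Jordan domain) $W$ with $x \in W$, $W \subset U$; the homeomorphism $f$, restricted to the set of points $y$ with $y, f(y) \in U$, sends Jordan curves to Jordan curves and, being locally orientation preserving at the fixed point $x$, preserves the "inside/outside" relation for curves near $x$. Given $z \in V\setminus F$ separated from $f(z)$ by $F \cap U$, pick a simple closed curve (or properly embedded arc in $U$) $c \subset F \cap U$ separating them; then $f(c) = c$, and $f$ maps the side of $c$ containing $z$ to the side containing $f(z)$, which by hypothesis is the opposite side — contradicting that $f$ preserves the co-orientation of $c$ induced by the local orientation at $x$ (using $x$ is on the correct side, or is on $c$, via the neighborhood basis of $x$ by such curves, subsection~\ref{ss.ends}). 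Hence no such $c$ exists, $z$ and $f(z)$ lie in the same component $O$ of $U \setminus F$, and since $O$ is an open surface (in particular arcwise connected) the desired path $\gamma_z$ from $z$ to $f(z)$ inside $U \setminus F$ exists.

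The main obstacle I anticipate is making the "$f$ preserves the separation" step rigorous without a global orientation: $S$ need not be orientable, so I must work entirely inside the disk $U$, where an orientation exists, and I must ensure the separating set from $F$ that I use actually lies in $U$ and is genuinely preserved by $f$ — this is where the hypothesis that $V$, $f(V)$, and $f^2(V)$ are all contained in $U$ is used, since it guarantees that the relevant images stay in the chart where the orientation argument is valid. A clean way to finish is to note that Proposition~\ref{p.Brown-Kister} is literally the statement obtained by applying the classical Brown--Kister argument inside the single chart $U$; I would cite~\cite{brownkister} for the global statement and indicate that its proof is local in nature, so that the two-iterate containment hypothesis is exactly what is needed to localize it, with the Schoenflies theorem providing the normal form for separating curves/arcs.
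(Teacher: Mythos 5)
There is a genuine gap at the heart of your argument. Your reduction of the statement to ``$z$ and $f(z)$ lie in the same component of $U\setminus F$'' is fine, but the separation step collapses: you propose to pick a simple closed curve or properly embedded arc $c\subset F\cap U$ separating $z$ from $f(z)$ and then argue that $f$, fixing $c$ pointwise and preserving orientation near $x$, cannot exchange its sides. However $F$ is an arbitrary closed set of fixed points; it need not contain any arc or Jordan curve at all (think of a pseudo-arc or other continua containing no arcs), so no such $c$ exists in general. The most that separation theory gives you is a separating continuum contained in $F$, and for such a continuum the claim ``$f$ respects the separation'' (equivalently, $f$ preserves each complementary component of a pointwise-fixed closed set) is essentially the statement you are trying to prove -- it is the content of Brown--Kister, not an input you can take for granted. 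Your fallback, ``the proof of \cite{brownkister} is local in nature, so it localizes to the chart $U$,'' is an assertion, not an argument: Theorem~\ref{t.brown-kister} assumes an orientable surface and a globally orientation-preserving homeomorphism, neither of which is available here (the proposition is used precisely for non-orientable or merely locally controlled situations), and the only control you have is $V, f(V), f^2(V)\subset U$; showing that this two-iterate containment suffices is exactly the nontrivial point, and ``iterating the separation three times forces an orientation contradiction'' does not identify a mechanism.

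For comparison, the paper's proof avoids separation theory entirely and is where the hypothesis on $f^2(V)$ genuinely enters. One takes three simple arcs from $z$ to $f(z)$ inside $V\cup f(V)$, pairwise meeting only at $\{z,f(z)\}$; if none of them avoids $F$ one truncates each at its first meeting with $F$, obtaining a triod $T$ centered at $z$ whose three feet lie in $F$ and are therefore fixed. Comparing $T$ with $f(T)$ (a triod centered at $f(z)$ with the same feet, contained in $f(V)\cup f^2(V)$): either they meet away from the feet, and then $T\cup f(T)$ contains a path from $z$ to $f(z)$ in $(V\cup f(V)\cup f^2(V))\setminus F\subset U\setminus F$, or they meet only at the feet, in which case two triods with common feet and disjoint interiors in the disk $U$ must carry opposite cyclic orders on their legs, contradicting that $f$ locally preserves the orientation at $x$. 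If you want to salvage your approach you would have to replace ``find an arc or circle in $F$'' by an argument valid for arbitrary fixed continua, at which point you are re-proving Brown--Kister rather than quoting it; the triod argument is the elementary way around this.
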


\begin{proof}[Proof of the proposition] 
The following argument is adapted from \cite{lecalvez06} (beginning of Section~4).
Remember that a \emph{triod} is the union of three simple arcs having a unique commun point, which is an end-point of each of them called the center of the triod. If $T$ is a triod in the oriented plane with center $c$, then the orientation of the plane determines a cyclic order on the three connected components of $T \setminus \{c\}$ (called the legs of the triod). An orientation preserving homeomorphism preserves the cyclic order on the legs of the triods.

Under the hypotheses of the proposition, consider a point $z$ in $V \setminus F$ and three simple arcs $\alpha,\beta,\gamma$ in $V \cup f(V)$ whose pairwise intersection equals $\{z,f(z)\}$. If one of the arcs is disjoint from $F$ then we are done. Thus we assume they all meet $F$.
Then the union of the arcs contains a triod $T$, with center $z$, which meets $F$ only at its three feet (the end-points of its legs).
If $T$ meets its image $f(T)$ at some point which is not one of the three feet, then again we are done, since $T \cup f(T)$ contains an arc disjoint from $F$ and joining $z$ to $f(z)$, and this arc is included in $V \cup f(V) \cup f^2(V)$. If not, then $T$ and $f(T)$ are two triods with commun feets and no other commun points. Then the orientation on the legs $f(\alpha),f(\beta),f(\gamma)$ of $f(T)$ must be opposite to the orientation on the legs $\alpha,\beta,\gamma$ of $T$. This contradicts the fact that $f$ preserves the orientation at $x$.
\end{proof}

\subsection{Alexander isotopy}\label{sub.alexander}

\begin{prop}[Alexander~\cite{alexander}]\label{p.alexander}
For any homeomorphism $f$ of the closed ball which coincides with the identity on the boundary,
there exists an isotopy $(f_t)_{t\in [0,1]}$ from $f_0=\id$ to $f_1=f$
that fixed pointwise the boundary of the ball.

If $p$ is a fixed point of $f$, one can furthermore require $f_t(p)=p$ for any $t$.
\end{prop}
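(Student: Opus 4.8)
The plan is to use the classical ``Alexander trick'': coning the homeomorphism towards the center of the ball. Write $\bbD$ for the closed unit ball with Euclidean norm $\norm{\cdot}$, and let $f$ be a homeomorphism of $\bbD$ equal to the identity on $\partial\bbD$. First I would define, for $t\in(0,1]$,
$$
f_{t}(x)=\begin{cases} t\,f(x/t) & \text{if } \norm{x}\leq t,\\[2pt] x & \text{if } \norm{x}\geq t,\end{cases}
$$
and set $f_{0}=\id$. The two formulas agree on the sphere $\norm{x}=t$ because $f$ is the identity on $\partial\bbD$, so each $f_{t}$ is a well-defined continuous self-map of $\bbD$; it is a bijection with explicit inverse obtained from the same formula with $f$ replaced by $f^{-1}$, hence — being a continuous bijection of a compact Hausdorff space — a homeomorphism. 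By construction $f_{t}$ fixes $\partial\bbD$ pointwise, $f_{1}=f$, and $f_{0}=\id$.

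Next I would check that $(t,x)\mapsto f_{t}(x)$ is jointly continuous. On the region $t>0$ this is immediate from the formula and the continuity of $f$ (the ``seam'' $\norm{x}=t$ causes no trouble since the two pieces match there). The only delicate point — and the only place where any care is needed — is continuity at $t=0$: if $\norm{x}\geq t$ then $f_{t}(x)-x=0$, while if $\norm{x}<t$ then both $x$ and $f_{t}(x)=t\,f(x/t)$ lie in the ball of radius $t$, so $\norm{f_{t}(x)-x}<2t$. Hence $f_{t}\to\id$ uniformly as $t\to 0^{+}$, which gives continuity at $t=0$. This proves the first assertion.

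For the refinement, suppose $p$ is a fixed point of $f$. If $p\in\partial\bbD$ there is nothing to do, since every $f_{t}$ above already fixes $\partial\bbD$ pointwise. If $p$ lies in the interior, I would first choose a homeomorphism $h$ of $\bbD$ which is the identity on $\partial\bbD$ and sends $0$ to $p$ (for instance one supported in a small ball around a segment joining $0$ to $p$); then $g:=h^{-1}fh$ is a homeomorphism of $\bbD$, equal to the identity on $\partial\bbD$, with $g(0)=0$. Applying the construction above to $g$ yields an isotopy $(g_{t})$ from $\id$ to $g$ fixing $\partial\bbD$ pointwise, and moreover $g_{t}(0)=t\,g(0)=0$ for all $t$. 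Conjugating back, $f_{t}:=h\,g_{t}\,h^{-1}$ is an isotopy from $\id$ to $f$ fixing $\partial\bbD$ pointwise, with $f_{t}(p)=h(g_{t}(0))=h(0)=p$ for every $t$, as required. I do not expect any genuine obstacle here: the whole argument is elementary, the single subtlety being the uniform-continuity estimate at $t=0$ recorded above (which uses only that $f$ is a continuous map of the compact ball).
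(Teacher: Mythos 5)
Your proof is correct and follows essentially the same route as the paper, which uses exactly the coning formula $f_t(x)=t\,f(x/t)$ for $\norm{x}\leq t$ and $f_t(x)=x$ otherwise. Your explicit treatment of continuity at $t=0$ and the conjugation by a boundary-fixing homeomorphism sending $0$ to $p$ (which the paper leaves implicit for the ``furthermore'' clause) are both sound.
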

\begin{proof}
In the standard ball $B(0,1)$, for $t\in (0,1]$,
one defines $f_t(x)$ to coincide with $t.f(x/t)$ when $\|x\|\leq t$ and with $x$ when $\|x\|>t$.
\end{proof}

\subsection{Topology of surfaces: some lemmas}

\begin{lemm}\label{lemma.approximation-disc} For any connected compact subset $K$ of the plane
and any neighborhood $U$ of $K$,
there exists a topological disc containing $K$ bounded by a curve included in $U$.
\end{lemm}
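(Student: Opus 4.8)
The plan is to fatten $K$ slightly to a compact surface with boundary inside $U$, and then fill in its holes except the unbounded one. First I would choose, by compactness of $K$ and since $K\subset U$, a compact connected surface with boundary $W'$ such that $K\subset\operatorname{int}(W')\subset\overline{W'}\subset U$; for instance one can cover $K$ by finitely many small closed disks contained in $U$ and take a regular neighbourhood of their union, or triangulate a neighbourhood of $K$ finely and let $W'$ be the union of the closed triangles meeting $K$ together with any faces needed to make the result a surface. Since $W'$ is a compact surface with boundary in the plane, it is planar, so it is homeomorphic to a disk with finitely many open disks removed; equivalently, $\partial W'$ is a finite disjoint union of Jordan curves $\gamma_0,\gamma_1,\dots,\gamma_m$, exactly one of which, say $\gamma_0$, bounds the unbounded complementary component, while each $\gamma_i$ for $i\ge 1$ bounds a bounded open disk $\Delta_i$ of $\mathbb{R}^2\setminus W'$ (here I use the Jordan curve theorem and the fact that a compact connected planar surface with boundary has this standard form).

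Next I would set $D:=W'\cup\Delta_1\cup\cdots\cup\Delta_m$, i.e.\ $D$ is obtained from $W'$ by filling in all the bounded complementary holes. Then $D$ is a compact connected set whose topological boundary is exactly $\gamma_0$, and $D$ is precisely the union of $\gamma_0$ with the bounded component of $\mathbb{R}^2\setminus\gamma_0$; by the Schoenflies theorem $D$ is a topological closed disk. Moreover $K\subset\operatorname{int}(W')\subset\operatorname{int}(D)$, so $D$ is a topological disk containing $K$, and its boundary $\gamma_0$ is a connected component of $\partial W'\subset U$, hence $\gamma_0\subset U$. This is exactly what the lemma asserts.

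The only point that needs a little care — and the step I would flag as the main (mild) obstacle — is the very first one: producing the compact surface-with-boundary $W'$ with $K\subset\operatorname{int}(W')\subset U$. Since $K$ need not be locally connected or nicely shaped, one cannot just ``take a boundary curve of $K$''; instead one genuinely uses a smooth or PL model of a neighbourhood. Concretely: let $d=\operatorname{dist}(K,\mathbb{R}^2\setminus U)>0$, cover $K$ by finitely many open disks of radius $d/3$ centred at points of $K$, let $W_0$ be the union of the corresponding closed disks, and then let $W'$ be a small regular (PL) neighbourhood of $W_0$; it is a compact surface with boundary, it contains $K$ in its interior, and it lies within the $d/2$-neighbourhood of $K$, hence inside $U$. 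Connectedness of $W'$ is arranged by adding thin rectangles joining the disks along paths in the connected set $K$ (staying within distance $d/3$ of $K$). Once $W'$ is in hand, everything else is the standard planar-surface classification plus Schoenflies, with no further difficulty.
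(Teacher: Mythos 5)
Your proof is correct and follows essentially the same route as the paper: take a compact connected surface with boundary $W'$ with $K\subset\operatorname{int}(W')\subset U$ and add to it all bounded components of its complement, the resulting set being a closed disk whose boundary is the outer boundary curve of $W'$, hence contained in $U$. The paper's proof is exactly this (stated in two lines); your extra details on constructing $W'$ and on identifying the filled set via Jordan--Schoenflies are a sound elaboration of the same argument.
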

\begin{proof}
Let us consider a surface with boundary $M\subset U$ which contains $K$.
The disc is obtained as the union of $M$ with all the bounded connected components of
its complement.
\end{proof}


\begin{lemm}\label{lemm.bounding-disk}
Let $M$ be a (non necessarily orientable) surface.
Any contractible simple closed curve $\gamma$ is the boundary of a topological disc
$\Delta\subset M$.
\end{lemm}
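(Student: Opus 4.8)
\textbf{Proof plan for Lemma~\ref{lemm.bounding-disk}.}
The statement to prove is that a contractible simple closed curve $\gamma$ in a (possibly non-orientable) surface $M$ bounds a topological disc $\Delta\subset M$. The plan is to pass to the universal cover, find a disc upstairs, and then argue that this disc projects injectively. First I would note that since $\gamma$ is contractible it lifts to a simple closed curve $\tilde\gamma$ in the universal cover $X$ of $M$; here I use that the universal cover of a surface that is not the sphere nor the projective plane is the plane (and in the remaining two cases the statement is either trivial or handled directly, since on $\bbS^2$ a Jordan curve bounds a disc by Schoenflies, and $\mathbb{R}P^2$ cannot carry a contractible essential simple closed curve other than one bounding a disc). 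So assume $X\cong\bbR^2$. By the Jordan--Schoenflies theorem, $\tilde\gamma$ bounds a closed topological disc $\widetilde\Delta\subset X$.

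The key step is then to show that the covering projection $\pi:X\to M$ restricts to a homeomorphism from $\widetilde\Delta$ onto its image, so that $\Delta:=\pi(\widetilde\Delta)$ is the desired embedded disc. For this I would take an arbitrary non-trivial deck transformation $\theta$ and show $\theta(\widetilde\Delta)\cap\widetilde\Delta=\emptyset$. Since $\gamma$ is simple, $\tilde\gamma$ and $\theta(\tilde\gamma)$ are disjoint (two distinct lifts of a simple closed curve never meet). Hence either the two disc boundaries are nested or they are unlinked; if $\theta(\widetilde\Delta)\subset\widetilde\Delta$ (or conversely), then $\theta$ maps the compact set $\widetilde\Delta$ into itself, and the Brouwer fixed point theorem gives a fixed point of $\theta$ in $X$, contradicting that $\theta$ acts freely. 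Therefore the discs are disjoint, which shows $\pi|_{\widetilde\Delta}$ is injective; being a continuous injection from a compact space it is a homeomorphism onto its image, and $\Delta=\pi(\widetilde\Delta)$ is a topological closed disc in $M$ with boundary $\gamma$.

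I expect the main (though modest) obstacle to be the bookkeeping around the exceptional surfaces and the orientation issue: the curve is given only up to homotopy, so one must be careful that ``contractible'' really does produce a \emph{closed} lift rather than a line, and that the nesting dichotomy for two disjoint Jordan curves in the plane is being applied correctly. The non-orientability of $M$ plays no real role here, since the argument takes place entirely in the plane $X$ and only uses that deck transformations act freely and properly discontinuously (this is exactly the same mechanism used in the proof of Proposition~\ref{p.exists-bigon} to show a bigon projects injectively). Everything else is a direct invocation of Jordan--Schoenflies and Brouwer.
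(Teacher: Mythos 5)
Your main argument --- lift to the planar universal cover, apply Schoenflies to get a disc $\widetilde\Delta$ bounded by the closed lift $\tilde\gamma$, and rule out nesting of $\widetilde\Delta$ and $\theta(\widetilde\Delta)$ via Brouwer because deck transformations act freely, so that $\widetilde\Delta$ projects injectively --- is exactly the paper's proof of the generic case, and it is correct. (A small point, glossed equally by the paper: one should also say why $\theta(\tilde\gamma)\neq\tilde\gamma$ for $\theta\neq\id$; this follows from the injectivity of the covering projection on the lift of a contractible loop together with freeness of the action.)

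The genuine gap is the projective plane. Your parenthetical claim that $\mathbb{R}P^2$ ``cannot carry a contractible \ldots simple closed curve other than one bounding a disc'' is precisely the statement of the lemma for $M=\mathbb{R}P^2$, asserted without proof; and your planar mechanism does not apply there, since the universal cover is the sphere, where $\tilde\gamma$ bounds \emph{two} discs and one must argue that a correctly chosen one projects injectively. The paper handles this case explicitly: $\gamma$ has exactly two lifts $\tilde\gamma,\tilde\gamma'$ in $\bbS^2$, disjoint and exchanged by the antipodal map $T$; take $\widetilde\Delta$ to be the disc bounded by $\tilde\gamma$ which is disjoint from $\tilde\gamma'$; since $T$ has no fixed point, $T(\widetilde\Delta)$ cannot contain $\widetilde\Delta$, so $T(\widetilde\Delta)$ is the disc bounded by $\tilde\gamma'$ disjoint from $\tilde\gamma$, hence $T(\widetilde\Delta)\cap\widetilde\Delta=\emptyset$ and $\widetilde\Delta$ projects to an embedded disc bounded by $\gamma$. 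This case cannot be waved away in this paper: the lemma is invoked in Corollary~\ref{c.essential-2-sided} in a situation where $M$ may well be the projective plane, so your proof needs this extra paragraph (or an alternative argument, e.g.\ that a null-homologous two-sided simple closed curve in $\mathbb{R}P^2$ separates and one complementary piece is a disc) to be complete.
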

\begin{proof}
Let us assume that $M$ is a surface different from the sphere and the projective plane.
Its universal cover $X$ is the plane.
Since $\gamma$ is contractible, it lifts in $X$ as a closed curve $\tilde \gamma$.
The Schoenflies theorem implies that $\tilde \gamma$ bounds a disc $\tilde \Delta$.
Since $\gamma$ is simple and contractible in $M$, the loop $\tilde \gamma$
is disjoint from its images by the automorphisms $T$ of the cover $X\to M$.
For each $T\neq \id$ we have $T(\tilde \gamma)\subset \tilde \Delta$
or $T(\tilde \gamma)\cap \tilde \Delta=\emptyset$.
Since $T$ has no fixed point, there is no inclusion
$T(\tilde \Delta)\subset\tilde  \Delta$ or $T(\tilde \Delta)\supset \tilde \Delta$.
Since $X$ is the plane, this gives $T(\tilde \Delta)\cap \tilde \Delta=\emptyset$:
the projection of $\tilde\Delta$ in $M$ is then a disc which is bounded by $\gamma$.

When $M$ is the sphere, the lemma holds immediately by the Schoenflies theorem.

When $M$ is the projective plane, its universal cover $X$ is the sphere.
There exists two lifts $\tilde \gamma, \tilde \gamma'$ of $\gamma$:
they are disjoint and exchanged by the antipodal map $T$.
Let $\tilde \Delta\subset X$ be the disc bounded by $\tilde \gamma$
which is disjoint from $\tilde \gamma'$.
The image $T(\tilde \Delta)$ does not contain $\tilde \Delta$ since $T$
has no fixed point: it is the disc bounded by $\tilde \gamma'$
which is disjoint from $\tilde \gamma$, hence from $\tilde \Delta$.
Again $\tilde \Delta$ induces a topological disc on $M$ bounded by $\gamma$.
\end{proof}

\begin{coro}\label{c.essential-2-sided}
In every surface which is not the sphere, the plane or the projective plane, there exists an essential 2-sided curve.
\end{coro}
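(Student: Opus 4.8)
The goal is Corollary~\ref{c.essential-2-sided}: every surface $M$ which is not the sphere, the plane or the projective plane contains an essential two-sided simple closed curve. The plan is to argue by contradiction, assuming $M$ contains no essential two-sided simple closed curve, and to deduce that $M$ must be one of the three excluded surfaces. The two main ingredients are Lemma~\ref{lemm.bounding-disk} (a contractible simple closed curve bounds a disc) and some elementary covering-space considerations.

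\textbf{First step: reduce to the case with no essential simple closed curve at all.} Suppose $M$ contains an essential simple closed curve $\gamma$; by hypothesis it must be one-sided, so a neighbourhood of $\gamma$ is a M\"obius band, and the boundary $\delta$ of this M\"obius band is a simple closed curve in $M$. The curve $\delta$ is two-sided (the boundary of a M\"obius band is two-sided). If $\delta$ were essential, it would be an essential two-sided curve, contradicting our assumption; hence $\delta$ is contractible, so by Lemma~\ref{lemm.bounding-disk} it bounds a disc $\Delta$. Then $\Delta$ together with the M\"obius band neighbourhood of $\gamma$ forms a closed subsurface homeomorphic to the projective plane; since $M$ is connected and a surface without boundary, gluing a disc to a M\"obius band along their common boundary already fills up a projective plane, and one checks that $M$ must then equal this projective plane (the complement of this closed projective-plane-minus-nothing is empty, or more carefully: the boundary circle $\delta$ separates nothing new). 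This forces $M = \mathbb{RP}^2$, the excluded case. So from now on we may assume $M$ contains no essential simple closed curve whatsoever.

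\textbf{Second step: no essential simple closed curve forces $\pi_1$ to be trivial or $\mathbb{Z}/2$, hence $M$ is the plane, sphere or projective plane.} If every simple closed curve in $M$ is contractible (by Lemma~\ref{lemm.bounding-disk}, hence bounds a disc), I would argue that $\pi_1(M)$ contains no infinite-order element: any essential loop can, after a small perturbation and surgery on self-intersections, be replaced by an essential simple closed curve unless it is already homotopically trivial or of order two — this is the standard fact that a surface whose simple closed curves are all inessential has essentially no topology. Concretely, if $M$ is non-compact then $\pi_1(M)$ is free, and a non-trivial free group is infinite cyclic or larger, which would produce an essential simple closed curve (a generator is represented by a simple closed curve in a non-compact surface); thus $\pi_1(M)$ is trivial and $M$ is the plane. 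If $M$ is compact, then from the classification of surfaces, the only compact surfaces without essential simple closed curves are the sphere ($\pi_1$ trivial) and the projective plane ($\pi_1 = \mathbb{Z}/2$); every other compact surface contains a non-separating or essential two-sided curve. In all cases we reach one of the three excluded surfaces, a contradiction, which proves the corollary.

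\textbf{Main obstacle.} The delicate point is the surgery/perturbation argument in the second step: turning an arbitrary essential loop into an essential \emph{simple} closed curve, and controlling whether the result is one-sided or two-sided. Rather than develop this by hand, I would lean on the classification of surfaces (cited in section~\ref{ss.surface-groups} via~\cite{stillwell2012classical}): one lists the surfaces, notes that all of them except $\mathbb{S}^2$, $\bbR^2$, $\mathbb{RP}^2$ visibly contain an essential two-sided simple closed curve (for a non-compact surface that is not the plane, take a generator of the free fundamental group, realized as a simple closed curve; for a compact surface of genus $\geq 1$ orientable or non-orientable of higher complexity, take a standard curve in the handle or a separating curve), and uses Lemma~\ref{lemm.bounding-disk} together with the first step only to dispose cleanly of the one-sided-curve case. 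This keeps the proof short and within the tools already available in the paper.
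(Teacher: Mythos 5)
Your proof is correct and takes essentially the same route as the paper: an essential one-sided curve gives a M\"obius band whose two-sided boundary, if inessential, bounds a disc by Lemma~\ref{lemm.bounding-disk} and forces $M=\mathbb{R}P^2$, while the case with no essential simple closed curve at all is settled by classification/fundamental-group considerations, exactly as in the paper's (equally terse) argument. One harmless slip: the projective plane \emph{does} contain an essential simple closed curve (the core of the M\"obius band, which is precisely what your first step exploits), but since your second step only needs that a surface with no essential curve lies among the three exceptional surfaces, the logic is unaffected.
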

\begin{proof}
If there exists no essential curve, then we are on the sphere or the plane. Assume there is an essential 1-sided curve. By definition this curve is included in a M\"obius band, whose boundary is a 2-sided curve. If this curve is not essential then according to Lemma~\ref{lemm.bounding-disk} it bounds a disk, and we are in the projective plane.
\end{proof}

\begin{lemm}\label{l.bound-annulus}
Let $M$ be a (non necessarily orientable) surface.
Two oriented essential simple closed curves $\gamma,\gamma'$
that are disjoint and homotopic bound an annulus. More precisely, there exists an embedding $i: \bbS^1 \times [0,1]$ such that $i(t,0) = \gamma(t)$ and $i(t,1) = \gamma'(t)$ for every $t \in \bbS^1$.
\end{lemm}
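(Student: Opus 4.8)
The plan is to prove Lemma~\ref{l.bound-annulus} by passing to an appropriate cover in which the two curves become embedded lifts that are still disjoint and homotopic, reducing to the case of an annular surface where the statement is elementary. First I would dispose of the trivial case where $M$ is the sphere or projective plane (there the curves cannot be essential unless $M=\bbS^2$, in which case Schoenflies gives the annulus directly). So assume the universal cover $X$ of $M$ is the plane.

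Next I would analyze the deck transformation $T$ associated to $\gamma$: lift $\gamma$ to a line $\tilde\gamma$ in $X$ invariant under $T$. Since $\gamma$ is essential and simple, $\tilde\gamma$ is a properly embedded topological line, and $T$ acts freely and properly discontinuously, so (as in the proof of Proposition~\ref{prop.primitive}) $T$ is conjugate either to a translation or to a glide reflection; the quotient $X/\langle T\rangle$ is an open annulus or an open M\"obius band. Using the homotopy between $\gamma$ and $\gamma'$, I would lift $\gamma'$ to a line $\tilde\gamma'$ joining a point to its $T$-image, hence $T$-invariant and projecting to an embedded essential simple closed curve $\bar\gamma'$ in $X/\langle T\rangle$. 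Here I need to know $\bar\gamma'$ is simple: this follows because $\gamma'$ is simple in $M$ and any self-intersection of $\bar\gamma'$ would come from two $T$-inequivalent lifts of $\gamma'$ crossing — but the homotopy lets me control the lifts, and an intersection of $\tilde\gamma'$ with another lift $\tilde\gamma''=S\tilde\gamma'$ would force $\gamma$ and its conjugate by $STS^{-1}$... I would instead argue directly in $X/\langle T\rangle$: the full preimage of $\gamma'$ there is a disjoint union of lines (disjointness of the family being inherited from simplicity of $\gamma'$ in $M$ together with the fact that $\gamma,\gamma'$ are disjoint and homotopic), one of which is a closed curve $\bar\gamma'$; and $\bar\gamma'$ is essential since it is homotopic to $\bar\gamma$, the core.

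Then in the annulus $A = X/\langle T\rangle$ I have two disjoint essential simple closed curves $\bar\gamma$ and $\bar\gamma'$, and in an annulus any such pair bounds a compact sub-annulus — a standard fact (each separates $A$, so the region between them is a compact surface with two boundary circles, and an Euler characteristic computation, or the classification of compact surfaces with boundary, identifies it as an annulus). The M\"obius band case is handled the same way: a one-sided essential simple closed curve and a disjoint homotopic copy bound an annulus inside the M\"obius band (the complement of an open M\"obius-band neighborhood of the core). Finally I would push this annulus down: the deck group of the cover $A \to M$ is $\langle T\rangle$ acting trivially on the sub-annulus (since $\bar\gamma,\bar\gamma'$ are $T$-invariant and the region between them is the unique such compact region), so $T$ maps the sub-annulus to itself, forcing it to descend to an embedded annulus in $M$; concretely I would check injectivity of the projection restricted to the sub-annulus using that distinct $T$-translates of $\tilde\gamma'$ are disjoint. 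The ``more precisely'' clause about the parametrized embedding $i$ with $i(\cdot,0)=\gamma$, $i(\cdot,1)=\gamma'$ then follows by reparametrizing the boundary circles of this annulus, using that $\gamma,\gamma'$ are the given oriented curves; the orientations match up because the two boundary components of an annulus are coherently oriented and the hypothesis is that $\gamma,\gamma'$ are oriented and homotopic.

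The main obstacle I expect is the injectivity of the projection of the sub-annulus of $A$ down to $M$, i.e.\ ruling out that some $T$-translate of $\bar\gamma'$ (equivalently, some other lift of $\gamma'$ in $A$) re-enters the region between $\bar\gamma$ and $\bar\gamma'$; this is exactly where disjointness of $\gamma$ and $\gamma'$ in $M$ and simplicity of $\gamma'$ must be combined carefully, and it is the analogue of the injectivity arguments appearing in the proof of Proposition~\ref{p.exists-bigon} (no deck transformation can nest the relevant region inside itself, by Brouwer, and otherwise the regions are disjoint). A secondary technical point is to be careful in the non-orientable case, where $T$ may be a glide reflection and one must verify that $\bar\gamma'$ is two-sided in $A$ before invoking the annulus-with-two-boundary-circles argument; this uses that $\gamma$ and $\gamma'$ are disjoint and homotopic, hence cobound, so their cores behave coherently.
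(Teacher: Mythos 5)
Your overall route is the same as the paper's: pass to the intermediate cover $A=X/\langle T\rangle$ (an open annulus), lift the homotopy to get a closed, simple, essential lift $\bar\gamma'$ of $\gamma'$ disjoint from the core $\bar\gamma$, take the compact annulus between them, and push it down to $M$. The genuine gap is exactly the step you flag as the main obstacle and then dispose of too quickly: the projection of that sub-annulus to $M$ need not be injective, and the nesting-or-disjoint dichotomy you import from the bigon argument (Proposition~\ref{p.exists-bigon}) does not apply here. There the relevant region is a compact disc, so Brouwer excludes $\theta(B)\subset B$ and transversality excludes partial overlap; here the region upstairs is the non-compact strip $S$ between $\tilde\gamma$ and $\tilde\gamma'$, invariant under $T$, and its image $U(S)$ under another deck transformation $U$ can overlap $S$ partially, with no nesting and no fixed point. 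This really happens: on $M=\bbT^2=\bbR^2/\bbZ^2$ take $\gamma=\{y=0\}$, $\gamma'=\{y=1/2\}$ and a free homotopy pushing $y$ from $0$ up to $3/2$; lifting it from $\tilde\gamma=\{y=0\}$ gives $\tilde\gamma'=\{y=3/2\}$, the strip $\{0\le y\le 3/2\}$ contains further lifts of both curves, its translate by $(0,1)$ overlaps it, and the compact annulus between $\bar\gamma$ and $\bar\gamma'$ in $A$ does \emph{not} embed in $M$ (the points over $(x,1/4)$ and $(x,5/4)$ are identified). So the assertion that ``the region between them is the unique such compact region, hence descends to an embedded annulus'' is false as stated; disjointness of the $T$-translates of $\tilde\gamma'$ is not the issue, and the sentence identifying the deck group of $A\to M$ with $\langle T\rangle$ is also a misidentification (that is the deck group of $X\to A$; the covering $A\to M$ is in general not regular). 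The paper's proof spends its entire second half on precisely this point: if some lift $U(\tilde\gamma)$ meets $S$, one shows $U$ commutes with $T$ (so $\pi_1(M)$ is abelian and $M$ is a torus or Klein bottle quotient), and then one does not project the original annulus but an \emph{innermost} one in $A$ --- an annulus bounded by $\bar\gamma'$ and a suitable lift of $\gamma$ containing no other lifts of $\gamma$ or $\gamma'$ --- which does project injectively; only when no translate of $\tilde\gamma\cup\tilde\gamma'$ meets $S$ does your direct push-down work. Without this case analysis (or an equivalent innermost-pair argument) the proof is incomplete.

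A secondary error: your treatment of the M\"obius band case for $A$ is wrong, although that case is in fact vacuous. If $T$ were a glide reflection, $\bar\gamma$ would be the one-sided core of $A$ and $\bar\gamma'$ a disjoint simple closed curve freely homotopic to it; but the mod-$2$ intersection number is a free homotopy invariant and the self-intersection number of a one-sided curve is $1$, so no curve freely homotopic to the core can be disjoint from it. This is how the case is excluded (it is the paper's remark that the automorphism $\tau$ preserves orientation), and it is also what guarantees $\gamma$ is two-sided, which the conclusion requires. Your proposed fix --- that the core and a disjoint homotopic copy bound the complement of a M\"obius neighbourhood of the core --- fails: that complement's boundary curve is homotopic to the \emph{square} of the core, and the disjoint homotopic copy does not exist in the first place. (Your dismissal of the exceptional surfaces is also stated backwards: $\bbS^2$ has no essential simple closed curves at all, while $\bbR P^2$ has them but any two must intersect; either way those cases are vacuous.)
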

\begin{proof}
There is no essential loop on the sphere.
In the projective space, all essential loops are homotopic.
Moreover the intersection number in $\mathbb{Z}/2\mathbb{Z}$ of any two such loops is well defined
and non zero. Hence there is no couple of disjoint essential loops in this case.

Let us now assume that $M$ is a surface different from the sphere and the projective plane.
Its universal cover $X$ is the plane. The simple curves $\gamma, \gamma'$
may be lifted as proper lines $\tilde \gamma, \tilde \gamma'$, which bound a strip $S$
(using the Schoenflies theorem). There exists an automorphism $\tau$ of the cover
such that $\gamma=\tilde \gamma/\tau$.
One considers the intermediate covering
$Y:=X/\tau\to M$. The curve $\gamma$ lifts as a loop $\bar \gamma$.
Note that $\tau$ is orientable (otherwise as above $\gamma$ meets any loop $\gamma'$
that is homotopic to $\gamma$) and $Y$ is an annulus.
A homotopy between $\gamma$ and $\gamma'$ may be lifted:
this gives a homotopy in $Y$ between $\bar \gamma$ and a loop
$\bar \gamma'$ which lifts $\gamma'$. The two loops $\bar \gamma$
and $\bar \gamma'$ are essential and disjoint in the annulus $Y$, hence (using the Schoenflies theorem)
bound an annulus $A=S/\tau$.

Let us assume that there exists an automorphism $T$ of the covering such that
$T(\tilde \gamma)$ meets (hence is contained) in $S$.
The projection of $T(\tilde \gamma)$ in $Y$ is compact, hence is a closed curve.
It is simple, hence $T(\tilde \gamma)$ is invariant by $\tau$.
We have shown that the two automorphisms $\tau,T$ commute,
so that $M$ is a quotient of the torus and the fundamental group of $M$ is abelian.
All the lifts of $\gamma$ is $ Y$ are essential loops, hence there exists
an automorphism $T$ of $Y\to M$ such that the annulus bounded
by $\bar \gamma'$ and $T(\bar \gamma)$ is
disjoint from all the other lifts of $\gamma$ and $\gamma'$.
One deduces that the annulus bounded by $\bar \gamma'$ and $T(\bar \gamma)$
projects in $M$ as an annulus as required.

Let us now assume that $T(\tilde \gamma\cup \tilde \gamma')$ is disjoint from $S$
for any automorphism $T$. One deduces that $S$ is disjoint from all its images
$T(S)$ such that $T$ is not in the group generated by $\tau$.
This proves that the projection of $S$ in $M$ is an annulus.
\end{proof}

\begin{lemm}\label{l.fibration}
Let $M$ be a surface, $x_{0}$ a point of $M$. Then the map $\homeo(M) \to M$ given by $h \mapsto h(x_{0})$ is a fiber bundle. In particular, if the trajectory of $x_{0}$ under an isotopy $(f_{t})$ is a closed contractible loop, then there is an isotopy $(f'_{t})$, from the identity to $f$, for which the trajectory of $x_{0}$ is constant; furthermore the isotopy $(f'_{t})$ is homotopic to the isotopy $(f_{t})$ and coincides with $(f_{t})$ outside some compact subset of $M$.
\end{lemm}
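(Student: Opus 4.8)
\textbf{Plan for the proof of Lemma~\ref{l.fibration}.}

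The plan is to prove the fiber bundle statement first, and then deduce the two applications. For the fiber bundle claim, the key observation is local triviality: the evaluation map $\mathrm{ev}_{x_0}\colon\homeo(M)\to M$, $h\mapsto h(x_0)$, is surjective (any point can be moved to any other by a compactly supported homeomorphism, using connectedness of $M$ and chaining a finite sequence of ``small pushes'' through charts), and near each point $w\in M$ one can build a continuous family of homeomorphisms $(\varphi_{z})_{z\in U_w}$, all supported in a fixed disk $U_w$ around $w$, with $\varphi_w=\mathrm{Id}$ and $\varphi_z(w)=z$. Explicitly, one fixes a chart identifying $U_w$ with an open disk, and for $z$ in a smaller disk lets $\varphi_z$ be a homeomorphism equal to the translation sending $0$ to the coordinate of $z$ near the center and damped to the identity near $\partial U_w$ (this is a standard ``point-pushing'' map, and its continuity in $z$ is immediate). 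Then the map $(z,h)\mapsto \varphi_z\circ h$ gives a homeomorphism from $U_w\times \mathrm{ev}_{x_0}^{-1}(w)$ onto $\mathrm{ev}_{x_0}^{-1}(U_w)$ commuting with the projections, so $\mathrm{ev}_{x_0}$ is a locally trivial fibration. First I would write this down carefully, paying attention only to the continuity of $z\mapsto\varphi_z$ in the compact-open topology; everything else is formal.

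Next I would derive the homotopy lifting property for $\mathrm{ev}_{x_0}$ from local triviality (a locally trivial map over a paracompact base is a Hurewicz fibration, or one can lift paths/homotopies directly by hand using the explicit local trivializations). Given an isotopy $(f_t)_{t\in[0,1]}$ from $f_0=\mathrm{Id}$ to $f_1=f$, regard it as a path $\tilde f\colon[0,1]\to\homeo(M)$, and let $\gamma(t)=f_t(x_0)=\mathrm{ev}_{x_0}\circ\tilde f(t)$ be its trajectory, a loop at $x_0$ since $f(x_0)=x_0$. The hypothesis is that $\gamma$ is contractible in $M$; pick a homotopy $(\gamma_s)_{s\in[0,1]}$ rel basepoint $x_0$ with $\gamma_0=\gamma$ and $\gamma_1$ the constant loop at $x_0$. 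By the homotopy lifting property applied to the homotopy $[0,1]\times[0,1]\to M$, $(t,s)\mapsto\gamma_s(t)$, starting from the lift $\tilde f$ at $s=0$, we obtain a homotopy of paths $(\tilde f^{(s)})_{s\in[0,1]}$ in $\homeo(M)$ with $\tilde f^{(0)}=\tilde f$, with $\mathrm{ev}_{x_0}\circ\tilde f^{(s)}=\gamma_s$ for all $s$, and with fixed endpoints (the endpoints of all $\gamma_s$ are $x_0$, and the fibration structure lets us keep the endpoints $\mathrm{Id}$ and $f$ fixed throughout). The desired isotopy is $(f'_t):=\tilde f^{(1)}$: it runs from $\mathrm{Id}$ to $f$, and its trajectory $\mathrm{ev}_{x_0}\circ\tilde f^{(1)}=\gamma_1$ is the constant loop at $x_0$, so $f'_t(x_0)=x_0$ for all $t$. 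Moreover $(\tilde f^{(s)})_s$ is by construction a homotopy between the isotopies $(f_t)$ and $(f'_t)$.

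For the last refinement — that $(f'_t)$ can be taken to coincide with $(f_t)$ outside a compact set — I would note that the contraction $(\gamma_s)$ of $\gamma$ takes values in a compact subset $K_0$ of $M$, and that the local trivializations used in the path lifting are supported in charts; by covering $K_0$ by finitely many such charts one arranges that the lifting modifies $\tilde f$ only by post-composition with homeomorphisms supported in a fixed compact neighborhood $K$ of $K_0$. Hence $f'_t=f_t$ on $M\setminus K$. The main obstacle I anticipate is purely a matter of care rather than difficulty: verifying continuity of the point-pushing family $z\mapsto\varphi_z$ and, more delicately, organizing the homotopy lifting so that endpoints stay fixed and so that the support control is genuinely uniform in $(t,s)$ — the fibration formalism gives existence of a lift abstractly, but to get the ``constant trajectory'' and ``compactly supported modification'' conclusions simultaneously one should really lift by hand using the explicit $\Theta(z,h)=\varphi_z\circ h$ trivializations, which keeps everything supported where the loop and its contraction live.
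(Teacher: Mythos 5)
Your proposal is correct and follows essentially the same route as the paper: local triviality of the evaluation map via a continuous, compactly supported family of point-pushing homeomorphisms in a chart, then the homotopy lifting property applied to a contraction of the trajectory of $x_{0}$, with the compact-support refinement coming from the fact that the trivializations (hence the modification of $(f_t)$) are supported where the contraction lives. The only cosmetic difference is the endpoint issue you flag: where you would lift the homotopy rel endpoints (or by hand), the paper lifts the square homotopy starting from $(f_t)$ and takes the restriction of the lift to the other three sides of the square, which yields the constant trajectory without invoking the relative lifting property.
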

\begin{proof}
Since $\homeo(M)$ is a group, it is enough to prove that the map $h\mapsto h(x_{0})$ admits local sections.
Working in a chart, we see that it is enough to prove the statement with $M$ equal to the open unit disk $\bbD^2$ and $\homeo(M)$ replaced by the space of compactly supported homeomorphisms of the disk. Then the statement boils down to the following property: one can associate to each point $x$ of the unit open disk a homeomorphism $h_{x}$ which is the identity on the boundary and satisfies $h_{x}(0) = x$, in such a way  that $h_{x}$ depends continuously on $x$. This is easily achieved by an explicit construction: for instance, one can send linearly each segment $[0z]$, joining $0$ to  a point $z$ on the boundary of the disc, to the segment $[xz]$.

Then the second part follows from standard homotopy lifting properties (see for example~\cite{hatcher}, section 4.2). More precisely, a fiber bundle has the homotopy lifting property with respect to the segment $[0,1]$ (\cite{hatcher}, proposition 4.48). By hypothesis, the trajectory $t \mapsto f_{t}(x_{0})$ is a contractible loop, 
thus there is a homotopy $H: [0,1] \times [0,1] \to M$ from this loop to the constant loop. This homotopy lifts to a homotopy $\tilde H$ in $\homeo(M)$ with $\tilde H (t,0) = f_{t}$ for every $t \in [0,1]$. Define the new homotopy by restricting $\tilde H$ to the three other sides of the square. 

To see that the new homotopy may be chosen to coincide with $(f_{t})$ ouside a compact subset of $M$, we note that our local section was compactly supported. Thus the restriction of the map $f \mapsto f(x_{0})$ to the space of compactly supported homeomorphisms of $M$ is also a fiber bundle. This provides a compactly supported isotopy $(g_{t})$, from the identity to the identity and homotopic to the constant (identity) isotopy, under which the trajectory of $x_{0}$ coincides with the trajectory under $(f_{t})$. Now the isotopy $(f'_{t}) =(g_{t}^{-1} f_{t})$ suits our needs.
\end{proof}


%
%

\bibliographystyle{alpha}
\bibliography{biblio-isotopies}

\end{document}